\documentclass[11pt,english]{amsart}
\PassOptionsToPackage{obeyFinal}{todonotes}
\usepackage[T1]{fontenc}
\usepackage[latin9]{inputenc}
\usepackage{color}
\usepackage{babel}
\usepackage{todonotes}
\usepackage{amsbsy}
\usepackage{amstext}
\usepackage{amsthm}
\usepackage{amssymb}
\usepackage{graphicx}
\usepackage{enumitem}
\usepackage[letterpaper]{geometry}
\usepackage[pdfusetitle,
 bookmarks=true,bookmarksnumbered=false,bookmarksopen=false,
 breaklinks=false,pdfborder={0 0 1},backref=false,colorlinks=false]
 {hyperref}
\hypersetup{
 colorlinks=true,linkcolor=teal,citecolor=purple,linktocpage=true}

\makeatletter
\numberwithin{equation}{section}
\numberwithin{figure}{section}
\theoremstyle{plain}
\newtheorem{thm}{\protect\theoremname}[section]
\theoremstyle{definition}
\newtheorem{defn}[thm]{\protect\definitionname}
\theoremstyle{plain}
\newtheorem{lem}[thm]{\protect\lemmaname}
\theoremstyle{remark}
\newtheorem{rem}[thm]{\protect\remarkname}
\theoremstyle{plain}
\newtheorem{cor}[thm]{\protect\corollaryname}
\theoremstyle{remark}
\newtheorem{claim}[thm]{\protect\claimname}
\theoremstyle{plain}
\newtheorem{prop}[thm]{\protect\propositionname}
\theoremstyle{remark}
\newtheorem{notation}[thm]{Notation}

\@ifundefined{date}{}{\date{}}
\usepackage{amsmath, tikz-cd, amssymb, placeins}
\DeclareRobustCommand*\cal{\@fontswitch\relax\mathcal}

\usetikzlibrary{calc}
\usetikzlibrary{decorations.pathmorphing}

\tikzset{curve/.style={settings={#1},to path={(\tikztostart)
    .. controls ($(\tikztostart)!\pv{pos}!(\tikztotarget)!\pv{height}!270:(\tikztotarget)$)
    and ($(\tikztostart)!1-\pv{pos}!(\tikztotarget)!\pv{height}!270:(\tikztotarget)$)
    .. (\tikztotarget)\tikztonodes}},
    settings/.code={\tikzset{quiver/.cd,#1}
        \def\pv##1{\pgfkeysvalueof{/tikz/quiver/##1}}},
    quiver/.cd,pos/.initial=0.35,height/.initial=0}

\tikzset{tail reversed/.code={\pgfsetarrowsstart{tikzcd to}}}
\tikzset{2tail/.code={\pgfsetarrowsstart{Implies[reversed]}}}
\tikzset{2tail reversed/.code={\pgfsetarrowsstart{Implies}}}
\tikzset{no body/.style={/tikz/dash pattern=on 0 off 1mm}}

\makeatother

\providecommand{\claimname}{Claim}
\providecommand{\corollaryname}{Corollary}
\providecommand{\definitionname}{Definition}
\providecommand{\lemmaname}{Lemma}
\providecommand{\propositionname}{Proposition}
\providecommand{\remarkname}{Remark}
\providecommand{\theoremname}{Theorem}

\begin{document}
\title{Knotting and linking in 4 and 5 dimensions from barbell diffeomorphisms}
\author{Seungwon Kim}
\author{Gheehyun Nahm}
\author{Alison Tatsuoka}

\thanks {SK was supported by National Research Foundation of Korea (NRF) grants funded by the Korean government (MSIT) (No.\ 2022R1C1C2004559). GN was partially supported by the ILJU Academy and Culture
Foundation, the Simons collaboration \emph{New structures in low-dimensional
topology}, and a Princeton Centennial Fellowship. AT was partially supported by an NSF Graduate Research Fellowship and by the NSF under Grant No. DMS-1928930
while in residence at the Simons Laufer Mathematical Sciences Institute in Berkeley, California,
during the Spring 2026 semester.}

\begin{abstract}
In this paper, we construct infinitely many non-isotopic 3-knots in the 5-sphere, each of which has four critical points with respect to the standard height function of the 5-sphere. 
This contrasts with a theorem of Scharlemann which says that any 2-knot in the 4-sphere with four critical points is unknotted, and also provides infinitely many knotted solid tori in the 4-sphere and 5-ball, which resolves the last remaining case of the conjecture by Budney and Gabai on the existence of knotted handlebodies.

We also construct various knotted and linked handlebodies, discs, and spheres in the 4-sphere, 5-ball, and 5-sphere, extending recent works of Hughes, Miller, and the first author, and a recent work of the authors. All of our examples are explicit and are constructed using barbell diffeomorphisms.

\end{abstract}

\maketitle
\tableofcontents{}

\section{\label{sec:Introduction}Introduction}

In breakthrough work \cite{budney2021knotted3ballss4}, Budney and Gabai introduced barbell diffeomorphisms and used them to construct infinitely many knotted 3-balls in $S^4$. In the same paper, they conjectured \cite[Conjecture~11.3]{budney2021knotted3ballss4} that there exist knotted handlebodies of any genus $g\ge 1$ in $S^4$.
Hughes, Miller, and the first author \cite{hughes2024knotted} resolved this conjecture for $g\ge 2$ and moreover showed that their handlebodies stay knotted even if their interiors are pushed into $B^5$. 
They first constructed nontrivial $3$-knots in $S^5$ with $(2g+2)$ critical points with respect to the standard height function of $S^5$: one critical point each of index $0$ and $3$, and $g$ critical points each of index $1$ and $2$.
Then they showed that this yields knotted genus $g$ handlebodies in $S^4$.

In this paper, we construct infinitely many nontrivial $3$-knots with four critical points, one critical point each of index $0,1,2,3$.
This contrasts with Scharlemann's theorem \cite{scharlemann1985smoothspheres} which says that a $2$-knot in $S^4$ with four critical points must be unknotted, and also resolves the last remaining case of Budney and Gabai's conjecture, namely the existence of knotted solid tori.\footnote{It will be clear from the construction of our $3$-knots that that they are nontrivial implies Corollary~\ref{cor:knotted-handlebodies}, but in fact Theorem~\ref{thm:nontrivial-3knots}, as stated, also implies Corollary~\ref{cor:knotted-handlebodies}; see Remark~\ref{rem:genus-1-handlebody-from-3knot}.}

\begin{thm}[``Morse-simple'' 3-knots; Theorem~\ref{thm:morsesimple-s3}]
\label{thm:nontrivial-3knots}There exist infinitely many pairwise non-isotopic
embeddings of $S^{3}$ in $S^{5}$ all of which have four critical points with respect
to the standard height function on $S^{5}$ (which is Morse on the $3$-knot).
\end{thm}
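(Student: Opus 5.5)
The plan is to build the $3$-knots as spins of barbell diffeomorphisms applied to a solid torus, and then to separate them using an invariant of the complement.

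\textbf{Construction.} Start with the standard unknotted $S^3\subset S^5$ carrying the obvious Morse function with four critical points. Cut along the level $S^4$ at a regular value between the index $1$ and index $2$ critical points. The lower half of the $3$-knot is then a standard solid torus $V=S^1\times D^2$ sitting in $B^5_-$. Its boundary $\partial V=T^2\subset S^4$ is split by the standard Heegaard-type decomposition into an index $0$ and index $1$ piece below, and an index $2$ and index $3$ piece above.

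Choose a barbell $\mathcal{B}\subset S^4\setminus T^2$ and let $\beta_k$ be the associated barbell diffeomorphism, iterated or with varying cuff data indexed by $k$. Regluing by $\beta_k$ gives the knot
\[
K_k=V\cup_{T^2}\beta_k(V'),
\]
where $V'$ is the upper solid torus. Since $\beta_k$ is supported in a neighbourhood of $\mathcal{B}$ inside the level $S^4$, the height function on $K_k$ is still Morse with exactly four critical points, of indices $0,1,2,3$. Equivalently, $\beta_k$ acts by a diffeomorphism of $S^4$ fixing the lower solid torus, and we take the image of the upper solid torus under a collar-level isotopy.

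The barbell has to be chosen so that $\beta_k(V')$ still caps $T^2$ to give a $3$-sphere. This amounts to taking $\mathcal{B}$ in the complement of $T^2$ with cuffs linking the core circles appropriately, following the genus $g\geq 2$ construction of \cite{hughes2024knotted} but adapted to $g=1$.

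\textbf{Distinguishing.} Isotopic knots have diffeomorphic complements relative to the meridian, so it suffices to separate the $K_k$ by an invariant of $\pi_1(S^5\setminus K_k)$ or of a finite cover. The plan is:

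1. Compute a handle description of $S^5\setminus \nu K_k$ from the barbell.
2. Show that $\pi_1$ has a presentation with one extra relator that depends on $k$.
3. Distinguish the groups by counting representations onto a family of finite groups, or by Alexander-module data of the infinite cyclic cover.

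If $\pi_1$ is too simple, for instance $\mathbb{Z}$, fall back to an alternative. One option is to show that $V\cup\beta_k(V')$ restricted to $B^5$ gives solid tori whose relative complements differ, detected through the Budney--Gabai $W_3$-type invariant or via the action on $\pi_2$ of the complement as a $\mathbb{Z}[\pi_1]$-module.

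\textbf{Main obstacle.} The hardest step is the distinguishing invariant. In genus $1$ there is essentially a single $1$-handle, so the fundamental group of the complement tends to collapse, which is exactly why the $g=1$ case was left open. I would first try to set up the barbell so that the knot group is a nontrivial $1$-relator group whose relator's word length grows with $k$, and compare Alexander polynomials. If these coincide, I would compute $\pi_2$ of the complement as a module over $\mathbb{Z}[t^{\pm1}]$ and distinguish the modules through their Fitting ideals.
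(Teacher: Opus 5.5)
Your framework is the right one: cut the unknot along the middle level $S^4$ into two solid tori and reglue one of them by a barbell diffeomorphism of $S^4$ supported away from $T^2$. The Morse count is also fine. However, the proposal has two genuine gaps.

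\textbf{First gap: your main invariant is trivial.} For any $3$-knot with one critical point of each index $0,1,2,3$, each index-$i$ critical point contributes an $(i+1)$-handle to the exterior (Theorem~\ref{thm:62} with $k=2$). So the exterior has exactly one $1$-handle, its $\pi_1$ is cyclic with $H_1\cong\mathbb{Z}$, and hence $\pi_1(S^5\setminus K_k)\cong\mathbb{Z}$ for every $k$. Representation counts and Alexander modules therefore all agree with those of the unknot. This has nothing to do with genus $1$: the same holds for the $(2g+2)$-critical-point knots of \cite{hughes2024knotted}. By Theorem~\ref{thm:unknotted-handlebody}, $\pi_2$ as a $\mathbb{Z}[t^{\pm1}]$-module is the place to look. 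That is what the paper uses, but in your proposal it appears only as a fallback, with no computation.

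\textbf{Second gap: you never say which barbell to use.} The condition you impose, that $\beta_k(V')$ still caps $T^2$ to give a $3$-sphere, is vacuous, since a barbell in $S^4\setminus T^2$ fixes $T^2$ pointwise. Natural choices give the unknot. By Lemma~\ref{lem:unknots} (Equation~(\ref{eq:betainv}) plus Hartman's theorem), $H_v\cup\overline{\boldsymbol{\beta_{h,k}}H_h}$, $H_v\cup\overline{\boldsymbol{\beta_{v,\ell}}H_h}$ and $H_v\cup\overline{\boldsymbol{\beta_{h,k}\beta_{v,\ell}}H_h}$ are all unknotted. The same argument shows that any iterate of a single horizontal or vertical barbell is unknotted, so ``iterate $\beta$'' fails.

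The paper needs two barbells, composed in a specific order: $Y_\beta=H_v\cup\overline{\boldsymbol{\beta_{v,\ell}\beta_{h,k}}H_h}$. Here the cuffs of $\beta_{h,k}$ (resp.\ $\beta_{v,\ell}$) are parallel copies of $S_h$ (resp.\ $S_v$), and the bars wind $k$ (resp.\ $\ell$) times around the meridian of $T^2$. The winding matters: in the universal cover of $Z=S^4\setminus\mathring N(T^2)$, the lifted barbells then join $\rho^i\widetilde{S_h}$ to $\rho^{i+k}\widetilde{S_h}$, and $\rho^i\widetilde{S_v}$ to $\rho^{i+\ell}\widetilde{S_v}$.

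\textbf{The missing computation.} By Proposition~\ref{prop:handle-decomp} and Lemma~\ref{lem:glue}, the exterior has a single $1$-, $2$- and $3$-handle. The belt sphere of the $2$-handle meets $Z$ in $D_v$, and the $3$-handle is attached along $\boldsymbol{\beta_{v}\beta_{h}}S_v$. Hence $\pi_2\otimes\mathbb{F}_2\cong\mathbb{F}_2[t^{\pm1}]/(f)$, where $f$ is the mod $2$ equivariant intersection number of these two objects. Applying Lemma~\ref{lem:barbelltubing} to the lifted barbells gives $f=1+(t+t^{-1})(t^k+t^{-k})(t^\ell+t^{-\ell})$, so $\pi_2\otimes\mathbb{F}_2$ has dimension $2k+2\ell+2$. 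Without an explicit barbell and this computation, your plan does not yet produce even one nontrivial knot, let alone infinitely many.
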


\begin{defn}[Compressing-curve equivalent handlebodies]
Let $H_1$ and $H_2$ be two $3$-dimensional handlebodies with common boundary $K$. 
We say that $H_1$ and $H_2$ are \emph{compressing-curve equivalent} if a closed curve $\gamma \subset K$ bounds a disk in $H_1$ if and only if it bounds a disk in $H_2$.
\end{defn}

\begin{cor}[Knotted handlebodies; Theorem~\ref{thm:genus1-handlebody} and Corollary~\ref{cor:bgconj}]
\label{cor:knotted-handlebodies}For each $g\geq1$, there exist infinitely many genus $g$ compressing-curve equivalent handlebodies in $S^4$ which are pairwise non-isotopic rel.~$\partial$ and remain non-isotopic even when their interiors are pushed into $B^{5}$. 
\end{cor}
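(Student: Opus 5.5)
The plan is to derive the corollary from Theorem~\ref{thm:nontrivial-3knots}, following the strategy of Hughes--Kim--Miller \cite{hughes2024knotted}, and then to pass from genus $1$ to arbitrary genus by stabilization.

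\textbf{From a Morse-simple $3$-knot to a solid torus.} Let $K\subset S^5$ be one of the $3$-knots of Theorem~\ref{thm:nontrivial-3knots}, with critical points of index $0,1,2,3$. After an isotopy, put the index $0$ and $1$ critical points below the equator $S^4=h^{-1}(0)$ and the index $2$ and $3$ critical points above it. Then $K\cap h^{-1}(-\infty,0]$ is a $0$-handle plus a $1$-handle, that is, a solid torus $H_1$ in the lower $B^5$. Similarly $K\cap h^{-1}[0,\infty)$ is a solid torus $H_2$ in the upper $B^5$. They meet in a common torus boundary $T\subset S^4$. Pushing each $H_i$ into the equatorial $S^4$ through the product structure gives solid tori $H_1,H_2\subset S^4$ with $\partial H_1=\partial H_2=T$.

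\textbf{Compressing-curve equivalence and knottedness.} First, I would check that $H_1$ and $H_2$ are compressing-curve equivalent. For the unknotted model the meridian disk of $H_1$ and of $H_2$ are the same curve on $T$. The barbell diffeomorphism used in the construction is supported away from $T$ and is pulled back to the equator, so the meridian curves agree, and on a torus the compressing curves are determined by the meridian. Second, suppose $H_1$ and $H_2$ were isotopic rel $\partial$ in $S^4$, or even after pushing their interiors into $B^5$. Gluing $H_1$ in the lower $B^5$ to $H_2$ in the upper $B^5$ would then give a $3$-knot isotopic to the double $H_1\cup_T \bar H_1$, which is the trivial $3$-knot. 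Comparing this with $K$ is exactly how Remark~\ref{rem:genus-1-handlebody-from-3knot} is meant to be used. To get infinitely many, I fix $H_2$ and vary $K$ among the pairwise non-isotopic knots of the theorem, all built from the same $H_2$ but different $H_1$. An isotopy $H_1\simeq H_1'$ rel $\partial$ in $B^5$ would give an isotopy $K\simeq K'$, so the $H_1$'s are pairwise distinct.

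\textbf{Higher genus.} For $g\ge 2$ the result is already due to \cite{hughes2024knotted}, but it also follows uniformly. Boundary-connect-sum the genus $1$ examples with a standard unknotted genus $g-1$ handlebody, attached along a standard arc away from the support. Equivalently, stabilize $K$ by adding cancelling $1$/$2$ critical point pairs, which gives $2g+2$ critical points. Non-isotopy is preserved because the doubled $3$-knot is unchanged up to connected sum with an unknot.

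\textbf{Main obstacle.} I expect the hardest step to be making sure the infinitely many knots share a common top half $H_2$, or otherwise arranging the invariant so it distinguishes the handlebodies rel boundary. If the construction does not give this directly, I would use the fact that the barbell diffeomorphism is supported in a neighborhood of the lower half. Then $K_n = H^{(n)}_1\cup H_2$ with $H_2$ fixed, and the invariant distinguishing the $K_n$ (from Theorem~\ref{thm:morsesimple-s3}) passes to the $H^{(n)}_1$.
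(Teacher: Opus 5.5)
Your mechanism for the infinite family is the paper's (Corollary~\ref{cor:bgconj} via Lemma~\ref{lem:stabilization}). You keep one half fixed, vary the other by a barbell diffeomorphism, and observe that an isotopy rel.\ $\partial$ in $B^5$ between two varying halves would give an isotopy of the corresponding $3$-knots.

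However, your paragraph on compressing-curve equivalence and knottedness is false as written. The two halves of a $3$-knot are never compressing-curve equivalent. For the unknot $H_v\cup\overline{H_h}$, the meridians of $H_v$ and $H_h$ are two curves on $T^2$ meeting once. If both halves had the same meridian, their union would be $S^1\times S^2$, not $S^3$. For the same reason $H_1\cup_T\overline{H_1}\cong S^1\times S^2$ is not ``the trivial $3$-knot''. So $H_1\not\approx H_2$ holds for trivial reasons and says nothing about knotting.

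The equivalence the corollary needs is between \emph{members of the family}, namely $H_h$ and $\boldsymbol{\beta}H_h$ for $\boldsymbol{\beta}=\boldsymbol{\beta_{v,\ell}\beta_{h,k}}$. Your observation that $\boldsymbol{\beta}$ fixes $N(T^2)$ pointwise is exactly what proves this. Indeed, $\boldsymbol{\beta}^{\pm1}$ carries a disk in one handlebody bounded by $\gamma\subset T^2$ to a disk in the other with the same boundary. Likewise, knottedness of $\boldsymbol{\beta}H_h$ means $\boldsymbol{\beta}H_h\not\approx H_h$. That comes from comparing $H_v\cup\overline{\boldsymbol{\beta}H_h}$ with the unknot $H_v\cup\overline{H_h}$, not from comparing the two halves. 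These differ because $\dim_{\mathbb{F}_2}\pi_2(S^5\setminus Y_\beta)\otimes_{\mathbb{Z}}\mathbb{F}_2=2k+2\ell+2\neq0$.

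Your first paragraph and your ``main obstacle'' disappear once you use the explicit knots. Those of Theorem~\ref{thm:morsesimple-s3} are by definition $H_v\cup\overline{\boldsymbol{\beta}H_h}$: push-ins of handlebodies in $S^4$ with common boundary $T^2$ and common half $H_v$. So no boundary-parallelism result or support argument is needed. Starting instead from Theorem~\ref{thm:nontrivial-3knots} as a black box requires two extra inputs, as in Remark~\ref{rem:genus-1-handlebody-from-3knot}: \cite[Lemma~4.1]{hughes2024knotted} to push the halves into $S^4$, and Montesinos's theorem to normalize them to $H_v$ and $H_h$.

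For $g\ge 2$, the stabilization must use complementary handlebodies. Boundary-sum the fixed half with $H_v$ and each varying half with $H_h$, inside a small ball near $T^2$ that every $\boldsymbol{\beta}$ fixes. Then each $3$-manifold changes by connected sum with the unknotted $H_v\cup\overline{H_h}$, and compressing-curve equivalence is preserved. Summing both halves with the same handlebody would instead add $S^1\times S^2$ summands, and the non-isotopy argument would no longer apply as stated.
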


\begin{rem}[Comparison with \cite{hughes2024knotted}]
The $3$-knot in \cite{hughes2024knotted} was shown to be nontrivial by applying Ruberman's theorem \cite{MR709569} which says that the $5$-twist spun trefoil is not doubly slice, which in turn uses Rokhlin's theorem \cite{MR52101}; they could not explicitly identify either the $3$-knot or the handlebody.
    In contrast, we explicitly construct the $3$-knots and the handlebodies, and we show that our $3$-knots are nontrivial by computing the $\pi_2$ of the complement; this allows us to construct and distinguish infinitely many $3$-knots with few critical points and infinitely many knotted handlebodies.
\end{rem}

Note that the 3-knots of Theorem~\ref{thm:nontrivial-3knots} have the fewest critical points possible while being nontrivial. 
In fact, for all $n\ge 3$, the unknotting theorem for $n$-knots in $S^{n+2}$ \cite{MR179803, MR184249, MR230325} implies that any embedding of $S^n$ in $S^{n+2}$ with two critical points must be unknotted.

In higher dimensions, Ferus \cite[Theorem A.6]{ferus1968totale} (also
see \cite{MR212817}) showed for all $m\ge 1$ that there exist embeddings of (possibly exotic) $(4m+1)$-spheres in $S^{4m+3}$ with four critical points. 
This motivated a question of Kuiper \cite[Section 10, page 390]{kuiper1984geometryintotal}, who asked whether nontrivial $n$-knots with four critical points can also exist in dimensions $n=4m-1, m\geq 1$;\footnote{Kuiper phrases this question in slightly different language. For an embedding $f:M\hookrightarrow\mathbb{R}^{N}$, let $\tau[f]$ denote
the infimum of the number of critical points of $\rho\circ g:M\to\mathbb{R}$
for all embeddings $g:M\hookrightarrow\mathbb{R}^{N}$ isotopic to
$f$ and linear projections $\rho:\mathbb{R}^{N}\to\mathbb{R}$ such
that $\rho\circ g$ is a Morse function. In \cite{kuiper1984geometryintotal},
this is phrased in terms of the total absolute curvature $\tau(f)$
of $f$ \cite[(1.1)]{kuiper1984geometryintotal}: $\tau[f]$ is the
infimum of $\tau(g)$ over all embeddings $g$ isotopic to $f$ \cite[paragraph after (1.5)]{kuiper1984geometryintotal}
(see \cite[Theorem 2.1]{MR950555} or \cite[Lemma 3.16]{ferus1968totale}). Kuiper's question asks whether there exist nontrivial $(4m-1)$-knots $f: S^{4m-1} \hookrightarrow \mathbb{R}^{4m+1}$ with $\tau[f]=4$.}
Theorem~\ref{thm:nontrivial-3knots} answers this question for $m=1$. 
In fact, the construction of these 3-knots extends naturally to the construction of infinitely many $(2k-1)$-knots in $S^{2k+1}$ with 4 critical points for all $k \geq 2$, answering Kuiper's question for all $m \geq 1$ (see Appendix \ref{sec:kuiperappendix}).

Theorem~\ref{thm:nontrivial-3knots} also answers a question posed by Carrara, Ruas, and Saeki \cite[Proposition 5.2]{carrara2001}, who showed that if there exists a 3-knot in $S^{5}$ with four critical
points of indices anything other than $0,1,2,3$, then that 3-knot must be unknotted; they asked if this condition on the indices could be dropped \cite[Remark 5.3]{carrara2001}.
Indeed, Theorem~\ref{thm:nontrivial-3knots} shows it cannot be dropped.

Using similar methods, for all $n\ge 2$, we also construct infinitely many nontrivial $n$-component links of
3\nobreakdash-spheres in $S^{5}$, all of which have $(2n+2)$ critical points with
respect to the standard height function on $S^{5}$. As in the 1-component
case, it follows from a result of Komatsu \cite{MR1364071} (compare \cite{powellspanning})
that any $n$-component link of 3-spheres in $S^{5}$ with $2n$ critical points
must be isotopic to the unlink. Moreover, the links are \emph{Brunnian}, i.e.\ they become isotopic to the $(n-1)$-component unlink after removing any one of the components.

\begin{thm}[Brunnian 3-links; Theorem~\ref{thm:linked-6crit}]
\label{thm:linked-3spheres} For all $n\ge 2$, there exist infinitely many pairwise
non-isotopic $n$-component Brunnian 3-links in $S^{5}$, all of which
have $(2n+2)$ critical points with respect to the standard height function on $S^{5}$.
\end{thm}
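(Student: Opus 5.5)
We will build the links from an unlink by applying a single barbell diffeomorphism. We then detect nontriviality through the second homotopy group of the complement, viewed as a module over the group ring of $\pi_1$.

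\emph{Construction.} Start with the standard $n$-component unlink $L_0=U_1\sqcup\dots\sqcup U_n$ of $3$-spheres in $S^5$, each $U_i$ in standard position with one index-$0$ and one index-$3$ critical point, giving $2n$ critical points in total. Add one cancelling pair of index-$1$ and index-$2$ critical points, placed on a single component, say $U_1$. Concretely, $U_1$ is described in a middle level $S^4$ as a Heegaard-type splitting: a solid torus capped off by the index-$0$ and index-$3$ balls. The trivial choice of gluing recovers the unknot.

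Next, choose a barbell $\mathcal{B}\subset S^4$ in the middle level of the complement of the cross-section of the link. Its two cuffs should be meridian $2$-spheres of $U_1$ and of another component $U_2$. Its bar should be an arc that winds around the remaining components $U_3,\dots,U_n$, in the pattern of a Brunnian word such as an iterated commutator of their meridians. The bar should also wrap $k$ times around the index-$1$ handle.

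Apply the barbell diffeomorphism $\phi_{\mathcal{B}}$ to the middle-level cross-section and the handle attaching data above it. This yields a link $L_k$ still with $2n+2$ critical points. The implanting happens in a level set, so it adds no critical points; this is just the mechanism used for Theorem~\ref{thm:nontrivial-3knots}.

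\emph{Brunnian property.} Delete any component $U_j$. If $j\ge3$, the bar's homotopy class becomes trivial in the complement because it was a commutator involving the meridian of $U_j$. The barbell is then isotopic to one with a trivial bar, and $\phi_{\mathcal{B}}$ is isotopic to the identity there. If $j\in\{1,2\}$, one cuff bounds a $3$-ball disjoint from the link, so the barbell diffeomorphism is again isotopic to the identity in the complement. For $n=2$ the bar only winds through the index-$1$ handle, and the same cuff argument applies. In all cases the remaining $n-1$ components with their Morse structure cancel back to the unlink by Komatsu's result \cite{MR1364071}.

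\emph{Distinguishing.} Compute $\pi_2(S^5\setminus L_k)$ as a $\mathbb{Z}[\pi_1]$-module, where $\pi_1$ is free on the $n$ meridians. Use the handle decomposition of the complement read off from the movie: the barbell diffeomorphism changes the attaching sphere of a $3$-handle (dually, a $2$-handle) by the Budney--Gabai formula. Take the quotient by the relevant submodule, or take an Alexander-type invariant, for instance the order of a torsion module after abelianizing to $\mathbb{Z}[t_1^{\pm1},\dots,t_n^{\pm1}]$. This should give an invariant depending on $k$, such as a polynomial with span growing in $k$, which shows the $L_k$ are pairwise non-isotopic. The same computation with $n=1$ is what should establish Theorem~\ref{thm:nontrivial-3knots}, so I would reuse that calculation.

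The main obstacle is this last step. For a Brunnian link, every abelian invariant that sees only pairs of components vanishes. The class must therefore be detected by a multivariable term involving all $n$ meridians, and I must check that this term survives the quotient by the unit and automorphism ambiguity of the group ring.
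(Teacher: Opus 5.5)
There are genuine gaps in the construction, the Brunnian step, and the distinguishing step. First, the construction is not well posed. A $3$-sphere in $S^5$ has a meridian circle, not a meridian $2$-sphere. In the middle level, the cross-section of $U_2$ is an unknotted $S^2$ whose exterior is $S^1\times B^3$, so any cuff ``around $U_2$'' is null-homotopic in the middle-level complement $Z$. All of $\pi_2(Z)=H_2(\widetilde Z)$ is spanned by $\pi_1$-translates of the two spheres $S_h,S_v$ of the torus component. These two cannot serve as disjoint cuffs, since their equivariant intersection is $1+t\neq0$ mod $2$; the paper therefore takes both cuffs to be parallel copies of a single torus sphere.

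But then one barbell is invisible. $\pi_2\otimes\mathbb{F}_2$ is $\mathbb{F}_2[F_n]$ modulo the equivariant intersection $f$ of the $3$-handle's attaching sphere $\boldsymbol{\beta}S_v$ with the disk $D_v$ in which the $2$-handle's belt sphere meets $Z$. A horizontal barbell only adds translates of $S_h$, which miss $D_v$, and a vertical one has cuffs disjoint from $S_v$; either way $f=1$ (compare Lemma~\ref{lem:unknots}). The missing idea is the composite $\boldsymbol{\beta_{v,\ell}\beta_{h,k}}$, in this order. Here $\boldsymbol{\beta_{h,k}}$ creates $S_h$-translates, which $\boldsymbol{\beta_{v,\ell}}$ converts into $S_v$-translates that $D_v$ sees, giving $f_{k,\ell}=1+(\rho_n^{-1}+1)(w^{-k}+w^{k})(1+\rho_n)(w^{-\ell}+w^{\ell})$.

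Your Brunnian step also breaks. The bar's extra $k$ windings around the $1$-handle survive deleting $U_j$ for $j\ge3$. The paper's bar is exactly $w^k$, with $w$ an iterated commutator of the meridians of \emph{all} sphere components and nothing else. Even a null-homotopic bar does not trivialize a barbell by itself: Theorem~\ref{thm:simple-knotted-handlebody} holds for any bar. The paper uses that, after straightening, the barbell sits in $N(S_h)\cong S^2\times D^2$ (or $N(S_v)$) with parallel cuffs. Then $[\boldsymbol{\beta}(D),\partial D]=[D,\partial D]+[S]-[S]=[D,\partial D]$, and Gabai's lightbulb theorem \cite{MR4127900} isotopes $\boldsymbol{\beta}$ to a map supported in a $4$-ball (Lemma~\ref{lem:s2d2-barbell-trivial}). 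Finally, Komatsu's theorem does not apply to a sublink that still contains the torus component, since that sublink has $2(n-1)+2$ critical points. It is simply the standard unlink once $\boldsymbol{\beta}$ has been moved off it.

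The distinguishing step, which you flag as the obstacle, fails as proposed. Write $M_{k,\ell}:=\mathbb{F}_2[F_n]/\mathbb{F}_2[F_n]f_{k,\ell}\cong\pi_2\otimes\mathbb{F}_2$. For $n\ge3$, $w$ dies in $H_1$, so $w^{k}+w^{-k}\mapsto0$ and $f_{k,\ell}\mapsto1$. The abelianized module is therefore zero, and the Alexander-type invariant you suggest cannot see these links.

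The paper instead uses a nilpotent quotient (Lemma~\ref{lem:group-theoretic}): $\varphi\colon F_n\to G:=U_n\times\mathbb{Z}$, with $\rho_i\mapsto I+E_{i,i+1}$ for $i<n$ and $\rho_n$ sent to the generator of $\mathbb{Z}$. Then $\varphi(w)$ and $\varphi(\rho_n)$ generate a central subgroup $H\cong\mathbb{Z}^2$. Because $\varphi(f_{k,\ell})$ is central, $\mathbb{F}_2[G]\otimes_{\mathbb{F}_2[F_n]}M_{k,\ell}\cong\mathbb{F}_2[G]/\mathbb{F}_2[G]\varphi(f_{k,\ell})$ has annihilator $\mathbb{F}_2[G]\varphi(f_{k,\ell})$. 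Its intersection with $\mathbb{F}_2[H]\cong\mathbb{F}_2[s^{\pm1},t^{\pm1}]$ is the principal ideal generated by $\varphi(f_{k,\ell})$. Unique factorization in this Laurent ring then shows the modules are nonzero and pairwise non-isomorphic for $\{k,\ell\}\neq\{k',\ell'\}$.

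This also settles your concern about ambiguity. $U_n$ has nilpotency class $n-1$ and $w$ is a weight-$(n-1)$ commutator, so $\varphi(w)$ is unchanged when meridians are replaced by conjugates. And $\varphi(\rho_n)$ is central, so the same holds for it.
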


Similarly to Corollary~\ref{cor:knotted-handlebodies}, Theorem~\ref{thm:linked-3spheres} give rise to Brunnian links of handlebodies.

\begin{cor}[Brunnian handlebody links; Corollary~\ref{cor:solid-torus-ball-brunnian}]
\label{cor:brunnian-handlebodies}
    For all $n\ge 2$ and $(g_1,\cdots,g_n) \in \mathbb{Z}_{\ge 0} ^n $ with $g_n\ge 1$, there exists an infinite family $H_1 , H_2 , \cdots$ of $n$-component handlebody links of genus $(g_1, \cdots ,g_n)$ in $S^4$ with common boundary such that for all $k \neq \ell$,
    \begin{enumerate}
    \item any proper sublinks of $H_k$ and $H_\ell$ with the same boundary are isotopic rel.~$\partial$; and
    \item the links $H_k$ and $H_\ell$ are not isotopic rel.\ $\partial$ and remain non-isotopic even when their interiors are pushed into $B^5$.
    \end{enumerate}
\end{cor}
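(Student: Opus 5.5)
The plan is to deduce Corollary~\ref{cor:brunnian-handlebodies} from Theorem~\ref{thm:linked-3spheres} in the same way Corollary~\ref{cor:knotted-handlebodies} is deduced from Theorem~\ref{thm:nontrivial-3knots}.

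\textbf{Slicing the links.} Take an $n$-component Brunnian 3-link $L=L_1\cup\cdots\cup L_n$ in $S^5$ with $2n+2$ critical points. Each component needs a minimum and a maximum, so the remaining two critical points lie on a single component, which we label $L_n$; they have index $1$ and $2$. After isotopy, we arrange that:
\begin{itemize}
\item all index $0$ and index $1$ critical points lie below the equatorial $S^4$;
\item all index $2$ and index $3$ critical points lie above it.
\end{itemize}
Then $L\cap B^5_-$ is a trivial disjoint union of handlebodies: $L_1,\dots,L_{n-1}$ meet it in $3$-balls and $L_n$ meets it in a solid torus. By uniqueness of trivial handlebody systems in $B^5$ (unknotting in codimension two with one-handles only), the lower half is standard. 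The link is then recorded entirely by the upper halves $H=L\cap B^5_+$, which form a handlebody link of genus $(0,\dots,0,1)$ in $B^5_+$ with a fixed boundary in $S^4$. Flowing down by the height function, which has no critical points between the equator and the upper index-$2$ points, we push these into $S^4$ rel.~$\partial$ (these are the handlebodies "with interiors pushed into $B^5$"). To get general genus $(g_1,\dots,g_n)$ with $g_n\ge 1$, we stabilize each component by boundary-connect-summing trivial $1$-handles in a small ball away from everything. This preserves every property below, because capping off with the standard lower half undoes the stabilization up to isotopy.

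\textbf{Non-isotopy.} Suppose the handlebody links $H_k$ and $H_\ell$ built from $L^{(k)}$ and $L^{(\ell)}$ (with common boundary) were isotopic rel.~$\partial$ in $B^5$. Gluing this isotopy to the fixed standard lower half gives an isotopy $L^{(k)}\simeq L^{(\ell)}$ in $S^5$, contradicting Theorem~\ref{thm:linked-3spheres}. For this we must realize all $L^{(k)}$ with the same lower half and the same boundary in $S^4$. This is natural if the family arises, as in the barbell construction, by modifying only the upper half by diffeomorphisms supported in $B^5_+$ fixing $S^4$. I would check this against the explicit construction of Theorem~\ref{thm:linked-6crit}.

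\textbf{Sublinks.} Removing a component from $H_k$ corresponds to removing that component from $L^{(k)}$, which leaves the unlink by the Brunnian property. I then need isotopy rel.~$\partial$ of the sub-handlebody-links, not merely isotopy of the closed links. This is the main obstacle: recovering a rel.~boundary statement from a closed isotopy.

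My first attempt would use the explicit construction. If the barbell diffeomorphism producing $L^{(k)}$ is supported near the barbell, and the barbell's spheres link every component, then deleting a component lets the barbell be isotoped off (the Brunnian mechanism itself), so the diffeomorphism becomes isotopic to the identity rel.~$S^4$ on the complement of the remaining components. This gives the isotopy rel.~$\partial$ directly, in $S^4$ and hence in $B^5$.

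As a fallback, I would use a relative unknotting argument. Both sublinks are handlebody links in $B^5$ with the same boundary whose doubles along the standard lower half are the same unlink. One can then try to show they are compressing-curve equivalent and conclude that they are isotopic rel.~$\partial$, following the argument of Remark~\ref{rem:genus-1-handlebody-from-3knot}.
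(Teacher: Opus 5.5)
Your argument for part (2) is essentially the paper's. Using the explicit links of Theorem~\ref{thm:linked-6crit}, the common lower half is $B\sqcup H_v$, and the handlebody links are $\boldsymbol{\beta_{v,1}\beta_{h,k}}(B\sqcup H_h)$. An isotopy of these rel.\ $\partial$ in $B^5$ would glue to an isotopy of the $3$-links, and stabilizing the lower half dually as in Lemma~\ref{lem:stabilization} handles higher genus. Two side remarks. First, these halves differ by diffeomorphisms of $S^4$ fixing $N(K\sqcup T^2)$, not by diffeomorphisms supported in $B^5_+$ fixing $S^4$; the latter would give isotopic links. Second, the generic slicing of an arbitrary link from Theorem~\ref{thm:linked-3spheres} is not needed and is not justified as written.

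The genuine gap is in part (1). In the construction, the cuffs of $\beta_{h,k}$ and $\beta_{v,\ell}$ are parallel copies of $S_h$ or $S_v$, which link only $T^2$. The Brunnian behaviour with respect to the balls is carried by the bar, which follows $w^k$ with $w$ an iterated commutator.
\begin{enumerate}
\item Deleting $T^2$ does let you shrink the cuffs and move the barbells off $B$; this is the first half of Lemma~\ref{lem:s2d2-barbell-trivial}.
\item Deleting a ball component $K_i$ only kills the bar. You can then retract the barbell into a neighbourhood $S^2\times D^2$ of $S_h$ (resp.\ $S_v$). However, its cuffs are still the essential sphere $S^2\times\mathrm{pt}$ of $S^4\setminus T^2$, so it cannot be ``isotoped off.'' Nothing you wrote shows that the resulting barbell diffeomorphism of $S^2\times D^2$ rel.\ $\partial$ is trivial.
\end{enumerate}
The missing idea is as follows. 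For $D=\mathrm{pt}\times D^2$, the two tubings cancel by Equation~(\ref{eq:barbell-action}): $[\boldsymbol{\beta}(D),\partial D]=[D,\partial D]+[S]-[S]$. Gabai's light bulb theorem \cite[Theorem~10.4]{MR4127900} then gives $\boldsymbol{\beta}(D)\approx D$ rel.\ $\partial$. One can therefore isotope $\boldsymbol{\beta}$ rel.\ $\partial(S^2\times D^2)$ to be the identity near $D$ (compare \cite{krushkal2024corksexoticdiffeomorphisms}), so it is supported in a $B^4$. That $B^4$ can be moved off $(B\setminus B_i)\sqcup H_h$ rel.\ $N((K\setminus K_i)\sqcup T^2)$.

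Your fallback cannot work. Compressing-curve equivalence does not imply isotopy rel.\ $\partial$: Corollary~\ref{cor:knotted-handlebodies} produces compressing-curve equivalent handlebodies that are not isotopic rel.\ $\partial$. Likewise, isotopy of the closed doubles in $S^5$ gives no isotopy of the halves rel.\ boundary, let alone in $S^4$, which is what (1) requires. Remark~\ref{rem:genus-1-handlebody-from-3knot} argues in the opposite direction. Finally, for (1) to survive stabilization, your ``capping off'' argument is irrelevant. What is needed is that the isotopies above are rel.\ $N(\partial)$, so that stabilizing inside a $4$-ball in $N(\partial)$ is compatible with them.
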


Hughes, Miller, and the first author constructed 2-component Brunnian links of handlebodies of genus $g_1\ge 0$ and $g_2\ge 4$ in $S^4$ \cite[Theorem~1.5]{hughes2023nonisotopicsplittingspheressplit}. In \cite{kim2026brunnianlinks3balls4sphere}, we constructed infinitely many $n$-component Brunnian links of 3-balls in $S^4$ for all $n\ge 2$; Niu \cite{niu2026brunnianspanning3disks2unlink} showed this independently for $n=2$. Combining the results of \cite{kim2026brunnianlinks3balls4sphere} with Corollary \ref{cor:brunnian-handlebodies}, we have the existence of infinitely many $n$-component Brunnian links of handlebodies of any genus in $S^4$.

\begin{rem}[Brunnian $2$-disk links]
It is natural to ask whether there exist Brunnian links of 2-disks in $S^4$. 
We show in Theorem~\ref{thm:disks-5dlinked} that there exist infinitely many 2-component Brunnian 2-disk links in $S^4$ that remain non-isotopic in $B^5$. 
However, as a consequence of Gabai's 4-dimensional lightbulb theorem \cite{MR4127900}, there do not exist Brunnian links of 2-disks in $S^4$ with more than 2 components; see Corollary~\ref{cor:no-brunnian-2disk}.
\end{rem}

\subsection{4-dimensional methods}

In this paper, we also distinguish various knotted objects in $S^4$ using $4$-dimensional methods. 
Notably, we obtain infinitely many knotted splitting 3-spheres for a large class of ``links'' $K_{1}\sqcup K_{2}\subset S^{4}$.

\begin{defn}[Splitting spheres]
\label{def:splitting-sphere}Let $K=K_{1}\sqcup K_{2}$ be the union
of disjoint submanifolds $K_{1},K_{2}$ of $S^{n}$ that are \emph{split},
i.e.\ they are contained in disjoint copies of $B^{n}$ in $S^{n}$.
A \textit{splitting sphere} $\Sigma$ for $K$ is an embedded $S^{n-1}\subset S^{n}\setminus K$ such that $K_{1}$ and $K_{2}$ lie
in distinct connected components of $S^{n}\setminus\Sigma$. Two splitting spheres
are \emph{isotopic} if they are isotopic in $S^{n}\setminus K$. 
\end{defn}

\begin{thm}[Knotted splitting spheres]
\label{thm:splitting-spheres}There exist infinitely many pairwise
non-isotopic splitting 3-spheres for split links $K_{1}\sqcup K_{2}\subset S^{4}$,
where: 
\begin{enumerate}
\item \label{enu:splitting-spheres-s1s1}$K_{1},K_{2}\cong S^{1}$ (Theorem~\ref{thm:circle-splittingspheres})
\item \label{enu:splitting-spheres-s1knot}$K_{1}$ is any 
surface in $S^{4}$, $K_{2}\cong S^{1}$ (Theorem~\ref{thm:less-simple})
\item \label{enu:splitting-spheres-knotunknot}$K_{1}$ is any surface in $S^{4}$, $K_{2}$ is an unknotted surface of
genus $g\geq1$ (Theorem~\ref{thm:simple-splitting-spheres})
\end{enumerate}
\end{thm}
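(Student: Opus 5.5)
Each of the three cases is built the same way: start from a standard splitting $3$-sphere $\Sigma_0$ and apply barbell diffeomorphisms supported in $S^4\setminus K$. The images $\Sigma_k=\beta^k(\Sigma_0)$ are automatically splitting spheres. The work is in showing they are pairwise non-isotopic in $S^4\setminus K$.

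\emph{Construction.} Fix $\Sigma_0$ bounding a $4$-ball $B_1\supset K_1$ with complementary ball $B_2\supset K_2$. Choose a barbell $\mathcal{B}\subset S^4\setminus K$, that is, a neighborhood of two $2$-spheres joined by an arc. The spheres are chosen as follows.
\begin{itemize}
\item When $K_2\cong S^1$ (cases (1) and (2)), take the meridian $2$-sphere of $K_2$ (the linking sphere of the circle) together with a $2$-sphere in $B_1$ linking $K_1$: the meridian sphere of $K_1$ in case (1), or a linking $2$-sphere of the surface $K_1$ in case (2).
\item In case (3), take a $2$-sphere dual to a curve on the unknotted surface $K_2$, and choose it so that the relevant element of the complement is nonzero.
\end{itemize}
The bar is an arc crossing $\Sigma_0$ once. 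The barbell diffeomorphism $\beta$ of Budney--Gabai is supported in $\mathcal{B}$, hence fixes $K$, and we set $\Sigma_k=\beta^k(\Sigma_0)$.

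\emph{Invariant.} An isotopy of splitting spheres in $S^4\setminus K$ extends to an ambient isotopy of $S^4$ fixing $K$. Consequently $\Sigma_k\simeq\Sigma_\ell$ would give a diffeomorphism of the pair $(S^4\setminus K,\Sigma_0)$ relating the two, and it would preserve the induced decomposition of $S^4\setminus \nu K$ into the two sides $X_1=B_1\setminus K_1$ and $X_2=B_2\setminus K_2$. The cleanest invariant I expect is the class of $\Sigma_k$ in $\pi_3(S^4\setminus K)$, or its image in homology of a cover. A more robust alternative is to cut along $\Sigma_k$ and compare $\pi_2$ of the two pieces as modules over $\pi_1$.

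Since $\beta$ acts on $\Sigma_0$ by "sweeping" across the barbell, $\Sigma_k$ differs from $\Sigma_0$ by $k$ times a Whitehead-product type class. This class is built from the two barbell spheres $[S_1],[S_2]\in\pi_2(S^4\setminus K)$. The $\pi_1$ of the complement is $\mathbb{Z}*\mathbb{Z}$ in case (1), and contains a free product with $\mathbb{Z}$ in the other cases. Hence $\pi_2$ is large: for the unlink of circles, the complement is homotopy equivalent to $S^1\vee S^1\vee S^2\vee S^2\vee S^3$ after suitable identification. I would compute in the universal cover, where $\pi_3$ is $\pi_3$ of a wedge of $2$-spheres and $3$-spheres indexed by group elements. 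By Hilton's theorem these Whitehead products are detected.

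\emph{Main obstacle.} The hardest step is that isotopy of $\Sigma$ is unoriented and not based. One must quotient by
\begin{itemize}
\item the $\pi_1$-action,
\item sign, and
\item the action of the mapping class group of $S^4$ rel $K$ on $\pi_3$, which may itself include barbell-type self-maps.
\end{itemize}
To handle this I would pass to a homological invariant that is insensitive to these choices. One option is the equivariant intersection form between $\Sigma_k$ lifted to the universal cover and the lifts of fixed $2$-chains. Another is the $\mathbb{Z}[\pi_1]$-module $\pi_2(X_1^{(k)})$ of the side containing $K_1$. I would arrange the barbell's bar to wind $k$ times around a meridian of $K_2$, so that the invariant becomes a polynomial like $t^k-1$, distinct up to units $\pm t^m$. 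In case (2), if $K_1$ is a knotted surface, work in the infinite cyclic cover associated to the meridian of $K_2$ only, so that the knottedness of $K_1$ does not interfere. Case (3) reduces similarly by capping $K_2$ to a circle-like core.
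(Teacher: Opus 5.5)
There are genuine gaps. Your overall framework matches the paper: apply barbell diffeomorphisms supported in $S^4\setminus K$ to a standard splitting sphere. Your case (1) barbell, with cuffs the meridian spheres of the two circles and $\Sigma_k=\boldsymbol{\beta}^k\Sigma_0$, is also exactly the one the paper uses.

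\textbf{The topology of the complements is wrong.} A circle in $S^4$ has codimension $3$, so its complement is simply connected and its meridian is a $2$-sphere, not a loop. In case (1), $S^4\setminus\mathring N(K_1\sqcup K_2)\cong L\# R$ with $L\cong R\cong S^2\times D^2$. This is homotopy equivalent to $S^2\vee S^2\vee S^3$, not $S^1\vee S^1\vee\cdots$, and $\pi_1$ is trivial rather than $\mathbb{Z}*\mathbb{Z}$. In case (2), all of $\pi_1$ comes from the surface $K_1$. Consequently:
\begin{itemize}
\item there is no meridian loop of $K_2$ for the bar to wind around;
\item there is no cyclic cover ``associated to the meridian of $K_2$'';
\item no invariant like $t^k-1$ of the kind you expect can arise.
\end{itemize}
In case (2), a $2$-sphere linking $K_1$ need not exist at all: $H_2(S^4\setminus K_1)\cong H^1(K_1)=0$ when $K_1$ is a (possibly knotted) $2$-sphere.

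The paper uses the opposite division of labor: the homology comes from the $K_2$ side and $\pi_1$ from the $K_1$ side. In cases (2) and (3), both cuffs are parallel copies $S,S'$ of a single sphere on the $K_2$ side. This is the meridian sphere of $C$ in case (2) and the horizontal sphere $S_{h,1}$ of $K_R$ in case (3). The bar winds $k$ times around $\mu_{K_1}$. Downstairs this barbell is homologically invisible. In the $m$-fold cyclic cover unwinding $\mu_{K_1}$, where $\widetilde X\cong\widetilde L\#(\#^m R)$, the two relevant lifts have cuffs in different $R$-summands.

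\textbf{None of your invariants is pinned down, and one cannot work.} $\boldsymbol{\beta}^k$ is a diffeomorphism of $S^4$ fixing $N(K)$, supported in a simply connected set, and carrying $\Sigma_0$ to $\Sigma_k$. Hence the two sides of $\Sigma_k$ are diffeomorphic to those of $\Sigma_0$ compatibly with $\pi_1$, so $\pi_2$ of the pieces is the same for every $k$. For the same reason you must not quotient by diffeomorphisms of $(S^4,K)$: that would identify all the $\Sigma_k$. What you can use is that an isotopy in $S^4\setminus K$ is ambient rel.\ $N(K)$. The other options also fail:
\begin{itemize}
\item A $3$-sphere and $2$-chains in a $4$-manifold do not have an intersection \emph{number}.
\item For knotted $K_1$ the universal cover is not a wedge of spheres, so Hilton's theorem is unavailable in cases (2) and (3).
\item In case (1), $\pi_3(S^2\vee S^2\vee S^3)$ could conceivably detect $\Sigma_k$, but the formula $[\Sigma_k]=[\Sigma_0]+k\cdot(\text{Whitehead product})$ is only asserted.
\end{itemize}

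\textbf{The missing idea} is to track a \emph{relative} class with fixed boundary on $\partial N(K_2)$. Take a disk $D$ on the $K_2$ side with $\partial D\subset\partial N(K_2)$, meeting one cuff once and missing the other. By Lemma~\ref{lem:barbelltubing}:
\begin{itemize}
\item in case (1), $[\boldsymbol{\beta}^kD,\partial D]=[D,\partial D]-k[S_L]$;
\item in the cover, $[\widetilde{\boldsymbol{\beta_k}}\widetilde D,\partial\widetilde D]=[\widetilde D,\partial\widetilde D]+[\rho^k\widetilde S]-[\rho^{-k}\widetilde{S'}]$.
\end{itemize}
An isotopy rel.\ $N(K)$ from $\boldsymbol{\beta}^k\Sigma$ to $\Sigma$ would push $\boldsymbol{\beta}^kD$ into the $K_2$ side rel.\ $\partial D$. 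That would force its class into the summand $H_2(R,\partial D)$ of $H_2(L\#R,\partial D)\cong H_2(L)\oplus H_2(R,\partial D)$, or into $H_2(\widetilde R,\partial\widetilde D)$ in Equation~(\ref{eq:nfold-directsum}), which is a contradiction. In cases (2) and (3) the extra terms lie in the summands $H_2(\rho^{\pm k}\widetilde R)$. So the topology of $\widetilde L$, and hence of $K_1$, never enters, which is why $K_1$ may be an arbitrary surface.
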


When $K_1, K_2$ are unknotted surfaces in $S^4$ of genus $g_1, g_2$, respectively, a pair of non-isotopic splitting 3-spheres for $K_1 \sqcup K_2$ were previously constructed by Hughes, Miller, and the first author \cite{hughes2023nonisotopicsplittingspheressplit} when $(g_1,g_2)\in \{(2,2),(2,3),(3,3)\}$ or $g_1\ge 0$ and $g_2\ge 4$. 
In \cite{tatsuoka2025splittingspheresunlinkeds2s}, the third author constructed infinitely many non-isotopic splitting spheres for the trivial link of two $2$-spheres in $S^4$, which we gave an alternative proof of in \cite{kim2026brunnianlinks3balls4sphere}. 
Theorem~\ref{thm:splitting-spheres}~(\ref{enu:splitting-spheres-knotunknot}) completes the proof that there exist infinitely many splitting spheres for the split link of two unknotted surfaces of any genus in $S^4$.

\begin{rem}[$4$-dimensional methods for knotted handlebodies]
   In Theorems~\ref{thm:simple-knotted-handlebody}~and~\ref{thm:genus1-handlebody}, we give simple, alternative proofs of the existence of infinitely many knotted handlebodies of any genus $g\ge 1$ in $S^4$ using $4$-dimensional methods.
\end{rem}

\begin{rem} [$5$-dimensional method for splitting spheres]
   It is possible to give a $5$-dimensional proof of the existence of infinitely many non-isotopic splitting $3$-spheres in $S^4$ for the split link of unknotted surfaces of genus $g_1 , g_2$ for $g_1 \ge 0$, $g_2 \ge 1$.
   We give an argument in Corollary~\ref{cor:5d-splitting} for a special case, namely the existence of a knotted splitting sphere for $g_1 = 0$, $g_2 = 1$; this is a corollary of Theorem~\ref{thm:linked-3spheres} (Theorem~\ref{thm:linked-6crit}).
\end{rem}

\begin{rem}[Barbells]
    All the knotting and linking in this paper is constructed using barbell diffeomorphisms as introduced by Budney-Gabai in \cite{budney2021knotted3ballss4}. Since \cite{budney2021knotted3ballss4}, barbells have been used by various people to further study mapping class groups of 4-manifolds \cite{bghyperbolic, fernandez2024grasperfamiliesspheress2, lin2026daxinvariantslightbulbs, niu2025mappingclassgroup4dimensional} and to construct interesting knotting and linking in 4-manifolds \cite{ tatsuoka2025splittingspheresunlinkeds2s, niu2026brunnianspanning3disks2unlink, kim2026brunnianlinks3balls4sphere}. The use of barbells allows us to build and distinguish our objects very explicitly.
\end{rem}

\subsection{Organization}
In Section \ref{sec:Barbells,-knotted-disks,}, we review the construction of barbell diffeomorphisms.
In Sections~\ref{sec:Simple-barbell-implantations}~and~\ref{sec:More-complicated-barbells}, we use barbell diffeomorphisms to construct simple examples of various knotted and linked objects and show that they are nontrivial mainly by using $4$-dimensional methods.
In Section~\ref{sec:Codimension--submanifolds}, we use $5$-dimensional methods to show our main theorems. The proofs in Sections~\ref{sec:Simple-barbell-implantations}~and~\ref{sec:More-complicated-barbells} and the proofs in Section~\ref{sec:Codimension--submanifolds} are logically independent and can be read separately.

A more detailed summary of the sections is as follows.
In Section \ref{sec:Simple-barbell-implantations}, we consider a class of barbells that we call \emph{simple}, and we use them to construct infinitely many examples of the following:
(a) splitting spheres for two circles (Theorem~\ref{thm:circle-splittingspheres});
(b) splitting spheres for unknotted surfaces both having genus $\ge 1$ (Theorem \ref{thm:simple-splitting});
(c) two-component Brunnian 2-disk links (Theorem~\ref{thm:disks-5dlinked}); and
(d) knotted handlebodies of genus $g\geq2$ (Theorem~\ref{thm:simple-knotted-handlebody}).

In Section~\ref{sec:More-complicated-barbells}, we use slightly
more complicated barbells to construct the splitting spheres of Theorem~\ref{thm:splitting-spheres}~(\ref{enu:splitting-spheres-s1knot})
(Theorem~\ref{thm:less-simple}) and Theorem~\ref{thm:splitting-spheres}~(\ref{enu:splitting-spheres-knotunknot})
(Theorem~\ref{thm:simple-splitting-spheres}). We also provide a 4-dimensional proof of the existence of infinitely many knotted solid tori in $S^4$ (Theorem~\ref{thm:genus1-handlebody}).

In Subsection~\ref{subsec:3knots3links}, we show that there exist infinitely many 3-knots in $S^5$ with $4$ critical points and $n$-component Brunnian 3-links in $S^5$ with $(2n+2)$ critical points for all $n\ge 2$.
In Subsection~\ref{subsec:general3mfds}, we extend our arguments and show for each 3-manifold $Y$ with Heegaard genus 1 that there exist infinitely many embeddings of $Y$ in $S^5$.
In Subsection~\ref{subsec:handlebodycorollaries}, we collect various corollaries of Subsections~\ref{subsec:3knots3links}~and~\ref{subsec:general3mfds} concerning knotted and linked handlebodies in $S^4$ that remain knotted and linked in $B^5$.

In Subsections~\ref{subsec:3knots3links}~and~\ref{subsec:general3mfds}, we need to study $5$-dimensional handle decompositions of the complements of our $3$-knots and $3$-links in order to show that they are nontrivial; we give more details of this in Appendix~\ref{sec:appendixa}. Finally, we generalize our construction of $3$-knots with $4$ critical points to all odd dimensions in Appendix~\ref{sec:kuiperappendix}.

\subsection{Conventions}
All the manifolds we consider are oriented, and all the diffeomorphisms are orientation preserving. If $X$ is an oriented manifold, then we denote $X$ with the opposite orientation by $\overline{X}$.

We write $A\approx B$ to mean that $A$ and $B$ are isotopic. If $F$ is a set, then we say diffeomorphisms and isotopies are rel.\ $F$ if they fix $F$ pointwise.

We denote by $N(A)$ (resp.\ $\mathring{N}(A)$) a closed (resp.\ open) tubular neighborhood of $A$.

We view $S^4$ as $B^4 \cup S^3 \times [-1,1]\cup \overline{B^4}$, and draw objects in $S^4$ by drawing their intersection with the $S^3\times 0$ time slice of $S^4$. We further view $S^3\times 0$ as $\mathbb{R} ^3\cup \{\infty\}$. Not every object we consider will lie fully in this $S^3 \times 0$ slice; in particular, we often consider 2- and 3-spheres in $S^4$ that intersect $S^3 \times 0$ in equatorial 1- and 2-spheres, respectively. In these cases, we imagine the rest of the 2- and 3-spheres as being capped off by 2- and 3-disks, respectively, in nearby time slices $S^3 \times \pm \epsilon$. We also often draw arcs that link surfaces in $S^4$; it should be understood that these arcs do not intersect the surface, and instead wind around a meridional $S^1$ of the surface in the nearby past and future.

\subsection*{Acknowledgements}
GN thanks Peter Ozsv\'{a}th for his continuous support and helpful discussions. AT thanks Dave Gabai for his invaluable mentorship and advice. We thank 
Ryan Budney, 
Kunal Chawla,
David Gabai, 
Maggie Miller, 
Mark Powell, 
Qiuyu Ren, 
Daniel Ruberman,
and Joshua Wang
for helpful conversations.

\section{\label{sec:Barbells,-knotted-disks,}Barbell diffeomorphisms}

\begin{figure}[h]
\begin{centering}
\includegraphics[scale=0.3]{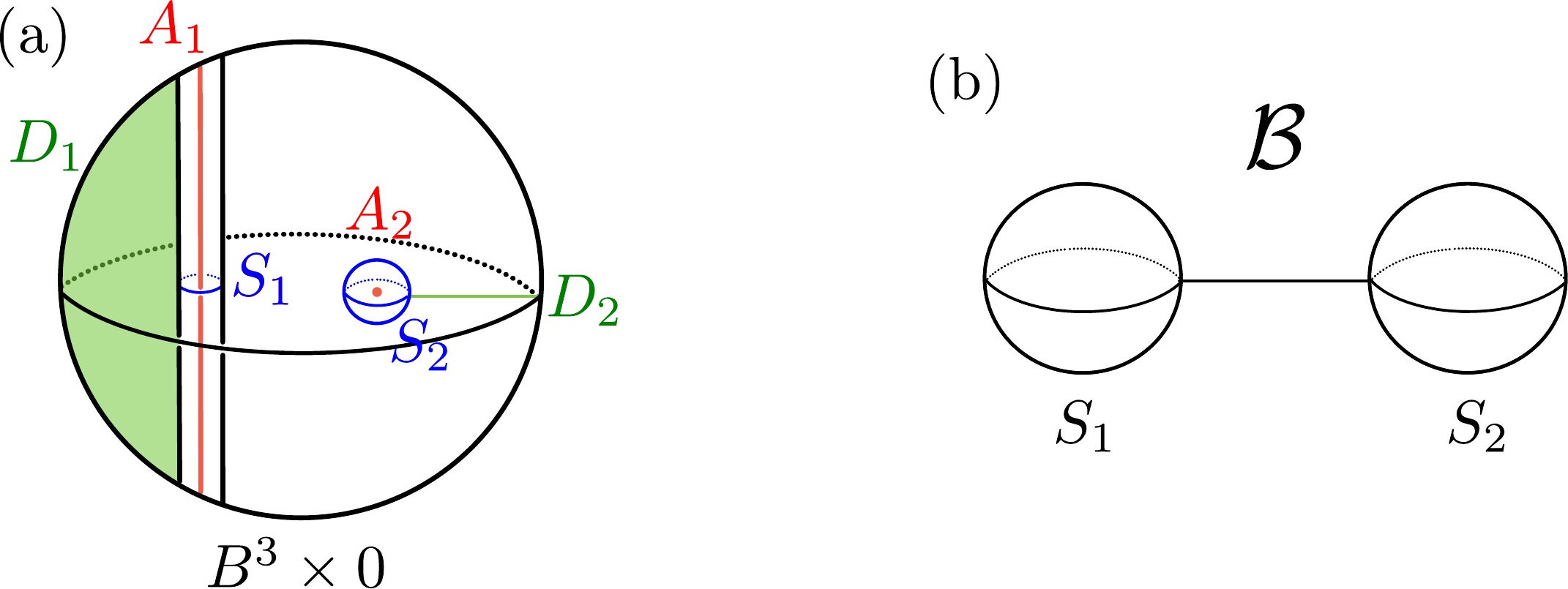}
\par\end{centering}
\caption{\label{fig:barbell}(a): The arcs $A_{1}$ and $A_{2}$, the 2-spheres
$S_{1}$ and $S_{2}$, and the 2-disks $D_{1}$ and $D_{2}$ in
$B^{3}\times0 \subset B^4$. Note that $D_{1}$
appears fully in this 3-dimensional slice, while $D_{2}$ intersects
it in an arc; similarly, $S_{2}$ appears fully in this slice, while
$S_{1}$ intersects it in an equatorial $S^{1}$. (b): The model barbell
$\mathcal{B}$.}
\end{figure}

In \cite{budney2021knotted3ballss4}, Budney and Gabai construct diffeomorphisms of 4-manifolds $X$ up to isotopy rel.~$\partial$ by specifying a \emph{barbell} (Definition~\ref{def:barbell}) in $X$; they call the resulting diffeomorphisms \emph{barbell diffeomorphisms}. 
In this section, we briefly recall how barbell diffeomorphisms are constructed and then review how barbell diffeomorphisms act on surfaces.
We refer to \cite[Section~5]{budney2021knotted3ballss4} or \cite[Section~2]{kim2026brunnianlinks3balls4sphere} for more details.\footnote{Sections 3 and 4 of \cite{kim2026brunnianlinks3balls4sphere} were intended to be sections 6 and 7 of this paper.} 

\begin{defn}[Barbells]
\label{def:barbell}A \emph{barbell} in a $4$-manifold
$X$ is a collection of the following:
\begin{enumerate}
\item two \emph{cuffs}, i.e.\ two disjoint embedded $S^{2}$'s with trivial normal bundle, and
\item a \emph{bar}, i.e.\ an embedded interval $[0,1]$ that connects the cuffs and intersects the cuffs only at its endpoints.
\end{enumerate}
\end{defn}

To define barbell diffeomorphisms, we first define the following objects; see Figure~\ref{fig:barbell}.

\begin{defn}[Model thickened barbell and model barbell]
The \emph{model thinkened barbell} is $\mathcal{NB}:=S^{2}\times D^{2}\natural S^{2}\times D^{2}$. 
Let $S_{1},S_{2}$ be the two copies of $S^{2}\times\mathrm{pt}$ and let $D_{1},D_{2}$ be the two copies of $\mathrm{pt}\times D^{2}$ in $S^{2}\times D^{2}\natural S^{2}\times D^{2}$; then $S_{i}\cap D_{j}=\delta_{ij}$.
The \emph{model barbell} $\mathcal{B}\subset \mathcal{NB}$ is the union of $S_1 , S_2$, and an interval connecting $S_1$ and $S_2$ that intersects $S_i$ only at its endpoints.
\end{defn}

To construct barbell diffeomorphisms, Budney and Gabai first construct a diffeomorphism $\beta: \mathcal{NB} \to \mathcal{NB}$ rel.\ $\partial$, called the \emph{barbell map}. 
Then, if $f$ is an embedding of $\mathcal{NB}$ into a 4-manifold $X$, let $\beta_f \in \mathrm{Diff}_\partial (X)$ be the diffeomorphism obtained by pushing forward $\beta$ along $f$.
Budney and Gabai show that the isotopy rel.\ $\partial$ class of $\beta_f$ only depends on the isotopy class of $f|_{\mathcal{B}}$.
Hence, a barbell $\eta$ in $X$ gives rise to a diffeomorphism which we denote as $\boldsymbol{\eta} \in \mathrm{Diff}_\partial (X)$;
$\boldsymbol{\eta}$ is well-defined up to isotopy rel.\ $\partial$.

Before we study how barbell diffeomorphisms act on surfaces, let us first recall the definition of $\beta$ and how it acts on $D_1$ and $D_2$.
The key observation for defining $\beta $ is that $S^{2}\times D^{2}\natural S^{2}\times D^{2} \cong B^{4}\setminus(\mathring N(A_{1})\sqcup \mathring N(A_{2}))$ where $A_{1}$ and $A_{2}$ are two disjoint, properly embedded arcs in $D^4$;
the barbell map is given by spinning $A_1$ positively around $A_2$.
During the arc-spinning isotopy, the disk $D_{1}$ gets dragged around the
meridional 2-sphere $S_{2}$ to $A_{2}$, so that after the isotopy
$D_{1}$ has changed by a tubing to $S_{2}$ along a path from $A_{1}$
to $A_{2}$. Similarly, the disk $D_{2}$ also gets dragged around
the meridional 2-sphere $S_{1}$ to $A_{1}$, so that $D_{2}$ changes
by a tubing to $\overline{S_{1}}$ along a path from $A_{2}$ to $A_{1}$.

Now we are ready to study how barbell diffeomorphisms $\boldsymbol{\eta}:X\rightarrow X$ act on surfaces $\Sigma\subset X$.

\begin{lem} \label{lem:barbelltubing}
    Let $X$ be a $4$-manifold, and let $\eta$ be a barbell in $X$ with cuffs $S_1, S_2 \subset X$. Let $\Sigma$ be a surface in $X$ such that $\partial \Sigma $ is disjoint from $\eta$. Then in $H_2(X,\partial \Sigma)$ we have
    \begin{equation}\label{eq:barbell-action}
    \boldsymbol{\eta}_\ast ([\Sigma, \partial \Sigma ]) = [\Sigma, \partial \Sigma ] + (\Sigma \cdot S_1 ) [S_2] - (\Sigma \cdot S_2) [S_1 ]
    \end{equation}
    where $\Sigma \cdot S_i$ denotes the algebraic intersection number of $\Sigma$ and $S_i$.
\end{lem}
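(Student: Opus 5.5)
We reduce to a homological computation inside the thickened barbell. Choose an embedding $f\colon \mathcal{NB}\to X$ realizing $\eta$, with image $N:=f(\mathcal{NB})$ a small neighborhood of $S_1\cup S_2\cup$ bar, disjoint from $\partial\Sigma$. Since $\boldsymbol{\eta}$ is supported in $N$ and is the identity near $\partial N$, we may isotope $\Sigma$ (rel.\ $\partial\Sigma$) to be transverse to the cuffs and meet $N$ only in finitely many parallel copies of the cocore disks $f(D_1)$ and $f(D_2)$. Concretely, near each transverse intersection point of $\Sigma$ with $S_i$, $\Sigma$ looks locally like a normal disk to $S_i$. We then slide these normal disks along $S_i$ (staying in $\Sigma$'s isotopy class rel.\ $\partial$) to a neighborhood of the point where the bar meets $S_i$. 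This puts $\Sigma\cap N$ into the form of $a$ signed copies of $D_1$ and $b$ signed copies of $D_2$, with signed counts $a=\Sigma\cdot S_1$ and $b=\Sigma\cdot S_2$. The orientation conventions are that $D_i$ is oriented so that $D_i\cdot S_i=+1$, matching $S_i\cap D_j=\delta_{ij}$.

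Next we compute in the model. The description of $\beta$ recalled above says that $\beta(D_1)$ is $D_1$ tubed to $S_2$, and $\beta(D_2)$ is $D_2$ tubed to $\overline{S_1}$. Hence in $H_2(\mathcal{NB},\partial D_1\sqcup\partial D_2)$ we have
\[
\beta_*[D_1]=[D_1]+[S_2], \qquad \beta_*[D_2]=[D_2]-[S_1].
\]
Tubing changes the relative class by exactly the class of the sphere tubed in, and a copy of $D_i$ with reversed orientation maps to the negative. Pushing forward along $f$ and using that $\boldsymbol{\eta}$ is the identity on $\Sigma\setminus N$, we decompose $[\Sigma,\partial\Sigma]$ as the class of $\Sigma\setminus N$ glued to the disk pieces. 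This gives
\[
\boldsymbol{\eta}_*[\Sigma,\partial\Sigma]=[\Sigma,\partial\Sigma]+a[S_2]-b[S_1],
\]
which is \eqref{eq:barbell-action}. A cleaner way to phrase the additivity is to note that $\boldsymbol{\eta}(\Sigma)-\Sigma$ is a cycle supported in $N$. Its class in $H_2(N)\cong\mathbb{Z}[S_1]\oplus\mathbb{Z}[S_2]$ is determined by intersection with the disks $D_1,D_2$, or equivalently is read off piece by piece.

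The main obstacle is bookkeeping of orientations and signs. We must check that the sign of each local intersection of $\Sigma$ with $S_i$ matches the orientation of the corresponding copy of $D_i$, and that the ``positive spinning'' convention really yields $+S_2$ for $D_1$ and $-S_1$ for $D_2$. The isotopy step, which moves all intersections near the bar, is harmless for the homology computation: instead of performing it, one can use that $\boldsymbol{\eta}$ is well defined up to isotopy rel.\ $\partial$ and work directly with transverse intersections. Even so, the sign of the tubing in the barbell model is the point to verify carefully, which we take from the description of $\beta$ given above.
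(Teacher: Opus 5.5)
Your proposal is correct and follows the paper's argument: isotope $\Sigma$ rel.\ $\partial\Sigma$ so that it misses the bar and meets the cuff neighborhoods in signed parallel copies of the normal disks $D_1, D_2$. Then use that the barbell map tubes $D_1$ with $S_2$ and $D_2$ with $\overline{S_1}$, so each positive or negative intersection with $S_1$ (resp.\ $S_2$) contributes $\pm[S_2]$ (resp.\ $\mp[S_1]$). Sliding the disks toward the bar is unnecessary, and recasting the bookkeeping via $H_2(N)\cong\mathbb{Z}[S_1]\oplus\mathbb{Z}[S_2]$, detected by pairing with $D_1, D_2$, is a valid repackaging of the same count.
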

\begin{proof}
    Let $N(\eta)$ be a small closed neighborhood of $\eta$, and view it as the union of $N(S_1)\sqcup N(S_2)$ and a neighborhood of the bar. Let $D_1 \subset N(S_1)$ (resp.\ $D_2 \subset N(S_2)$) be the normal disk to $S_1$ (resp.\ $S_2$).
    Let us first isotope $\Sigma$ rel.\ $\partial \Sigma$ such that $\Sigma$ is disjoint from the neighborhood of the bar of $\eta$ and that $\Sigma\cap (N(S_1)\sqcup N(S_2))$ consists of parallel copies of some number of $D_1$, $\overline{D_1}$, $D_2$, and $\overline{D_2}$.
    By the above discussion, $\boldsymbol{\eta}(D_1)$ is $D_1$ tubed with $S_2$ along the bar, and $\boldsymbol{\eta}(D_2)$ is $D_2$ tubed with $\overline{S_1}$ along the bar.
    Hence, $\boldsymbol{\eta}(\Sigma)$ can be obtained from $\Sigma$ as follows: for each positive (resp.\ negative) intersection $x\in \Sigma \cap S_1$, tube $\Sigma$ at $x$ with $S_2$ (resp.\ $\overline{S_2}$) along the bar, and for each positive (resp.\ negative) intersection $y\in \Sigma \cap S_2$, tube $\Sigma$ at $y$ with $\overline{S_1}$ (resp.\ $S_1$) along the bar. Thus the lemma follows.
\end{proof}

\begin{rem}\label{rem:budney-gabai-nontrivial-proof}
Note that this is how Budney and Gabai show that $\beta$ is not isotopic to the identity rel.\ $\partial$: 
letting $\beta_{*}$ denote the induced map of $\beta$ on $H_{2}(S^{2}\times D^{2}\natural S^{2}\times D^{2},\partial D_{1})$,
we have 
\begin{equation*}
\beta_{*}[D_{1},\partial D_{1}]=[D_{1},\partial D_{1}]+[S_{2}]\neq [D_{1},\partial D_{1}],
\end{equation*}
and so $\beta$ is a nontrivial diffeomorphism of $S^{2}\times D^{2}\natural S^{2}\times D^{2}$
rel.\ $\partial$.
\end{rem}

\section{\label{sec:Simple-barbell-implantations}Simple barbell implantations}

In this section we consider a special class of barbells, called \emph{simple} barbells (Definition~\ref{def:A-simple-barbell}), and use them to construct various knotted and linked objects in $S^4$. As we will see, the defining property of simple barbells allows us to easily distinguish the isotopy classes of our objects. 

Notably, simple barbells are already strong enough to produce interesting knotted objects in $S^{4}$ and $B^{5}$. For instance, we use them to prove (Theorem~\ref{thm:simple-splitting}) that there exist infinitely
many pairwise non-isotopic splitting spheres for the $2$-component
unlink $K_{m,n}$ of an unknotted genus $m$ and an unknotted genus $n$ surface for all $m,n\ge1$. This extends the main result of \cite{hughes2023nonisotopicsplittingspheressplit}, where Hughes, Miller, and the first author constructed pairs of non-isotopic splitting spheres for $K_{m,n}$ when $m\geq 4$ and $n\ge 0$, or when $(m,n)\in \{(2,2),(2,3),(3,3)\}$. 

We also use simple barbells to give an alternative proof (Theorem~\ref{thm:simple-knotted-handlebody}) of the main result of \cite{hughes2024knotted}, which says that there exist knotted handlebodies of genus $\ge 2$ in $S^4$ that remain non-isotopic when their interiors are pushed into $B^5$. In fact, Theorem \ref{thm:simple-knotted-handlebody} shows that there exist infinitely many knotted handlebodies of genus $\ge 2$ in $S^4$ that are pairwise non-isotopic rel.\ $\partial$; later in Section \ref{sec:Codimension--submanifolds}, we show that these handlebodies also stay pairwise non-isotopic when their interiors are pushed into $B^5$ (Corollary~\ref{cor:simple-5d}).

Now we define simple barbells.

\begin{defn}
\label{def:A-simple-barbell}Let  $X$ be a 4-manifold, and let $F$ be a subset of $X$. A \emph{simple barbell} in $(X,F)$
is a barbell in $X\setminus F$ with cuffs $S_{L}$
and $S_{R}$, such that (1) the homology class of $S_{L}$ is nonzero in $H_2(X,F)$, 
and (2) there exists an embedded disk $(D_{R},\partial D_{R})\subset(X,F)$
that is disjoint
from $S_{L}$ and intersects $S_{R}$ transversely at one point.
\end{defn}

See Figure~\ref{fig:simplebarbell} for the key example of a simple barbell: $F$ is the union of two $S^{1}$'s, and $\beta$ is a barbell in the complement $S^{4}\setminus F$.
Before moving on to the results of this section, we explain our motivation for defining simple barbells.
Recall from Remark~\ref{rem:budney-gabai-nontrivial-proof} that Budney and Gabai show the barbell map $\beta:\cal{NB} \to \cal NB$ is not isotopic to the identity rel.\ $\partial$ by showing the induced map $\beta _\ast $ acts nontrivially on $H_2 ({\cal{NB}},\partial D_{1} )$. We import this to the setting of simple barbells via Equation~(\ref{eq:barbell-action}): if $\eta$ is a simple barbell in $X \setminus F$, then the induced map $\boldsymbol{\eta}_\ast$ on $H_2 (X ,F)$ satisfies
\begin{equation} \label{eq:simple-homology}
\boldsymbol{\eta}_\ast ([D_R , \partial D_R ]) = [D_R , \partial D_R ] - [S_L]\neq [D_R , \partial D_R ] \in H_2 (X ,F).
\end{equation}

In what follows, we use Equation~(\ref{eq:simple-homology}) repeatedly to show that the objects we construct by implanting simple barbells are not isotopic in $S^4$ rel.\ $F$.

\begin{figure}[h]
\begin{centering}
\includegraphics{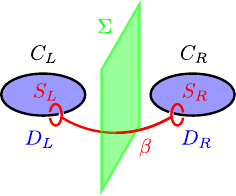}
\par\end{centering}
\caption{\label{fig:simplebarbell}Circles $C_{L}$ and $C_{R}$, disks $D_{L}$
and $D_{R}$, a simple barbell $\beta$ with cuffs $S_{L}$ and $S_{R}$, and a splitting 3-sphere $\Sigma$ in $S^{4}$; their
intersections with the $S^{3}\times0$ time slice are drawn. Note
that only the equatorial $S^{1}$\textquoteright s of the cuffs of
the barbell, and the equatorial $S^{2}$ of the splitting sphere $\Sigma$
are drawn.}
\end{figure}

We begin by considering barbells in the complement of two circles in $S^4$. As the first application of Equation~(\ref{eq:simple-homology}), we show that the link of two circles in $S^4$ admits infinitely many pairwise non-isotopic
splitting 3-spheres. We use the notation
of Figure~\ref{fig:simplebarbell}, and denote the barbell diffeomorphism induced by the barbell $\beta$ as $\boldsymbol{\beta} :(S^4 , C_L \sqcup C_R ) \to (S^4 , C_L \sqcup C_R)$.

\begin{thm}
\label{thm:circle-splittingspheres}Let $\Sigma$ be any splitting
sphere for $C_{L}\sqcup C_{R}$. For integers $k\neq\ell$,
$$\boldsymbol{\beta}^k \Sigma := \overbrace{\boldsymbol{\beta} \circ \cdots \circ \boldsymbol{\beta}}^{k}(\Sigma) \mathrm{\ and\ }\boldsymbol{\beta}^{\ell}\Sigma := \overbrace{\boldsymbol{\beta} \circ \cdots \circ \boldsymbol{\beta}}^{\ell}(\Sigma),$$
are not isotopic in $S^4 \setminus (C_{L}\sqcup C_{R})$.
\end{thm}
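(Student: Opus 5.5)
The plan is to argue by contradiction and detect the difference using the homological action of Equation~(\ref{eq:simple-homology}). To make $S_L$ homologically nontrivial, I will work in the exterior $X:=S^4\setminus \mathring N(C_L\sqcup C_R)$ with $F:=\partial N(C_R)$. By Alexander duality, $H_2(S^4\setminus(C_L\sqcup C_R))\cong \mathbb{Z}^2$, generated by the meridional spheres of $C_L$ and of $C_R$. Hence $[S_L]\neq 0$ in $H_2(X)$, and I expect also in $H_2(X,F)$. The disk $D_R$ of Figure~\ref{fig:simplebarbell} meets $F$ in a longitude of $C_R$, so $\beta$ is a simple barbell in $(X,F)$. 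I will also check directly that $\boldsymbol\beta_*[S_L]=[S_L]$. This follows from Lemma~\ref{lem:barbelltubing}, since $S_L$ has zero algebraic intersection with both cuffs. Iterating Equation~(\ref{eq:simple-homology}) then gives
$$\boldsymbol\beta^m_*[D_R,\partial D_R]=[D_R,\partial D_R]-m[S_L]\in H_2(X,F).$$

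Suppose $\boldsymbol\beta^k\Sigma\approx\boldsymbol\beta^\ell\Sigma$ in $S^4\setminus(C_L\sqcup C_R)$, and set $m=\ell-k\neq0$. Applying $\boldsymbol\beta^{-k}$ gives $\Sigma\approx\boldsymbol\beta^m\Sigma$. By isotopy extension there is a diffeomorphism $\varphi$ of $X$, isotopic to the identity rel.\ $\partial X$ (in particular rel.\ $F$), with $\varphi(\boldsymbol\beta^m\Sigma)=\Sigma$. Put $\psi:=\varphi\circ\boldsymbol\beta^m$. Then $\psi(\Sigma)=\Sigma$ and $\psi$ fixes $F$ pointwise. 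Since $\varphi$ acts trivially on $H_2(X,F)$, we get $\psi_*[D_R]=[D_R]-m[S_L]$.

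Next I use the splitting sphere. Let $B_R$ be the component of $S^4\setminus\Sigma$ containing $C_R$. Since $\psi$ fixes $C_R$ and preserves $\Sigma$, it preserves $B_R$, and $B_R':=B_R\cap X$ is a ball minus a circle neighborhood. By the first isotopy of the argument below, I may assume $D_R\subset B_R'$, and then $\psi(D_R)\subset B_R'$ as well. So $\psi_*[D_R]$ lies in the image of
$$H_2(B_R',F)\to H_2(X,F).$$
I will then show that $[S_L]$ is not a rational combination of elements of this image. First, $H_2(B_R',F)$ is generated by $[D_R]$ and the meridian sphere of $C_R$, because $H_2(B^4\setminus C_R)\cong\mathbb{Z}$ by Alexander duality. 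Second, the meridian sphere of $C_L$ is independent of these in $H_2(X,F)$. To see this, I will pair with the class dual to $C_L$: the linking number with $C_L$, equivalently the intersection number with a $3$-chain in $S^4$ bounded by $C_L$ and chosen inside the complementary side $B_L$. Using this pairing, $[D_R]-m[S_L]$ has pairing $-m\neq0$, while every class in the image from $B_R'$ has pairing $0$. This contradiction proves the theorem.

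The main obstacle is the reduction to $D_R\subset B_R'$, since $\Sigma$ is an arbitrary splitting sphere and could be badly knotted. I would avoid placing $D_R$ by hand and argue with homology of the side only. Take any $2$-chain in $B_R'$ with boundary a longitude of $C_R$ on $F$; such a chain exists because $C_R$ is null-homologous in the ball $\overline{B_R}$. Its class differs from $[D_R]$ only by elements with zero pairing against $C_L$. Running the argument above with this chain in place of $D_R$ gives the same contradiction, since the iterated action of $\boldsymbol\beta^m$ depends only on the homology class. The remaining points to confirm carefully are that $\boldsymbol\beta$ preserves the pairing with $C_L$ in the expected way and that $[S_L]$ has pairing $\pm1$, so that the correction term is genuinely nonzero.
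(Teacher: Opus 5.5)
Your argument is correct and is essentially the paper's. Equation~(\ref{eq:simple-homology}) gives $\boldsymbol\beta^m_*[D_R]=[D_R]-m[S_L]$, and an isotopy of splitting spheres would force this class into the part of $H_2$ carried by the $C_R$ side of $\Sigma$, while $[S_L]$ is carried by the $C_L$ side. The paper phrases this via the splitting $H_2(X,\partial D_R)\cong H_2(L)\oplus H_2(R,\partial D_R)$ rather than your pairing. Your auxiliary chain $E$ also makes explicit the reduction to a disk on the $C_R$ side of an arbitrary $\Sigma$, which the paper leaves implicit.

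There are two slips to fix:
\begin{enumerate}
\item The chain bounded by $C_L$ must be a $2$-chain, not a $3$-chain, since a $3$-chain has $2$-dimensional boundary.
\item $[E]-[D_R]$ need not have zero pairing with $C_L$. For $\Sigma=\boldsymbol\beta^{-1}\Sigma_0$, with $\Sigma_0$ the sphere of Figure~\ref{fig:simplebarbell}, it equals $[S_L]$. What you actually need, and already have, is that it lies in the image of $H_2(X)$. Hence it is a multiple of $[S_L]$ in $H_2(X,F)$ and is fixed by $\boldsymbol\beta_*$, which gives $\boldsymbol\beta^m_*[E]=[E]-m[S_L]$.
\end{enumerate}
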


\begin{proof}
We show that for $k\neq0$, $\boldsymbol{\beta}^{k}\Sigma$ and
$\Sigma$ are not isotopic in $S^4$ rel.\ $N(C_{L}\sqcup C_{R})$. Let $X:=S^{4}\setminus \mathring N(C_{L}\sqcup C_{R})$,
$L:=S^{4}\setminus \mathring N(C_{L})$ and $R:=S^{4}\setminus \mathring N(C_{R})$. Then,
since $X\cong L\#R$, we have
\[
H_{2}(X,\partial D_{R})\cong H_{2}(L)\oplus H_{2}(R,\partial D_{R}).
\]
Note that $L\cong D^{2}\times S^{2}$, and so $H_{2}(L)\cong\mathbb{Z}$
which is generated by the meridian $S_L$ of $C_{L}$.

Now, by Equation\ (\ref{eq:simple-homology}), we have 
\[
[\boldsymbol{\beta}^{k}D_{R},\partial D_{R}]=-k[S_{L}]+[D_{R},\partial D_{R}]\in H_{2}(X,\partial D_{R}),
\]
but if $\boldsymbol{\beta}^{k}\Sigma$ and $\Sigma$ were isotopic in $X$ rel.\ $\partial X$, then this isotopy would isotope $\boldsymbol{\beta}^{k}D_{R}$
into $R$ rel.\ $\partial D_R$. However, $-k[S_L] \neq 0 \in H_2 (L)$, and so 
\[
[\boldsymbol{\beta}^{k}D_{R},\partial D_{R}]\not\in H_{2}(R,\partial D_{R}),
\]
which is a contradiction.
\end{proof}
The barbell $\beta$ also gives rise to Brunnian links of $2$-disks in $S^{4}$
that stay linked even when their interiors are pushed into $B^{5}$.
\begin{thm}
\label{thm:disks-5dlinked}For $k\neq\ell$, $\boldsymbol{\beta}^{k}(D_{L}\sqcup D_{R})$
and $\boldsymbol{\beta}^{\ell}(D_{L}\sqcup D_{R})$ are not isotopic rel.\ $\partial$, and they stay non-isotopic rel.\ $\partial$ even when their interiors
are pushed into $B^{5}$. On the other hand, $\boldsymbol{\beta}^k(D_L)$ and $D_L$ are isotopic rel.\ $C_L$ and $\boldsymbol{\beta}^{k}(D_R)$ and $D_R$ are isotopic rel.\ $C_R$.
\end{thm}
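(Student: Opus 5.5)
The plan is to treat the two individual statements by an explicit unknotting, and the linking statement by a homological invariant. In dimension $4$ that invariant is the class in $H_2$ of the disk complement given by Equation~(\ref{eq:simple-homology}); in dimension $5$ it is a linking number of $2$-spheres in $S^5$, which will be read off from the same tubing description.

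\textbf{Individual components.} By Lemma~\ref{lem:barbelltubing} and its proof, $\boldsymbol{\beta}(D_R)$ is $D_R$ tubed along the bar to $\overline{S_L}$. Likewise $\boldsymbol{\beta}(D_L)$ is $D_L$ tubed along the bar to $S_R$, since $D_L$ meets $S_L$ once and misses $S_R$. Iterating, $\boldsymbol{\beta}^k(D_L)$ is $D_L$ tubed to $k$ parallel copies of $S_R$ (or of $\overline{S_R}$ if $k<0$) along parallel copies of the bar. Now forget $C_R$. The cuff $S_R$ is the boundary of a small normal $3$-ball to $C_R$, and this ball can be chosen disjoint from $C_L$, from $D_L$ and from the bar, away from the bar's endpoint. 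Shrinking that ball, and then retracting the tube along the bar, isotopes $\boldsymbol{\beta}^k(D_L)$ back to $D_L$ rel.\ $C_L$; the $k$ copies are handled one after another using nested balls. The statement for $D_R$ is symmetric, using the normal ball to $C_L$ bounded by $S_L$.

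\textbf{Non-isotopy rel.\ $\partial$ in $S^4$.} Let $X=S^4\setminus\mathring N(C_L\sqcup C_R)$. An isotopy rel.\ $\partial$ of the link $\boldsymbol{\beta}^k(D_L\sqcup D_R)$ to $\boldsymbol{\beta}^\ell(D_L\sqcup D_R)$ in particular carries $\boldsymbol{\beta}^k D_R$ to $\boldsymbol{\beta}^\ell D_R$ rel.\ $\partial D_R$ in $X$. By Equation~(\ref{eq:simple-homology}) these disks have classes $[D_R]-k[S_L]$ and $[D_R]-\ell[S_L]$ in $H_2(X,\partial D_R)$. As in the proof of Theorem~\ref{thm:circle-splittingspheres}, $[S_L]$ has infinite order in this group, so the classes differ when $k\neq\ell$.

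\textbf{Pushing into $B^5$.} Push the interiors of both disks into $B^5$. Close each disk up by gluing its boundary to the standard disk $D_L$, respectively $D_R$, pushed into the complementary ball of $S^5=B^5\cup\overline{B^5}$. This gives a $2$-component link of $2$-spheres $\Sigma_L^{k}\sqcup\Sigma_R^{k}\subset S^5$. An isotopy rel.\ $\partial$ in $B^5$ extends to an isotopy of these links in $S^5$, so the linking number $\mathrm{lk}(\Sigma_L^k,\Sigma_R^k)\in H_2(S^5\setminus\Sigma_L^k)\cong\mathbb{Z}$ is an invariant; this group is generated by a meridian, since $2+2=5-1$.

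To compute it, note that $\boldsymbol{\beta}^kD_L$ still meets $S_L$ transversely in one point, because $S_R\cap S_L=\emptyset$. Hence $S_L$, pushed slightly into $B^5$, is a meridian of $\Sigma^k_L$. Moreover, the tubing description shows that $\Sigma^k_R$ is homologous in $S^5\setminus\Sigma_L^k$ to $(D_R\cup D_R')-k\,S_L$. The sphere $D_R\cup D_R'$ is split from $\Sigma_L^k$: it lies in a region separated from $\Sigma_L^k$, once the tubes to $S_R$ on $\boldsymbol{\beta}^kD_L$ are pushed to the other side of time. This gives $\mathrm{lk}=\mp k$, and so the links are distinct for $k\neq\ell$.

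The main obstacle is this last homological bookkeeping. The tubes to copies of $S_R$ that are added to $\boldsymbol{\beta}^kD_L$ could a priori interfere with the homology class of $\Sigma_R^k$ in the complement of $\Sigma^k_L$. I would settle this by computing the linking number instead as the algebraic intersection of $\Sigma_L^k$ with a $3$-chain bounded by $\Sigma_R^k$. For that chain I would take the $3$-chain bounded by $D_R\cup D_R'$ together with $k$ normal $3$-balls to $C_L$ bounded by the copies of $S_L$, plus thin tubes along the bar. Then the intersection with $\Sigma_L^k$ is exactly $k$ points of one sign coming from the normal balls, and the tubes along the bar contribute nothing after a general-position perturbation.
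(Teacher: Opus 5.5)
Your proposal is correct and is essentially the paper's argument: undo the tubes to the cuff using the $3$-ball it bounds, and detect the linking by capping the pushed-in disks off to a $2$-component link of $2$-spheres in $S^5$ and computing the class of one component in $H_2$ of the complement of the other. Capping with the standard disks instead of with $\overline{\boldsymbol{\beta}^{\ell}(D_L\sqcup D_R)}$, and proving the $S^4$ statement separately rather than deducing it from the $B^5$ one, are only cosmetic differences. Your $3$-chain count correctly fills in the paper's ``one can show'', but only once you replace the vague general-position step by a specific choice: push $\boldsymbol{\beta}^kD_R$ in less deeply than $\boldsymbol{\beta}^kD_L$. At that depth $\Sigma^k_L$ is just a collar on $C_L$, which each normal ball meets once and which $D_R\times I$ and the solid tubes miss; with the opposite ordering, the same $k$ points appear instead as the intersections of $D_R$ with the copies of $S_R$ on $\boldsymbol{\beta}^kD_L$.
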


\begin{proof}
First we show $\boldsymbol{\beta}^k(D_L) \approx D_L$ rel.\ $\partial D_L=C_L$. It is sufficient to show this for $k=1$. By the proof of Lemma~\ref{lem:barbelltubing}, $\boldsymbol{\beta}(D_L)$ is obtained by tubing the disk $D_L$ to the cuff $S_R$ of $\beta$. Since $S_R$ bounds a 3-ball disjoint from $C_L$, we can use this 3-ball to isotope $\boldsymbol{\beta}(D_L)$ back to $D_L$ rel.\ $C_L$. Similarly, $\boldsymbol{\beta}^k(D_R)$ is isotopic to $D_R$ rel.\ $C_R$.

Now we show that the disks are linked. We abuse notation and let $\boldsymbol{\beta}^{k}(D_{L})$, $\boldsymbol{\beta}^{\ell}(D_{L})$,
$\boldsymbol{\beta}^{k}(D_{R})$, and $\boldsymbol{\beta}^{\ell}(D_{R})$ denote both the disks
in $S^{4}$ and their push-ins into $B^{5}$. Consider the disk links
\[
\boldsymbol{\beta}^{k}(D_{L}\sqcup D_R ),\ \boldsymbol{\beta}^{\ell}(D_{L} \sqcup D_R )\subset B^{5}.
\]
Then viewing $S^{5}$ as $B^{5}\cup\overline{B^{5}}$, gluing these
disk links together gives us a two-component link of $2$-spheres
$U_{L}\sqcup U_{R}\subset S^{5}$, where 
\[
U_{L}:=\boldsymbol{\beta}^{k}(D_{L})\cup\overline{\boldsymbol{\beta}^{\ell}(D_{L})},\ U_{R}:=\boldsymbol{\beta}^{k}(D_{R})\cup\overline{\boldsymbol{\beta}^{\ell}(D_{R})}.
\]
Using Equation\ (\ref{eq:simple-homology}), one can show that
\[
[U_{R}]=(\ell-k)\mu_{L}\in H_{2}(S^{5}\setminus U_{L})=\left\langle \mu_{L}\right\rangle \cong\mathbb{Z}
\]
where $\mu_{L}$ is represented by a meridional $S^2$ of $U_{L}$ in $S^5$. On the other hand, if $\boldsymbol{\beta}^{k}(D_{L}\sqcup D_{R})$
and $\boldsymbol{\beta}^{\ell}(D_{L}\sqcup D_{R})$ were isotopic rel.\ $\partial$
in $B^{5}$, then $[U_{R}]$ would be $0$ in $H_{2}(S^{5}\setminus U_L)$,
which is a contradiction.
\end{proof}

We briefly digress from simple barbells to discuss Brunnian 2-disks. In \cite{kim2026brunnianlinks3balls4sphere}, we constructed $n$-component Brunnian links of 3-balls in $S^4$ for all $n\ge 2$. In light of Theorem \ref{thm:disks-5dlinked}, it would be natural to ask whether there also exist $n$-component Brunnian links of 2-disks in $S^4$ for $n>2$.
We show that Gabai's
$4$-dimensional light bulb theorem \cite[Theorem 10.1]{MR4127900} implies that they do not exist for all $n>2$.
\begin{prop}[{\cite[Theorems 10.4 and 10.5]{MR4127900}}]
\label{prop:2disk-homology}Let $C=\sqcup_{k=1}^{n}C_{k}$ be an
embedding of $\sqcup_{k=1}^{n}S^{1}$ in $S^{4}$. Let $D,D'$ be
two embeddings of $\sqcup_{k=1}^{n}D^{2}$ in $S^{4}$, that restrict
to $C$ on the boundary. For $k=1,\cdots,n$, let $D_{k}$ (resp.\ $D_{k}'$)
be the restriction of $D$ (resp.\ $D'$) onto the $k$th $D^{2}$
of $\sqcup^{n}D^{2}$. Then, $D$ and $D'$ are isotopic rel.\ $\partial$
if and only if for all $k=1,\cdots,n$ we have
\[
[D_{k}\cup\overline{D_{k}'}]=0\in H_{2}(S^{4}\setminus \mathring N(C\setminus C_{k}))\cong\mathbb{Z}^{n-1}.
\]
\end{prop}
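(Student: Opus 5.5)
The plan is to reduce the statement to Gabai's 4-dimensional light bulb theorem \cite[Theorem 10.1]{MR4127900}, applied one component at a time, and to use an induction on the number of components to move from componentwise isotopies to an isotopy of the whole disk link.

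\emph{Necessity.} Suppose $D$ and $D'$ are isotopic rel.\ $\partial$. Restricting the isotopy to the $k$th disk gives an isotopy of $D_k$ to $D_k'$ rel.\ $\partial$ inside $S^4\setminus \mathring N(C\setminus C_k)$, after shrinking the tubular neighborhood if needed. The trace of this isotopy is a $3$-chain bounding $D_k\cup\overline{D_k'}$, so the class vanishes. The isomorphism $H_2(S^4\setminus \mathring N(C\setminus C_k))\cong\mathbb{Z}^{n-1}$ follows from Alexander duality: it is generated by the meridional spheres of the $C_j$ with $j\ne k$.

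\emph{Sufficiency.} I induct on the number of components, moving one disk at a time. Work in $M_k:=S^4\setminus \mathring N(D_1'\cup\cdots\cup D_{k-1}'\cup C_{k+1}\cup\cdots\cup C_n)$, assuming $D_1,\dots,D_{k-1}$ have already been isotoped to $D_1',\dots,D_{k-1}'$ rel.\ $\partial$. Removing a neighborhood of a disk $D_j'$ instead of its boundary circle $C_j$ kills the corresponding meridian. Hence $M_k$ still has $\pi_1=1$, since circle and disk complements in $S^4$ are simply connected (a meridian of a circle bounds a disk, because $C$ has codimension $3$). Also $H_2(M_k)$ is generated by the meridional spheres of $C_{k+1},\dots,C_n$.

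The key input is Gabai's theorem for disks: two properly embedded disks in a simply connected $4$-manifold with the same boundary, and with a dual sphere for the boundary circle, are isotopic rel.\ $\partial$ exactly when they are homologous. I will also need to arrange the hypotheses appearing in \cite[Theorems 10.4--10.5]{MR4127900}: that $\pi_1$ of the complement is trivial, and that $C_k$ has a dual sphere, namely a small linking sphere in the boundary of its neighborhood, which has trivial normal bundle.

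The remaining condition is that $D_k\cup\overline{D_k'}$ is null-homologous in $M_k$. This follows from the hypothesis, because the inclusion $M_k\hookrightarrow S^4\setminus \mathring N(C\setminus C_k)$ induces a map on $H_2$ that is injective on the span of the meridians of $C_{k+1},\dots,C_n$. Gabai's theorem then gives an isotopy of $D_k$ to $D_k'$ rel.\ $\partial$ in $M_k$. Since this isotopy avoids the earlier disks, concatenating over $k$ yields an isotopy of $D$ to $D'$.

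The main obstacle is checking that the homology hypothesis in $S^4\setminus \mathring N(C\setminus C_k)$, which treats the earlier components only as circles, still controls the class in the smaller space $M_k$. Two things must be verified here. First, the class of $D_k\cup\overline{D_k'}$ in $M_k$ has no extra components beyond those detected by the meridians of $C_{k+1},\dots,C_n$. Second, the extra generators coming from $D_1',\dots,D_{k-1}'$ do not appear. To settle this I will compute $H_2(M_k)$ via Alexander duality with respect to the $1$-skeleton-like set $D'_{<k}\cup C_{>k}$, which is a union of contractible pieces and circles. This shows $H_2(M_k)\cong\mathbb{Z}^{n-k}$, mapping injectively, so vanishing is inherited.
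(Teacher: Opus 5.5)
Your induction (move $D_k$ to $D_k'$ inside $M_k$, carrying the later disks along by isotopy extension) is a reasonable way to unpack the paper's one-line appeal to Gabai. Your checks that $\pi_1(M_k)=1$ and that $H_2(M_k)\to H_2(S^4\setminus\mathring N(C\setminus C_k))$ is injective are correct. The gap is in the step where you invoke Gabai's Theorem~10.1.

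That theorem is about \emph{properly embedded} disks. To use the linking sphere $S_k$ of $C_k$ as the common dual sphere, you must first make $D_k$ and $D_k'$ agree on a collar of $C_k$ and then delete $\mathring N(C_k)$. After that, homotopy rel $\partial$ is governed by the class of the pushed-off sphere $D_k\cup\overline{D_k'}$ in $H_2(M_k\setminus\mathring N(C_k))\cong\mathbb{Z}^{n-k+1}$, which has the extra generator $[S_k]$. You only check vanishing in $H_2(M_k)$, where $C_k$ is filled back in and $[S_k]=0$. The $[S_k]$-coefficient is invisible to the hypothesis of the proposition, depends on how you match the collars, and can be any integer. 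Already for $n=1$ the hypothesis is vacuous, while in $S^4\setminus\mathring N(C_1)\cong S^2\times D^2$ two disks whose relative classes differ by a nonzero multiple of $[S^2\times\mathrm{pt}]$ are not even homotopic rel $\partial$. So the real obstacle is the meridian of $C_k$ itself, not the contributions of $D'_{<k}$ and $C_{>k}$ that you analyze.

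The missing idea is the spinning argument behind the $n=1$ case (uniqueness of spanning disks of a circle in $S^4$). This is why the paper points to the argument of Theorems~10.4 and~10.5 rather than to Theorem~10.1 alone.
\begin{itemize}
\item The isotopy only has to fix $C_k$ pointwise, not $N(C_k)$. So you may apply an ambient isotopy supported in a small neighborhood of $C_k$ and fixing $C_k$ that drags the inward collar direction of $D_k$ through a degree-one family in the unit normal $S^2$-bundle of $C_k$, returning the collar to itself.
\item Each such spin changes the $[S_k]$-coefficient by $\pm1$, and it misses $D'_{<k}$, $C_{>k}$ and the other disks.
\item After spinning until that coefficient is $0$, Theorem~10.1 applies in $M_k\setminus\mathring N(C_k)$ with dual sphere $S_k$, and your induction goes through.
\end{itemize}
You should also record that all the earlier ambient isotopies (including these spins) fix $C$ pointwise. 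Hence they do not change the class of $D_k\cup\overline{D_k'}$ in $H_2(S^4\setminus\mathring N(C\setminus C_k))$; this matters because the hypothesis is stated for the original $D_k$.
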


\begin{proof}
This follows from the same argument as \cite[Theorems 10.4 and 10.5]{MR4127900},
using \cite[Theorem 10.1]{MR4127900}. Compare \cite[Theorem 1.8]{krushkal2024corksexoticdiffeomorphisms},
\cite[Theorem 1.3]{ren2025khovanovskeinlasagnamodules}.
\end{proof}
\begin{cor}
\label{cor:no-brunnian-2disk}There are no Brunnian $2$-disk
links in $S^{4}$ with $3$ or more components.
\end{cor}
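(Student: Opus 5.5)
The plan is to use Proposition~\ref{prop:2disk-homology} directly. Let $D=\sqcup_{k=1}^n D_k$ and $D'=\sqcup_{k=1}^n D'_k$ be two $n$-component $2$-disk links in $S^4$ with common boundary $C=\sqcup C_k$, where $n\ge 3$. Assume that every proper sublink of $D$ is isotopic rel.\ $\partial$ to the corresponding sublink of $D'$. We aim to show that $D$ and $D'$ are themselves isotopic rel.\ $\partial$, so that no Brunnian phenomenon can occur.

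By Proposition~\ref{prop:2disk-homology}, it suffices to show that for each $k$ the class
\[
[D_k\cup\overline{D'_k}]\in H_2(S^4\setminus \mathring N(C\setminus C_k))\cong\mathbb{Z}^{n-1}
\]
vanishes. This group is generated by the meridional spheres $\mu_j$ of the circles $C_j$ with $j\ne k$. The coefficient of $\mu_j$ equals the algebraic intersection number of the sphere $D_k\cup\overline{D'_k}$ with a surface bounded by $C_j$; equivalently, it is the image of the class under the map
\[
H_2(S^4\setminus \mathring N(C\setminus C_k))\to H_2(S^4\setminus \mathring N(C_j))\cong\mathbb{Z}\langle\mu_j\rangle
\]
induced by inclusion (filling in the other circles). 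Thus the class is determined by its images in these rank-one groups, one for each $j\ne k$.

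Now fix $k$ and $j\ne k$. Since $n\ge 3$, there is a proper sublink containing both components $k$ and $j$; for instance, drop some index $m\notin\{k,j\}$. By hypothesis, the sublinks of $D$ and $D'$ indexed by $\{k,j\}$ (or by all indices except $m$) are isotopic rel.\ $\partial$. Applying the ``only if'' direction of Proposition~\ref{prop:2disk-homology} to this sublink, whose boundary is the sublink $C'\subset C$, gives
\[
[D_k\cup\overline{D'_k}]=0\in H_2\bigl(S^4\setminus \mathring N(C'\setminus C_k)\bigr),
\]
where $C'\setminus C_k$ contains $C_j$. Pushing this forward to $H_2(S^4\setminus \mathring N(C_j))$ shows that the $\mu_j$-coefficient vanishes. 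Since $j\ne k$ was arbitrary, the whole class vanishes, and Proposition~\ref{prop:2disk-homology} then gives $D\approx D'$ rel.\ $\partial$.

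The main point needing care is the compatibility of the inclusion maps. One must check that the sphere $D_k\cup\overline{D'_k}$ (after making $D_k$ and $D'_k$ agree near $C_k$) represents the same class in each complement, and that the inclusion-induced maps send $\mu_j\mapsto\mu_j$ and kill the other meridians. This holds because, by Alexander duality, each coordinate is a linking (intersection) number with a Seifert surface of $C_j$. It is also where $n\ge 3$ is used: for $n=2$ the only pair $\{k,j\}$ is the whole link, and indeed Theorem~\ref{thm:disks-5dlinked} gives Brunnian examples in that case.
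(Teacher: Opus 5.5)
Your proof is correct and is essentially the paper's argument. You reduce via Proposition~\ref{prop:2disk-homology} to the vanishing of $[D_k\cup\overline{D'_k}]$ in $H_2(S^4\setminus\mathring N(C\setminus C_k))$, then apply the ``only if'' direction to sublinks, using $n\ge 3$ so that each meridian coefficient survives the deletion of some other component. The paper does the same thing but reads off coefficients through the quotient maps killing $\mu_m$ for two different deleted indices, whereas you project onto each single $\mu_j$ through $H_2(S^4\setminus\mathring N(C_j))$; this is only a cosmetic difference.
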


\begin{proof}
Let $C:=\bigsqcup_{k=1}^{n}C_{k}$ be an embedding of $n$ copies of $S^{1}$
in $S^{4}$, and let $D:=\bigsqcup_{k=1}^{n}D_{k}$ and $D':=\bigsqcup_{k=1}^{n}D_{k}'$ be embeddings of $\sqcup_{k=1}^{n}D^{2}$ in $S^{4}$ such that $\partial D_{k} = \partial D_{k}'=C_{k}$. 
Let $n\geq3$ and suppose that for all $k=1,\cdots ,n$, $D\setminus D_k $ and $D' \setminus D_k '$ are isotopic rel.\ $C\setminus C_k$. We claim that $D$ and $D'$ are isotopic rel.\ $C$. By Proposition~\ref{prop:2disk-homology}, it is sufficient to show that 
\begin{equation}\label{eq:disks-homclass}
[D_{k}\cup\overline{D_{k}'}]=0\in H_{2}(S^{4}\setminus \mathring N(C\setminus C_{k}))
\end{equation}
for all $k=1,\cdots ,n$. To keep the notations simple, we show it for $k=1$; the other cases follow in the same way.

Let $\mu_{i}$ be the meridian of $C_{i}$. Then, $H_{2}(S^{4}\setminus \mathring N(C\setminus C_{1}))\cong\mathbb{Z}^{n-1}$
has basis $\mu_{2},\cdots,\mu_{n}$, and the map $H_{2}(S^{4}\setminus \mathring N(C\setminus C_{1}))\to H_{2}(S^{4}\setminus \mathring N(C\setminus(C_{1}\sqcup C_{k})))$
induced by the inclusion of spaces is given by quotienting out by $\mu_{k}$. Let $a_2, \cdots ,a_n \in \mathbb{Z}$ be such that 
\[
[D_{1}\cup\overline{D_{1}'}]=\sum_{\ell=2}^{n}a_{\ell}\mu_{\ell}\in H_{2}(S^{4}\setminus \mathring N(C\setminus C_{1})).
\]
Let $k\in \{2,\cdots ,n\}$. Since $D\setminus D_{k}$ is isotopic to $D'\setminus D_{k}'$ rel.\ $C\setminus C_{k}$, we have 
\[
[D_{1}\cup\overline{D_{1}'}]=0\in H_{2}(S^{4}\setminus \mathring N(C\setminus(C_{1}\sqcup C_{k}))),
\]
and so $a_{\ell}=0$ for all $\ell\neq k$. Since $n\ge3$, we can
let $k=2$ or $3$, and so we conclude that $a_{\ell}=0$ for all
$\ell=2,\cdots,n$. Therefore Equation~(\ref{eq:disks-homclass}) follows for $k=1$.
\end{proof}

\begin{figure}[h]
\begin{centering}
\includegraphics{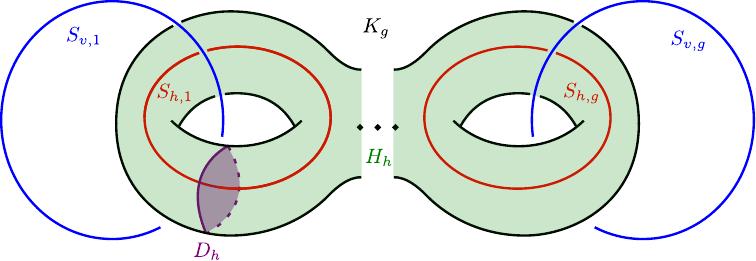}
\par\end{centering}
\caption{\label{fig:kg}The standard genus $g$ surface $K_g \subset S^3 \times 0 \subset S^4 $, the horizontal handlebody $H_h \subset S^3 \times 0 \subset S^4$ that $K_g$ bounds, $2$-spheres $S_{h,i},S_{v,i} \subset S^4 \setminus \mathring {N}(K_g)$ for $i=1,\cdots ,g$
whose homology classes generate $H_{2}(S^{4}\setminus K_{g}) \cong\mathbb{Z}^{2g}$, and a compressing disk $D_h$ for the handlebody $H_h $ that is geometrically dual to $S_{h,1}$.}
\end{figure}

Now we return to simple barbells. In Theorems \ref{thm:simple-splitting} and \ref{thm:simple-knotted-handlebody}, we construct infinitely many pairwise non-isotopic splitting spheres of $K_{m,n}$ for $m,n\ge 1$, where $K_{m,n}$ is the unlink of an unknotted genus $m$ and an unknotted genus $n$ surface in $S^4$, as well as infinitely many knotted handlebodies in $S^4$ of genus $\ge 2$. We do so by implanting simple barbells in the complement of unknotted surfaces with genus $\ge 1$. Below we define the relevant objects in these surface complements (see also Figure~\ref{fig:kg}).

Let $K_{g}\subset S^{3}\times0\subset S^{4}$ be the standard (unknotted)
genus $g$ surface in $S^{4}$. It bounds two genus $g$ handlebodies
in $S^{3}\times0=\mathbb{R}^{3}\cup\{\infty\}$. We call these the
\emph{horizontal handlebody} $H_{h}$ and the \emph{vertical handlebody}
$H_{v}$, which are characterized by the property that the point at
infinity $\infty$ is contained in $H_{v}$.

By Alexander duality, $H_{2}(S^{4}\setminus K_{g})\cong\mathbb{Z}^{2g}$ and is spanned by the homology classes
of the $2g$ many $2$-spheres drawn in Figure~\ref{fig:kg}. We refer to these as the \emph{horizontal spheres}
$S_{h,1},\cdots,S_{h,g}$ and the \emph{vertical spheres} $S_{v,1},\cdots,S_{v,g}$.
Note that for $i\neq j$, we have $S_{h,i}\cap S_{h,j}=S_{v,i}\cap S_{v,j}=S_{h,i}\cap S_{v,j}=\emptyset$,
but $S_{h,i}$ and $S_{v,i}$ intersect (they can be made to intersect
at only two points, with opposite sign). Finally, let $D_{h} \subset H_h$ be the compressing disk of $H_h$ drawn in Figure~\ref{fig:kg}: it intersects $S_{h,1}$ once, and is disjoint from all the other horizontal and vertical 2-spheres. 

\begin{figure}[h]
\begin{centering}
\includegraphics[scale=0.8]{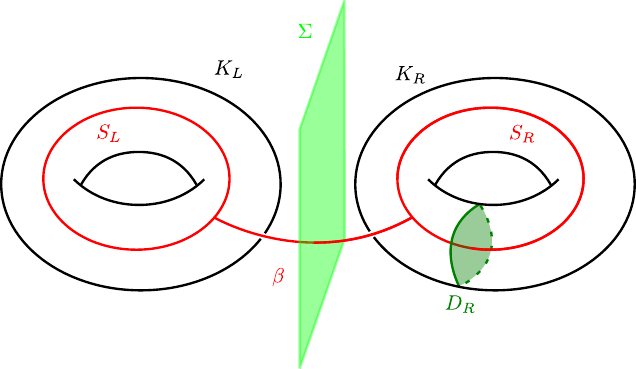}
\par\end{centering}
\caption{\label{fig:sigma11-splitting}The $2$-component unlink of two tori $K_{1,1} = K_{L}\sqcup K_{R}\subset S^{3}\times0$,
a simple barbell $\beta$ with cuffs $S_L$ and $S_R$, a compressing disk $D_{R}$ for the horizontal
handlebody that $K_{R}$ bounds, and a splitting $3$-sphere $\Sigma$.}
\end{figure}

Now we are ready to construct more splitting spheres. Let $K_{m,n}:=K_{L}\sqcup K_{R}$ be the $2$\nobreakdash-component unlink of $K_L$, an
unknotted surface of genus $m$, and $K_R$, an unknotted surface of genus
$n$. Denote the first horizontal sphere for $K_{L}$ (resp.\ $K_{R}$) as $S_L$ (resp.\ $S_R$); see Figure~\ref{fig:sigma11-splitting} for $(n,m)=(1,1)$.
\begin{thm}
\label{thm:simple-splitting}Let $m,n\ge1$, let $\Sigma$ be any
splitting sphere of $K_{m,n}=K_{L}\sqcup K_{R}$, and let $\beta$
be a barbell in $S^{4}\setminus K_{m,n}$ whose two cuffs are $S_L$
and $S_R$. Then, for $k\neq\ell$, $\boldsymbol{\beta}^{k}\Sigma$ and
$\boldsymbol{\beta}^{\ell}\Sigma$ are not isotopic in $S^{4}\setminus K_{m,n}$.
\end{thm}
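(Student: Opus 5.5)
Reduce to $\ell=0$ and $k\neq 0$: if $\boldsymbol{\beta}^k\Sigma\approx\boldsymbol{\beta}^\ell\Sigma$, apply $\boldsymbol{\beta}^{-\ell}$, which is a diffeomorphism of $S^4\setminus K_{m,n}$, to get $\boldsymbol{\beta}^{k-\ell}\Sigma\approx\Sigma$. We then mimic the proof of Theorem~\ref{thm:circle-splittingspheres}. Set $X:=S^4\setminus\mathring N(K_{m,n})$, $L:=S^4\setminus\mathring N(K_L)$ and $R:=S^4\setminus\mathring N(K_R)$. Up to isotopy in $X$ we may take $\Sigma$ to be a standard splitting sphere, so that $X\cong L\# R$ with $\Sigma$ the connected-sum sphere. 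We may also take $\beta$ to lie away from $\Sigma$, since only the isotopy class of the barbell matters. An isotopy of $\boldsymbol{\beta}^k\Sigma$ to $\Sigma$ in $X$ extends to an ambient isotopy $\phi_t$ of $X$ rel.\ $\partial X$.

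Let $D_R\subset R$ be the compressing disk of the horizontal handlebody of $K_R$ that is geometrically dual to $S_R=S_{h,1}$ and disjoint from $S_L$, with $\partial D_R\subset\partial N(K_R)$. Then $\beta$ is a simple barbell in $(S^4,K_{m,n})$ once we check that $[S_L]\neq0$ in $H_2(X,\partial D_R)$. By Equation~(\ref{eq:simple-homology}) iterated $k$ times,
\[
[\boldsymbol{\beta}^k D_R,\partial D_R]=[D_R,\partial D_R]-k[S_L]\in H_2(X,\partial D_R).
\]
Each application of $\boldsymbol{\beta}$ tubes by $\overline{S_L}$, and $\boldsymbol{\beta}(S_L)$ is homologous to $S_L$ because $S_L\cdot S_L=S_L\cdot S_R=0$, so this iteration is justified. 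Since $\boldsymbol{\beta}^k D_R$ is disjoint from $\boldsymbol{\beta}^k\Sigma$ and lies on its $K_R$ side, the disk $\phi_1(\boldsymbol{\beta}^kD_R)$ lies in the $R$-side of $\Sigma$, i.e.\ in $R\setminus\mathring B^4$, rel.\ $\partial D_R$. Its class therefore lies in the image of $H_2(R\setminus\mathring B^4,\partial D_R)$. The class is unchanged by the isotopy rel.\ $\partial X$.

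The Mayer--Vietoris splitting along $S^3$ gives
\[
H_2(X,\partial D_R)\cong H_2(L\setminus\mathring B^4)\oplus H_2(R\setminus\mathring B^4,\partial D_R),
\]
with $H_2(L\setminus \mathring B^4)\cong H_2(S^4\setminus K_L)\cong\mathbb{Z}^{2m}$ by Alexander duality. Here $[S_L]=[S_{h,1}]$ is a basis element and hence nonzero; this requires $m\ge1$. Projecting to the first summand, $[\boldsymbol{\beta}^kD_R]-[D_R]=-k[S_L]$ has nonzero $L$-component. But both $\phi_1(\boldsymbol{\beta}^kD_R)$ and $D_R$ have zero $L$-component, which is a contradiction. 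The hypothesis $n\ge1$ is needed for the existence of $D_R$.

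The main obstacle is justifying the direct-sum decomposition relative to $\partial D_R$, and that a disk in the $R$-side has zero $L$-component. Since $\partial D_R\subset R$, the relative part lives entirely in $R$, and $H_1(S^3)=H_2(S^3)=0$ makes the Mayer--Vietoris sequence split cleanly. A second point to check is that $\Sigma$ can be assumed to be the standard connected-sum sphere. This is not needed: for arbitrary $\Sigma$ one only uses that $\Sigma$ separates $X$ into pieces $X_L\supset\partial N(K_L)$ and $X_R\supset\partial N(K_R)$, with $H_2(X_L)\to H_2(X)$ injective and $[S_L]$ outside the image of $X_R$. Both follow because $X_L\cup B^4\simeq L$ and $X_R\cup B^4\simeq R$ after capping off by Alexander duality. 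I would use the general separation form of the argument so that no standardization of $\Sigma$ is needed.
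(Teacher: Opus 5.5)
Your argument is the paper's. The paper proves this theorem by rerunning the proof of Theorem~\ref{thm:circle-splittingspheres} with $L,R$ replaced by $S^4\setminus\mathring N(K_L)$ and $S^4\setminus\mathring N(K_R)$, and with $D_R$ the horizontal compressing disk of $K_R$. That is exactly your splitting of $H_2(X,\partial D_R)$ along the splitting sphere, combined with $[\boldsymbol{\beta}^kD_R,\partial D_R]=[D_R,\partial D_R]-k[S_L]$. Your extra checks are correct: the reduction to $\ell=0$, the justification of the iteration via $S_L\cdot S_L=S_L\cdot S_R=0$, and where $m,n\ge1$ enter.

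Two of your remarks should be removed, and one step needs filling in.

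\textbf{The barbell cannot be moved off $\Sigma$.} The sentence ``we may take $\beta$ to lie away from $\Sigma$'' is false. Since $\boldsymbol{\beta}$ is supported near $\beta$, a barbell isotopic to $\beta$ and missing $\Sigma$ would give $\boldsymbol{\beta}\Sigma\approx\Sigma$, contradicting the theorem. Concretely, $S_L$ cannot lie on the $K_R$-side of $\Sigma$ and $S_R$ cannot lie on the $K_L$-side, so the bar must cross $\Sigma$. You never use this sentence, so just delete it.

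\textbf{$\Sigma$ cannot be standardized.} An arbitrary splitting sphere cannot be isotoped to the standard one; that is precisely what the theorem shows. So your opening standardization is false as stated, and the separation form at the end is the version you must actually run.

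\textbf{Missing step in the separation form.} You need a disk with boundary $\partial D_R$ lying in $X_R$, and the standard $D_R$ may meet $\Sigma$. This is easily fixed.
\begin{enumerate}
\item Van Kampen along $\Sigma\cong S^3$ gives $\pi_1(X)\cong\pi_1(X_L)*\pi_1(X_R)$. So $\partial D_R$, which is null-homotopic in $X$, bounds a singular disk $D\subset X_R$.
\item The intersection form on $H_2(X)$ vanishes because $X\subset S^4$. Hence $D\cdot S_R=D_R\cdot S_R=1$ and $D\cdot S_L=0$.
\item By linearity of Equation~(\ref{eq:barbell-action}) on $H_2(X,\partial D_R)$, you still get $\boldsymbol{\beta}^k_*[D]=[D]-k[S_L]$.
\item The map $\phi_1\circ\boldsymbol{\beta}^k$ preserves $X_R$ and acts on $H_2(X,\partial D_R)$ as $\boldsymbol{\beta}^k_*$. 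Therefore $k[S_L]\in\operatorname{im}H_2(X_R,\partial D_R)$.
\item This contradicts your capping argument, which shows that $[S_L]$ has nonzero component in the torsion-free summand $H_2(X_L)$.
\end{enumerate}
The paper's written proof tacitly works in the standard picture at this point as well.
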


\begin{proof}
Let $D_{R}$ be the compressing $2$-disk  of Figure~\ref{fig:kg} for the horizontal handlebody
bounded by $K_{R}$ (see Figure~\ref{fig:sigma11-splitting}); in particular, it intersects $S_R$ once and is disjoint from $S_L$. Now, the proof of this theorem
follows almost line-by-line the proof of Theorem~\ref{thm:circle-splittingspheres},
only here we replace $L$ (resp.\ $R$) in the proof of Theorem~\ref{thm:circle-splittingspheres}
by $S^{4}\setminus \mathring N(K_{L})$ (resp.\ $S^{4}\setminus \mathring N(K_{R})$).
\end{proof}

\begin{figure}[h]
\begin{centering}
\includegraphics{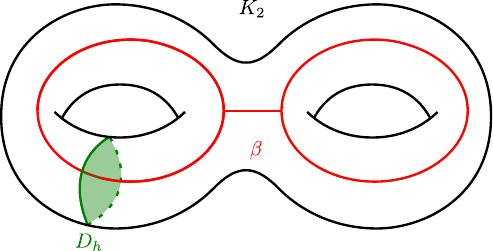}
\par\end{centering}
\caption{\label{fig:sigma2-barbells}The standard genus $2$ surface $K_2 \subset S^3 \times 0 \subset S^4 $,
a barbell $\beta$ that gives rise to knotted handlebodies $\boldsymbol{\beta}^k H_h $, and a compressing disk $D_h$ for the handlebody $H_h$}
\end{figure}

Finally we construct knotted handlebodies of genus $\ge 2$.
\begin{thm}
\label{thm:simple-knotted-handlebody}Let $g\ge2$, and let $\beta$
be a barbell in $S^{4}\setminus K_{g}$ whose two cuffs are $S_{h,1}$
and $S_{h,2}$. Then, for $k\neq\ell$, $\boldsymbol{\beta}^{k}H_{h}$ and $\boldsymbol{\beta}^{\ell}H_{h}$
are not isotopic rel.\ $\partial$.
\end{thm}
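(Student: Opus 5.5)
We will reduce to showing that $\boldsymbol{\beta}^{k}H_h$ and $H_h$ are not isotopic rel.\ $\partial$ for $k\neq 0$. This suffices because if $\boldsymbol{\beta}^{k}H_h \approx \boldsymbol{\beta}^{\ell}H_h$ rel.\ $K_g$, then applying $\boldsymbol{\beta}^{-\ell}$ (which fixes $K_g$) gives $\boldsymbol{\beta}^{k-\ell}H_h\approx H_h$ rel.\ $K_g$. The invariant will be the homology class of the image of the compressing disk $D_h$ in the complement of a handlebody: the argument of Theorem~\ref{thm:circle-splittingspheres}, with the splitting sphere replaced by the handlebody.

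First we check that $\beta$ is a simple barbell in $(X,F)$, where $X:=S^4$, $F:=H_h$, $S_L:=S_{h,2}$ and $S_R:=S_{h,1}$. Its cuffs lie in $S^4\setminus K_g$, and after pushing them off $H_h$ in the time direction we may assume they avoid $H_h$. Condition (2) holds with $D_R:=D_h$, which meets $S_{h,1}$ once and is disjoint from $S_{h,2}$. For condition (1) we work in $W:=S^4\setminus \mathring N(H_h)\cong S^4\setminus \mathring N(\natural^g S^1\times D^2)$, whose complement is a neighbourhood of a wedge of $g$ circles. Hence $H_2(W)\cong\mathbb{Z}^g$, generated by the meridional $2$-spheres of the cores, and $S_{h,i}$ is the meridian of the $i$th core of $H_h$. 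In particular $[S_{h,2}]\neq0$ in $H_2(W)$, and we will show it stays nonzero in the relative group we actually use below.

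For the main step, suppose $\boldsymbol{\beta}^k H_h\approx H_h$ rel.\ $K_g$. By isotopy extension this isotopy carries $\boldsymbol{\beta}^k D_h\subset\boldsymbol{\beta}^kH_h$ to a disk contained in $H_h$ with the same boundary $\partial D_h\subset K_g$. Consider $H_2(S^4\setminus \mathring N(\mathrm{int}\,H_h\text{-push-off}))$ relative to $\partial D_h$. More cleanly, let $H_h^+$ be a slight push of $H_h$ into the time direction $S^3\times\epsilon$ with boundary $K_g$ fixed. Then $\boldsymbol{\beta}^kD_h$, viewed in $X':=S^4\setminus \mathring N(\text{interior of }H_h^+)$, is homologous rel.\ $\partial D_h$ to $D_h-k[S_{h,2}]$ by Equation~(\ref{eq:simple-homology}) applied $k$ times. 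Since $D_h$ is disjoint from the cuffs after the first application, iterating is straightforward.

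On the other hand, a disk lying in $H_h$ with boundary $\partial D_h$ is homologous rel.\ boundary to $D_h$ inside a neighbourhood of $H_h$, because $H_2(H_h,\partial D_h)\cong H_2(H_h)\oplus(\text{rel part})$ and $H_2(H_h)=0$. Moving that homology by the isotopy, we would get $k[S_{h,2}]=0$ in $H_2(X',\partial D_h)$, which is the contradiction we want.

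The main obstacle is making this last homology argument rigorous: choosing the right ambient space in which the two disks and the isotopy both live, and checking that $[S_{h,2}]$ survives in $H_2(\,\cdot\,,\partial D_h)$. I would handle this with the exact sequence of the pair. The image of $H_2(\partial D_h)=0$ vanishes, and $H_1(\partial D_h)\to H_1(X')$ need not matter for injectivity of $H_2(X')\to H_2(X',\partial D_h)$, since that kernel comes from $H_2(\partial D_h)=0$. So it remains to see $[S_{h,2}]\neq 0$ in $H_2(X')$. Here $X'$ is a complement of a genus-$g$ handlebody, i.e.\ of a wedge of circles up to homotopy, and $S_{h,2}$ links the second core once (Alexander duality pairs it with the second generator of $H_1(H_h)$). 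A cleaner alternative that avoids pushing $H_h$ is to note that an isotopy of $H_h$ rel.\ $\partial$ restricts to an isotopy of the compressing disk in the complement of the other compressing disks' meridian data. Either way, the linking pairing of $S_{h,2}$ with the core of the second handle, which is disjoint from $D_h$ and from the isotopy region, detects the coefficient $k$.
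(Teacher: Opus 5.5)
Your reduction to comparing $\boldsymbol{\beta}^kH_h$ with $H_h$ is fine, and so is the invariant you have in mind: the relative class of $D_h$, together with $H_2(H_h)=0$. The ambient space you choose, however, makes the argument fail. There is a genuine gap.

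The cuffs cannot be pushed off $H_h$. $S_{h,1}$ meets $D_h\subset H_h$ algebraically once, and $S_{h,2}$ likewise meets a compressing disk of the second handle once. This intersection number depends only on the class of the cuff in $H_2(S^4\setminus K_g)$. So every sphere in that class meets $H_h$, and likewise any push-off $H_h^+$ rel.\ $K_g$.

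Your setup is in fact inconsistent. With $F=H_h$, condition (2) asks $D_R=D_h\subset F$ to meet a cuff lying in $S^4\setminus F$.

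Relatedly, $S_{h,i}$ is not the meridian sphere of the $i$th core of $H_h$. In the standard picture, $S_{h,i}\cap(S^3\times0)$ is a circle in $\mathrm{int}(H_h)$ running once over the $i$th handle. The meridian spheres of the cores of $H_h$ are disjoint from $H_h$ and correspond to the vertical spheres $S_{v,i}$, which $D_h$ does not see.

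Consequently $\boldsymbol{\beta}^kD_h\not\subset X'$, and ``$[S_{h,2}]\neq0\in H_2(X')$'' has no meaning. Even if it did, an isotopy rel.\ $K_g$ from $\boldsymbol{\beta}^kH_h$ to $H_h$ has no reason to avoid $H_h^+$ or the core of the second handle. So classes in $H_2(X',\partial D_h)$ are not isotopy invariants. Your ``cleaner alternative'' has the same defect: $S_{h,2}$ meets the core of the second handle, so the linking pairing you invoke is not defined.

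The missing idea is to work in the complement of the surface $K_g=\partial H_h$, not of a handlebody, as the paper does. Set $A:=(S^4\setminus K_g)\cup\partial D_h$. Then each of your ingredients goes through:
\begin{enumerate}
\item An isotopy rel.\ $K_g$ preserves $A$ and fixes $\partial D_h$. Hence it preserves $[\boldsymbol{\beta}^kD_h,\partial D_h]\in H_2(A,\partial D_h)$.
\item By Lemma~\ref{lem:barbelltubing}, iterated, this class equals $[D_h,\partial D_h]\pm k[S_{h,2}]$. Iteration is fine because $\boldsymbol{\beta}_*$ fixes $[S_{h,1}]$ and $[S_{h,2}]$, the cuffs being disjoint with trivial normal bundles.
\item Your exact-sequence remark shows $H_2(S^4\setminus K_g)\to H_2(A,\partial D_h)$ is injective. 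Since $[S_{h,2}]$ is a basis element of $H_2(S^4\setminus K_g)\cong\mathbb{Z}^{2g}$, we get $k[S_{h,2}]\neq0$ there.
\item The image disk $D'\subset H_h$ is homologous to $D_h$ rel.\ $\partial D_h$ inside $\mathrm{int}(H_h)\cup\partial D_h\subset A$, because $H_2(H_h)=0$. This must be done inside $\mathrm{int}(H_h)\cup\partial D_h$, not merely in a neighbourhood of $H_h$, which contains $K_g$.
\end{enumerate}
Together these give the contradiction.
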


\begin{proof}
It is sufficient to prove that $\boldsymbol{\beta}^{k}H_{h}$ and $H_{h}$ are
not isotopic rel.\ $\partial$ for $k\neq0$. The genus $2$ case is drawn in Figure~\ref{fig:sigma2-barbells}.

To use Equation~(\ref{eq:simple-homology}), 
we consider the homology class of the $2$-disk $D_h$: 
we have
\[
[\boldsymbol{\beta}^{k}D_h,\partial D_h]=k[S_{h,2}]+[D_h,\partial D_h]\neq[D_h,\partial D_h]\in H_{2}((S^{4}\setminus K_{g})\cup\partial D_h,\partial D_h).
\]
Since $H_{2}(H_{h})=0$, any two properly embedded disks in $H_{h}$
with the same boundary have the same homology class. However, if $\boldsymbol{\beta}^{k}H_{h}$
and $H_{h}$ were isotopic rel.\ $\partial$, then the isotopy maps
$\boldsymbol{\beta}^{k}D_h$ to a properly embedded disk $D'$ in $H_{h}$ with
$\partial D'=\partial D_h$, but 
\[
[D',\partial D']=[\boldsymbol{\beta}^{k}D_h,\partial D_h]\neq[D_h,\partial D_h]\in H_{2}((S^{4}\setminus K_{g})\cup\partial D_h,\partial D_h),
\]
which is a contradiction.
\end{proof}
\begin{rem}
\label{rem:simple-barbel-d5}
In Corollary~\ref{cor:simple-5d}, we show that for $k\neq\ell$, the handlebodies
$\boldsymbol{\beta}^{k}H_{h}$ and $\boldsymbol{\beta}^{\ell}H_{h}$ actually stay non-isotopic rel.\ $\partial$
even when their interiors are pushed into $B^{5}$. 
\end{rem}

\section{\label{sec:More-complicated-barbells}Virtually simple barbell implantations}

In this section we use slightly more complicated barbells to construct more splitting spheres and handlebodies in $S^4$. In particular, in Theorem \ref{thm:simple-splitting-spheres} we construct infinitely many pairwise non-isotopic splitting spheres for the unlink of two unknotted surfaces of genus $m$ and $n$ for all $m \geq 1, n\geq 0$, and in Theorem \ref{thm:genus1-handlebody} we construct infinitely many knotted solid tori in $S^4$; the latter covers the last remaining case of \cite[Conjecture 11.3]{budney2021knotted3ballss4}. We note that later in Section \ref{sec:Codimension--submanifolds}, we give another construction of knotted solid tori in $S^4$, and show that both families of knotted solid tori remain knotted when their interiors are pushed into $B^5$ (Theorem \ref{thm:morsesimple-s3} and Corollary~\ref{cor:bgconj}). We chose to include the construction of Theorem \ref{thm:genus1-handlebody} to further illustrate the techniques of this section.

The barbells used in this section are just slightly more complicated than the simple barbells of Section \ref{sec:Simple-barbell-implantations} (see Definition \ref{def:A-simple-barbell}). For a given subset $F\subset S^4$, the use of simple barbells in $S^4\setminus F$ required $S^4 \setminus F$ to contain two 2-spheres whose homology classes are linearly independent in $H_2(S^4 , F)$.
In this section, we only require that $S^4 \setminus F$ has one 2-sphere which is nontrivial in $H_2(S^4 , F)$, but we also require that $S^4 \setminus F$ has nontrivial $\pi_1$. The upshot is that the barbells in this section will lift to simple barbells in some cover, where we can once again leverage the methods of Section \ref{sec:Simple-barbell-implantations}.

We begin by constructing infinitely many pairwise non-isotopic splitting spheres for the split ``link'' $K \sqcup C \subset S^4$ of a surface $K$ and a circle $C$ in $S^4$.
Let $S,S'$ be two parallel copies of the meridional 2-sphere of $C$, and let $\beta_{k}$ be a barbell in $S^4 \setminus (K\sqcup C)$ with cuffs $S,S'$ and a bar that winds $k$ times around the
meridional circle $\mu_{K}$ of $K$; see Figure~\ref{fig:simplebarbell-1}~(a). 

\begin{figure}[h]
\begin{centering}
\includegraphics{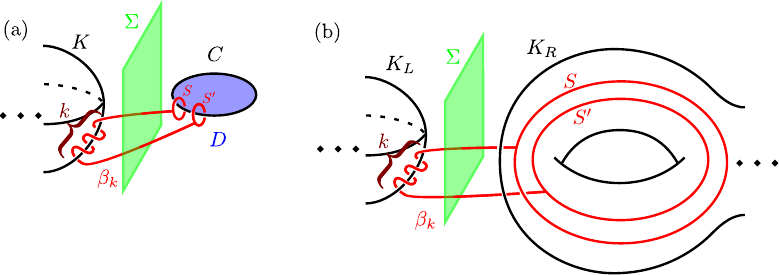}
\par\end{centering}
\caption{\label{fig:simplebarbell-1}(a): A split link $K\sqcup C \subset S^{4}$ of a knotted surface $K$ and
a circle $C$, a splitting sphere $\Sigma$ for $K\sqcup C $, and a barbell
$\beta_{k}$ in the complement of $K\sqcup C\subset S^{4}$. The
cuffs of $\beta_k$ are $S,S'$ the bar winds $k$ times around the meridian
of $K$. (b) A knotted
surface $K_{L}\subset S^{4}$, an unknotted surface $K_{R}$,
a standard splitting sphere $\Sigma$, and a barbell $\beta_{k}$
in the complement of $K_{L}\sqcup K_{R}\subset S^{4}$. The cuffs of $\beta_k$ are
$S,S'$ and the bar winds $k$ times around the meridian of $K_{L}$.}
\end{figure}

\begin{thm}
\label{thm:less-simple}Let $\Sigma$ be any splitting 3-sphere of
$K\sqcup C$. The splitting spheres $\Sigma,\boldsymbol{\beta_1}\Sigma,\boldsymbol{\beta_2}\Sigma,\cdots$
are pairwise non-isotopic rel.\ $K\sqcup C$.
\end{thm}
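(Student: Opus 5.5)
The plan is to lift to a covering space where $\beta_k$ becomes a simple barbell, and then run the argument of Theorem~\ref{thm:circle-splittingspheres} there. Set $X:=S^4\setminus\mathring N(K\sqcup C)$, and let $L:=S^4\setminus\mathring N(K)$ and $R:=S^4\setminus\mathring N(C)$. Then $X\cong L\# R$, with the splitting sphere $\Sigma$ as the connect-sum sphere. Since $H_1(L)\cong\mathbb{Z}$ is generated by $\mu_K$, there is a map $\pi_1(X)\to\pi_1(L)\to\mathbb{Z}$. Let $p:\widetilde X\to X$ be the associated infinite cyclic cover, with deck generator $t$.

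Next we describe the cover concretely. It is $\widetilde L$, the infinite cyclic cover of $L$, with a copy $t^jR$ of $R$ connect-summed at each lift $t^j\Sigma$. Take the disk $D$ bounded by $C$ with $D\cap S=$ one point and $D\cap S'=\emptyset$; this is the $D_R$ from the picture, after taking $S'$ to be pushed off. Then $D$ lifts to $\widetilde D\subset R$, its cuffs lift to $S,S'\subset R$, and their translates lie in $t^jR$. The bar of $\beta_k$ winds $k$ times around $\mu_K$, so the lifted barbell $\widetilde\beta_k$ has cuffs $S'$ (in $R$) and $t^{k}S$ (in $t^kR$), up to sign of the exponent.

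By the proof of Lemma~\ref{lem:barbelltubing}, $\boldsymbol{\beta_k}(D)$ is $D$ tubed along the bar to a copy of $\pm S'$. Its lift is $\widetilde D$ tubed to $\pm S'$ in $R$, and tubing to a parallel copy changes nothing. I therefore expect the relevant lift to hit the other cuff instead. Concretely, I would choose $D$ to meet $S'$ once and miss $S$; then $[\widetilde{\boldsymbol{\beta_k}D}]=[\widetilde D]\pm t^{k}[S]$ in $H_2(\widetilde X,\partial\widetilde D)$. Note that $t^kS$ lies in $t^kR$, which is a different summand from $R$ when $k\neq0$.

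Now suppose $\boldsymbol{\beta_k}\Sigma\approx\boldsymbol{\beta_\ell}\Sigma$ rel.\ $K\sqcup C$ with $k\ne\ell$. Composing with $\boldsymbol{\beta_\ell}^{-1}$ does not simplify matters, since the barbells differ, so I argue directly. An isotopy $\Sigma\approx\Sigma'$ in $X$ extends to an ambient isotopy of $X$ rel.\ $\partial$. The side of $\boldsymbol{\beta_k}\Sigma$ containing $C$ is $\boldsymbol{\beta_k}(R)$, which contains $\boldsymbol{\beta_k}D$. The isotopy carries $\boldsymbol{\beta_k}D$ into $\boldsymbol{\beta_\ell}(R)$ rel.\ boundary, and this lifts to the cover. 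It is cleaner to compare everything to $\Sigma$, which is what I will do.

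Step 1: let $\phi=\boldsymbol{\beta_\ell}^{-1}\circ(\text{isotopy})\circ\boldsymbol{\beta_k}$. This is a diffeomorphism of $X$ rel.\ $\partial$ preserving $\Sigma$ and hence $R$. Its lift preserves $\widetilde R$, the lift of $R$ containing $\widetilde D$, so the class of the lifted $\phi(D)$ lies in the image of $H_2(\widetilde R,\partial\widetilde D)$.

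Step 2: compute the lift of $\phi(D)$ homologically. It equals $[\widetilde D]\pm t^k[S]\mp t^\ell[S]$, using that $\boldsymbol{\beta_\ell}^{-1}$ acts as in Equation~(\ref{eq:barbell-action}) with the opposite sign. One must check that the isotopy acts trivially on homology rel.\ boundary, which holds because it is isotopic to the identity.

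Step 3: use the splitting $H_2(\widetilde X,\partial\widetilde D)\cong H_2(\widetilde L)\oplus\bigoplus_j H_2(t^jR,\cdot)$. The classes $t^kS$ and $t^\ell S$ are nonzero and independent, since each is the meridian of a distinct lift of $C$, so the element from Step 2 is not in the $R$-summand. This contradiction proves the theorem, including the case $\ell=0$ (where $\boldsymbol{\beta_0}$ is trivial, matching $\Sigma$).

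I expect the main obstacle to be bookkeeping the lift. This means checking which cuff $\widetilde D$ meets and which translate the other cuff lands in. It also means verifying that $\boldsymbol{\beta_k}$ really lifts to the diffeomorphism induced by the lifted barbells, namely the product of $\widetilde\beta_k$ over all its deck translates, which act on disjoint pieces. If the choice of which cuff $D$ meets is wrong, the computation degenerates to tubing with a parallel sphere, so this is the point to check carefully.
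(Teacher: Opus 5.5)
Your strategy is the paper's: pass to a cyclic cover unwinding $\mu_K$, split $H_2(\widetilde X,\partial\widetilde D)$ along the lifted connected-sum spheres, compute the lifted barbell action via Equation~(\ref{eq:barbell-action}), and reduce $\boldsymbol{\beta_k}\Sigma\approx\boldsymbol{\beta_\ell}\Sigma$ to your composite $\phi$, which preserves the $C$-side. The paper uses an $m$-fold cyclic cover with $m>2k+2\ell+100$; your infinite cyclic cover works equally well and removes the wraparound bookkeeping.

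\textbf{The gap.} The central computation is built on a configuration that does not exist. Both cuffs $S,S'$ are parallel copies of the meridional $2$-sphere of $C$, so they are homologous in $X$. Hence any disk $D$ with $\partial D=C$ satisfies $D\cdot S=D\cdot S'=\pm1$, and you cannot choose $D$ to meet one cuff and miss the other.

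The worry that motivated this choice is also unfounded. The bar winds $k$ times around $\mu_K$. So in the cover, the lift of the bar starting at whichever cuff $\widetilde D$ meets in $\widetilde R$ ends on a translate of the other cuff lying in $t^{\pm k}\widetilde R$. It never ends on a parallel copy inside $\widetilde R$. Your sentence ``its lift is $\widetilde D$ tubed to $\pm S'$ in $R$'' is where this goes wrong, and there is no degeneration to avoid.

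\textbf{The correct count.} Two lifted barbells meet $\widetilde D$: one with cuffs $\widetilde S,\ t^k\widetilde{S'}$, and one with cuffs $t^{-k}\widetilde S,\ \widetilde{S'}$. Therefore
$[\widetilde{\boldsymbol{\beta_k}}\widetilde D,\partial\widetilde D]=[\widetilde D,\partial\widetilde D]+t^k[\widetilde S]-t^{-k}[\widetilde{S'}]$.
Your Step~2 should then produce four correction terms, $t^k[\widetilde S]-t^{-k}[\widetilde{S'}]-t^{\ell}[\widetilde S]+t^{-\ell}[\widetilde{S'}]$.

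Step~3 needs one extra line to rule out cancellation among these terms. Set $M:=\max\{k,\ell\}\ge1$. The exponent $M$ occurs exactly once among $\pm k,\pm\ell$. So the component in the $H_2(t^M\widetilde R)\cong\mathbb{Z}$ summand is $\pm t^M[\widetilde S]\neq0$, and the class does not lie in the $H_2(\widetilde R,\partial\widetilde D)$ summand.

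With this correction your Steps~1--3 go through and agree with the paper's proof.
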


\begin{proof}
Let $L:=S^{4}\setminus \mathring N(K)$, $R:=S^{4}\setminus \mathring N(C)$, and $X:=S^{4}\setminus \mathring N(K\sqcup C)=L\#R$.
For notational simplicity, we first show that for any $k\ge1$, $\boldsymbol{\beta_{k}}\Sigma$
and $\Sigma$ are not isotopic in $L\#R$ rel.\ $\partial$.

For $m>2k+100$, consider the $m$-fold cyclic
cover $\widetilde{X}$ of $X$ that ``unwinds the meridian $\mu_{K}$ of $K$'',
i.e.\ that corresponds to the kernel of 
\[
\pi_{1}(X)\to H_{1}(X)=\left\langle \mu_{K}\right\rangle \cong\mathbb{Z}\twoheadrightarrow\mathbb{Z}/m\mathbb{Z}.
\]
Similarly let $\widetilde{L}$ be the $m$-fold cyclic cover of $L$ corresponding to the kernel of 
\[
\pi_{1}(L)\to H_{1}(L)=\left\langle \mu_{K}\right\rangle \cong\mathbb{Z}\twoheadrightarrow\mathbb{Z}/m\mathbb{Z}.
\]
Then $\widetilde{X}\cong\widetilde{L}\#\left(\#^{m}R\right)$. Let $\rho:\widetilde{X}\to\widetilde{X}$ be the deck transformation
that corresponds to $\mu_{K}$. Choose a summand  $\widetilde{R}$ of $\#^{m}R$, and let $\widetilde{D},\widetilde{S},$
and $\widetilde{S'}$ denote the lifts of $D,S,$ and $S'$, respectively,
that live inside $\widetilde{R}$.

Then, $H_{2}(\widetilde{X},\partial\widetilde{D})$ decomposes as
\begin{equation}
H_{2}(\widetilde{L})\oplus H_{2}(\widetilde{R},\partial\widetilde{D})\oplus\left(\bigoplus_{i=1}^{m-1}H_{2}(\rho^{i}\widetilde{R})\right).\label{eq:nfold-directsum}
\end{equation}
Let $\widetilde{\boldsymbol{\beta_{k}}}:\widetilde{X}\to\widetilde{X}$ be the
lift of $\boldsymbol{\beta_{k}}$ that fixes $\partial\widetilde{D}$. That $\boldsymbol{\beta_{k}}\Sigma$
and $\Sigma$ are not isotopic in $L\#R$ rel.~$\partial$ follows
from the following two statements:
\begin{enumerate}
\item We have 
\[
[\widetilde{\boldsymbol{\beta_{k}}}\widetilde{D},\partial\widetilde{D}]=[\widetilde{D},\partial\widetilde{D}]+[\rho^{k}\widetilde{S}]-[\rho^{-k}\widetilde{S'}]\in H_{2}(\widetilde{X},\partial\widetilde{D}).
\]
Note that the terms on the right hand side are all nonzero and lie
in different summands of Equation (\ref{eq:nfold-directsum}).
\item If $\boldsymbol{\beta_{k}}\Sigma$ and $\Sigma$ are isotopic in $L\#R$ rel.\ $\partial$,
then the lift of this isotopy to $\widetilde{X}$ takes $\widetilde{\boldsymbol{\beta_{k}}}\widetilde{D}$
into $\widetilde{R}$, and so $[\widetilde{\boldsymbol{\beta_{k}}}\widetilde{D},\partial\widetilde{D}]$
lies in the $H_{2}(\widetilde{R},\partial\widetilde{D})$ summand
of Equation (\ref{eq:nfold-directsum}).
\end{enumerate}
It remains to prove the first statement. The diffeomorphism $\widetilde{\boldsymbol{\beta_{k}}}:\widetilde{X}\rightarrow\widetilde{X}$
is a composition of $m$ barbell diffeomorphisms corresponding to the $m$ barbells in $\widetilde{X}$ that cover the barbell $\beta_{k}$ in $X$. There are two lifts of $\beta_k$ that
intersect the disk $\widetilde{D}$: one barbell whose cuffs are $\widetilde{S}$
and $\rho^{k}\widetilde{S'}$, and one barbell whose cuffs are $\rho^{-k}\widetilde{S}$
and $\widetilde{S'}$. Therefore
the first statement follows by Equation~(\ref{eq:barbell-action}).

The proof that $\boldsymbol{\beta_{\ell}}^{-1}\boldsymbol{\beta_{k}}\Sigma$ and $\Sigma$ are
not isotopic in $L\#R$ rel.\ $\partial$ for positive integers $k\neq\ell$
is similar. Let $m>2k+2\ell+100$, and proceed as before. Then, we have 
\[
[\widetilde{\boldsymbol{\beta_{\ell}}}^{-1}\widetilde{\boldsymbol{\beta_{k}}}\widetilde{D},\partial\widetilde{D}]=[\widetilde{D},\partial\widetilde{D}]+[\rho^{k}\widetilde{S}]-[\rho^{-k}\widetilde{S'}]-[\rho^{\ell}\widetilde{S}]+[\rho^{-\ell}\widetilde{S'}]\in H_{2}(\widetilde{X},\partial\widetilde{D}),
\]
but if $\boldsymbol{\beta_{\ell}}^{-1}\boldsymbol{\beta_{k}}\Sigma$ and $\Sigma$ were isotopic
in $L\#R$ rel.\ $\partial$, then $[\widetilde{\boldsymbol{\beta_{\ell}}^{-1}}\widetilde{\boldsymbol{\beta_{k}}}\widetilde{D},\partial\widetilde{D}]$
would lie in the $H_{2}(\widetilde{R},\partial\widetilde{D})$ summand
of Equation (\ref{eq:nfold-directsum}), which is a contradiction.
\end{proof}
\begin{rem}
The above proof also shows that for any two distinct integer sequences
$(a_{1},\cdots,a_{n})$ and $(b_{1},\cdots,b_{n})$, $\boldsymbol{\beta_{1}}^{a_{1}}\cdots\boldsymbol{\beta_{n}}^{a_{n}}\Sigma$
and $\boldsymbol{\beta_{1}}^{b_{1}}\cdots\boldsymbol{\beta_{n}}^{b_{n}}\Sigma$ are not isotopic
rel.\ $K\sqcup C$.
\end{rem}

The same argument gives infinitely many splitting spheres for any
split link $K_{L}\sqcup K_{R} \subset S^{4}$, where $K_{L}$ is any surface and $K_{R}$ is any
unknotted surface of genus $\ge1$. Let $\beta_{k}$ be a barbell whose cuffs are parallel
copies of the sphere $S_{h,1}$ for $K_{R}$ and whose
bar winds $k$ times around the meridian $\mu_{K_{L}}$ of $K_{L}$; see Figure~\ref{fig:simplebarbell-1}~(b).
\begin{thm}
\label{thm:simple-splitting-spheres}The splitting spheres $\Sigma,\boldsymbol{\beta_{1}}\Sigma,\boldsymbol{\beta_{2}}\Sigma,\cdots$
are pairwise non-isotopic rel.\ $K_{L}\sqcup K_{R}$.
\end{thm}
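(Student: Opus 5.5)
The plan is to follow the proof of Theorem~\ref{thm:less-simple} almost verbatim, replacing the circle $C$ by the unknotted surface $K_R$ and the meridional sphere of $C$ by the horizontal sphere $S_{h,1}$ of $K_R$. Set $L:=S^{4}\setminus\mathring N(K_L)$, $R:=S^{4}\setminus\mathring N(K_R)$, and $X:=S^4\setminus\mathring N(K_L\sqcup K_R)\cong L\#R$. As the test disk, take the compressing disk $D:=D_h$ of the horizontal handlebody of $K_R$ from Figure~\ref{fig:kg}. It meets $S_{h,1}$ once, is disjoint from the other horizontal and vertical spheres, and has boundary on $\partial N(K_R)$. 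Since $\Sigma$ separates $K_L$ from $K_R$, we may take $D$ on the $K_R$ side of $\Sigma$, disjoint from $\Sigma$. An isotopy of $\boldsymbol{\beta_k}\Sigma$ to $\Sigma$ rel.\ $\partial$ would then carry $\boldsymbol{\beta_k}D$ into $R$ rel.\ $\partial D$. As in Theorem~\ref{thm:less-simple}, it suffices to treat $\boldsymbol{\beta_\ell}^{-1}\boldsymbol{\beta_k}\Sigma$ versus $\Sigma$ for $k\neq\ell$ (with $\ell=0$ meaning $\Sigma$ itself).

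Next, fix $m>2k+2\ell+100$ and pass to the $m$-fold cyclic cover $\widetilde X$ associated to $\pi_1(X)\to H_1(X)\to\mathbb{Z}/m$. The map to $\mathbb{Z}/m$ counts linking with $K_L$ only, i.e.\ it sends $\mu_{K_L}\mapsto 1$ and $\mu_{K_R}\mapsto 0$. Here $H_1(X)\cong\mathbb{Z}\langle\mu_{K_L}\rangle\oplus\mathbb{Z}\langle\mu_{K_R}\rangle$ by Alexander duality. Since the $R$ summand has trivial image, it lifts to $m$ copies, and $\widetilde X\cong\widetilde L\#(\#^m R)$. Hence
\[
H_2(\widetilde X,\partial\widetilde D)\cong H_2(\widetilde L)\oplus H_2(\widetilde R,\partial\widetilde D)\oplus\bigoplus_{i=1}^{m-1}H_2(\rho^i\widetilde R),
\]
and each $H_2(\rho^i\widetilde R)\cong H_2(R)\cong\mathbb{Z}^{2n}$ contains the nonzero class $[\rho^i\widetilde S]$ of the lifted horizontal sphere. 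The main point to check is that the connected sum decomposition lifts in this way; this holds because $L$ carries all of the $\mathbb{Z}/m$ monodromy.

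The lifted diffeomorphism $\widetilde{\boldsymbol{\beta_k}}$ fixing $\partial\widetilde D$ is the product of the $m$ lifted barbell maps. Exactly two of the lifted barbells have a cuff meeting $\widetilde D$: one with cuffs $\widetilde S,\rho^k\widetilde{S'}$ and one with cuffs $\rho^{-k}\widetilde S,\widetilde{S'}$. Since $S$ and $S'$ are parallel copies of $S_{h,1}$, Lemma~\ref{lem:barbelltubing} then gives
\[
[\widetilde{\boldsymbol{\beta_\ell}}^{-1}\widetilde{\boldsymbol{\beta_k}}\widetilde D,\partial\widetilde D]=[\widetilde D,\partial\widetilde D]\pm\big([\rho^k\widetilde S]-[\rho^{-k}\widetilde {S}]-[\rho^{\ell}\widetilde S]+[\rho^{-\ell}\widetilde S]\big).
\]
The choice of $m$ guarantees that $\pm k,\pm\ell$ are distinct and nonzero mod $m$, except that $\ell=0$ simply drops the last two terms. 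So the correction terms lie in distinct summands $H_2(\rho^i\widetilde R)$ with $i\neq0$ and do not cancel.

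I expect the main obstacle to be the bookkeeping in this last step. One must confirm that applying the barbell maps sequentially (with $\widetilde D$ already tubed) introduces no further terms. This requires $\widetilde S$ and $\widetilde{S'}$ to have zero algebraic intersection with the lifted spheres, which holds because parallel copies of $S_{h,1}$ have trivial normal bundle and are disjoint. It also requires that the $\pm$ signs from the orientation conventions do not matter, and they do not, since only nonvanishing is needed. Given this, the class has nonzero components outside $H_2(\widetilde R,\partial\widetilde D)$. But a lifted isotopy would place it in that summand, which is a contradiction.
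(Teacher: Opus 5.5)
Your proposal is correct and is essentially the paper's proof. The paper likewise reruns the argument of Theorem~\ref{thm:less-simple} with $K,C$ replaced by $K_L,K_R$, using the $m$-fold cyclic cover that reduces $\mu_{K_L}$ mod $m$ and kills $\mu_{K_R}$. There, too, $\widetilde X\cong\widetilde L\#(\#^m R)$, and the lifted barbells contribute the classes $[\rho^{\pm k}\widetilde S]$ and $[\rho^{\pm \ell}\widetilde S]$ in distinct summands outside $H_2(\widetilde R,\partial\widetilde D)$.
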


\begin{proof}
\noindent The same proof as Theorem~\ref{thm:less-simple} works,
replacing $K,C$ by $K_{L},K_{R}$, respectively. The only
difference is that $H_{1}(X)=\left\langle \mu_{K_{L}},\mu_{K_{R}}\right\rangle \cong\mathbb{Z}^{2}$,
where $\mu_{K_{L}}$ (resp.\ $\mu_{K_{R}}$) is the meridian of $K_{L}$
(resp.\ $K_{R}$). The $m$-fold cyclic cover of $X$ now corresponds
to the kernel of 
\[
\pi_{1}(X)\to H_{1}(X)\twoheadrightarrow\mathbb{Z}/m\mathbb{Z},
\]
where the last map is given by quotienting out by $m\mu_{K_{L}}$
and $\mu_{K_{R}}$.
\end{proof}

\begin{figure}[h]
\begin{centering}
\includegraphics{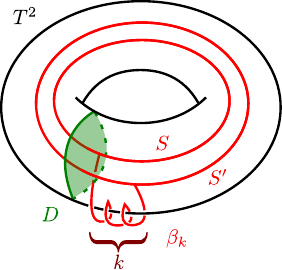}
\par\end{centering}
\caption{\label{fig:genus1-handlebody}The standard torus $T^{2}\subset S^{3}\times0 \subset S^4$, a barbell $\beta_k$ whose cuffs $S,S'$ are parallel copies of the horizontal sphere $S_h$ and its bar winds $k$ times around the meridian of $T^2$, and 
a compressing disk $D$ of the horizontal handlebody $H_h$.}
\end{figure}

Lastly, we construct infinitely many knotted genus $1$ handlebodies in $S^4$. The setting is depicted in Figure \ref{fig:genus1-handlebody}. Let $T^2$ be the standard genus $1$ surface in $S^3\times 0\subset S^4$, let $D$ be a compressing disk of the horizontal handlebody $H_h$, and let the \emph{horizontal barbell} $\beta_k$ be the barbell whose cuffs are two parallel copies $S, S'$ of the horizontal sphere $S_h$, and whose bar winds $k$ times around the meridian of $T^2$. 

\begin{thm}\label{thm:genus1-handlebody}
The genus $1$ handlebodies $\{H_h\}\cup\{\boldsymbol{\beta_{k}}H_h\}_{k\ge1}$
in $S^{4}$ are pairwise non-isotopic rel.~$\partial$.
\end{thm}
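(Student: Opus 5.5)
We combine the covering-space argument of Theorem~\ref{thm:less-simple} with the disk-homology argument of Theorem~\ref{thm:simple-knotted-handlebody}. Let $X:=S^4\setminus\mathring N(T^2)$. Then $H_1(X)=\langle\mu\rangle\cong\mathbb{Z}$, where $\mu$ is the meridian of $T^2$. Fix $m$ large compared to $k,\ell$ (say $m>2k+2\ell+100$). Let $\widetilde X$ be the $m$-fold cyclic cover corresponding to $\pi_1(X)\to\mathbb{Z}\twoheadrightarrow\mathbb{Z}/m$, with deck transformation $\rho$. The compressing disk $D$ lies in $H_h$ and is disjoint from $\mu$ in the appropriate sense, so $\partial D$ and $D$ lift. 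Let $\widetilde D$ be a chosen lift; the spheres $\widetilde S,\widetilde{S'}$ are lifts of $S_h$, and $\widetilde D$ meets $\widetilde S$ once.

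\textbf{Step 1: computing $H_2(\widetilde X,\partial\widetilde D)$.} We need the classes $[\rho^i\widetilde S_h]$, $i\in\mathbb{Z}/m$, to be linearly independent (and nonzero) in this group. The cleanest route is geometric duality. The vertical sphere $S_v$ meets $S_h$ in two points of opposite sign, and these two points should differ by the meridian loop. One checks this from Figure~\ref{fig:genus1-handlebody}: the two intersection points of $S_h$ and $S_v$ are connected, through $S_h\cup S_v$, by a loop linking $T^2$ once. Hence in the cover the lifts satisfy $\rho^i\widetilde S_h\cdot\rho^j\widetilde S_v=\pm(\delta_{i,j}-\delta_{i,j+1})$, up to a shift of indices. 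This intersection matrix is not directly invertible, so it is better to pair instead with the lifts of the compressing disk $D$ itself, or to pair with $H_2(\widetilde X,\partial\widetilde X)$. Concretely, $\rho^j\widetilde D$ is a relative class with $\rho^i\widetilde S_h\cdot\rho^j\widetilde D=\delta_{ij}$. Pairing in $H_2(\widetilde X,\partial \widetilde X)$ therefore shows that the $[\rho^i\widetilde S_h]$ are independent in $H_2(\widetilde X)$, and hence also modulo the image of $H_2(\partial\widetilde D)=0$. This is the step I expect to need the most care: one must verify that the $\rho^j\widetilde D$ are genuinely disjoint from $\rho^i\widetilde S_h$ for $i\ne j$. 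This holds because $D$ meets $S_h$ exactly once and lifts homeomorphically.

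\textbf{Step 2: the action of the lifted barbell.} As in the proof of Theorem~\ref{thm:less-simple}, let $\widetilde{\boldsymbol{\beta_k}}$ be the lift fixing $\partial\widetilde D$; it is a composition of the $m$ lifted barbell maps. Exactly two lifts have a cuff meeting $\widetilde D$. One has cuffs $\widetilde S,\rho^k\widetilde{S'}$ and the other has cuffs $\rho^{-k}\widetilde S,\widetilde{S'}$. By Lemma~\ref{lem:barbelltubing},
\[
[\widetilde{\boldsymbol{\beta_k}}\widetilde D,\partial\widetilde D]=[\widetilde D,\partial\widetilde D]+[\rho^k\widetilde S_h]-[\rho^{-k}\widetilde S_h].
\]
Similarly, $[\widetilde{\boldsymbol{\beta_\ell}}^{-1}\widetilde{\boldsymbol{\beta_k}}\widetilde D,\partial\widetilde D]$ differs from $[\widetilde D,\partial\widetilde D]$ by $[\rho^k\widetilde S_h]-[\rho^{-k}\widetilde S_h]-[\rho^\ell\widetilde S_h]+[\rho^{-\ell}\widetilde S_h]$. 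The indices $\pm k,\pm\ell$ are distinct mod $m$ when $k\ne\ell$ are positive, so by Step~1 this difference is nonzero. The case of $H_h$ versus $\boldsymbol{\beta_k}H_h$ is the special case with only the $k$-terms.

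\textbf{Step 3: contradiction.} Suppose $\boldsymbol{\beta_k}H_h\approx\boldsymbol{\beta_\ell}H_h$ rel.~$\partial$. Then $\boldsymbol{\beta_\ell}^{-1}\boldsymbol{\beta_k}H_h\approx H_h$ rel.~$T^2$, and this isotopy can be taken in the complement of $T^2$ away from a collar. The isotopy carries $\boldsymbol{\beta_\ell}^{-1}\boldsymbol{\beta_k}D$ to a properly embedded disk $D'\subset H_h$ with $\partial D'=\partial D$. The isotopy lifts to $\widetilde X$. Moreover $H_h$ lifts homeomorphically: its interior misses $T^2$, and $\pi_1(H_h)$ maps trivially to $H_1(X)$ after pushing off, as the core of $H_h$ bounds a disk in $S^3$ disjoint from $T^2$ (after a pushoff). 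Inside the lifted handlebody, whose $H_2$ vanishes, $D'$ lifts to a disk homologous to $\widetilde D$ rel.~boundary. This contradicts Step~2.
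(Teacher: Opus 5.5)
Your Steps~1 and~2 are fine; they are the unbranched-cover computation of Theorem~\ref{thm:less-simple}. The gap is in Step~3, which rests on the unproved claim that the isotopy from $\boldsymbol{\beta_\ell}^{-1}\boldsymbol{\beta_k}H_h$ to $H_h$ rel $T^2$ ``can be taken away from a collar.''

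In this paper, rel $\partial$ only fixes $T^2$ pointwise. The inward normal of the solid torus along $T^2$ is a section of the unit normal circle bundle, and it may turn $n$ full times around the meridian during the isotopy. Restricted to $X$, the boundary curve of the moving disk then sweeps the torus $\partial D\times\mu\subset\partial X$ a total of $n$ times. In the $m$-fold cover, the lifted isotopy starting at $\widetilde{\boldsymbol{\beta}}\widetilde D$ therefore ends at a disk with boundary $\rho^{\pm n}\partial\widetilde D$. That disk lies in the $\rho^{\pm n}$-translate of the lift of $H_h\cap X$ containing $\widetilde D$, not in that lift itself. Even if $m\mid n$, what you get is $[\widetilde{\boldsymbol{\beta}}\widetilde D,\partial\widetilde D]=[\widetilde D,\partial\widetilde D]\pm(n/m)[\widetilde T]$, where $\widetilde T$ is the lifted torus, a nonzero class. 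So ``$D'$ lifts to a disk homologous to $\widetilde D$ rel boundary'' does not follow.

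Nor can you simply absorb the rotation. For $3$-balls bounded by an unknotted $S^2$ you could, but $H_h$ admits no loop of embeddings rel $T^2$ with nonzero rotation. Such a loop would change $[D]$ by $\pm n[\partial D\times\mu]$, and $[\partial D\times\mu]=\pm[S_v]\neq0$ in $H_2(X)$, since it meets the compressing disk of $H_v$ once. This is exactly the ambiguity the paper's proof is built around. In the branched cover the lifted isotopy only lands on $\rho^i\widetilde D$ for an unknown $i$, and the pairings with $[\widetilde S]$, $[\rho^k\widetilde D,\partial\widetilde D]$ and $[\widetilde D,\partial\widetilde D]$ are what dispose of it.

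You can close the gap with tools you already have. In the base, each barbell changes $[D]$ by $[S']-[S]=0$ (Lemma~\ref{lem:barbelltubing}). Hence $[\boldsymbol{\beta_\ell}^{-1}\boldsymbol{\beta_k}D]=[D]=[D']$ in $H_2(X,\partial D)$. On the other hand, the isotopy gives $[D']=[\boldsymbol{\beta_\ell}^{-1}\boldsymbol{\beta_k}D]\pm n[S_v]$. So $n=0$, the rotation can be undone near $T^2$, and your Step~3 then goes through.

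Alternatively, you can keep $n$ and work in $H_2(\widetilde X,\partial\widetilde X)$. Pairing with $[\widetilde{S_h}]$ forces $m\mid n$. Pairing the remaining relation with $[\rho^k\widetilde D,\partial\widetilde D]$, which misses a pushed-off copy of $\widetilde T$, then gives the contradiction.

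With this added, your argument is a valid and somewhat more elementary alternative to the paper's. The unbranched cover and $H_2=0$ of the lifted solid torus replace the branched cover, the uniqueness of compressing disks, and the meridian-sphere bookkeeping. The price is that you must control the collar rotation explicitly.
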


\begin{proof}
For simplicity, we first show that $H_h$ and $\boldsymbol{\beta_{k}}H_h$
are non-isotopic rel.\ $\partial$ for $k\ge1$. For $m>2k+100$,
let $\widetilde{X}$ be the $m$-fold cyclic branched cover of $S^{4}$
along $T^{2}$, and let $\rho:\widetilde{X}\to\widetilde{X}$ be the
deck transformation corresponding to the meridian of $T^{2}$.
Let $\widetilde{D}$, $\widetilde{S}$, and $\widetilde{S'}$ be lifts of $D$, $S$, and $S'$, respectively, to $\widetilde{X}$ such that
\begin{equation}
\left|\widetilde{D}\cap\rho^{i}\widetilde{S}\right|=\left|\widetilde{D}\cap\rho^{i}\widetilde{S'}\right|=\begin{cases}
1 & {\rm if\ } i \equiv 0 \mod m\\
0 & {\rm otherwise}
\end{cases}.\label{eq:intersection-bc}
\end{equation}
Let $\widetilde{\boldsymbol{\beta_{k}}}:\widetilde{X}\to\widetilde{X}$ be the
lift of $\boldsymbol{\beta_{k}}$ that fixes $\widetilde{S}$. Then, arguing similarly
to the proof of Theorem~\ref{thm:less-simple}, we have\footnote{Note that there is no particular reason for working over $\mathbb{F}_{2}$
other than to avoid dealing with signs.}
\[
[\widetilde{\boldsymbol{\beta_{k}}}\widetilde{D},\partial\widetilde{D}]=[\widetilde{D},\partial\widetilde{D}]+[\rho^{k}\widetilde{S}]+[\rho^{-k}\widetilde{S'}]\in H_{2}(\widetilde{X},\partial\widetilde{D};\mathbb{F}_{2}).
\]

Assume that there exists an isotopy rel.\ $\partial$ between $\boldsymbol{\beta_{k}}H_h$
and $H_h$. Then, this isotopy would send $\boldsymbol{\beta_{k}}D$ to a compressing
disk of $H_h$. Since any two compressing disks of $H_h$ are
isotopic rel.\ $\partial$, we have that $\boldsymbol{\beta_{k}}D$ and $D$ are isotopic
rel.\ $\partial$. Lifting this isotopy to $\widetilde{X}$, we have
that $\widetilde{\boldsymbol{\beta_{k}}}\widetilde{D}$ and $\rho^{i}\widetilde{D}$
are isotopic rel.\ $\partial$ for some $i$, and so 
\begin{equation}
[\rho^{i}\widetilde{D},\partial\widetilde{D}]=[\widetilde{D},\partial\widetilde{D}]+[\rho^{k}\widetilde{S}]+[\rho^{-k}\widetilde{S'}]\in H_{2}(\widetilde{X},\partial\widetilde{D};\mathbb{F}_{2}).\label{eq:branched-cover-homology}
\end{equation}
We will show that Equation~(\ref{eq:branched-cover-homology}) is
a contradiction, by using the intersection product
\[
H_{2}(\widetilde{X}\setminus \mathring N(\partial\widetilde{D}),\partial N(\partial\widetilde{D});\mathbb{F}_{2})\otimes H_{2}(\widetilde{X}\setminus \mathring N(\partial\widetilde{D});\mathbb{F}_{2})\to\mathbb{F}_{2}.
\]
Note that $H_{2}(\widetilde{X},\partial\widetilde{D};\mathbb{F}_{2})\cong H_{2}(\widetilde{X}\setminus \mathring N(\partial\widetilde{D}),\partial N(\partial\widetilde{D});\mathbb{F}_{2})$.

First, note that $[\rho^{i}\widetilde{S}]=[\rho^{i}\widetilde{S'}]$
and that the intersection product  $[\rho^{i}\widetilde{S}]\cdot[\rho^{j}\widetilde{S}]=0$
for all $i,j$. Hence, taking the intersection product of both sides
of Equation~(\ref{eq:branched-cover-homology}) with $[\widetilde{S}]$,
we have $i=0$ modulo $m$ by Equation~(\ref{eq:intersection-bc}), and so Equation~(\ref{eq:branched-cover-homology}) implies 
\begin{equation}
[\rho^{k}\widetilde{S}]+[\rho^{-k}\widetilde{S'}]=0\in H_{2}(\widetilde{X},\partial\widetilde{D};\mathbb{F}_{2}).\label{eq:bc2}
\end{equation}
Using the long exact sequence for the pair $(\widetilde{X}\setminus \mathring N(\partial\widetilde{D}),\partial N(\partial\widetilde{D}))$,
we see that the kernel of
\[
H_{2}(\widetilde{X}\setminus \mathring N(\partial\widetilde{D});\mathbb{F}_{2})\to H_{2}(\widetilde{X}\setminus \mathring N(\partial\widetilde{D}),\partial N(\partial\widetilde{D});\mathbb{F}_{2})
\]
is spanned by the meridian $\mu$
of $\partial\widetilde{D}$. Hence, Equation~(\ref{eq:bc2}) implies
\begin{equation}
[\rho^{k}\widetilde{S}]+[\rho^{-k}\widetilde{S'}]=0\ {\rm or}\ [\mu]\in H_{2}(\widetilde{X}\setminus \mathring N(\partial\widetilde{D});\mathbb{F}_{2}).\label{eq:bc3}
\end{equation}

Now we arrive at a contradiction by taking the intersection product of both sides of Equation~(\ref{eq:bc3}) with $[\rho^{k}\widetilde{D},\partial\widetilde{D}]$
and $[\widetilde{D},\partial\widetilde{D}]$:
$$([\rho^k\widetilde{S}]+[\rho^{-k}\widetilde{S}'])\cdot [\rho^k\widetilde D, \partial \widetilde{D}]=1$$ by Equation~(\ref{eq:intersection-bc}), so that $[\rho^k\widetilde{S}]+[\rho^{-k}\widetilde{S}']\neq 0$, while  $$([\rho^k\widetilde{S}]+[\rho^{-k}\widetilde{S}'])\cdot [\widetilde
D, \partial \widetilde{D}]=0 \mathrm{\ and\ } [\mu]\cdot [\widetilde
D, \partial \widetilde{D}]=1,$$ so that $[\rho^k\widetilde{S}]+[\rho^{-k}\widetilde{S}']\neq [\mu]$.

Finally, to show that $\boldsymbol{\beta_{k}}H_h$ and $\boldsymbol{\beta_{\ell}}H_h$ are
non-isotopic rel.\ $\partial$ for positive integers $k,\ell$, we
run the same argument as above, working in the $m$-fold cyclic branched cover of $S^4$ along $T^2$ for $m>2k+2\ell+100$. As before, it reduces
to showing
\[
[\rho^{k}\widetilde{S}]+[\rho^{-k}\widetilde{S'}]+[\rho^{\ell}\widetilde{S}]+[\rho^{-\ell}\widetilde{S'}]\notin\left\langle [\mu]\right\rangle \subset H_{2}(\widetilde{X}\setminus \mathring N(\partial\widetilde{D});\mathbb{F}_{2}),
\]
which follows from considering the intersection product of $[\rho^{k}\widetilde{S}]+[\rho^{-k}\widetilde{S'}]+[\rho^{\ell}\widetilde{S}]+[\rho^{-\ell}\widetilde{S'}]$ with
$[\rho^{k}\widetilde{D},\partial\widetilde{D}]$ and $[\widetilde{D},\partial\widetilde{D}]$.
\end{proof}

\section{\label{sec:Codimension--submanifolds} Morse-simple 3-manifolds in $S^{5}$ and handlebodies in $B^5$}

In this section we use barbells to construct knotting and linking in $S^5$ and $B^5$. 

In Subsection \ref{subsec:3knots3links}, we construct nontrivial 3-knots with four critical points (Theorem \ref{thm:morsesimple-s3}) and $n$-component Brunnian 3-links with $2n+2$ critical points (Theorem \ref{thm:linked-6crit}) for all $n\ge 2$ in $S^5$. Theorem \ref{thm:morsesimple-s3} partially answers a question of Kuiper~\cite[Section 10, page 390]{kuiper1984geometryintotal}. It follows from a result of Komatsu \cite{MR1364071} (see also the unknotting theorem for $3$-knots \cite[Corollary 3.1]{MR184249} and \cite[Theorem 2.1]{MR230325}, and also \cite{hartman_unknotting3balls,powellspanning}) that for all $n\ge 1$, any $n$-component link of 3-spheres in $S^5$ with $2n$ critical points is isotopic to the unlink; hence the term \emph{``Morse-simple''}.

In Subsection \ref{subsec:general3mfds}, we generalize the argument of Theorem \ref{thm:morsesimple-s3} to produce infinitely many non-isotopic embeddings of any 3-manifold of Heegaard genus 1 in $S^5$ with four critical points (Theorem \ref{thm:genus1-hd} and Corollary \ref{cor:morsesimple3mfd}).

Finally, in Subsection \ref{subsec:handlebodycorollaries}, we use the results of Subsections \ref{subsec:3knots3links} and \ref{subsec:general3mfds} to obtain knotted handlebodies of all genus $\ge 1$ in $S^4$ that remain knotted when their interiors are pushed into $B^5$ (Corollaries \ref{cor:bgconj}~and~\ref{cor:simple-5d}), as well as $n$-component Brunnian links of handlebodies with one component having genus $\ge 1$ in $S^4$ that remain linked when their interiors are pushed into $B^5$ (Corollary~\ref{cor:solid-torus-ball-brunnian}). The genus 1 knotted handlebodies of Corollary \ref{cor:bgconj} resolve a conjecture of Budney-Gabai \cite[Conjecture 11.3]{budney2021knotted3ballss4} on the existence of knotted handlebodies in $S^4$. The linked handlebodies of Corollary~\ref{cor:simple-5d}, together with the $n$-component Brunnian links of 3-balls constructed in \cite{kim2026brunnianlinks3balls4sphere} and independently by Niu for the $n=2$ case in \cite{niu2026brunnianspanning3disks2unlink}, provide infinitely many $n$-component Brunnian links of handlebodies of any genera in $S^4$. 

Throughout this section, we view $S^5$ as $B^5 \cup \overline{B^5}$ and think of the common boundary of the 5-balls as the equatorial $S^4$ of $S^5$.
We will consider handlebodies $H \subset S^4$ and push their interiors into $B^5$; the resulting pushed-in handlebody will also be denoted by $H\subset B^5$ by abuse of notation.
We construct $3$-manifolds $Y\subset S^5$ by taking two handlebodies $H_1 , H_2 \subset S^4$ with common boundary and taking the union of the push-ins $H_1 \subset B^5$ and $\overline{H_2} \subset \overline{B^5}$; i.e.\ $Y:=H_1 \cup \overline{H_2}$.

Thinking of $B^5$ as the southern hemisphere and $\overline{B^5}$ as the northern hemisphere of $S^5$, then with respect to the standard height function on $S^5$, the handlebody $H_1 \subset B^5$ contributes index-$0$ and index-$1$ critical points to $Y$, while the handlebody $\overline{H_2} \subset \overline{B^5}$ contributes index-$2$ and index-$3$ critical points to $Y$. In our figures, we draw objects in the equatorial $S^4 \subset S^5$ by drawing their intersections with the equatorial $S^3\times 0$ in $S^4$.

\subsection{\label{subsec:3knots3links} Morse-simple knots and links in $S^5$}

We begin this subsection by constructing infinitely many 3-knots in $S^5$ with four critical points. 

\begin{figure}[h]
\begin{centering}
\includegraphics[scale=0.95]{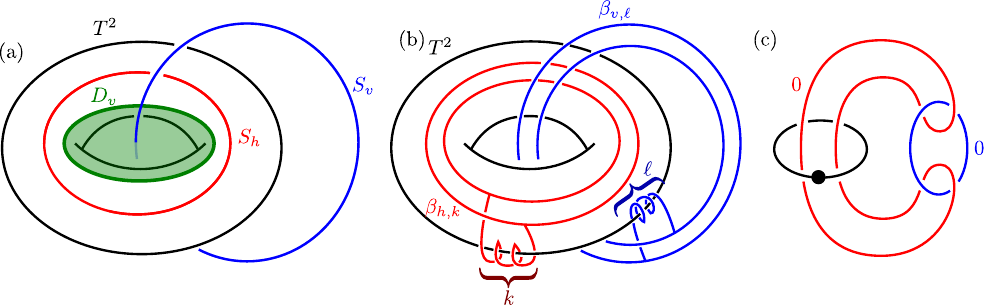}
\par\end{centering}
\caption{\label{fig:tmp2}(a) An unknotted torus $T^{2}\subset S^{3}\times0$,
a compressing disk $D_{v}$ of the vertical handlebody $H_{v}$,
and two $2$-spheres $S_{h}$ and $S_{v}$ whose homology classes
span $H_{2}(S^{4}\setminus T^{2})\protect\cong\mathbb{Z}^{2}$. (b)
The horizontal barbell $\beta_{h,k}$ and the vertical barbell $\beta_{v,\ell}$
in $S^{4}\setminus T^{2}$. (c): A handle diagram for $S^{4}\setminus \mathring N(T^{2})$.
Note that the two $0$-framed $2$-handles correspond to $S_{h}$
and $S_{v}$.}
\end{figure}

For positive integers $k,\ell\in\mathbb{Z}_{>0}$, we consider two
barbells in $S^{4}\setminus T^{2}$ (see Figure~\ref{fig:tmp2}~(b)):
the \emph{horizontal barbell} $\beta_{h,k}$ and the \emph{vertical
barbell} $\beta_{v,\ell}$. The two cuffs of the horizontal barbell
$\beta_{h,k}$ are two parallel copies of $S_{h}$, and the bar
of $\beta_{h,k}$ winds $k$ times around the meridian of $T^{2}$.
Similarly, the two cuffs of the vertical barbell $\beta_{v,\ell}$
are two parallel copies of $S_{v}$, and the bar of $\beta_{v,\ell}$
winds $\ell$ times around the meridian of $T^{2}$.
\begin{thm}
\label{thm:morsesimple-s3}Let $k,\ell\ge1$, and let $Y_{\beta}$
be the $3$-knot $H_{v}\cup\overline{\boldsymbol{\beta_{v,\ell}\beta_{h,k}}H_{h}}$.
Then, 
\[
\dim_{\mathbb{F}_{2}}\left(\pi_{2}(S^{5}\setminus Y_{\beta})\otimes_{\mathbb{Z}}\mathbb{F}_{2}\right)=2k+2\ell+2.
\]
In particular, by varying $k,\ell\ge1$, we obtain infinitely many
pairwise non-isotopic $3$-knots in $S^{5}$ with $4$ critical points
with respect to the standard height function of $S^{5}$.
\end{thm}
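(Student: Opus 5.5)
We propose to compute $\pi_2(S^5\setminus Y_\beta)$ through the universal cover of the complement. Since $Y_\beta$ is a $3$-knot, $H_1(S^5\setminus Y_\beta)\cong\mathbb{Z}$, generated by a meridian. The first step is to show that $\pi_1(S^5\setminus Y_\beta)\cong\mathbb{Z}$.

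For this, decompose $S^5\setminus \mathring N(Y_\beta)$ as the union of $B^5\setminus \mathring N(H_v)$ and $\overline{B^5}\setminus \mathring N(\boldsymbol{\beta_{v,\ell}\beta_{h,k}}H_h)$. These are glued along $S^4\setminus\mathring N(T^2)$, which we identify with $S^4\setminus\mathring N(T^2)$ via the barbell diffeomorphism. Since $H_v$ is a boundary-parallel handlebody pushed into $B^5$, its complement is obtained from $S^4\setminus\mathring N(T^2)$ times an interval by attaching $5$-dimensional $3$-handles along the $2$-spheres dual to the compressing disk $D_v$, plus a top handle. Using the handle diagram of Figure~\ref{fig:tmp2}(c), the total complement then has a handle decomposition consisting of that of $S^4\setminus \mathring N(T^2)\times I$, together with one $3$-handle attached along $S_v$ from below and one $3$-handle attached along $\boldsymbol{\beta_{v,\ell}\beta_{h,k}}(S_h)$ from above (the sphere dual to $D_h$, transported by the barbell map), plus a $4$- and a $5$-handle. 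The $3$-handles do not affect $\pi_1$, and the $2$-handles of $S^4\setminus T^2$ kill the relations so that $\pi_1$ is $\mathbb{Z}$, as for the unknot. Consequently $\pi_2(S^5\setminus Y_\beta)\cong H_2(\widetilde{W})$, where $\widetilde W$ is the infinite cyclic cover.

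Next we compute $H_2(\widetilde W;\mathbb{F}_2)$ via cellular chains over $\Lambda=\mathbb{F}_2[t^{\pm1}]$. In $\widetilde{S^4\setminus T^2}$, $H_2$ is generated over $\Lambda$ by the lifts $\widetilde S_h,\widetilde S_v$, subject to the relations in the $\Lambda$-intersection form, as in Section~\ref{sec:More-complicated-barbells}. Each $3$-handle kills the lift of its attaching sphere, so we get
\[
\pi_2\otimes\mathbb{F}_2\;\cong\; \Lambda\langle \widetilde S_h,\widetilde S_v\rangle\big/\bigl(\widetilde S_v,\ \widetilde{\boldsymbol{\beta}}(\text{dual of }D_h)\bigr),
\]
up to a correction coming from $H_2$ of $\widetilde{S^4\setminus T^2}$ itself, which we expect to be free of rank $2$ over $\Lambda$ since the $1$-handles lift to a $\mathbb{Z}$-family. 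The key computation uses Lemma~\ref{lem:barbelltubing} lifted to the cover, exactly as in Theorem~\ref{thm:less-simple}. The horizontal barbell sends the class of $S_h$ (equivalently, the relevant meridian sphere of $D_h$) to $S_h + (t^{k}+t^{-k})S_h$ modulo the intersection pairing $S_h\cdot D_h=1$. The vertical barbell contributes terms $(t^{\ell}+t^{-\ell})S_v$ through the intersections of $S_h$ with $S_v$ (the two points of opposite sign, which differ by $t$). After killing $S_v$ and reducing, the quotient should be $\Lambda$ modulo a polynomial such as $1+t^k+t^{-k}$ times a factor from $\ell$, whose span over $\mathbb{F}_2$ is $2k+2\ell+2$. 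We will keep track carefully of which lifts intersect, using Equation~(\ref{eq:intersection-bc}).

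The main obstacle is this last step: correctly identifying the attaching maps of the two $3$-handles after the barbell map, in terms of $\Lambda$-combinations of the $2$-handles, including how the vertical barbell acts on the dual sphere of $D_h$ via its $\pm$ intersections with $S_v$. This is where the degree of the resulting Laurent polynomial, and hence the dimension count $2k+2\ell+2$, is produced. The appendix on handle decompositions will be used to justify the handle structure. Non-isotopy then follows because $\dim$ varies with $k+\ell$. The count of critical points is immediate from the construction: the push-in of $H_v$ contributes the index-$0$ and index-$1$ critical points, and the push-in of $\overline{H_h}$ contributes the index-$2$ and index-$3$ critical points.
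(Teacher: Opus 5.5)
Your handle decomposition has the attaching spheres of the two $3$-handles swapped, and the computation cannot be completed from it. By Proposition~\ref{prop:handle-decomp}, relative to $Z=S^4\setminus\mathring N(T^2)$ the exterior of the pushed-in $H_v$ is a single $3$-handle attached along $S_h$, not along the sphere $S_v$ dual to $D_v$. Indeed, $S_v$ meets $D_v\subset H_v$ once, so it pairs nontrivially with the cocore of the $2$-handle of $B^5\setminus\mathring N(H_v)$ and cannot bound there. Likewise, the top $3$-handle is attached along $\boldsymbol{\beta_{v,\ell}\beta_{h,k}}S_v$, not along the image of $S_h$.

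With your assignment, a correct computation gives zero. The cuffs of $\beta_{h,k}$ are parallel copies of $S_h$, so by Lemma~\ref{lem:barbelltubing} the horizontal barbell leaves the class of $\widetilde{S_h}$ unchanged; your formula $S_h\mapsto S_h+(t^k+t^{-k})S_h$ contradicts this. The vertical barbell only adds $\Lambda$-multiples of the $\rho^j\widetilde{S_v}$, so once $\widetilde{S_v}$ is killed the remaining relation is $\widetilde{S_h}=0$. In fact your decomposition is that of the exterior of $H_h\cup\overline{\boldsymbol{\beta_{v,\ell}\beta_{h,k}}H_v}$, which is unknotted by the argument of Lemma~\ref{lem:unknots} with $h$ and $v$ interchanged. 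Killing $\widetilde{S_v}$ would also erase all dependence on $\ell$, so $2k+2\ell+2$ cannot come out. Your closing polynomial ``such as $1+t^k+t^{-k}$ times a factor from $\ell$'' is a guess rather than a derivation. The extra $4$- and $5$-handles are also spurious, since each side is $Z\times I$ plus one relative $3$-handle, though this does not affect $H_2$.

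The repair is the paper's route. Take the absolute decomposition of $B^5\setminus\mathring N(H_v)$: one $0$-, $1$- and $2$-handle, with the belt sphere of the $2$-handle meeting $Z$ in $D_v$. Glue on the single relative $3$-handle of $\overline{B^5\setminus\mathring N(H_h)}$ via $\boldsymbol{\beta_{v,\ell}\beta_{h,k}}$ using Equation~(\ref{eq:glue}). Then $\pi_2\otimes_{\mathbb{Z}}\mathbb{F}_2\cong\mathbb{F}_2[t^{\pm1}]/(f)$, where $f$ is the mod $2$ equivariant intersection number of $\boldsymbol{\beta_{v,\ell}\beta_{h,k}}S_v$ with $D_v$.

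The knotting comes from the action on $S_v$. Since $\widetilde{S_v}$ meets $\rho^j\widetilde{S_h}$ exactly for $j=0,-1$, the horizontal barbell produces $\widetilde{S_v}+(1+t^{-1})(t^k+t^{-k})\widetilde{S_h}$. The vertical barbell then adds $(1+t)(t^\ell+t^{-\ell})\rho^j\widetilde{S_v}$ for each new term $\rho^j\widetilde{S_h}$. Each $\rho^i\widetilde{D_v}$ is dual to $\rho^i\widetilde{S_v}$ and disjoint from every $\rho^j\widetilde{S_h}$, so this gives $f=1+(t+t^{-1})(t^k+t^{-k})(t^\ell+t^{-\ell})$. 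Its extreme exponents $\pm(k+\ell+1)$ each occur once, so the dimension is $2k+2\ell+2$.
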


\begin{proof}
To simplify notation, we henceforth drop the indices
$k$ and $\ell$ and write $\beta_{h},\beta_{v}$ for the horizontal and vertical barbells $\beta_{h,k}, \beta_{v,\ell}$.

Let $X:=S^{5}\setminus \mathring N(Y_{\beta})$, and let $\widetilde{X}$ be
the universal cover of $X$. By Hurewicz and the universal coefficient
theorem, 
\[
\pi_{2}(X)\otimes_{\mathbb{Z}}\mathbb{F}_{2}\cong H_{2}(\widetilde{X};\mathbb{F}_{2}).
\]

We compute $H_{2}(\widetilde{X};\mathbb{F}_{2})$ from an explicit
handle diagram description of $X$. 

\begin{claim}
\label{claim:pi2-comp}Let $Z:=S^{4}\setminus \mathring N(T^{2})$, and let $\mu$ denote the meridian of $T^2$ in $S^4$, which generates $\pi_1(S^4 \setminus T^2)\cong \mathbb{Z}$. Let $\widetilde{Z}$ be the universal cover of $Z$, and let $\rho: \widetilde{Z}\to \widetilde{Z}$ be the deck transformation corresponding to the meridian $\mu$. Let $\widetilde{D_v}$ (resp.\ $\widetilde{S_{\beta}}$) be a lift of $D_{v}$ (resp.\ $\boldsymbol{\beta_{v}\beta_{h}}S_{v}$)
to $\widetilde{Z}$, and let $\widetilde{S_{\beta}}\cdot\rho^{i}\widetilde{D_{v}}$ denote the mod 2 intersection number of $\widetilde{S_{\beta}}$ and
$\rho^{i}\widetilde{D_{v}}$ in $\widetilde{Z}$. Then 
\begin{equation}
H_{2}(\widetilde{X};\mathbb{F}_{2})\cong\mathbb{F}_{2}[t,t^{-1}]/(f),\label{eq:pi2tensorf2}
\end{equation}
where $f$ is the \textit{mod 2 equivariant intersection number of $\boldsymbol{\beta_v\beta_h}S_v$ and $D_v$}, i.e.\ 
\[
f:=\sum_{i\in \mathbb{Z}}\left(\widetilde{S_{\beta}}\cdot\rho^{i}\widetilde{D_{v}}\right)t^{i}.
\]
\end{claim}

\begin{proof}[Proof of Claim~\ref{claim:pi2-comp}]
In Proposition~\ref{prop:handle-decomp}, we show that (1) there is a handle decomposition of $B^{5}\setminus \mathring N(H_{v})$ consisting of a single $0$-, $1$-, and $2$-handle, where the belt sphere of the 2-handle intersects $Z$ in the disk $D_v$; and (2) there is a relative handle decomposition of $B^{5}\setminus \mathring N(H_{h})$ relative to $Z$ that consists of a single $3$-handle, whose attaching sphere is $S_v\subset Z$.

By Equation~(\ref{eq:glue}), $X$ is obtained by gluing $\overline{B^{5}\setminus \mathring N(H_{h})}$ to $B^{5}\setminus \mathring N(H_{v})$ along the subspace $Z$ of their boundary, via the map $\boldsymbol{\beta_{v}\beta_{h}} : Z\to Z$. Hence, we obtain a handle decomposition of $X$ that consists of a single $0$-, $1$-, $2$-, and $3$-handle, such that in the level set $\partial (B^5 \setminus \mathring N(H_v))$ between the $2$- and $3$-handles, the attaching sphere of the $3$-handle is $\boldsymbol{\beta_{v}\beta_{h}}S_{v}\subset Z$, and the belt sphere of the $2$-handle intersects $Z$ in $D_v$.

Hence, $f$ is the mod $2$ equivariant intersection number of the
attaching sphere of the $3$-handle and the belt sphere of the $2$-handle in $\partial(B^{5}\setminus \mathring N(H_{v}))$, and so Equation~(\ref{eq:pi2tensorf2}) follows.
\end{proof}

\begin{figure}[h]
\begin{centering}
\includegraphics[scale=0.85]{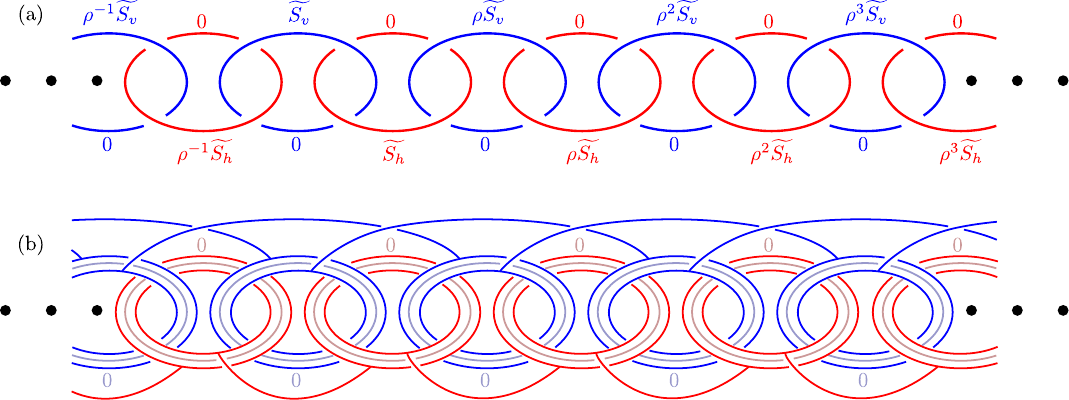}
\par\end{centering}
\caption{\label{fig:ztilde}(a): A handle diagram for the universal cover $\widetilde{Z}$
of $Z=S^{4}\setminus \mathring N(T^{2})$. The $2$-handles correspond to the
$2$-spheres $\rho^{i}\widetilde{S_{h}}$ and $\rho^{i}\widetilde{S_{v}}$.
(b): Lifts of the barbell $\beta_{h,1}$ are drawn in red, and lifts
of the barbell $\beta_{v,2}$ are drawn in blue.}
\end{figure}

Let $\widetilde{D_{v}}$, $\widetilde{S_{v}}$, and $\widetilde{S_{h}}$
be lifts of $D_{v}$, $S_{v}$, and $S_{h}$ to $\widetilde{Z}$ (see
Figure~\ref{fig:ztilde}~(a)), such that
\[
\left|\widetilde{D_{v}}\cap\rho^{i}\widetilde{S_{v}}\right|=\begin{cases}
1 & i=0\\
0 & {\rm otherwise}
\end{cases},\ \left|\widetilde{S_{h}}\cap\rho^{i}\widetilde{S_{v}}\right|=\begin{cases}
1 & i=0,1\\
0 & {\rm otherwise}
\end{cases}.
\]
We will choose a particular lift $\widetilde{S_\beta}$ of $\boldsymbol{\beta_v \beta_h } S_v$ and compute the mod $2$ equivariant intersection number $f$
of $\widetilde{S_{\beta}}$ and $\widetilde{D_{v}}$ by computing
the homology class $[\widetilde{S_{\beta}}]\in H_{2}(\widetilde{Z};\mathbb{F}_{2})$
in terms of $[\rho^{i}\widetilde{S_{h}}]$ and $[\rho^{i}\widetilde{S_{v}}]$.
For simplicity, we drop the brackets from the notation.

Let $\boldsymbol{\widetilde{\beta_{h}}}$ (resp.\ $\boldsymbol{\widetilde{\beta_{v}}}$) be
the lift of the barbell diffeomorphism $\boldsymbol{\beta_{h}}$ (resp.\ $\boldsymbol{\beta_{v}}$) on $Z$
to $\widetilde{Z}$ that fixes the inverse image of $S^{4}\setminus \mathring N(\beta_{h}\sqcup\beta_{v})$,
i.e.\ the complement of a neighborhood of the barbells. Then $\widetilde{\boldsymbol{\beta_{v}}}\widetilde{\boldsymbol{\beta_{h}}}\widetilde{S_{v}}$ is a lift of $\boldsymbol{\beta_v \beta_h } S_v$; let $\widetilde{S_{\beta}}:=\widetilde{\boldsymbol{\beta_{v}}}\widetilde{\boldsymbol{\beta_{h}}}\widetilde{S_{v}}$. 

The barbell $\beta_h$ (resp.\ $\beta_v$) in $Z$ lifts to $\mathbb{Z}$-many barbells in $\widetilde{Z}$; for all $i\in \mathbb{Z}$ we have a barbell covering $\beta_h$ (resp.\ $\beta_v$) whose cuffs are parallel copies of $\rho^{i}\widetilde{S_{h}}$
and $\rho^{k+i}\widetilde{S_{h}}$ (resp.\ $\rho^{i}\widetilde{S_{v}}$
and $\rho^{\ell+i}\widetilde{S_{v}}$).
The diffeomorphism $\widetilde{\boldsymbol{\beta_{h}}}$ (resp.\ $\widetilde{\boldsymbol{\beta_{v}}}$) of $\widetilde{Z}$ is the composition of the barbell diffeomorphisms given by these lifts of $\beta_h$ (resp.\ $\beta_v$). We can describe the homology class of the 2-sphere $\widetilde{\boldsymbol{\beta_{v}}}\widetilde{\boldsymbol{\beta_{h}}}\widetilde{S_{v}}$ explicitly using Equation~(\ref{eq:barbell-action}).

First we describe the homology class of $\widetilde{\boldsymbol{\beta_h}}\widetilde{S_v}$ using Equation~(\ref{eq:barbell-action}). To do so, let us study pairs $(\widetilde{\beta},\widetilde{S})$ such that $\widetilde{\beta}$ is a lift of the barbell $\beta_h$, $\widetilde{S}$ is a cuff of $\widetilde{\beta}$, and $\widetilde{S}$ intersects $\widetilde{S_v}$.
There are four such pairs $(\widetilde{\beta},\widetilde{S})$, and the cuffs of the barbell $\widetilde{\beta}$ of these four pairs are parallel copies of (1) $\rho^{-k-1}\widetilde{S_{h}}$ and
$\rho^{-1} \widetilde{S_{h}}$, (2) $\rho^{-k}\widetilde{S_{h}}$ and
$\widetilde{S_{h}}$, (3) $\rho^{-1} \widetilde{S_{h}}$ and $\rho^{k-1}\widetilde{S_{h}}$,
and (4) $\widetilde{S_{h}}$ and $\rho^{k}\widetilde{S_{h}}$. Hence, for each such pair $(\widetilde{\beta},\widetilde{S})$, $\widetilde{S}$ intersects $\widetilde{S_h}$ transversely at one point, and therefore by Equation~(\ref{eq:barbell-action}),
\begin{equation}
\widetilde{\boldsymbol{\beta_{h}}}\widetilde{S_{v}}=\widetilde{S_{v}}+\rho^{-k-1}\widetilde{S_{h}}+\rho^{-k}\widetilde{S_{h}}+\rho^{k-1}\widetilde{S_{h}}+\rho^{k}\widetilde{S_{h}}.\label{eq:bvsh}
\end{equation}
In fact, by the proof of Lemma \ref{lem:barbelltubing}, the sphere $\widetilde{\boldsymbol{\beta_{h}}}\widetilde{S_{v}}$ is given
by tubing together the five spheres in the right hand side of Equation (\ref{eq:bvsh}).

Now we describe the homology class $\widetilde{\boldsymbol{\beta_{v}}}\widetilde{\boldsymbol{\beta_{h}}}\widetilde{S_{v}}$.
Similarly to the previous paragraph, each of the spheres $\rho^{-k-1}\widetilde{S_{v}},\rho^{-k}\widetilde{S_{h}},\rho^{k-1}\widetilde{S_{h}},\rho^{k}\widetilde{S_{h}}$ in Equation~(\ref{eq:bvsh}) intersects four cuffs of various lifts of the barbell $\beta_{v}$, while $\widetilde{S_v}$ does not intersect any lifts of $\beta_v$. In total, these intersections contribute sixteen new spheres of the form $\rho^{i}\widetilde{S_{v}}$ once we apply $\widetilde{\boldsymbol{\beta_v}}$. Explicitly, we have 
\begin{multline}\label{eq:barbell-sphere-homology-class}
\widetilde{\boldsymbol{\beta_{v}}}\widetilde{\boldsymbol{\beta_{h}}}\widetilde{S_{v}}=\widetilde{S_{v}}+\rho^{-k-1}\widetilde{S_{h}}+\left(\rho^{-k-1-\ell}\widetilde{S_{v}}+\rho^{-k-\ell}\widetilde{S_{v}}+\rho^{-k-1+\ell}\widetilde{S_{v}}+\rho^{-k+\ell}\widetilde{S_{v}}\right)\\
+\rho^{-k}\widetilde{S_{h}}+\left(\rho^{-k-\ell}\widetilde{S_{v}}+\rho^{-k-\ell+1}\widetilde{S_{v}}+\rho^{-k+\ell}\widetilde{S_{v}}+\rho^{-k+\ell+1}\widetilde{S_{v}}\right)\\
+\rho^{k-1}\widetilde{S_{h}}+\left(\rho^{k-1-\ell}\widetilde{S_{v}}+\rho^{k-\ell}\widetilde{S_{v}}+\rho^{k-1+\ell}\widetilde{S_{v}}+\rho^{k+\ell}\widetilde{S_{v}}\right)\\
+\rho^{k}\widetilde{S_{h}}+\left(\rho^{k-\ell}\widetilde{S_{v}}+\rho^{k-\ell+1}\widetilde{S_{v}}+\rho^{k+\ell}\widetilde{S_{v}}+\rho^{k+\ell+1}\widetilde{S_{v}}\right).
\end{multline}
Since we also have $$\rho^i\widetilde{D_v}\cdot \rho^j\widetilde{S_h} = 0 \mathrm{\ and\ }\rho^i\widetilde{D_v}\cdot \rho^j\widetilde{S_v} = \delta_{ij},$$ our final intersection count yields 
\[
f=t^{-k-\ell-1}+t^{-k-\ell+1}+t^{-k+\ell-1}+t^{-k+\ell+1}+1+t^{k-\ell-1}+t^{k-\ell+1}+t^{k+\ell-1}+t^{k+\ell+1},
\]
and so 
\begin{equation}
{\rm rk}_{\mathbb{F}_{2}}\left(\pi_{2}(X)\otimes_{\mathbb{Z}}\mathbb{F}_{2}\right)={\rm rk}_{\mathbb{F}_{2}}H_{2}(\widetilde{X};\mathbb{F}_{2})=2k+2\ell+2\neq0.\label{eq:pi2}\qedhere
\end{equation}
\end{proof}

Interestingly, the following $3$-knots constructed similarly to  $H_{v}\cup\overline{\boldsymbol{\beta_{v,\ell}\beta_{h,k}} H_{h}}$ are unknotted:

\begin{lem}\label{lem:unknots}
The $3$-knots $H_{v}\cup\overline{\boldsymbol{\beta_{v,\ell}} H_{h}},\ H_{v}\cup\overline{\boldsymbol{\beta_{h,k}} H_{h}},\ H_{v}\cup\overline{\boldsymbol{\beta_{h,k}} \boldsymbol{\beta_{v,\ell}} H_{h}} \subset S^5 $
are unknotted.
    
\end{lem}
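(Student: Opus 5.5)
The plan is to reduce all three $3$-knots to the standard unknot $H_{v}\cup\overline{H_{h}}$, the union of the two handlebodies of the genus-$1$ Heegaard splitting of $S^{3}\times0$. Two observations should suffice.

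\textbf{Observation A (support).} Each barbell diffeomorphism is supported in a small neighborhood of its barbell. The vertical barbell $\beta_{v,\ell}$ can be isotoped in $S^{4}\setminus T^{2}$ to be disjoint from $H_{h}$. Its cuffs are parallel copies of $S_{v}$. Indeed, in the description of Figure~\ref{fig:kg}, $S_{v}$ is geometrically dual to the vertical compressing disk $D_{v}\subset H_{v}$ and can be drawn disjoint from $H_{h}$: it lives in $S^{3}\times\{\pm\epsilon\}$ around a core of $H_{v}$. The bar winds $\ell$ times around the meridian $\mu$ of $T^{2}$, and this winding can be realized by loops in nearby time slices $S^{3}\times\{\pm\epsilon\}$, so it avoids $H_{h}\subset S^{3}\times0$ away from $T^{2}$. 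Hence $\boldsymbol{\beta_{v,\ell}}H_{h}=H_{h}$ on the nose. Symmetrically, the cuffs $S_{h}$ of $\beta_{h,k}$ can be chosen disjoint from $H_{v}$, and the bar can be pushed off $H_{v}$, so $\boldsymbol{\beta_{h,k}}H_{v}=H_{v}$. I will check these disjointness claims carefully against Figure~\ref{fig:tmp2}; this is the step where a picture-level error could occur.

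\textbf{Observation B (transfer across the equator).} Let $\psi\in\mathrm{Diff}(S^{4})$ be a barbell diffeomorphism whose barbell lies in $S^{4}\setminus T^{2}$. As a diffeomorphism of $S^{4}$ not rel $T^{2}$, $\psi$ extends to a diffeomorphism $\Psi$ of $S^{5}$: act by $\psi$ on a collar $S^{4}\times[-1,1]$ of the equator, compatibly with the push-ins, and extend over the two $5$-balls. This extension exists because barbell diffeomorphisms of $S^{4}$ are isotopic to the identity, since the cuffs bound balls in $S^{4}$. Such a $\Psi$ is isotopic to the identity of $S^{5}$. Moreover $\Psi^{-1}$ carries $H_{v}\cup\overline{\psi H_{h}}$ to $\psi^{-1}H_{v}\cup\overline{H_{h}}$. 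Here one must arrange that the push-ins are compatible: push in by a radial collar and let $\Psi$ act levelwise by $\psi$ on the collar and by the identity deeper in. Therefore
\[
H_{v}\cup\overline{\psi H_{h}}\approx\psi^{-1}H_{v}\cup\overline{H_{h}}.
\]

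With these in hand, the three cases are as follows.
\begin{enumerate}
\item For $\boldsymbol{\beta_{v,\ell}}H_{h}$, Observation A gives $\boldsymbol{\beta_{v,\ell}}H_{h}=H_{h}$, so the first knot is already standard.
\item For $\boldsymbol{\beta_{h,k}}H_{h}$, apply Observation B with $\psi=\boldsymbol{\beta_{h,k}}$. This gives $\boldsymbol{\beta_{h,k}}^{-1}H_{v}\cup\overline{H_{h}}$, and $\boldsymbol{\beta_{h,k}}^{-1}H_{v}=H_{v}$ by Observation A.
\item For $\boldsymbol{\beta_{h,k}}\boldsymbol{\beta_{v,\ell}}H_{h}$, first $\boldsymbol{\beta_{v,\ell}}H_{h}=H_{h}$, and then the previous case applies.
\end{enumerate}
This also explains why the order in Theorem~\ref{thm:morsesimple-s3} matters. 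In $\boldsymbol{\beta_{v}\beta_{h}}H_{h}$, the diffeomorphism $\boldsymbol{\beta_{h}}$ moves $H_{h}$ into a position that meets the vertical cuffs, so neither barbell can be discarded.

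The main obstacle I expect is making Observation B rigorous. I need to check that the extension $\Psi$ really is compatible with the pushed-in handlebodies and that it is isotopic to the identity, or at least that it preserves the isotopy class of the resulting $3$-knot. The cleanest route is to note that $\psi$ is isotopic to $\mathrm{id}_{S^{4}}$ through an isotopy $\psi_{t}$, since each cuff bounds a $3$-ball in $S^{4}$ and the barbell map becomes trivial there. One then uses $\psi_{t}$ on the collar levels, and the identity deeper in, to define $\Psi$.
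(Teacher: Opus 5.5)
Your Observation~B is correct. It is exactly the paper's Equation~(\ref{eq:betainv}); the paper extends $\boldsymbol{\beta}$ over $B^5$ via pseudoisotopy, and your isotopy-to-the-identity argument also works for these barbells. The gap is Observation~A, on which all three cases rest: the vertical barbell cannot be made disjoint from $H_h$.

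First, the winding of the bar cannot live in $S^3\times\{\pm\epsilon\}$. A loop there bounds a disk in the $4$-ball $S^3\times[\epsilon,1]\cup\overline{B^4}$, which misses $T^2$, so the loop is null-homotopic in $S^4\setminus T^2$.

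Second, the bar is forced through $H_h$. Since $\partial H_h=T^2$, the algebraic intersection number of a loop in $S^4\setminus T^2$ with $H_h$ equals its linking number with $T^2$. Your cuffs, near $S_v$, do miss $H_h$. But the bar closed up by a short arc between the parallel cuffs represents $\mu^{\pm\ell}$, so the bar meets $H_h$ algebraically $\pm\ell\neq 0$ times.

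No isotopy in $S^4\setminus T^2$ removes this. Suppose the barbell missed $H_h$, and let $P\cong S^2\times[0,1]$ be the product region between the cuffs. Then $F:=H_h\cap P$ would be a closed surface in $\mathrm{int}\,P$, homologous in $P$ to $\pm\ell$ times a cuff. This gives $lk(F,\partial D_v)=\pm\ell\cdot lk(S_v,\partial D_v)=\pm\ell$. But $F$ bounds a $3$-chain in $\mathrm{int}\,H_h$, which misses $T^2$, so that linking number is $0$.

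Thus $\boldsymbol{\beta_{v,\ell}}H_h\neq H_h$, and symmetrically $\boldsymbol{\beta_{h,k}}^{-1}H_v\neq H_v$. Cuffs that miss a $3$-manifold while the bar pierces it is the mechanism behind Budney--Gabai's knotted $3$-balls. So there is no reason to expect $\boldsymbol{\beta_{v,\ell}}H_h\approx H_h$ rel.\ $\partial$ in $S^4$, and your argument does not show it.

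What is missing is a genuinely $5$-dimensional input. The paper observes that $\beta_v$ is disjoint from a neighborhood of a compressing disk $D_h$ of $H_h$: the cuffs miss $H_h$, and the bar can be taken to pierce $H_h$ away from $D_h$. So $\boldsymbol{\beta_v}H_h$ differs from $H_h$ only in that the $3$-ball $B=H_h\setminus\mathring N(D_h)$ is replaced by $\boldsymbol{\beta_v}B$, which has the same boundary.

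By Hartman's theorem, two $3$-balls in $S^4$ with the same boundary become isotopic rel.\ $\partial$ once their interiors are pushed into $B^5$. Hence $\boldsymbol{\beta_v}H_h\approx H_h$ in $B^5$ rel.\ $S^4$, and likewise $\boldsymbol{\beta_h}^{-1}H_v\approx H_v$.

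Combined with your Observation~B, this handles all three knots, with one adjustment. The third knot should be rewritten as $\boldsymbol{\beta_h}^{-1}H_v\cup\overline{\boldsymbol{\beta_v}H_h}$, with Hartman applied in each hemisphere, because $\boldsymbol{\beta_v}H_h\approx H_h$ now holds only after push-in. Alternatively, you can use Levine's criterion (Theorem~\ref{thm:unknotted-handlebody}) after computing $\pi_1$ and $\pi_2$ of the exteriors as in the proof of Theorem~\ref{thm:morsesimple-s3}.
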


\begin{proof}
Note that by a result of Levine {\cite[\S 23]{MR266226}} (restated in Theorem~\ref{thm:unknotted-handlebody}), one can show that these 3-knots are unknotted by computing $\pi_{1}$ and $\pi_{2}$
of their exteriors, proceeding as in the proof of Theorem~\ref{thm:morsesimple-s3}. Here, we present a more direct proof that uses a result of Hartman \cite{hartman_unknotting3balls},
which says that any two $3$-balls in $S^{4}$ with the same boundary
become isotopic in $B^{5}$ once their interiors are pushed into $B^{5}$.

For simplicity, we again drop the subscripts $k,\ell$ from the notation. First we show that $\boldsymbol{\beta_{v}}H_{h} \approx H_{h}$ in $B^{5}$ rel.\ $S^{4}$. The barbell $\beta_{v}$ is disjoint from a neighborhood of
some compressing disk $D_{h}$ for $H_{h}$. Let $B$ be the 3-ball $H_{h}\setminus \mathring N(D_{h})\subset S^4$. By \cite{hartman_unknotting3balls}, we can isotope
$\boldsymbol{\beta_{v}}B$ to $B$ rel.\ $S^{4}$ in $B^{5}$, so that $\boldsymbol{\beta_{v}}H_{h}\approx H_{h}$
in $B^{5}$ rel.\ $S^{4}$, and therefore $H_v\cup \overline{\boldsymbol{\beta_v} H_h}\approx H_v\cup\overline{H_h}$ in $S^5$.

Next, to show that $H_{v}\cup\overline{\boldsymbol{\beta_{h}}H_{h}}$ and $H_{v}\cup\overline{\boldsymbol{\beta_{h}\beta_{v}}H_{h}}$
are unknotted, notice that 
\[
H_{v}\cup\overline{\boldsymbol{\beta_{h}}H_{h}}\approx\left(\left(\boldsymbol{\beta_{h}}\right)^{-1}H_{v}\right)\cup\overline{H_{h}},\ H_{v}\cup\overline{\boldsymbol{\beta_{h}\beta_{v}}H_{h}}\approx\left(\left(\boldsymbol{\beta_{h}}\right)^{-1}H_{v}\right)\cup\overline{\boldsymbol{\beta_{v}}H_{h}},
\]
by Equation~(\ref{eq:betainv}). These are unknotted since $\left(\boldsymbol{\beta_{h}}\right)^{-1}H_{v}\approx H_{v}$
in $B^{5}$ rel.\ $S^{4}$ by the same argument as above.
\end{proof}

\begin{thm}[{\cite[\S 23]{MR266226}}]\label{thm:unknotted-handlebody} 
A $3$-knot $Y\subset S^{5}$ is unknotted if and only if $\pi_{1}(S^{5}\setminus Y)\cong\mathbb{Z}$
and $\pi_{2}(S^{5}\setminus Y)=0$.
\end{thm}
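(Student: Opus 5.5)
We plan to deduce this from Levine's homotopy-theoretic unknotting criterion in codimension two. That criterion says an $n$-knot $Y\subset S^{n+2}$ with $n\ge 3$ is unknotted if and only if its exterior $X:=S^{n+2}\setminus\mathring N(Y)$ is homotopy equivalent to $S^1$. Levine proves it by an $h$-cobordism and fibering argument in the spirit of Browder--Levine and Stallings; we will quote it rather than reprove it.

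The direction ``unknotted $\Rightarrow$ $\pi_1\cong\mathbb{Z}$, $\pi_2=0$'' is immediate, since the exterior of the trivial $3$-knot is $S^1\times B^4$. For the converse, assume $\pi_1(X)\cong\mathbb{Z}$ and $\pi_2(X)=0$, and let $\widetilde X$ be the universal, i.e.\ infinite cyclic, cover. The goal is to show that $\widetilde X$ is contractible.

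\textbf{Step 1: low degrees.} By Hurewicz, $H_1(\widetilde X)=0$ and $H_2(\widetilde X)\cong\pi_2(X)=0$.

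\textbf{Step 2: high degrees.} We need $H_i(\widetilde X)=0$ for $i\ge 4$. Since $X$ is a compact $5$-manifold with nonempty boundary, it has a handle decomposition with no $5$-handles, so $H_5(\widetilde X)=0$. For $H_4$ and $H_3$ we use Milnor--Levine duality for the Alexander modules $H_q(\widetilde X)$ as $\Lambda=\mathbb{Z}[t,t^{-1}]$-modules. These modules are finitely generated, and multiplication by $t-1$ is an isomorphism on them because $X$ is a homology circle. Equivariant Poincar\'e--Lefschetz duality on $(\widetilde X,\partial\widetilde X)$, where $\partial\widetilde X\cong S^3\times\mathbb{R}$, together with the universal coefficient spectral sequence over $\Lambda$, relates $H_q(\widetilde X)$ to $\operatorname{Ext}^j_\Lambda(H_{4-q+j-1}(\widetilde X),\Lambda)$ for $j=0,1,2$. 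Concretely, it expresses $H_q$ through $\operatorname{Ext}$-groups of $H_{4-q}$ and $H_{5-q}$, up to conjugation $t\mapsto t^{-1}$. Feeding in $H_1=H_2=0$ should force $H_3=0$, and then $H_4=0$ as well.

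\textbf{Step 3: conclusion.} Now $\widetilde X$ is simply connected, has the homotopy type of a CW complex, and is acyclic, so it is contractible. Hence the covering map $\widetilde X\to X$ shows $X$ is an aspherical space with fundamental group $\mathbb{Z}$, i.e.\ $X\simeq S^1$. Levine's theorem with $n=3$ then says $Y$ is unknotted.

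The main obstacle is the duality in Step 2. The spectral sequence bookkeeping must be done carefully: the $\operatorname{Ext}^1$ and $\operatorname{Ext}^2$ terms over the two-dimensional ring $\Lambda$, and the boundary contribution from $S^3\times\mathbb{R}$, all have to vanish. What we would try first is to avoid this by citing Levine's explicit statement from \cite[\S 23]{MR266226}. There, for $3$-knots, $H_3$ is dual to $H_1$ and $H_2$ is self-dual via the Blanchfield-type pairing. With that statement, vanishing of $H_1$ and $H_2$ immediately gives the vanishing of all the Alexander modules.
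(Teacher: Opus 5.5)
Your proposal is correct, and it goes one level deeper than the paper, which gives no argument here and simply cites Levine. You reduce the statement to the homotopy-circle unknotting theorem, and you obtain $X\simeq S^1$ from $\pi_1$ and $\pi_2$ by duality in the infinite cyclic cover. This is the standard mechanism behind the ``$\pi_i\cong\pi_i(S^1)$ for $i\le[(n+1)/2]$'' form of the criterion.

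The step you flag as the main obstacle is immediate once the indexing, which is garbled in your write-up, is fixed:
\begin{itemize}
\item Equivariant Poincar\'e--Lefschetz duality gives $H_q(\widetilde X)\cong\overline{H^{5-q}(\operatorname{Hom}_\Lambda(C_*(\widetilde X,\partial\widetilde X),\Lambda))}$. So only the relative groups $H_s(\widetilde X,\partial\widetilde X)$ with $s\le 5-q$ matter, not absolute ones.
\item Since $\partial\widetilde X\cong S^3\times\mathbb{R}$ is connected with vanishing $H_1$ and $H_2$, the exact sequence of the pair gives $H_0(\widetilde X,\partial\widetilde X)=0$ and $H_s(\widetilde X,\partial\widetilde X)\cong H_s(\widetilde X)=0$ for $s=1,2$.
\item Hence the finite free $\Lambda$-complex $C_*(\widetilde X,\partial\widetilde X)$ is chain equivalent to a projective complex concentrated in degrees $\ge 3$. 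Its $\Lambda$-dual therefore has no cohomology in degrees $\le 2$, and $H_q(\widetilde X)=0$ for every $q\ge 3$ at once.
\end{itemize}
No $\operatorname{Ext}^1$, $\operatorname{Ext}^2$ or boundary terms ever appear, and your separate no-$5$-handle remark is unnecessary.

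Two small corrections. First, your fallback description of Levine's duality is inaccurate. For $3$-knots the $\mathbb{Z}$-torsion of $H_2$ is paired with that of $H_1$, and only the $\mathbb{Z}$-torsion-free part of $H_2$ is self-dual; this is harmless here because $H_1=H_2=0$. Second, for $n=3$ your black box ``$X\simeq S^1$ implies unknotted'' is the unknotting theorem for $3$-knots. The paper cites that result as \cite[Corollary~3.1]{MR184249} and \cite[Theorem~2.1]{MR230325}, so it is better attributed there than to Levine with $n\ge 3$.
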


% \maketitle
\begin{figure}[h]
\begin{centering}
\includegraphics{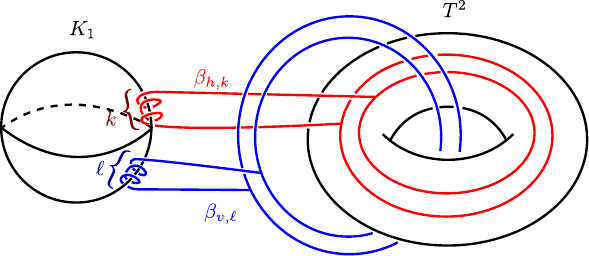}
\par\end{centering}
\caption{\label{fig:tmp3}The trivial link $K_{1}\sqcup T^{2}\subset S^{3}\times0$,
together with the horizontal barbell $\beta_{h,k}$ and the vertical
barbell $\beta_{v,\ell}$ in $S^{4}\setminus(K_{1}\sqcup T^{2})$} 
\end{figure}

Using a similar construction, we now construct, for all $n\ge2$,
infinitely many $n$-component Brunnian 3-links in $S^{5}$ with $(2n+2)$
critical points. 

Let $K_{1}\sqcup\cdots\sqcup K_{n-1}\sqcup T^{2}\subset S^{3}\times0$
be the standard $n$-component split link of $(n-1)$ $2$-spheres
and one $T^{2}$ (see Figure~\ref{fig:tmp3} for $n=2$). For $i=1,\cdots,n-1$,
let $B_{i}$ be the $3$-ball\footnote{According to our earlier convention, this would be the horizontal
handlebody that $K_{i}$ bounds.} that $K_{i}$ bounds in $S^{3}\times\{0\}$ that is disjoint from
$T^{2}$ (and all the other $K_{j}$'s), and let $H_{h}$ (resp.\ $H_{v}$)
be the horizontal (resp. vertical) handlebody that $T^{2}$ bounds
as before, but slightly isotope $H_{v}$ rel.\ $T^{2}$ in $S^{4}$
such that it does not intersect $K_{1}\sqcup\cdots\sqcup K_{n-1}$.
Similarly, let $S_{h}$ (resp.\ $S_{v}$) be the horizontal (resp.\ vertical)
sphere for $T^{2}$. Let $K:=K_{1}\sqcup\cdots\sqcup K_{n-1}$ and
$B:=B_{1}\sqcup\cdots\sqcup B_{n-1}$.

Let $F_{n}$ be the free group on $n$ letters $\rho_{1},\cdots,\rho_{n}$,
and identify $F_{n}\cong\pi_{1}(S^{4}\setminus(K\sqcup T^{2}))$
by identifying the meridian of $K_{i}$ with $\rho_{i}$ and the meridian
of $T^{2}$ with $\rho_{n}$. Our convention for composition in $\pi_{1}$
is that for $\gamma_{1},\gamma_{2}\in\pi_{1}$, their composition
$\gamma_{1}\gamma_{2}\in\pi_{1}$ is represented by the loop given
by concatenating $\gamma_{1}$ and $\gamma_{2}$ in this order (i.e.\ $\gamma_{1}$
comes first). Note that $\pi_{1}$ acts on the universal cover by
deck transformations on the \emph{left}.

Define words $w_{m}\in F_{n}$ for $m\ge1$ recursively by letting
$w_{1}:=\rho_{1}$ and $w_{m+1}:=[w_{m},\rho_{m+1}]$. Let $w:=w_{n-1}\in F_{n}\cong\pi_{1}(S^{4}\setminus(K\sqcup T^{2}))$.
Let the horizontal barbell $\beta_{h,k}$ (resp.\ vertical barbell
$\beta_{v,\ell}$) be such that its cuffs are two parallel copies
of $S_{h}$ (resp.\ $S_{v}$), and its bar loops around $K\sqcup T^{2}$
according to the word $w^{k}$ (resp.\ $w^{\ell}$), and does not
link nontrivially with the cuffs.

\begin{thm}
\label{thm:linked-6crit}Let $k,\ell\ge1$ and let $Y_{k,\ell}:=(B\sqcup H_{v})\cup\overline{\boldsymbol{\beta_{v,\ell}\beta_{h,k}}(B\sqcup H_{h})}\subset B^{5}\cup\overline{B^{5}}=S^{5}$. Then, $Y_{k,\ell}$ is an $n$-component Brunnian $3$-link in $S^{5}$,
i.e.\ $Y_{k,\ell}$ is non-isotopic to the unlink but if we remove
any one of the components, then it becomes isotopic to the unlink
with $(n-1)$ components. Moreover, if $k',\ell'\ge1$ and $\{k,\ell\}\neq\{k',\ell'\}$,
then $Y_{k,\ell}$ and $Y_{k',\ell'}$ are non-isotopic.
\end{thm}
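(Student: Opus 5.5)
We follow the proof of Theorem~\ref{thm:morsesimple-s3}, replacing the infinite cyclic cover by the universal cover of the complement of $K\sqcup T^{2}$, whose fundamental group is the free group $F_{n}$. The argument has three parts: the Brunnian property, nontriviality via $\pi_{2}\otimes\mathbb{F}_{2}$ of the complement, and distinguishing the family by the same invariant.

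\textbf{Brunnian property.} Removing the $T^{2}$-component leaves $\boldsymbol{\beta_{v,\ell}\beta_{h,k}}B$ glued to $B$. By Hartman's theorem \cite{hartman_unknotting3balls}, $\boldsymbol{\beta_{v,\ell}\beta_{h,k}}B\approx B$ in $B^{5}$ rel.\ $S^{4}$, so this sublink is the unlink.

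Now remove $K_{i}$, i.e.\ the component $B_{i}\cup\overline{\cdots}$. In $S^{4}\setminus(K\setminus K_{i}\sqcup T^{2})$, the word $w=w_{n-1}$ becomes trivial once $\rho_{i}$ is deleted, since an iterated commutator containing $\rho_{i}$ dies. So the bars of $\beta_{h,k}$ and $\beta_{v,\ell}$ can be isotoped, in the complement of the remaining components, to short bars. A barbell whose cuffs are parallel copies of the same sphere $S_{h}$ with a trivial bar gives a diffeomorphism isotopic to the identity. This holds because the cuffs bound a $\partial$-parallel $S^{2}\times I$ region, so the barbell map is trivial by Lemma~\ref{lem:barbelltubing}-type reasoning, or directly from the Budney--Gabai model, since a barbell with parallel cuffs and trivial bar lies in $S^{2}\times D^{2}$. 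Hence the remaining link is $(B\setminus B_{i}\sqcup H_{v})\cup\overline{(B\setminus B_{i})\sqcup H_{h}}$, which is the unlink.

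\textbf{Nontriviality.} Let $X$ be the exterior of $Y_{k,\ell}$ in $S^{5}$. Repeating Proposition~\ref{prop:handle-decomp} componentwise, $X$ has a handle decomposition with one $0$-handle, $n$ $1$-handles, one $2$-handle (from $H_{v}$) and one $3$-handle. The balls $B_{i}$ contribute canceling $1$-handles on the $B^{5}$ side and $2$-handles on the other side; these are arranged so that they only add generators $\rho_{i}$. Thus $\pi_{1}(X)$ is free on $\rho_{1},\dots,\rho_{n}$ with no relations beyond those coming from the $T^{2}$ part, and as in Claim~\ref{claim:pi2-comp} we get
\[
H_{2}(\widetilde{X};\mathbb{F}_{2})\cong\mathbb{F}_{2}[F_{n}]/\bigl(\mathbb{F}_{2}[F_{n}]\cdot f\bigr),
\]
where $f\in\mathbb{F}_{2}[F_{n}]$ is the equivariant intersection of $\boldsymbol{\beta_{v}\beta_{h}}S_{v}$ with $D_{v}$ in the universal cover of $Z=S^{4}\setminus\mathring N(K\sqcup T^{2})$. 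Expanding via Equation~(\ref{eq:barbell-action}) exactly as in Equation~(\ref{eq:barbell-sphere-homology-class}), with $t^{\pm k}$ replaced by $w^{\pm k}$ and $t$ by $\rho_{n}$, gives $f$ as a sum of sixteen group elements plus $1$, with multiplicities tracked mod $2$. We then check that $f$ is neither $0$ nor a unit in $\mathbb{F}_{2}[F_{n}]$. Since $\mathbb{F}_{2}[F_{n}]$ has only trivial units, it suffices that $f$ has more than one term after cancellation, which is clear by considering word lengths of $w^{\pm k}\rho_{n}^{\epsilon}w^{\pm\ell}$. Then $\pi_{2}(X)\otimes\mathbb{F}_{2}\neq0$, so by Theorem~\ref{thm:unknotted-handlebody}, applied through its link analogue of $\pi_{2}\neq0$ for the unlink complement, $Y_{k,\ell}$ is not the unlink.

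\textbf{Distinguishing the family.} This is the main obstacle, because the module $\mathbb{F}_{2}[F_{n}]/(f)$ is not finite-dimensional. We first try a quotient: map $F_{n}\to\mathbb{Z}$ by $\rho_{n}\mapsto t$ and $\rho_{i}\mapsto1$. This kills $w$, which is useless. Instead we pass to the cover of $S^{4}\setminus(K\sqcup T^{2})$ associated to a finite nilpotent quotient in which $w$ has large order. For example, take the quotient by the $n$-th term of the lower central series tensored with $\mathbb{Z}/m$, for $m\gg k,\ell$, in which $w$ generates a central $\mathbb{Z}/m$. Then $f$ maps to an element of $\mathbb{F}_{2}[G]$, and
\[
\dim_{\mathbb{F}_{2}}\mathbb{F}_{2}[G]/(f)
\]
is computed, as in Theorem~\ref{thm:morsesimple-s3}, by restricting to the abelian subgroup generated by $w$ and $\rho_{n}$. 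There $f$ becomes $(1+s^{k}+s^{-k})(1+\ldots)$-type expressions with $s=w$, and the resulting dimension is $2k+2\ell+2$ up to a fixed factor. This dimension is symmetric in $k$ and $\ell$ and determines the unordered pair $\{k,\ell\}$, and it is an isotopy invariant because the invariant is defined via characteristic covers of the complement.
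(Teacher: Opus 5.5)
\textbf{The gap is in the last step, distinguishing $Y_{k,\ell}$ from $Y_{k',\ell'}$.}

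First, suppose your dimension really were $2k+2\ell+2$ times a fixed index. It would still depend only on $k+\ell$. So it cannot separate, e.g., $Y_{1,3}$ from $Y_{2,2}$, and the claim that it ``determines the unordered pair'' is false.

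Second, the computation does not go through. Under a map sending $w\mapsto s$ and $\rho_n\mapsto t$ to commuting elements, the relator becomes
\[
\varphi(f_{k,\ell})=1+(t+t^{-1})(s^{k}+s^{-k})(s^{\ell}+s^{-\ell}),
\]
using $(t^{-1}+1)(1+t)=t+t^{-1}$ over $\mathbb{F}_2$. This is a genuinely two-variable Laurent polynomial, so the one-variable ``width'' count of Theorem~\ref{thm:morsesimple-s3} does not apply.
\begin{itemize}
\item If $\langle s,t\rangle\cong\mathbb{Z}^2$, then $\mathbb{F}_2[s^{\pm1},t^{\pm1}]/(\varphi(f_{k,\ell}))$ is infinite-dimensional.
\item If $G$ is finite (you never say where $\rho_n$ goes), then for odd orders the dimension counts zeros of $\varphi(f_{k,\ell})$ on a finite torus $\mu_m\times\mu_{m'}$. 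This depends on $m,m'$: for instance, if $\rho_n\mapsto 1$ then $\varphi(f_{k,\ell})=1$ and the quotient vanishes.
\end{itemize}

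\textbf{What is missing is an invariant finer than a dimension.} The paper chooses $\varphi:F_n\to U_n\times\mathbb{Z}$ so that $H=\langle\varphi(w),\varphi(\rho_n)\rangle\cong\mathbb{Z}^2$ is \emph{central} (Lemma~\ref{lem:group-theoretic}).
\begin{itemize}
\item Then the annihilator of $\mathbb{F}_2[G]\otimes_{\mathbb{F}_2[F_n]}M_{k,\ell}$ is $\mathbb{F}_2[G]\varphi(f_{k,\ell})$.
\item Its intersection with $\mathbb{F}_2[H]\cong\mathbb{F}_2[s^{\pm1},t^{\pm1}]$ is the principal ideal $(\varphi(f_{k,\ell}))$.
\item In this UFD the generator is determined up to a monomial unit. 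Its support is centrally symmetric: it consists of $(0,0)$, $(\pm(k+\ell),\pm1)$ and $(\pm(k-\ell),\pm1)$, where the last four cancel when $k=\ell$. So the only admissible monomial is $1$.
\item Hence the ideal records both $k+\ell$ and $|k-\ell|$, and therefore $\{k,\ell\}$.
\end{itemize}

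\textbf{Smaller points in your Brunnian and handle steps.}
\begin{itemize}
\item Lemma~\ref{lem:barbelltubing} is purely homological, and ``the barbell lies in $S^2\times D^2$'' does not make $\boldsymbol{\beta}$ trivial. The paper uses that $\boldsymbol{\beta}$ fixes $[D,\partial D]\in H_2(S^2\times D^2,\partial D)$, together with Gabai's light bulb theorem, to isotope $\boldsymbol{\beta}$ to a map supported in a $4$-ball. That suffices; isotopy to the identity would need the Dax isomorphism (Remark~\ref{rem:diffeo-isotopic}).
\item Hartman's theorem concerns a single $3$-ball. For $n>2$, use instead the paper's argument that, ignoring $T^2$, a cuff bounds a ball disjoint from the barbell.
\item The $B_i$ contribute $1$-handles and $4$-handles, not cancelling $1$-/$2$-handle pairs.
\end{itemize}
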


To show that $Y_{k,\ell}$ becomes the trivial $(n-1)$-component unlink if we remove any one of its components, we use the following stronger lemma.

\begin{lem}
\label{lem:s2d2-barbell-trivial}The links of $3$-balls $B$ and
$\boldsymbol{\beta_{v,\ell}\beta_{h,k}}(B)$ are isotopic rel.\ $N(K)$
in $S^{4}$. For all $i\in \{1,\cdots,n-1\}$, the links of handlebodies $(B\setminus B_{i})\sqcup H_{h}$
and $\boldsymbol{\beta_{v,\ell}\beta_{h,k}}((B\setminus B_{i})\sqcup H_{h})$ are isotopic rel.\ $N((K\setminus K_{i})\sqcup T^{2})$ in $S^{4}$. 
\end{lem}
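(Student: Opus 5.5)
Both statements come from one mechanism. The bar word $w=w_{n-1}$ is an iterated commutator that involves every meridian $\rho_1,\dots,\rho_{n-1}$. Forgetting any one $K_i$ kills $\rho_i$, so the bar becomes null-homotopic in the relevant complement. The barbell diffeomorphism then becomes isotopic to one whose bar is trivial, and the remaining cuffs can be dealt with directly. The plan is to avoid homology and produce isotopies by hand, in the spirit of the first paragraph of the proof of Theorem~\ref{thm:disks-5dlinked}.

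\emph{First statement.} Here we keep all of $K$ but ignore $T^2$, since $B$ is disjoint from $T^2$ and we only need an isotopy rel.\ $N(K)$. The cuffs of $\beta_{h,k}$ and $\beta_{v,\ell}$ are parallel copies of $S_h$ and $S_v$. In $S^4\setminus N(K)$ each of these bounds a $3$-ball: $S_h$ and $S_v$ are meridional spheres of $T^2$, and once $T^2$ is no longer an obstacle they bound small balls near $T^2$. We can choose these balls disjoint from $K$ and from $B$, because $B_i$ and $T^2$ are split. Since a cuff bounds a ball disjoint from the rest, the barbell diffeomorphism is isotopic to the identity rel.\ $N(K)$; one can also see this from Lemma~\ref{lem:barbelltubing}, since tubing $B$ into a sphere that bounds a ball in the complement can be undone. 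Even more simply, $B$ can be taken disjoint from a neighborhood of both barbells except where the bars pass near $K_i$. Concretely, I would use the proof of Lemma~\ref{lem:barbelltubing}. Isotope $B$ rel.\ $\partial$ so that it meets the barbells only in normal disks to cuffs; since the cuffs link only $T^2$, this can be arranged with no intersections at all. Then $\boldsymbol{\beta}$ fixes $B$ pointwise. The point to check is that the bar's winding around $K$ does not force intersections with $B$. The bar is $1$-dimensional and $B$ is $3$-dimensional in $S^4$, so a general position argument lets us push $B$ off the bar, at the cost of moving $B$ by an isotopy fixing $\partial B$. This step needs care: the isotopy must be rel.\ $N(K)$, so we push the bar, or equivalently the barbell, off $B$ instead. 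Barbell isotopy classes are what matter, so we may modify the bar by crossing changes with $B$. But crossing the $3$-ball $B_i$ changes the bar's homotopy class by conjugation, and $\boldsymbol{\beta}$ depends on the bar only up to isotopy in the complement of $K\sqcup T^2$. To avoid this issue I would instead apply the diffeomorphism and note that $\boldsymbol{\beta}(B)$ equals $B$ tubed to copies of cuffs, with zero algebraic and geometric count, so $\boldsymbol{\beta}(B)=B$ once $B$ avoids the cuffs.

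\emph{Second statement.} Fix $i$ and forget $K_i$. In $\pi_1(S^4\setminus((K\setminus K_i)\sqcup T^2))$ we set $\rho_i=1$, and $w_{n-1}$ becomes trivial: $w_i=[w_{i-1},\rho_i]$ dies, and hence so do all later commutators. So in the complement of $(K\setminus K_i)\sqcup T^2$ the bar of each barbell is homotopic, rel.\ its endpoints, to a short arc joining the two parallel cuffs. Homotopy implies isotopy for arcs in a $4$-manifold. Hence $\beta_{h,k}$ and $\beta_{v,\ell}$ become isotopic to barbells whose cuffs are parallel copies of $S_h$ (resp.\ $S_v$) joined by a trivial bar. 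A barbell whose two cuffs are parallel copies joined by a short bar is trivial: it lives in $N(S_h)\cong S^2\times D^2$, where the barbell map restricts to an isotopically trivial diffeomorphism. This gives an isotopy of $\boldsymbol{\beta_{v,\ell}\beta_{h,k}}$ to the identity rel.\ $N((K\setminus K_i)\sqcup T^2)$, which proves the claim.

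The main obstacle is justifying that a barbell with parallel cuffs and a trivial bar induces a diffeomorphism isotopic to the identity, not just homologically trivial. I would verify this with the arc-spinning model. When the cuffs are parallel meridians of one object, the spinning of one arc about the other can be undone by an isotopy supported near $N(S_h)$. This is the standard fact, used by Budney--Gabai, that such a barbell map is trivial.
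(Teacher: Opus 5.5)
Your reduction for the second statement is the same as the paper's. Killing $\rho_i$ kills $w$, so the bar can be isotoped rel.\ $N((K\setminus K_i)\sqcup T^2)$ until each barbell lies in $N(S_h)\cong S^2\times D^2$ (resp.\ $N(S_v)$).

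The gap is the step you flag as the main obstacle. Your argument needs the resulting barbell diffeomorphism of $S^2\times D^2$ to be isotopic to the identity rel.\ $\partial$. The sentence ``the spinning of one arc about the other can be undone by an isotopy supported near $N(S_h)$'' asserts this but does not prove it. The easy criterion, that a cuff bounds a $3$-ball missing the rest of the barbell, is unavailable here because both cuffs are essential. The paper notes (Remark~\ref{rem:diffeo-isotopic}) that isotopy to the identity holds but needs real input such as the Dax isomorphism theorem, and it avoids this.

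What you are missing is that you only need $\boldsymbol{\beta}$ to be isotopic to a diffeomorphism supported in a $4$-ball. Homological triviality gives this via the light bulb theorem:
\begin{itemize}
\item With $D=\mathrm{pt}\times D^2$ and $S=S^2\times\mathrm{pt}$, Equation~(\ref{eq:barbell-action}) gives $[\boldsymbol{\beta}(D),\partial D]=[D,\partial D]+[S]-[S]=[D,\partial D]$.
\item So $\boldsymbol{\beta}(D)\approx D$ rel.\ $\partial D$ by \cite[Theorem~10.4]{MR4127900}.
\item After a further isotopy rel.\ $\partial$, $\boldsymbol{\beta}$ is the identity near $D$. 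Hence it is supported in $S^2\times D^2\setminus\mathring N(\partial(S^2\times D^2)\cup D)\cong B^4$.
\item Conjugate by an ambient isotopy rel.\ $N((K\setminus K_i)\sqcup T^2)$ that moves this $4$-ball off the $3$-dimensional handlebodies $(B\setminus B_i)\sqcup H_h$. This gives the required isotopy.
\end{itemize}

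For the first statement, your opening argument is correct and is essentially the paper's. Ignoring $T^2$, a cuff bounds a $3$-ball in $S^4\setminus N(K)$ whose interior misses the rest of the barbell, so the barbell diffeomorphism is isotopic to the identity rel.\ $N(K)$.

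The ``simpler'' alternatives you then pursue are wrong and should be dropped. The bar winds around meridians of the $K_j$, so it meets $B$. General position cannot remove this, since an arc and a $3$-manifold in a $4$-manifold meet in isolated points. Also, Lemma~\ref{lem:barbelltubing} describes the effect on surfaces, which can be pushed off the bar. It says nothing about a $3$-manifold that crosses the bar, so ``$\boldsymbol{\beta}(B)=B$ once $B$ avoids the cuffs'' is false in general. Theorem~\ref{thm:circle-splittingspheres} is a counterexample: the standard splitting sphere there misses the cuffs and crosses the bar, yet $\boldsymbol{\beta}^k\Sigma\not\approx\Sigma$ for $k\neq 0$.
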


\begin{proof}
First, if we ignore $T^2$ and work in $S^4 \setminus N(K)$, then the cuffs of the barbells $\beta_{h,k}$ and $\beta_{v,\ell}$ bound $3$-balls whose interiors are disjoint from the barbells.
Hence, we can isotope $\beta_{h,k}$ and $\beta_{v,\ell}$ in $S^{4}$ rel.\ $N(K)$
(i.e.\ we allow ourselves to move the $T^{2}$) so that they are
disjoint from $B$. Thus $B$ and $\boldsymbol{\beta_{v,\ell}\beta_{h,k}}B$ are
isotopic rel.\ $N(K)$.

Fix $i\in\{1,\cdots,n-1\}$. For the second statement, we claim that
the diffeomorphisms $\boldsymbol{\beta_{v,\ell}},\boldsymbol{\beta_{h,k}}$
of $S^{4}$ are isotopic rel.\ $N((K\setminus K_{i})\sqcup T^{2})$
to diffeomorphisms of $S^{4}$ supported inside a $B^{4}$ (in fact,
see Remark~\ref{rem:diffeo-isotopic}). Once this is established
the second statement follows, since we can isotope $\boldsymbol{\beta_{h,k}}$
and $\boldsymbol{\beta_{v,\ell}}$ to diffeomorphisms supported
inside a $4$-ball in $S^{4}\setminus \mathring N((K\setminus K_{i})\sqcup T^{2})$,
and then further isotope the diffeomorphisms so that their supports are
contained in a 4-ball disjoint from $(B\setminus B_{i})\sqcup H_{h}$,
keeping $N((K\setminus K_{i})\sqcup T^{2})$ fixed throughout.

Since the word $w$ becomes trivial in $F_{n}/\left\langle \left\langle \rho_{i}\right\rangle \right\rangle $, then
if we ignore $K_{i}$,
the bar of the
barbell $\beta_{h,k}$ (resp.\ $\beta_{v,\ell}$) can be isotoped
rel.\ $N((K\setminus K_{i})\sqcup T^{2})$ until the barbell is
contained inside a closed tubular neighborhood of $S_{h}$ (resp.\ $S_{v}$),
which we identify with an $S^{2}\times D^{2}$ embedded in
$S^{4}\setminus \mathring N(T^{2})$. In the remainder of this paragraph, let
us use $\beta$ to refer to either $\beta_{h,k}$ or $\beta_{v,\ell}$;
the arguments for both are identical. After our initial isotopy of
the bar, we can view $\boldsymbol{\beta}$ as a diffeomorphism of
$S^{2}\times D^{2}$ rel.\ $\partial$.
Let $D:=\mathrm{pt}\times D^{2}$ and $S:=S^{2}\times\mathrm{pt}$.
By Equation~(\ref{eq:barbell-action}),
$[\boldsymbol{\beta}(D),\partial D]=[D,\partial D]+[S]-[S]=[D,\partial D]\in H_{2}(S^{2}\times D^{2},\partial D)$,
so by \cite[Theorem 10.4]{MR4127900}, $\boldsymbol{\beta}(D)\approx D$
rel.\ $\partial D$. After further isotopy, keeping $\partial(S^{2}\times D^{2})$
fixed, we can assume that $\boldsymbol{\beta}$ is the identity on
a neighborhood of $D$ (see, e.g.\ \cite[Proof of Theorem 1.8]{krushkal2024corksexoticdiffeomorphisms}),
so that $\boldsymbol{\beta}$ is isotopic to a diffeomorphism supported
in $S^{2}\times D^{2}\setminus\mathring N((\partial(S^{2}\times D^{2})\cup D))$,
i.e.\ in a $B^{4}\subset S^{2}\times D^{2}$. Since the isotopies
all took place inside $S^{2}\times D^{2}\subset S^{4}\setminus \mathring N(T^{2})$,
the claim follows.
\end{proof}
\begin{rem}
\label{rem:diffeo-isotopic}We note that it is possible to show that
the diffeomorphisms $\boldsymbol{\beta_{v,\ell}}$ and $\boldsymbol{\beta_{h,k}}$
of $S^{4}$ are actually isotopic to the identity rel.\ $N((K\setminus K_{i})\sqcup T^{2})$.
Recall that in the proof, we reduced to studying a specific barbell
diffeomorphism of $S^{2}\times D^{2}$ rel.\ $\partial$. Similarly
to the definition of the barbell map (Section~\ref{sec:Barbells,-knotted-disks,}),
one can view these diffeomorphisms as induced by arc-spinning in $B^{4}$,
and then for instance use the Dax Isomorphism Theorem (see \cite[Theorem 0.3]{gabaiselfrefdisks}); we leave this to the interested reader.
\end{rem}

\begin{proof}[Proof of Theorem~\ref{thm:linked-6crit}]
First by Lemma~\ref{lem:s2d2-barbell-trivial}, $Y_{k,\ell}$ becomes
isotopic to the $(n-1)$-component unlink if we remove any one of
the components. To show that $Y_{k,\ell}$ is a nontrivial link, let
us compute $\pi_{2}(S^{5}\setminus Y_{k,\ell})\otimes_{\mathbb{Z}}\mathbb{F}_{2}$
as a left $\mathbb{F}_{2}[\pi_{1}(S^{5}\setminus Y_{k,\ell})]$-module.

Let $Z:=S^{4}\setminus \mathring N(K\sqcup T^{2})$ and let $\widetilde{Z}$
be its universal cover. Note that $\pi_{1}(S^{5}\setminus Y_{k,\ell})\cong\pi_{1}(Z)\cong F_{n}$.
By abuse of notation, also denote as $\rho_{i}$ for $i=1,\cdots,n-1$ (resp.\ $\rho_{n}$) the
deck transformation of $\widetilde{Z}$ corresponding to the meridian
of $K_{i}$ (resp.\ $T^{2}$).

Similarly to the proof of Theorem~\ref{thm:morsesimple-s3}, define
lifts $\widetilde{D_{v}},\widetilde{S_{h}},\widetilde{S_{v}}\subset\widetilde{Z}$
of $D_{v},S_{h},S_{v}$, respectively, such that for $\rho,\rho'\in F_{n}$, we have
\[
\left|\rho\widetilde{D_{v}}\cap\rho'\widetilde{S_{v}}\right|=\begin{cases}
1 & \rho=\rho'\\
0 & {\rm otherwise}
\end{cases},\ \left|\rho\widetilde{S_{h}}\cap\rho\rho_{n}^{i}\widetilde{S_{v}}\right|=\begin{cases}
1 & i=0,1\\
0 & {\rm otherwise}
\end{cases}.
\]
Also, let $\widetilde{\boldsymbol{\beta_{h}}}$
(resp.\ $\widetilde{\boldsymbol{\beta_{v}}}$) be the lift of the
barbell diffeomorphism $\boldsymbol{\beta_{h}}$ (resp.\ $\boldsymbol{\beta_{v}}$)
on $Z$ to $\widetilde{Z}$ that fixes the inverse image of $S^{4}\setminus \mathring N(\beta_{h}\sqcup\beta_{v})$.

Then, as in the proof of Claim~\ref{claim:pi2-comp}, we can find
a handle decomposition for $S^{5}\setminus Y_{k,\ell}$ that has one
0-handle, $n$ 1-handles, one 2-handle, and one 3-handle (let us ignore
the 4- and 5-handles), such that the attaching sphere of the 3-handle
is $\boldsymbol{\beta_{v}\beta_{h}}S_{v}\subset Z$ and the belt sphere
of the 2-handle intersects $Z$ in $D_{v}$. Hence, we have 
\[
\pi_{2}(S^{5}\setminus Y_{k,\ell})\otimes_{\mathbb{Z}}\mathbb{F}_{2}\cong\mathbb{F}_{2}[F_{n}]\biggl/\left(\mathbb{F}_{2}[F_{n}]\sum_{\rho\in F_{n}}\left(\widetilde{\boldsymbol{\beta_{v}}}\widetilde{\boldsymbol{\beta_{h}}}\widetilde{S_{v}}\cdot\rho\widetilde{D_v}\right)\rho\right)
\]
as left $\mathbb{F}_{2}[F_{n}]\cong\mathbb{F}_{2}[\pi_{1}(S^{5}\setminus Y_{k,\ell})]$-modules.

We compute the $\mathbb{F}_{2}$-homology class of $\widetilde{\boldsymbol{\beta_{v}}}\widetilde{\boldsymbol{\beta_{h}}}\widetilde{S_{v}}$
in $\widetilde{Z}$ similarly to the proof of Theorem~\ref{thm:morsesimple-s3}:
\begin{multline*}
\widetilde{\boldsymbol{\beta_{v}}}\widetilde{\boldsymbol{\beta_{h}}}\widetilde{S_{v}}=\widetilde{S_{v}}+\rho_{n}^{-1}w^{-k}\widetilde{S_{h}}+\left(\rho_{n}^{-1}w^{-k-\ell}\widetilde{S_{v}}+\rho_{n}^{-1}w^{-k}\rho_{n}w^{-\ell}\widetilde{S_{v}}+\rho_{n}^{-1}w^{-k+\ell}\widetilde{S_{v}}+\rho_{n}^{-1}w^{-k}\rho_{n}w^{\ell}\widetilde{S_{v}}\right)\\
+w^{-k}\widetilde{S_{h}}+\left(w^{-k-\ell}\widetilde{S_{v}}+w^{-k}\rho_{n}w^{-\ell}\widetilde{S_{v}}+w^{-k+\ell}\widetilde{S_{v}}+w^{-k}\rho_{n}w^{\ell}\widetilde{S_{v}}\right)\\
+\rho_{n}^{-1}w^{k}\widetilde{S_{h}}+\left(\rho_{n}^{-1}w^{k-\ell}\widetilde{S_{v}}+\rho_{n}^{-1}w^{k}\rho_{n}w^{-\ell}\widetilde{S_{v}}+\rho_{n}^{-1}w^{k+\ell}\widetilde{S_{v}}+\rho_{n}^{-1}w^{k}\rho_{n}w^{\ell}\widetilde{S_{v}}\right)\\
+w^{k}\widetilde{S_{h}}+\left(w^{k-\ell}\widetilde{S_{v}}+w^{k}\rho_{n}w^{-\ell}\widetilde{S_{v}}+w^{k+\ell}\widetilde{S_{v}}+w^{k}\rho_{n}w^{\ell}\widetilde{S_{v}}\right).
\end{multline*}
Hence, if we let $f_{k,\ell}:=1+(\rho_{n}^{-1}+1)(w^{-k}+w^{k})(1+\rho_{n})(w^{-\ell}+w^{\ell})\in\mathbb{F}_{2}[F_{n}]$,
then 
\[
\pi_{2}(S^{5}\setminus Y_{k,\ell})\otimes_{\mathbb{Z}}\mathbb{F}_{2}\cong\mathbb{F}_{2}[F_{n}]/(\mathbb{F}_{2}[F_{n}]f_{k,\ell})=:M_{k,\ell}
\]
as left $\mathbb{F}_{2}[F_{n}]\cong\mathbb{F}_{2}[\pi_{1}(S^{5}\setminus Y_{k,\ell})]$-modules.

Now, we claim that $M_{k,\ell}\neq0$ and that if $\{k,\ell\}\neq\{k',\ell'\}$,
then $M_{k,\ell}\not\cong M_{k',\ell'}$ as left $\mathbb{F}_{2}[F_{n}]$-modules.
For this, we use the following group theoretic lemma.
\begin{lem}
\label{lem:group-theoretic}There exists a group $G$ and a homomorphism $\varphi:F_{n}\to G$
such that if $H\le G$ is the subgroup generated by $\varphi(w)$
and $\varphi(\rho_{n})$, then $H$ is central in $G$ and is isomorphic
to $\mathbb{Z}^{2}$.
\end{lem}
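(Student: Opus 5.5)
We need a group $G$ and a map $\varphi:F_n\to G$ such that $\varphi(w)$ and $\varphi(\rho_n)$ commute with everything and generate a free abelian subgroup of rank $2$. Here $w=w_{n-1}$ is the iterated commutator $[\cdots[[\rho_1,\rho_2],\rho_3],\cdots,\rho_{n-1}]$, which involves only the letters $\rho_1,\dots,\rho_{n-1}$. This suggests using a nilpotent group of class exactly $n-1$ for the first $n-1$ generators, and sending $\rho_n$ to an independent central factor.

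\textbf{Construction.} Take $G:=N\times\mathbb{Z}$, where $N$ is the free nilpotent group of class $n-1$ on $n-1$ generators $x_1,\dots,x_{n-1}$, i.e.\ $N=F_{n-1}/\gamma_n(F_{n-1})$. Define $\varphi(\rho_i)=(x_i,0)$ for $i\le n-1$ and $\varphi(\rho_n)=(1,1)$. Then
\[
\varphi(w)=(\overline{w_{n-1}(x_1,\dots,x_{n-1})},0),
\]
which lies in $\gamma_{n-1}(N)$.

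\textbf{Centrality.} In a group of nilpotency class $n-1$, the term $\gamma_{n-1}$ is central, since $[\gamma_{n-1},N]\subseteq\gamma_n=1$. Hence $\varphi(w)$ is central in $N$, and therefore in $G$. The element $(1,1)$ is central because the $\mathbb{Z}$ factor is central.

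\textbf{$H\cong\mathbb{Z}^2$.} It suffices to show that $\varphi(w)$ has infinite order in $N$. Then $H=\langle \varphi(w)\rangle\times\langle(1,1)\rangle\cong\mathbb{Z}\times\mathbb{Z}$, since the two generators live in different factors.

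This is the main obstacle. The approach is to use that $\gamma_{n-1}(N)=\gamma_{n-1}(F)/\gamma_n(F)$ is the degree-$(n-1)$ part of the free Lie ring on $x_1,\dots,x_{n-1}$ (Magnus–Witt). That graded piece is free abelian. The image of $w$ in it is the left-normed basic commutator $[[\cdots[x_1,x_2],\cdots],x_{n-1}]$, and basic commutators form part of a $\mathbb{Z}$-basis (Hall basis theorem). So the image of $w$ is a nonzero element of a free abelian group, hence has infinite order.

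\textbf{Alternative verification via a concrete representation.} If one prefers to avoid citing the Hall basis theorem, use the Magnus embedding $x_i\mapsto 1+X_i$ into the power series ring $\mathbb{Z}\langle\langle X_1,\dots,X_{n-1}\rangle\rangle$ modulo terms of degree $\ge n$. This gives a representation of $N$ by unitriangular matrices over $\mathbb{Z}$. Under it, $w$ maps to $1+[\cdots[X_1,X_2],\cdots,X_{n-1}]$ modulo degree $n$, where the bracket is the ring commutator. This ring commutator is nonzero: it contains the monomial $X_1X_2\cdots X_{n-1}$ with coefficient $1$.

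Powers then behave as $(1+P)^m=1+mP$ modulo higher degree, where $P$ is that homogeneous degree-$(n-1)$ commutator. So the image of $w^m$ is nontrivial for every $m\neq0$, and $\varphi(w)$ has infinite order.

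The case $n=2$ is consistent with this: $w=\rho_1$, $N=\mathbb{Z}$, and $G=\mathbb{Z}^2$ is abelian, so both centrality and $H\cong\mathbb{Z}^2$ are immediate.
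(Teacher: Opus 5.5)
Your proof is correct and takes the same approach as the paper: you factor through $F_{n-1}\ast\mathbb{Z}\to F_{n-1}\times\mathbb{Z}$ and take $G=F_{n-1}/\gamma_n(F_{n-1})\times\mathbb{Z}$, and you also supply the infinite-order justification that the paper only asserts for this quotient. For that check, the paper's alternative is a smaller version of your Magnus-embedding argument: it uses the unitriangular representation $\rho_i\mapsto I+E_{i,i+1}$ in $U_n$, under which $w\mapsto I+E_{1,n}$, which is visibly central and of infinite order.
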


\begin{proof}[Proof of Lemma~\ref{lem:group-theoretic}]
View $F_{n}=F_{n-1}\ast\mathbb{Z}$ where $F_{n-1}$ is the free
group over $\rho_{1},\cdots,\rho_{n-1}$ and $\mathbb{Z}$ has generator
$\rho_{n}$. Recall that in fact $w\in F_{n-1}$. It is sufficient
to find some group $Q$ and some homomorphism $\psi:F_{n-1}\to Q$ such that $\psi(w)$
is central and has infinite order, since we can let $\varphi$ be
the composition $F_{n-1}\ast\mathbb{Z}\to F_{n-1}\times\mathbb{Z}\xrightarrow{\psi\times{\rm Id}}Q\times\mathbb{Z}.$

Such $\psi$ can be obtained by quotienting out $F_{n-1}$ by the
$n$th lower central series of $F_{n-1}$. Alternatively (compare
\cite[page~127]{robinson1996course}), let $U_{n}$ be the group of
upper triangular $(n\times n)$-matrices over $\mathbb{Z}$ whose
diagonal entries are all $1$. Let $E_{i,j}$ be the $(n\times n)$-matrix
whose $(i,j)$th entry is $1$ and the rest are $0$. Then, let $Q:=U_{n}$
and let $\psi:F_{n-1}\to U_{n}$ be such that $\psi (\rho_{i}) = I+E_{i,i+1}$.
Then, one can show that $\psi(w)=I+E_{1,n}$ and so that it is central
and has infinite order.
\end{proof}
Let $\varphi:F_{n}\to G$ be the group homomorphism from Lemma~\ref{lem:group-theoretic}.
Then, it induces a ring homomorphism $\mathbb{F}_{2}[F_{n}]\to\mathbb{F}_{2}[G]$
which we also denote as $\varphi$; view $\mathbb{F}_{2}[G]$ as a
right $\mathbb{F}_{2}[F_{n}]$-module via $\varphi$. We will show
that $\mathbb{F}_{2}[G]\otimes_{\mathbb{F}_{2}[F_{n}]}M_{k,\ell}\neq0$
and that if $\{k,\ell\}\neq\{k',\ell'\}$, then $\mathbb{F}_{2}[G]\otimes_{\mathbb{F}_{2}[F_{n}]}M_{k,\ell}\not\cong\mathbb{F}_{2}[G]\otimes_{\mathbb{F}_{2}[F_{n}]}M_{k',\ell'}$
as left $\mathbb{F}_{2}[G]$-modules. Since $\mathbb{F}_{2}[G]\otimes_{\mathbb{F}_{2}[F_{n}]}M_{k,\ell}\cong\mathbb{F}_{2}[G]/(\mathbb{F}_{2}[G]\varphi(f_{k,\ell}))$,
and since $\varphi(f_{k,\ell})$ is central in $\mathbb{F}_{2}[G]$,
the annihilator of $\mathbb{F}_{2}[G]\otimes_{\mathbb{F}_{2}[F_{n}]}M_{k,\ell}$
is $\mathbb{F}_{2}[G]\varphi(f_{k,\ell})\subset\mathbb{F}_{2}[G]$.

Since $\mathbb{F}_{2}[G]=\bigoplus_{aH\in G/H}a\mathbb{F}_{2}[H]$
and $a\mathbb{F}_{2}[H]\varphi(f_{k,\ell})\subset a\mathbb{F}_{2}[H]$ ($H$ is central in $G$),
we have $\mathbb{F}_{2}[G]\varphi(f_{k,\ell})\cap\mathbb{F}_{2}[H]=\mathbb{F}_{2}[H]\varphi(f_{k,\ell})$.
Hence, it is sufficient to show that $\mathbb{F}_{2}[H]\varphi(f_{k,\ell})\neq\mathbb{F}_{2}[H]$
and that if $\{k,\ell\}\neq\{k',\ell'\}$, then $\mathbb{F}_{2}[H]\varphi(f_{k,\ell})\neq\mathbb{F}_{2}[H]\varphi(f_{k',\ell'})$.
These follow since $\mathbb{F}_{2}[H]$ (which is isomorphic to $\mathbb{F}_{2}[s^{\pm1},t^{\pm1}]$)
is a (commutative) UFD, $\varphi(f_{k,\ell})\in\mathbb{F}_{2}[H]$
is not a unit, and $\varphi(f_{k,\ell})$ is not a product of $\varphi(f_{k',\ell'})$
with a unit. 
\end{proof}

\subsection{\label{subsec:general3mfds} Morse-simple 3-manifolds in $S^5$}

In this subsection, we generalize Theorem~\ref{thm:morsesimple-s3}
to any $3$-manifold with Heegaard genus $1$.
\begin{thm}
\label{thm:genus1-hd}Let $\varphi:S^{4}\to S^{4}$ be an orientation
preserving diffeomorphism that restricts to an orientation preserving
diffeomorphism on $T^{2}$. For sufficiently large $k,\ell>0$, the
two codimension-$2$ submanifolds $Y:=H_{h}\cup\overline{\varphi H_{h}}$
and $Y_{\beta}:=H_{h}\cup\overline{\boldsymbol{\beta_{h,k}\beta_{v,\ell}}\varphi H_{h}}$
are non-isotopic in $S^{5}$. In fact, we can let 
\[
\pi_{2}(S^{5}\setminus Y_{\beta})\otimes_{\mathbb{Z}}\mathbb{F}_{2}
\]
be finite and arbitrarily large.
\end{thm}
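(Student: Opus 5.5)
The plan is to imitate the proof of Theorem~\ref{thm:morsesimple-s3}. First I will compute $\pi_1$ and $\pi_2\otimes\mathbb{F}_2$ of the exterior $X_\beta:=S^5\setminus\mathring N(Y_\beta)$ from a handle decomposition. Then I will show that for large $k,\ell$ this group is finite-dimensional but arbitrarily large, while the corresponding group for $Y$ is a fixed quantity independent of $k,\ell$.

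\textbf{Handle decomposition.} By Proposition~\ref{prop:handle-decomp} (with the roles of $h$ and $v$ swapped), $B^5\setminus\mathring N(H_h)$ has one $0$-, one $1$- and one $2$-handle, and the belt sphere of the $2$-handle meets $Z=S^4\setminus\mathring N(T^2)$ in the compressing disk $D_h$. Likewise, $\overline{B^5\setminus\mathring N(H_h)}$ relative to $Z$ is a single $3$-handle attached along $S_h$. Gluing via $\psi:=\boldsymbol{\beta_{h,k}\beta_{v,\ell}}\varphi$ gives a decomposition of $X_\beta$ with one handle of each index $0,1,2,3$. The $3$-handle is attached along $\psi S_h\subset Z$, and $\pi_1(X_\beta)=\pi_1(Z)=\mathbb{Z}=\langle t\rangle$.

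\textbf{The module.} Exactly as in Claim~\ref{claim:pi2-comp},
\[
\pi_2(X_\beta)\otimes\mathbb{F}_2\cong\mathbb{F}_2[t^{\pm1}]/(f_\beta),
\]
where $f_\beta$ is the mod $2$ equivariant intersection number of $\psi S_h$ with $D_h$. Its $\mathbb{F}_2$-dimension is the span (top degree minus bottom degree) of $f_\beta$ when $f_\beta\ne0$, and it is infinite when $f_\beta=0$. The same formula with $f:=\widetilde{\varphi S_h}\cdot\widetilde{D_h}$ computes the group for $Y$.

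\textbf{Computing $f_\beta$.} Since $\varphi$ preserves $T^2$, it acts on $H_2(\widetilde Z;\mathbb{F}_2)$, which is free over $\mathbb{F}_2[t^{\pm1}]$ on $\widetilde{S_h},\widetilde{S_v}$. Write $\widetilde{\varphi S_h}=a(t)\widetilde{S_h}+b(t)\widetilde{S_v}$. Because $\varphi|_{T^2}$ is a diffeomorphism and $\varphi$ is a diffeomorphism of $S^4$, the pair $(a,b)$ should reduce at $t=1$ to a primitive vector in $\mathbb{Z}^2$; in particular $(a,b)\ne(0,0)$. Next I apply $\widetilde{\boldsymbol{\beta_v}}$ and then $\widetilde{\boldsymbol{\beta_h}}$ using Equation~(\ref{eq:barbell-action}), as in Equation~(\ref{eq:bvsh}). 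A sphere $\widetilde{S_h}$ meets the lifts $\rho^i\widetilde{S_v}$ for $i=0,1$, and $\widetilde{S_v}$ meets $\rho^{-1}\widetilde{S_h}$ and $\widetilde{S_h}$. This gives
\begin{align*}
\widetilde{\boldsymbol{\beta_v}}:\ \widetilde{S_h}&\mapsto \widetilde{S_h}+P_\ell(t)\widetilde{S_v}, & P_\ell&=(1+t)(t^{\ell}+t^{-\ell}),\\
\widetilde{\boldsymbol{\beta_h}}:\ \widetilde{S_v}&\mapsto \widetilde{S_v}+Q_k(t)\widetilde{S_h}, & Q_k&=(1+t^{-1})(t^{k}+t^{-k}),
\end{align*}
with all other generators fixed. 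Pairing with $\widetilde{D_h}$, which is dual to $\widetilde{S_h}$, yields
\[
f_\beta=a+Q_k\,(b+P_\ell a)=a(1+P_\ell Q_k)+bQ_k .
\]

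\textbf{Degree argument.} Take $k,\ell$ much larger than the degree spans of $a$ and $b$. If $a\ne0$, the term $aP_\ell Q_k$ has extreme exponents near $\pm(k+\ell)$, and these exceed every exponent occurring in $a$ and $bQ_k$. So the top and bottom terms of $f_\beta$ cannot cancel, $f_\beta\ne0$, and its span is roughly $2k+2\ell+O(1)$. If $a=0$, then $b\ne0$ and $f_\beta=bQ_k$, whose span is roughly $2k+O(1)$. In both cases $\pi_2(X_\beta)\otimes\mathbb{F}_2$ is finite, and it is arbitrarily large as $k$ grows.

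\textbf{Conclusion.} For $Y$, the group $\pi_2\otimes\mathbb{F}_2$ is $\mathbb{F}_2[t^{\pm1}]/(f)$, which is independent of $k,\ell$: it is either infinite-dimensional (if $f=0$) or of fixed finite dimension. Choosing $k,\ell$ large enough therefore makes the two groups differ, so $Y$ and $Y_\beta$ are not isotopic.

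\textbf{Main obstacle.} The delicate point is justifying the lifted homology action of $\varphi$ and the claim $(a,b)\ne0$. I would argue it directly: $\varphi S_h$ is an embedded sphere in $Z$ whose image in $H_2(Z)\cong\mathbb{Z}^2$ is $\varphi_*[S_h]$, and this is nonzero because $\varphi_*$ is an automorphism of $H_2(Z)$. Reducing mod $2$ and pushing down to $Z$ sets $t=1$, and $(a(1),b(1))$ is the mod $2$ reduction of $\varphi_*[S_h]$. I will also need this reduction to be nonzero mod $2$. That follows because $\varphi_*$ is an automorphism of $H_2(Z)$, so the image of the primitive class $[S_h]$ is again primitive.
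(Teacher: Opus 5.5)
Your strategy matches the paper's: one handle of each index $0$--$3$, $\pi_2\otimes\mathbb{F}_2\cong\mathbb{F}_2[t^{\pm1}]/(f)$, the barbell action in $\widetilde Z$ via Equation~(\ref{eq:barbell-action}), and a degree count for large $k,\ell$. However, you attach the $3$-handle along the wrong sphere.

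Both halves of $Y_\beta$ come from the \emph{horizontal} handlebody, so only the lower half is ``swapped'' relative to Theorem~\ref{thm:morsesimple-s3}. The upper half $\overline{B^5\setminus\mathring N(H_h)}$ is the same piece that appears in that theorem. By Proposition~\ref{prop:handle-decomp}~(2), as in the proof of Claim~\ref{claim:pi2-comp}, its relative $3$-handle is attached along the vertical sphere $S_v$. So the $3$-handle of $X_\beta$ is attached along $\boldsymbol{\beta_{h,k}\beta_{v,\ell}}\varphi S_v$, not along $\boldsymbol{\beta_{h,k}\beta_{v,\ell}}\varphi S_h$.

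Your version is internally inconsistent. In $B^5\setminus\mathring N(H_h)$, a sphere $\Sigma\subset Z$ meets the cocore of the $2$-handle algebraically $\Sigma\cdot D_h$ times. Since $S_h\cdot D_h=1$, the sphere $S_h$ is not null-homologous there and cannot bound the core of a $3$-handle. Concretely, take $\varphi=\mathrm{Id}$. Your description would make the $2$- and $3$-handles cancel, so the exterior of $H_h\cup\overline{H_h}\cong S^1\times S^2$ would be $S^1\times B^4$. This contradicts Alexander duality, which gives $H_2(S^5\setminus Y)\cong H^2(S^1\times S^2)\cong\mathbb{Z}$. Hence the modules you write down for $Y$ and $Y_\beta$ are not $\pi_2\otimes\mathbb{F}_2$.

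The repair is easy, and then your argument is essentially the paper's. Write $\widetilde{\varphi S_v}=a\widetilde{S_h}+b\widetilde{S_v}$ instead. Your formulas for $\widetilde{\boldsymbol{\beta_v}}$ and $\widetilde{\boldsymbol{\beta_h}}$ on the generators are correct, so you again get $f_\beta=a(1+P_\ell Q_k)+bQ_k$. Also $(a,b)\neq 0$ mod $2$ by your same reasoning, since $[S_v]\neq 0$ in $H_2(Z;\mathbb{F}_2)$ and $\varphi_*$ is an automorphism. The resulting dimensions are $2k+2\ell+2+\mathrm{span}(a)$ if $a\neq 0$, and $2k+1+\mathrm{span}(b)$ if $a=0$. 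These agree with the paper's count, which records the same data as the intersection numbers $h_i,v_i,b_i$ of $\widetilde{\varphi S_v}$ with $\rho^i\widetilde{S_h},\rho^i\widetilde{S_v},\rho^i\widetilde{D_h}$.

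One smaller point: requiring $k,\ell$ ``much larger than the degree spans of $a,b$'' does not rule out cancellation. For example, if $a=1$, $b=t^N$ and $\ell=N-1$, then the top terms $t^{k+\ell+1}$ of $aP_\ell Q_k$ and $bQ_k$ cancel. You need $\ell$ large compared with the exponents themselves for a fixed lift, not just the spans, as the paper's bounds via $M_b,M_h,M_v$ ensure.
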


\begin{proof}
The computation is the same as the proof of Theorem~\ref{thm:morsesimple-s3}.
Let $X:=S^{5}\setminus \mathring N(Y_{\beta})$, and $Z:=S^{4}\setminus \mathring N(T^{2})$.
Let us drop the subscripts $k,\ell$ from the notation and define $\widetilde{Z},\rho,\widetilde{D_{h}},\widetilde{S_{h}},\widetilde{S_{v}},\widetilde{\beta_{h}},\widetilde{\beta_{v}}$
as in the proof of Theorem~\ref{thm:morsesimple-s3}.

Similarly to Claim~\ref{claim:pi2-comp}, 
\[
\pi_{2}(S^{5}\setminus Y_{\beta})\otimes_{\mathbb{Z}}\mathbb{F}_{2}\cong\mathbb{F}_{2}[t,t^{-1}]/(f)
\]
where $f$ is the mod $2$ equivariant intersection number of $\boldsymbol{\beta_{h,k}\beta_{v,\ell}}\varphi S_{v}$
and $D_{h}$ in $Z$.

Let $\widetilde{\varphi S_{v}}$ be a lift of $\varphi S_{v}$ to
$\widetilde{Z}$, and let $h_{i}=\widetilde{\varphi S_{v}}\cdot\rho^{i}\widetilde{S_{h}}$,
$v_{i}=\widetilde{\varphi S_{v}}\cdot\rho^{i}\widetilde{S_{v}}$,
and $b_{i}=\widetilde{\varphi S_{v}}\cdot\rho^{i}\widetilde{D_{h}}$.
Then, as elements
of $H_{2}(\widetilde{Z};\mathbb{F}_{2})$, we have
\[
\widetilde{\boldsymbol{\beta_{v}}}\widetilde{\varphi S_{v}}=\widetilde{\varphi S_{v}}+\sum_{i}v_{i}\left(\rho^{-\ell+i}\widetilde{S_{v}}+\rho^{\ell+i}\widetilde{S_{v}}\right).
\]
Hence,
\begin{multline*}
\widetilde{\boldsymbol{\beta_{h}}}\widetilde{\boldsymbol{\beta_{v}}}\widetilde{\varphi S_{v}}=\widetilde{\varphi S_{v}}+\sum_{i}h_{i}\left(\rho^{-k+i}\widetilde{S_{h}}+\rho^{k+i}\widetilde{S_{h}}\right)\\
+\sum_{i}v_{i}\biggl(\rho^{-\ell+i}\widetilde{S_{v}}+\rho^{-k-1-\ell+i}\widetilde{S_{h}}+\rho^{-k-\ell+i}\widetilde{S_{h}}+\rho^{k-1-\ell+i}\widetilde{S_{h}}+\rho^{k-\ell+i}\widetilde{S_{h}}\\
+\rho^{\ell+i}\widetilde{S_{v}}+\rho^{-k-1+\ell+i}\widetilde{S_{h}}+\rho^{-k+\ell+i}\widetilde{S_{h}}+\rho^{k-1+\ell+i}\widetilde{S_{h}}+\rho^{k+\ell+i}\widetilde{S_{h}}\biggl).
\end{multline*}
Therefore, 
\begin{multline*}
H_{2}(\widetilde{X};\mathbb{F}_{2})\cong\mathbb{F}_{2}[t,t^{-1}]\biggl/\sum_{i}\left(\widetilde{\boldsymbol{\beta_{h}}}\widetilde{\boldsymbol{\beta_{v}}}\widetilde{\varphi S_{v}}\cdot\rho^{i}\widetilde{D_{h}}\right)t^{i}=\mathbb{F}_{2}[t,t^{-1}]\biggl/\biggl(\sum_{i}b_{i}t^{i}+h_{i}(t^{-k+i}+t^{k+i})\\
+v_{i}(t^{-k-1-\ell+i}+t^{-k-\ell+i}+t^{k-1-\ell+i}+t^{k-\ell+i}+t^{-k-1+\ell+i}+t^{-k+\ell+i}+t^{k-1+\ell+i}+t^{k+\ell+i})\biggl).
\end{multline*}

Let $M_b :=\max \{ 0, \max \{ |i|: b_i \neq 0 \}\}$, $M_h :=\max \{ 0, \max \{ |i|: h_i \neq 0 \}\}$, and  $M_v :=\max \{ 0, \max \{ |i|: v_i \neq 0 \}\}$. Since $\widetilde{\varphi S_{v}}$ is compact, these are well-defined. The second homology
$H_{2}(\widetilde{Z};\mathbb{F}_{2})$ is freely generated by $\rho^{i}\widetilde{S_{h}}$
and $\rho^{i}\widetilde{S_{v}}$ and the homology class of $\widetilde{\varphi S_{v}}$
is nonzero, and hence there exists some $i$ such that either $h_{i}\neq0$
or $v_{i}\neq0$. If $k\ge M_b + M_h +100$ and $\ell\ge M_b + M_h + M_v +100$,
then 
\[
\dim_{\mathbb{F}_{2}}H_{2}(\widetilde{X};\mathbb{F}_{2})=\begin{cases}
2k+\max\{i:h_{i}\neq0\}-\min\{i:h_{i}\neq0\} & {\rm if}\ v_{i}=0\ \forall i\\
2k+2\ell+1+\max\{i:v_{i}\neq0\}-\min\{i:v_{i}\neq0\} & {\rm otherwise}
\end{cases}.\qedhere
\]
\end{proof}

\begin{rem}
The case where there exists some $i$ such that $h_{i}\neq0$ is simpler:
we only need one barbell. Let $Y_{\beta}':=H_{h}\cup\overline{\boldsymbol{\beta_{h,k}} \varphi H_{h}}$.
For $k\ge M_b + M_h +100$, we have 
\[
\dim_{\mathbb{F}_{2}}\pi_{2}(S^{5}\setminus Y_{\beta}')\otimes_{\mathbb{Z}}\mathbb{F}_{2}=2k+\max\{i:h_{i}\neq0\}-\min\{i:h_{i}\neq0\}.
\]
In particular, if we let $\varphi = \rm Id$, i.e.\ $Y=H_{h}\cup\overline{H_{h}}$ and
$Y_{\beta}'=H_{h}\cup\overline{\boldsymbol{\beta_{h,k}} H_{h}}$ in $S^{5}$, then $Y_\beta '$ is a knotted $S^1 \times S^2$ in $S^5$ with four critical points, since we have
\[
\pi_{2}(S^{5}\setminus Y)\cong\mathbb{Z}[t,t^{-1}],\ {\rm rk}_{\mathbb{F}_{2}}\left(\pi_{2}(S^{5}\setminus Y_{\beta}')\otimes_{\mathbb{Z}}\mathbb{F}_{2}\right)=2k+1.
\]
Note that this also gives an alternative proof of the second statement of Corollary~\ref{cor:bgconj}.
\end{rem}

\begin{cor}
\label{cor:morsesimple3mfd}If $Y$ is $S^{3}$, a lens space, or
$S^{1}\times S^{2}$, then there exist infinitely many pairwise non-isotopic
embeddings of $Y$ in $S^{5}$ that each have exactly four critical
points with respect to the standard
height function of $S^{5}$ (which is Morse on $Y$).
\end{cor}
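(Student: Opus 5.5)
The plan is to realize each $Y$ of Heegaard genus at most one as $H_h\cup\overline{\varphi H_h}$ for a suitable diffeomorphism $\varphi$ of $S^4$ preserving $T^2$, and then apply Theorem~\ref{thm:genus1-hd}. The case $Y=S^3$ is already Theorem~\ref{thm:morsesimple-s3}. The case $Y=S^1\times S^2$ is the remark following Theorem~\ref{thm:genus1-hd} with $\varphi=\mathrm{Id}$: the embeddings $H_h\cup\overline{\boldsymbol{\beta_{h,k}}H_h}$ have $\dim_{\mathbb{F}_2}\pi_2\otimes\mathbb{F}_2=2k+1$, so distinct $k$ give non-isotopic embeddings.

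For a lens space $L(p,q)$, I would use the classical fact (Montesinos; compare Hirose's work on mapping class groups of unknotted surfaces in $S^4$) that every element of $\mathrm{SL}_2(\mathbb{Z})$ preserving the Rokhlin quadratic form of $T^2\subset S^4$ extends to a diffeomorphism of $S^4$. The key point is that we only need some gluing map $\phi$ of $T^2$ with $H_h\cup_\phi\overline{H_h}\cong L(p,q)$, and we are free to precompose and postcompose $\phi$ by any diffeomorphism extending over $H_h$. Such an extendable map preserves the meridian up to sign and twists along it. Since the spin-structure (Rokhlin form) condition is a mod~$2$ condition on the matrix, I would choose the coset representative so that it preserves the form; the form-preserving subgroup $\Gamma_\theta$ has finite index, and twisting by the handlebody's Dehn twists should let us reach every gluing matrix $\begin{pmatrix}q&*\\p&*\end{pmatrix}$. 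I would realize the resulting $\phi$ by a diffeomorphism $\varphi$ of $S^4$ with $\varphi(T^2)=T^2$, so that $H_h\cup\overline{\varphi H_h}\cong L(p,q)$. Orientation issues, namely that $\varphi$ should preserve orientation on both $S^4$ and $T^2$, are handled by passing to $\overline{Y}$ or adjusting $q$ to $-q$ if necessary.

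Theorem~\ref{thm:genus1-hd} then says that for $k,\ell$ sufficiently large, $\dim_{\mathbb{F}_2}\pi_2(S^5\setminus Y_\beta)\otimes\mathbb{F}_2$ is finite and given by an explicit formula that is strictly increasing in $k$ (and in $\ell$). Letting $k\to\infty$ therefore gives infinitely many distinct values of this isotopy invariant, and hence infinitely many pairwise non-isotopic embeddings. Each embedding has four critical points: $H_h$ pushed into $B^5$ contributes one index-$0$ and one index-$1$ critical point, and $\overline{\boldsymbol{\beta\beta}\varphi H_h}$ in $\overline{B^5}$ contributes one each of index $2$ and $3$, exactly as discussed at the start of this section.

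The main obstacle is the extension step: showing that each lens-space gluing can be chosen to preserve the Rokhlin form of $T^2\subset S^4$. For this I would check directly that the Dehn twists $\tau_h,\tau_v$ about the curves bounding the horizontal and vertical disks both extend (twisting along $S_h$ or $S_v$ via a rotation of the surrounding $S^2\times D^2$ does not suffice, so one must be careful). If only even powers of one of these twists extend, I would instead show that every $L(p,q)$ still admits a gluing matrix in the generated subgroup, using the freedom $q\mapsto q+p$ and $q\mapsto q^{-1}\bmod p$.
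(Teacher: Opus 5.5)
Your plan is the paper's plan: realize $Y$ as $H_h\cup\overline{\varphi H_h}$, with $\varphi$ supplied by Montesinos's extension theorem, and apply Theorem~\ref{thm:genus1-hd}. You should also say explicitly that the barbells are supported away from $T^2$, so $H_h\cup\overline{\boldsymbol{\beta_{h,k}\beta_{v,\ell}}\varphi H_h}$ is still diffeomorphic to $Y$. Your $S^3$ and $S^1\times S^2$ cases are fine.

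The gap is the lens-space case. It is the only step with real content beyond Theorem~\ref{thm:genus1-hd}, and you leave it at ``should let us reach''. The check you propose there also fails. The Dehn twists about $\mu$ and $\lambda$ do \emph{not} extend over $S^4$: the Rokhlin form of the standard torus has $q(\mu)=q(\lambda)=0$ but $q(\mu+\lambda)=1$. In matrix terms, take $\psi_\ast=\begin{pmatrix}a&b\\c&d\end{pmatrix}$, with columns the images of $[\mu],[\lambda]$. Montesinos's criterion (as used in the paper) is that $\psi$ extends iff $a+b+c+d$ is even, and $\begin{pmatrix}1&1\\0&1\end{pmatrix}$ fails this. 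Only even powers of both twists extend, together with the quarter-turn $\begin{pmatrix}0&-1\\1&0\end{pmatrix}$.

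Finite index of $\Gamma_\theta$ by itself does not tell you which lens spaces arise. Your assertion that twisting on the $H_h$ side reaches a form-preserving matrix with prescribed first column is false when both entries of that column are odd. Indeed $q(a\mu+c\lambda)\equiv ac \pmod 2$, so no choice of second column helps. Equivalently, $ad-bc=1$ with $a,c$ odd forces $b+d$ odd, so the entry sum is odd.

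What is missing is the short mod~$2$ computation the paper carries out. The first column $(a,c)$ of $\psi_\ast$ is $[\psi(\mu)]$ and determines the lens space. If $a+c$ is odd, replace the second column $(b,d)$ by $(b+a,d+c)$. This keeps the determinant equal to $1$ and flips the parity of $b+d$, so one of the two matrices has even entry sum and extends over $S^4$. If $a$ and $c$ are both odd, first change the lens-space parameters using $L(p,q)\cong L(p,p+q)$; this is your post-composition $q\mapsto q+p$ by a twist extending over $H_h$. That makes the column sum odd, and you are back in the first case. With this inserted, the rest of your argument goes through as in the paper: the count of four critical points, and $\dim_{\mathbb{F}_2}\pi_2\otimes\mathbb{F}_2$ growing with $k$.
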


\begin{proof}
We use Montesinos's theorem \cite[Theorem 5.4 and Proposition 4.3]{MR698205}
to show that for any such choice of $Y$, there exists a diffeomorphism
$\varphi:(S^{4},T^{2})\to(S^{4},T^{2})$ such that $H_{h}\cup\overline{\varphi H_{h}}$ and $H_{h}\cup\overline{\boldsymbol{\beta_{h}\beta_{v}}\varphi H_{h}}$
are diffeomorphic to $Y$.

Let $\mu$ (resp.\ $\lambda$) be the simple closed curve on $T^{2}$
(well-defined up to isotopy and reversing orientation) that bounds
a compressing disk in the horizontal handlebody (resp.\ the vertical
handlebody). Let $\psi:T^{2}\to T^{2}$ be a diffeomorphism, and let
the induced map $\psi_{\ast}:H_{1}(T^{2})\to H_{1}(T^{2})$ be $\begin{pmatrix}a & b\\
c & d
\end{pmatrix}$ with respect to the basis $[\mu],[\lambda]$. Montesinos shows
that $\psi$ extends to a diffeomorphism $\varphi$ of $S^{4}$ if and only if
$a+b+c+d$ is even.

Now, we are left to find, for each choice of $Y$, $a,b,c,d\in \mathbb{Z}$ such that $a+b+c+d$ is even, $ad-bc=1$, and the induced $\varphi$ is such that $H_{h}\cup\overline{\varphi H_{h}}$ and $H_{h}\cup\overline{\boldsymbol{\beta_{h}\beta_{v}}\varphi H_{h}}$
are diffeomorphic to $Y$.
For $Y=S^{3}$, take $\psi_{\ast}=\begin{pmatrix}0 & -1\\
1 & 0
\end{pmatrix}$, and for $Y=S^{1}\times S^{2}$,
let $\psi_{\ast}={\rm Id}$.

The remaining cases are the lens spaces: for coprime $p,q\in\mathbb{Z}\setminus\{0\}$,
let $Y$ be the lens space represented by a Heegaard diagram $(T^{2},\alpha,\beta)$,
where $[\alpha]=[\mu]$ and $[\beta]=q[\mu]+p[\lambda]$. First, we
may assume that $p+q$ is odd; if not, then $p$ and $q$ are both
odd, so we can work with $(p,p+q)$ instead of $(p,q)$, since $L(p,q)\cong L(p,p+q)$.

Now since $p$ and $q$ are coprime, there exist $p^{\ast},q^{\ast}\in\mathbb{Z}$
such that $pp^{\ast}-qq^{\ast}=1$. Since $p+q$ is odd, either $\psi_{\ast}=\begin{pmatrix}p & q^{\ast}\\
q & p^{\ast}
\end{pmatrix}$ or $\psi_{\ast}=\begin{pmatrix}p & q^{\ast}+p\\
q & p^{\ast}+q
\end{pmatrix}$ will work.
\end{proof}

\begin{figure}[h]
\begin{centering}
\includegraphics{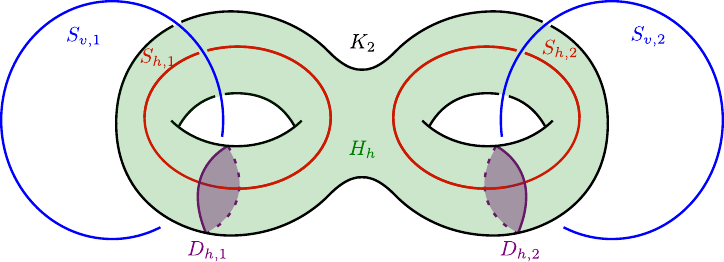}
\par\end{centering}
\caption{\label{fig:handlebody-genus2-disks}The standard genus $2$ surface $K_2 \subset S^3 \times 0 \subset S^4 $, the horizontal handlebody $H_h \subset S^3 \times 0 \subset S^4$ that $K_2$ bounds, $2$-spheres $S_{h,s},S_{v,s} \subset S^4 \setminus \mathring {N}(K_2)$ for $s=1,2$
whose homology classes generate $H_{2}(S^{4}\setminus K_{g}) \cong\mathbb{Z}^{4}$, and compressing disks  $D_{h,s}$ be two compressing disks of $H_h$ such that $D_{h,s}\cap S_{h,r}=\delta_{sr}$.}
\end{figure}

Lastly, we make the following computation for the purposes of proving Corollary \ref{cor:simple-5d} in Subsection \ref{subsec:handlebodycorollaries}. Let $K_2 , H_h , S_{h,s}, S_{v,s}, D_{h,s}$ for $s=1,2$ be as in Figure~\ref{fig:handlebody-genus2-disks}.

\begin{thm}
\label{thm:simple-5d}Let $\beta$
be a barbell in $S^{4}\setminus K_{2}$ whose two cuffs are $S_{h,1}$ and $S_{h,2}$, and let $Y:=H_{h}\cup\overline{H_{h}}$ and $Y_{k}:=H_{h}\cup\overline{\boldsymbol{\beta}^{k}H_{h}}$.
Then, as $\mathbb Z [t,t^{-1}]\cong \mathbb Z [\pi_1 (S^5 \setminus Y)] \cong \mathbb Z [\pi _1 (S^5 \setminus Y_k )]$-modules, 
\[
\pi_{2}(S^5 \setminus Y)\cong\mathbb{Z}[t,t^{-1}]\oplus\mathbb{Z}[t,t^{-1}],\ \pi_{2}(S^5 \setminus Y_{k})\cong\mathbb{Z}[t,t^{-1}]/(k(t-1))\oplus\mathbb{Z}[t,t^{-1}]/(k(t-1)),
\]
and so $Y,Y_{1},Y_{2},\cdots$ are pairwise non-isotopic embeddings of $S^1 \times S^2 $ in $S^5$.
\end{thm}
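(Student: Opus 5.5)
The plan is to repeat the argument of Claim~\ref{claim:pi2-comp} and Theorem~\ref{thm:morsesimple-s3}, but over $\mathbb{Z}$ instead of $\mathbb{F}_2$ and with a $2\times 2$ presentation matrix in place of a single polynomial.

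\textbf{Step 1: handle decompositions.} Let $Z:=S^4\setminus\mathring N(K_2)$. I take its infinite cyclic cover $\widetilde Z$, corresponding to $\pi_1(Z)\cong\mathbb{Z}=\langle t\rangle$ generated by the meridian. I will use the genus $2$ analogue of Proposition~\ref{prop:handle-decomp}, with the roles of $H_h$ and $H_v$ interchanged.
\begin{enumerate}
\item $B^5\setminus\mathring N(H_h)$ has a handle decomposition with one $0$-handle, two $1$-handles and two $2$-handles. The belt spheres of the $2$-handles meet $Z$ in $D_{h,1}$ and $D_{h,2}$.
\item Relative to $Z$, the complement $\overline{B^5\setminus\mathring N(H_h)}$ consists of two $3$-handles. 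Their attaching spheres are $S_{v,1}$ and $S_{v,2}$, which bound $3$-balls once $H_h$ is filled in.
\end{enumerate}
Gluing along $Z$ by $\mathrm{Id}$ (for $Y$) or by $\boldsymbol{\beta}^k$ (for $Y_k$) gives a decomposition of $S^5\setminus\mathring N(Y_k)$ with the two $3$-handles attached along $\boldsymbol{\beta}^kS_{v,s}$.

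\textbf{Step 2: the presentation.} The $1$-handles do not change $\pi_1\cong\mathbb{Z}$; indeed the attaching maps of the $2$-handles kill one of the $1$-handles each, up to the meridian. Hence
\[
\pi_2(S^5\setminus Y_k)\cong H_2(\widetilde X)\cong\mathbb{Z}[t^{\pm1}]^2\big/\operatorname{im}(A_k),
\]
where $A_k$ is the $2\times 2$ matrix with entries
\[
(A_k)_{sr}=\sum_i\bigl(\widetilde{\boldsymbol{\beta}^kS_{v,s}}\cdot\rho^i\widetilde{D_{h,r}}\bigr)t^i .
\]
These are the equivariant intersection numbers in $\widetilde Z$, taken with signs.

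\textbf{Step 3: computing $A_k$.} For $k=0$ every entry vanishes, since $S_{v,s}$ is disjoint from $H_h$. This gives $\pi_2(S^5\setminus Y)\cong\mathbb{Z}[t^{\pm1}]^2$.

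For general $k$, I apply the equivariant version of Lemma~\ref{lem:barbelltubing} to the $\mathbb{Z}$ lifts of $\beta$. The relevant equivariant intersection numbers are as follows.
\begin{itemize}
\item $\widetilde{S_{v,s}}\cdot\widetilde{S_{h,s}}=\pm t^{a}(1-t)$: the two intersection points have opposite signs and differ by a meridian.
\item $S_{v,s}$ is disjoint from $S_{h,r}$ for $r\neq s$.
\end{itemize}
This yields
\[
\widetilde{\boldsymbol\beta}\widetilde{S_{v,1}}=\widetilde{S_{v,1}}+u(1-t)\widetilde{S_{h,2}},\qquad \widetilde{\boldsymbol\beta}\widetilde{S_{v,2}}=\widetilde{S_{v,2}}-u'(1-t)\widetilde{S_{h,1}},
\]
where $u,u'$ are units $\pm t^c$ depending on the bar. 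The added classes $S_{h,1},S_{h,2}$ have zero equivariant intersection with both cuffs, because $S_{h,1}$ and $S_{h,2}$ are disjoint. So iterating is additive, and $\boldsymbol\beta^k$ adds $k$ times the same correction.

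Pairing with $\widetilde{D_{h,r}}$, using $D_{h,r}\cdot S_{h,s}=\delta_{rs}$ and $D_{h,r}\cdot S_{v,s}=0$, I expect
\[
A_k=\begin{pmatrix}0& ku(1-t)\\ -ku'(1-t)&0\end{pmatrix}.
\]
This gives the claimed module $\bigl(\mathbb{Z}[t^{\pm1}]/(k(t-1))\bigr)^2$.

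\textbf{Step 4: distinguishing the knots.} The modules are pairwise non-isomorphic. One way is to compare annihilator ideals $(k(t-1))$, which differ for different $k\ge0$. A simpler way is to tensor with $\mathbb{Z}[t^{\pm1}]/(t-2)\cong\mathbb{Z}[1/2]$, which gives $(\mathbb{Z}[1/2]/k)^2$; this is distinguished by its order for $k$ odd. For general $k$, tensor instead with $\mathbb{Z}[t^{\pm1}]/(t-p)$ for a prime $p$ not dividing $kk'$, which detects $|k|$. Since $\pi_2$ of the complement is an isotopy invariant, $Y,Y_1,Y_2,\dots$ are pairwise non-isotopic.

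\textbf{Main obstacle.} The hardest step is Step 1 together with the sign bookkeeping in Step 3. One must justify the genus $2$ handle structure, and in particular that the $1$-handles contribute nothing to $\pi_2$. One must also verify over $\mathbb{Z}$ that the equivariant intersection of $S_{v,s}$ with $S_{h,s}$ is exactly a unit times $(1-t)$, rather than $(1+t)$. I would check both directly in the handle diagram of $\widetilde Z$, as in Figure~\ref{fig:ztilde}.
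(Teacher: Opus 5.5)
Your argument is the paper's: the same handle decomposition of $S^5\setminus\mathring N(Y_k)$ (belt spheres of the $2$-handles meeting $Z$ in $D_{h,1},D_{h,2}$, $3$-handles attached along $\boldsymbol{\beta}^kS_{v,1},\boldsymbol{\beta}^kS_{v,2}$), the same lifted-barbell computation via Equation~(\ref{eq:barbell-action}), and the same matrix up to units and transposition. Step~4 is a valid way to finish, and it is more explicit than the paper. The annihilator $(k(t-1))$ is also unchanged by $t\mapsto t^{-1}$, which is the only ambiguity in identifying $\pi_1$ with $\mathbb{Z}$.

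There is one real error, in Steps~1--2.

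\textbf{The handle count.} The pushed-in genus $2$ handlebody has one index-$0$ and two index-$1$ critical points. By Theorem~\ref{thm:62} (codimension $2$), $B^5\setminus\mathring N(H_h)$ therefore has one $0$-handle, \emph{one} $1$-handle and two $2$-handles, exactly as in Proposition~\ref{prop:handle-decomp}. Euler characteristic confirms this: $\chi(S^5\setminus\mathring N(Y_k))=0=1-1+2-2$, while your count gives $-1$.

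\textbf{Why your justification fails.} Your explanation that the $2$-handles ``kill'' $1$-handles would actually break the computation. A $2$-handle cancelling a $1$-handle has nonzero $\partial_2$ in $C_*(\widetilde X)$. Then $\ker\partial_2$ would have rank at most $1$, and you could not get $\mathbb{Z}[t^{\pm1}]^2$ for $k=0$.

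\textbf{The correct reason.} The $1$-skeleton already has $\pi_1\cong\mathbb{Z}\cong\pi_1(S^5\setminus Y_k)$. So the $2$-handles are attached along null-homotopic circles, $\partial_2=0$, and $H_2(\widetilde X)=C_2(\widetilde X)/\operatorname{im}\partial_3=\operatorname{coker}(A_k)$.

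Finally, your worry about $(1-t)$ versus $(1+t)$ is settled by augmentation. Setting $t=1$ recovers the ordinary intersection number $S_{v,s}\cdot S_{h,s}$ in $S^4$, which is $0$ since $H_2(S^4)=0$. Hence the two intersection points have opposite signs.
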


\begin{proof}
As in the proof of Theorem~\ref{thm:morsesimple-s3}, let $X_{k}=S^{5}\setminus \mathring N(Y_{k})$,
$Z=S^{4}\setminus \mathring N(K_{2})$, let $\widetilde{X_{k}}$ and $\widetilde{Z}$
be the universal covers of $X_{k}$ and $Z$, respectively, let $\rho:\widetilde{Z}\to\widetilde{Z}$
be the deck transformation corresponding to the meridian of $K_{2}$, and let $\widetilde{\boldsymbol{\beta}}:\widetilde{Z}\to\widetilde{Z}$ be
the lift of the barbell diffeomorphism $\boldsymbol{\beta}:Z\to Z$ that fixes the inverse
image of $S^{4}\setminus \mathring N(\beta)$.

For $s=1,2$, let $\widetilde{D_{h,s}}$, $\widetilde{S_{h,s}}$,
and $\widetilde{S_{v,s}}$ be lifts of $D_{h,s}$, $S_{h,s}$,
and $S_{v,s}$ to $\widetilde{Z}$, such that
\[
\left|\widetilde{D_{h,s}}\cap\rho^{i}\widetilde{S_{h,s}}\right|=\begin{cases}
1 & i=0\\
0 & {\rm otherwise}
\end{cases},\ \left|\widetilde{S_{h,s}}\cap\rho^{i}\widetilde{S_{v,s}}\right|=\begin{cases}
1 & i=0,1\\
0 & {\rm otherwise}
\end{cases},
\]
and such that there is a lift of the barbell $\beta$ whose cuffs are $\widetilde{S_{h,1}}$
and $\widetilde{S_{h,2}}$ (Figure \ref{fig:genus2-handlebody-5d}).

Similarly to Claim~\ref{claim:pi2-comp}, we can show that 
\[
\pi_{2}(X_{k})\cong H_{2}(\widetilde{X_{k}})\cong{\rm coker}\left(F:=\begin{pmatrix}f_{11} & f_{12}\\
f_{21} & f_{22}
\end{pmatrix}:\mathbb{Z}[t,t^{-1}]\oplus\mathbb{Z}[t,t^{-1}]\to\mathbb{Z}[t,t^{-1}]\oplus\mathbb{Z}[t,t^{-1}]\right)
\]
where 
\[
f_{rs}:=\sum_{i}\left({\widetilde{\boldsymbol{\beta}}}^{k}\widetilde{S_{v,s}}\cdot\rho^{i}\widetilde{D_{h,r}}\right)t^{i}.
\]
Indeed, we can find a handle decomposition of $X_k$ that has a 1-handle, two 2-handles, and two 3-handles, where the belt spheres of the 2-handles intersect $Z$ in $D_{h,1}$ and $D_{h,2}$, and the attaching spheres of the 3-handles are $\boldsymbol{\beta}^kS_{v,1}$ and $\boldsymbol{\beta}^kS_{v,2}$.

We can
compute the homology classes of ${\widetilde{\boldsymbol{\beta}}}^{k}\widetilde{S_{v,1}}$
and ${\widetilde{\boldsymbol{\beta}}}^{k}\widetilde{S_{v,2}}$:
\[
{\widetilde{\boldsymbol{\beta}}}^{k}\widetilde{S_{v,1}}=\widetilde{S_{v,1}}-k\rho^{-1}\widetilde{S_{h,2}}+k\widetilde{S_{h,2}},\ {\widetilde{\boldsymbol{\beta}}}^{k}\widetilde{S_{v,2}}=\widetilde{S_{v,2}}+k\rho^{-1}\widetilde{S_{h,1}}-k\widetilde{S_{h,1}}\in H_{2}(\widetilde{X_{k}}).
\]
Hence, 
\[
F=\begin{pmatrix}0 & k-kt^{-1}\\
kt^{-1}-k & 0
\end{pmatrix}.\qedhere
\]
\end{proof}

\begin{figure}
    \centering
    \includegraphics[scale=0.7]{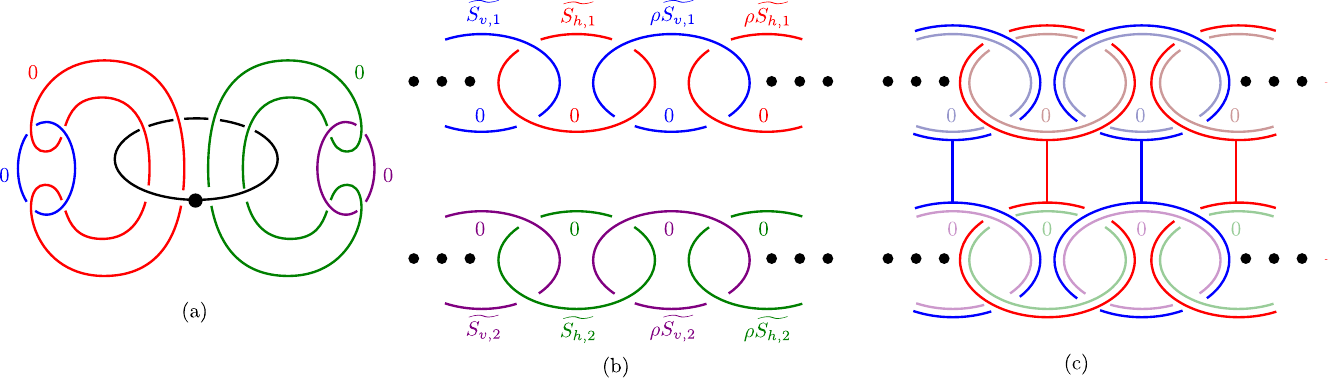}
    \caption{(a): A handle diagram for $Z=S^4 \setminus \mathring{N}(K_2)$. (b): The universal cover $\widetilde{Z}$. (c): The lifted barbells in $\widetilde{Z}$.}
    \label{fig:genus2-handlebody-5d}
\end{figure}

\subsection{\label{subsec:handlebodycorollaries} Knotted and linked handlebodies in $B^5$}

In this subsection we collect various corollaries of Subsections \ref{subsec:3knots3links} and \ref{subsec:general3mfds} concerning knotted and linked handlebodies in $S^4$ that remain knotted and linked in $B^5$. In general, the methods used to construct the knotted and linked 3-manifolds in Subsections \ref{subsec:3knots3links} and \ref{subsec:general3mfds} naturally give rise to such handlebodies, which we note in Lemma \ref{lem:stabilization}. Moreover, the knotted and linked handlebodies that arise in this way also remain knotted and linked after any number of \textit{stabilizations} in $S^4$.

\begin{defn}[Stabilization]\label{def:stabilization}
Let $B:= D^2 \times  [-1,1]\times [-1,1]$, $H_{loc}:= D^2 \times [-1,0] \times 0 \subset B$, and let $H_{loc}^\mathrm{st}$ be a boundary connected sum of  $H_{loc}$ with a genus $1$ handlebody inside $B$. This is our local model; note that any two choices of $H_{loc}^\mathrm{st}$ are isotopic in $B$ rel.\ $\partial B$.

Let $H_1, H_2, \cdots $ be a collection of (possibly disconnected) handlebodies in $S^4$ with common boundary $K$. Assume that there exists a small $4$-ball $B=D^2 \times [-1,1]\times [-1,1]\subset S^4$ such that  $K\cap B = D^2 \times 0\times 0$ and $H_i \cap B=H_{loc}$ for all $i$. (Note that we can always achieve this after isotoping the $H_i$'s rel.\ $K$ near a point on $K$.) The \emph{stabilizations} of $H_1, H_2 , \cdots $ are obtained by replacing $H_{loc} \subset B$ by $H_{loc}^\mathrm{st}$.
\end{defn}

\begin{lem}\label{lem:stabilization} Let $H_1, H_2, H_3$ be three (possibly disconnected) handlebodies in $S^4$ with common boundary $K\subset S^4$. Let $H_1'$ and $H_2'$ be handlebodies obtained from $H_1 $ and $H_2$ by any number of stabilizations. Push the interiors of $H_i$ into $B^5$ and denote these also by $H_i$. Suppose that the 3-manifolds $Y_1 := H_{3}\cup\overline{H_{1}}$ and $Y_2 := H_{3}\cup\overline{H_{2}}\subset S^{5}$
are non-isotopic in $S^5$. Then $H_1'\not\approx H_2'$~rel.~$\partial$, even when their interiors are pushed into $B^5$.\end{lem}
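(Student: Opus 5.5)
We argue by contrapositive: assuming $H_1'\approx H_2'$ rel.\ $\partial$ in $B^5$, we show $Y_1\approx Y_2$ in $S^5$. The stabilizations live in a small ball $B$ near a point of $K$; the plan is to compare the closed $3$-manifolds $Y_i':=H_3'\cup\overline{H_i'}$ (where $H_3'$ is $H_3$ stabilized the same number of times in a parallel region) with $Y_i$, and to use that stabilizing both sides of a $3$-manifold in $S^5$ in a standard local way only takes a connected sum with a standard unknotted $S^1\times S^2$ (or rather can be undone by a local isotopy after cancelling).

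\textbf{Step 1.} An isotopy of $H_1'$ to $H_2'$ rel.\ $\partial$ in $B^5$ extends, keeping $\overline{H_3}$ (or any fixed filling in $\overline{B^5}$) fixed, to an isotopy in $S^5$ of $H_1'\cup\overline{H_3}$ to $H_2'\cup\overline{H_3}$. So it suffices to show that $H_i'\cup\overline{H_3}$ determines $H_i\cup\overline{H_3}$ up to isotopy, i.e.\ that ``destabilizing'' is well defined. Note $\overline{Y_i}$-type reflections are harmless since isotopy classes of $Y_1,Y_2$ and their reflected versions correspond.

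\textbf{Step 2.} Analyze the local model. Inside $B\times[0,1]\subset B^5$ (a collar of the local ball pushed in), $H_{loc}^{\mathrm{st}}$ pushed into $B^5$ is $H_{loc}$ with an extra $1$-handle, i.e.\ contributes an extra index-$1$ critical point. In $H_i'\cup\overline{H_3}$ this handle can be cancelled against an index-$2$ critical point by first also stabilizing $H_3$ at the same place (giving the genus-one-higher surface on both sides) and observing that $H_{loc}^{\mathrm{st}}\cup\overline{H_{loc}^{\mathrm{st}}}$ over the local ball is the standard unknotted local model whose union with the rest is isotopic, rel.\ the outside of $B\times[-1,1]$, to $H_{loc}\cup\overline{H_{loc}}$ connect-summed with an unknotted $S^1\times S^2$ in a small ball. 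Concretely, I would show $Y_i'\approx Y_i\#(S^1\times S^2)_{\mathrm{std}}$ in $S^5$, where the summand is contained in a ball disjoint from everything else.

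\textbf{Step 3.} Conclude: from $H_1'\approx H_2'$ we get $Y_1'\approx Y_2'$, hence $Y_1\#U\approx Y_2\#U$ with $U$ a local unknotted $S^1\times S^2$. Then remove $U$: the unknotted $S^1\times S^2$ summand bounds a standard solid $S^1\times D^3$-type compression, and I would try to cancel it by surgery along the framed $2$-sphere $\mathrm{pt}\times S^2$ bounding a $3$-ball meeting $Y$ only in its boundary, which is preserved by the isotopy up to isotopy of that ball (using that $\pi_2$, $\pi_3$ data of the core disk is controlled, or instead compare invariants). This Step 3 is the main obstacle: cancellation of connected sum with unknotted summands is not obviously automatic. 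If direct cancellation fails, the fallback is to state the lemma at the level of the invariants actually used: the handle decompositions in Claim~\ref{claim:pi2-comp} change by a cancelling $1$/$2$-handle pair (in fact a free $\mathbb{Z}[\pi_1]$ summand in $\pi_2$ and an extra free generator in $\pi_1$ killed), so $\pi_2\otimes\mathbb{F}_2$ computations distinguishing $Y_1,Y_2$ still distinguish $Y_1',Y_2'$, which is what all applications need.
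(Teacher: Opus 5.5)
Your Step 3 is a genuine gap, and you create it yourself in Step 2 by stabilizing $H_3$ with the \emph{same} local model $H_{loc}^{\mathrm{st}}$ as $H_1,H_2$. With identical stabilizations on both sides, the union over the local ball is an unknotted $S^1\times S^2$. You are then forced to prove the cancellation $Y_1\#U\approx Y_2\#U\Rightarrow Y_1\approx Y_2$, and you give no argument for it. It is not formal: an isotopy from $Y_1\#U$ to $Y_2\#U$ need not carry the compressing $3$-ball of the $U$-summand to the standard one, so you cannot simply surger the summand back off.

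Note also that in your configuration the new index-$1$ and index-$2$ critical points do \emph{not} cancel. The meridian curves of the two added solid tori coincide, and that is exactly why an $S^1\times S^2$ summand appears. So your Step 2 claim of cancellation contradicts your own conclusion.

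The fallback does not prove the lemma either. It would only show that a specific invariant distinguishing $Y_1,Y_2$ (such as $\pi_2\otimes\mathbb{F}_2$ of the complement) still distinguishes the stabilized manifolds. The lemma, however, assumes nothing beyond $Y_1\not\approx Y_2$.

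The missing idea is that you choose the filling on the $H_3$ side, and you should choose the \emph{complementary} solid torus. The paper proceeds as follows:
\begin{enumerate}
\item Isotope $H_3$ rel.\ $K$ so that $H_3\cap B=H_{loc}$ as well.
\item Inside $B$, take genus-$1$ handlebodies $H_h$ and $H_v$ on opposite sides of a slice, with common boundary, such that $H_v\cup\overline{H_h}$ is a standard unknotted $3$-sphere (the genus-$1$ Heegaard splitting of $S^3$).
\item Set $H_i^{\mathrm{st}}:=H_i\natural H_h$ for $i=1,2$ and $H_3^{\mathrm{st}}:=H_3\natural H_v$, summing along the same arc so the boundaries agree.
\item Then $Y_i^{\mathrm{st}}=Y_i\#(H_v\cup\overline{H_h})\approx Y_i$ directly, since connect-summing with an unknotted $S^3$ changes nothing.
\end{enumerate}
Hence $H_1^{\mathrm{st}}\approx H_2^{\mathrm{st}}$ rel.\ $\partial$ in $B^5$ would give $Y_1\approx Y_2$. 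Since $H_1^{\mathrm{st}},H_2^{\mathrm{st}},H_3^{\mathrm{st}}$ again satisfy the hypotheses, induction handles any number of stabilizations. This is precisely the situation in which your Step 2 intuition is correct: the new index-$1$ and index-$2$ critical points now cancel, and no cancellation theorem for $S^1\times S^2$ summands is needed.
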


\begin{proof}
The fact that $H_1\not\approx H_2$ in $B^5$ rel.\ $K$ is immediate, since if they were isotopic in $B^5$ rel.\ $K$, $Y_1$ and $Y_2$ would be isotopic in $S^5$. 

We first show that $H_1$ and $H_2$ survive one stabilization.
Let $B=D\times [-1,1]\times [-1,1]\subset S^4$ be as in Definition~\ref{def:stabilization}. Isotope $H_{3}$ rel.\ $K$ near $B$ such
that we also have $H_{3}\cap B=D\times[-1,0]\times 0$. Let $H_{h}\subset D\times[0.1,1]\times[0,1]$
and $H_{v}\subset D\times[0.1,1]\times[-1,0]$ be genus $1$ handlebodies
with the same boundary such that 
\[
H_{h}\cap D\times[0.1,1]\times0=\partial H_h = \partial H_v = H_{v}\cap D\times[0.1,1]\times0
\]
and $H_{v}\cup\overline{H_{h}}$ is a standard $3$-sphere in $D\times[0.1,1]\times[-1,1]$.

Consider the stabilizations $H_{i} ^{\mathrm{st}} :=H_{i}\natural H_{h}$ for $i=1,2$
and $H_{3} ^{\mathrm{st}} :=H_{3}\natural H_{v}$, where we take the boundary
sum along the same arc, such that that the boundaries $\partial H_i ^{\mathrm{st}}$ coincide for $i=1,2,3$. 

Now, push the interiors of $H_{i}^{\mathrm{st}}$ into $B^{5}$ and form the
3-manifolds $Y_{1}^\mathrm{st}:=H_{3}^\mathrm{st}\cup\overline{H_{1}^\mathrm{st}}$ and $Y_{2}^\mathrm{st}:=H_{3}^\mathrm{st}\cup\overline{H_{2}^\mathrm{st}}$
in $S^{5}=B^{5}\cup\overline{B^{5}}$. Then $Y_{i}^\mathrm{st}=(H_{3}\cup\overline{H_{i}})\#(H_{v}\cup\overline{H_{h}}) = Y_i \#(H_{v}\cup\overline{H_{h}}) $ for $i=1,2$. Since $H_{v}\cup\overline{H_{h}}$ is an unknotted $S^{3}$ in $S^{5}$,
$Y_{i}^\mathrm{st}$ and $Y_{i}$ are isotopic.

This shows that the push-ins $H_1 ^\mathrm{st}$ and $H_2 ^\mathrm{st}$ are non-isotopic rel.\ $\partial$. Since $H_i ^\mathrm{st}$ for $i=1,2,3$ satisfy the conditions of the lemma, we conclude that $H_1$ and $H_2$ remain non-isotopic after any number of stabilizations.\end{proof}

Lemma \ref{lem:stabilization} implies that all the handlebodies we used to construct the 3-manifolds of Subsections \ref{subsec:3knots3links} and \ref{subsec:general3mfds} are non-isotopic in $B^5$ and after any number of stabilizations. In particular, the genus $1$ handlebodies used to construct the 3-knots of Theorem \ref{thm:morsesimple-s3} resolve \cite[Conjecture 11.3]{budney2021knotted3ballss4}. The knotted handlebodies of Theorem \ref{thm:simple-knotted-handlebody} in Section \ref{sec:Simple-barbell-implantations} and Theorem \ref{thm:genus1-handlebody} in Section \ref{sec:More-complicated-barbells} also remain non-isotopic when their interiors are pushed into $B^5$.  For completeness, we collect these knotted handlebodies below and reproduce their figures in Figure \ref{fig:section53-recap}.

\begin{figure} 
    \centering
    \includegraphics[scale=0.49]{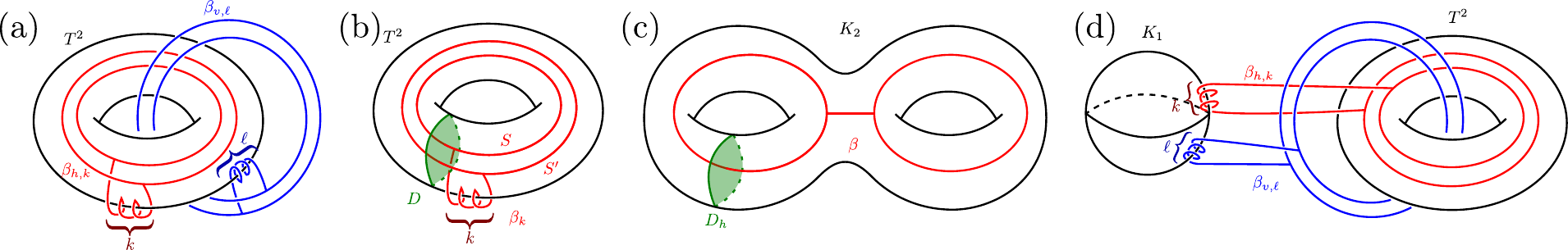}
    \caption{(a): The barbells $\beta_{h,k}, \beta_{v,{\ell}}$ used to construct the 3-knots of Theorem~\ref{thm:morsesimple-s3}. (b): The barbell $\beta_k$ used to construct the knotted solid tori of Theorem~\ref{thm:genus1-handlebody}. (c): The barbell $\beta$ used to construct the knotted genus $2$ handlebodies of Theorem~\ref{thm:simple-knotted-handlebody} and the knotted embeddings of $S^1 \times S^2$ of Theorem~\ref{thm:simple-5d}. (d): The barbells $\beta_{h,k}, \beta_{v,{\ell}}$ used to construct the Brunnian 3-links of Theorem~\ref{thm:linked-6crit}.}
    \label{fig:section53-recap}
\end{figure}

\begin{cor}
\label{cor:bgconj} The genus $1$ handlebodies $\{H_{h}\}\cup\{\boldsymbol{\beta_{v,1}\beta_{h,k}}H_{h}\}_{k\ge1}$
in $S^{4}$  are pairwise non-isotopic rel.\ $\partial$. Moreover, they
stay non-isotopic rel.\ $\partial$ even when their interiors are pushed into $B^5$. (Figure~\ref{fig:section53-recap}~(a))

The genus $1$ handlebodies $\{H_{h}\}\cup\{\boldsymbol{\beta_{h,k}}H_{h}\}_{k\ge1}$ from Theorem~\ref{thm:genus1-handlebody} stay non-isotopic rel.\ $\partial$ even when their interiors are pushed into $B^{5}$. (Figure~\ref{fig:section53-recap}~(b))

The genus $g\ge 1$ handlebodies
obtained by stabilizing these genus $1$ handlebodies $(g-1)$ times are also pairwise non-isotopic rel.\ $\partial$, and remain non-isotopic rel.\ $\partial$ even when their interiors are pushed into $B^5$.
\end{cor}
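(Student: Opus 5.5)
The plan is to reduce everything to Lemma~\ref{lem:stabilization}, with $H_3 := H_v$ as the common ``bottom'' handlebody. That lemma converts non-isotopy of closed $3$-manifolds in $S^5$ into non-isotopy rel.\ $\partial$ of the handlebodies, after push-in into $B^5$ and after any number of stabilizations. Non-isotopy in $B^5$ rel.\ $\partial$ implies non-isotopy in $S^4$ rel.\ $\partial$, since an isotopy in $S^4$ rel.\ $\partial$ pushes in to one in $B^5$. So in each part it suffices to show that the corresponding $3$-knots $H_v \cup \overline{H}$ are pairwise non-isotopic in $S^5$.

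\textbf{First family.} I will take $H_3 = H_v$ and compare $H_v\cup\overline{H_h}$ with $Y_{1,k} := H_v\cup\overline{\boldsymbol{\beta_{v,1}\beta_{h,k}}H_h}$.
\begin{itemize}
\item The former is the unknot: $H_v\cup\overline{H_h}$ is the standard genus-one splitting of $S^3$, so $\pi_2$ of its complement vanishes.
\item By Theorem~\ref{thm:morsesimple-s3} with $\ell=1$, the complement of $Y_{1,k}$ satisfies $\dim_{\mathbb{F}_2}\pi_2\otimes\mathbb{F}_2 = 2k+4$.
\end{itemize}
These dimensions are pairwise distinct as $k\ge1$ varies, and all nonzero, so the $3$-knots are pairwise non-isotopic. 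Lemma~\ref{lem:stabilization} then gives the first paragraph of the statement, together with the stabilized version in the third paragraph.

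\textbf{Second family.} For $\{H_h\}\cup\{\boldsymbol{\beta_{h,k}}H_h\}$ the analogous $3$-knots $H_v\cup\overline{\boldsymbol{\beta_{h,k}}H_h}$ are unknotted by Lemma~\ref{lem:unknots}, so $H_v$ is the wrong choice of $H_3$. Instead I will take $H_3=H_h$ and use the remark following Theorem~\ref{thm:genus1-hd}. With $\varphi=\mathrm{Id}$, the manifold $Y=H_h\cup\overline{H_h}\cong S^1\times S^2$ has $\pi_2$ of its complement equal to $\mathbb{Z}[t^{\pm1}]$, which has infinite $\mathbb{F}_2$-rank after tensoring. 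For $Y'_{\beta}=H_h\cup\overline{\boldsymbol{\beta_{h,k}}H_h}$, the rank is $2k+1$ once $k$ is at least the threshold $M_b+M_h+100$.

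The main obstacle is this threshold. With $\varphi=\mathrm{Id}$ I need to check that the constants are small ($b_i=0$, and $h_i$ supported at $i=0,1$), so the formula holds for every $k\ge1$. I would redo the computation directly, as in the proof of Theorem~\ref{thm:morsesimple-s3}:
\[
\widetilde{\boldsymbol{\beta_{h}}}\widetilde{S_v}=\widetilde{S_v}+\rho^{-k-1}\widetilde{S_h}+\rho^{-k}\widetilde{S_h}+\rho^{k-1}\widetilde{S_h}+\rho^{k}\widetilde{S_h}.
\]
Pairing with $\rho^i\widetilde{D_h}$ then gives $f=t^{-k-1}+t^{-k}+t^{k-1}+t^{k}$, with no cancellation for $k\ge1$. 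The quotient $\mathbb{F}_2[t^{\pm1}]/(f)$ therefore has dimension $2k+1$.

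These values are distinct for distinct $k$, and the trivial handlebody gives infinite rank. Hence all the $S^1\times S^2$'s are pairwise non-isotopic, and Lemma~\ref{lem:stabilization} with $H_3=H_h$ gives the second and third paragraphs. Note that Lemma~\ref{lem:stabilization} allows $H_3$ to coincide with $H_1$, so taking $H_3=H_h$ is legitimate.
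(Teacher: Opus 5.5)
Your proposal is correct. The first family is handled exactly as in the paper: $H_3=H_v$, Theorem~\ref{thm:morsesimple-s3} with $\ell=1$, then Lemma~\ref{lem:stabilization}.

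For the second family you take a different route.

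\textbf{The paper's route.} It stays with $3$-spheres by taking $H_3=(\boldsymbol{\beta_{v,1}})^{-1}H_v$. By Equation~(\ref{eq:betainv}), $H_3\cup\overline{\boldsymbol{\beta_{h,k}}H_h}\approx H_v\cup\overline{\boldsymbol{\beta_{v,1}\beta_{h,k}}H_h}$, which has $\pi_2\otimes\mathbb{F}_2$ of dimension $2k+4$. Meanwhile $H_3\cup\overline{H_h}$ is unknotted by Lemma~\ref{lem:unknots}.

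\textbf{Your route.} You take $H_3=H_h$ and distinguish the resulting embeddings of $S^1\times S^2$. This is exactly the alternative the paper flags in the remark after Theorem~\ref{thm:genus1-hd}. Your direct computation $f=t^{-k-1}+t^{-k}+t^{k-1}+t^{k}$ uses $D_h\cap S_v=\emptyset$, so $b_i=0$. It correctly shows the rank is $2k+1$ for every $k\ge1$, which removes the threshold $k\ge M_b+M_h+100$ in that remark. The trivial handlebody $H_h$ is then separated because $\mathbb{F}_2[t^{\pm1}]$ is infinite-dimensional.

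\textbf{Trade-off.} The paper's route needs no computation beyond Theorem~\ref{thm:morsesimple-s3}. However, it relies on Equation~(\ref{eq:betainv}) from Lemma~\ref{lem:glue} and on Lemma~\ref{lem:unknots}, hence on Hartman's theorem. Your route uses a single barbell and no unknotting input. The cost is a separate, easy handle-decomposition computation for closing up with $H_h$: there the $3$-handle is attached along $S_v$ and the $2$-handle's belt sphere meets $Z$ in $D_h$.

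One harmless slip: with the lift conventions of Theorem~\ref{thm:morsesimple-s3}, $h_i$ is supported at $i\in\{0,-1\}$, not $\{0,1\}$. That is what your displayed formula actually uses, and only the span of $f$ matters.
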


\begin{proof}
The first statement follows from applying Lemma \ref{lem:stabilization} to the 3-knots $H_v\cup\overline{ \boldsymbol{\beta_{v,1}\beta_{h,k}}H_h}$ constructed in Theorem~\ref{thm:morsesimple-s3}. The second statement also follows from Theorem~\ref{thm:morsesimple-s3}, since we have
$$H_v \cup \overline{\boldsymbol{\beta_{v,1}\beta_{h,k}}H_{h}} \approx \left( \boldsymbol{\beta_{v,1}} \right) ^{-1}H_v \cup \overline{\boldsymbol{\beta_{h,k}}H_{h}}$$ by Equation~(\ref{eq:betainv}) and $$H_v \cup \overline{\boldsymbol{\beta_{v,1}} H_{h}} \approx \left( \boldsymbol{\beta_{v,1}} \right) ^{-1}H_v \cup \overline{H_{h}}$$ is isotopic to the unknot by Lemma~\ref{lem:unknots}. The third statement follows from Lemma~\ref{lem:stabilization}.
\end{proof}

\begin{rem} 
\label{rem:genus-1-handlebody-from-3knot} It actually follows from results of \cite{carrara2001} and \cite{hughes2024knotted} that the existence of infinitely many knotted 3-knots in $S^5$ with 4 critical points automatically implies the existence of infinitely many knotted solid tori in $S^4$ that remain knotted after their interiors are pushed into $B^5$, even if those 3-knots were not constructed as an explicit union of two solid tori (as were the 3-knots of Theorem \ref{thm:morsesimple-s3}). To see this, assume that there exists a 3-knot $Y$ in $S^5$ with four critical points with respect to the standard height function. By \cite[Remark 5.2]{carrara2001}
(or by Theorem~\ref{thm:unknotted-handlebody}), those critical points must be distributed as one critical point of each index $0,1,2,3$. After isotopy, we may assume that $Y$ is embedded in $S^5=B^5 \cup \overline{B^5}$ as $Y=H_S \cup \overline{H_N}$, where $H_S$ is the solid torus in $B^5$ given by the $0$ and $1$-handles of $Y$, and $H_N$ is the solid torus in $B^5$ given by the $2$ and $3$-handles of $Y$. By \cite[Lemma 4.1]{hughes2024knotted}, these solid tori in $B^5$ are boundary parallel, i.e.\ they can be isotoped rel.\ $\partial$ into $S^4$. 

Now, we argue as in the fourth and fifth paragraphs of the proof of Theorem 1.3 in \cite{hughes2024knotted}: it follows from Montesinos's theorem \cite[Theorem~5.4 and Proposition~4.3]{MR698205} (compare \cite{MR1928177} and \cite[Theorem~3.5]{hughes2024knotted}) that after a diffeomorphism of $S^4$, we can assume $H_S$ (resp.\ $H_N$) is compressing-curve equivalent to the vertical handlebody $H_v$ (resp.\ the horizontal handlebody $H_h$). However, since $Y\not \approx H_v\cup \overline{H_h}$, it follows that either $H_S\not \approx H_v$ rel.\ $\partial$ or $H_N \not \approx H_h$ rel.\ $\partial$, even after pushing their interiors into $B^5$. In either case, since $H_v$ and $H_h$ are also related by a diffeomorphism of $S^4$, we can obtain a solid torus in $S^4$ that is not isotopic to $H_h$ rel.\ $\partial$ even in $B^5$. The same argument shows that the existence of infinitely many knotted solid tori in $S^4$ already follows from Theorem \ref{thm:morsesimple-s3}, i.e.\ the existence of infinitely many 3-knots with four critical points.
\end{rem}

\begin{cor}\label{cor:simple-5d}
The genus 2 handlebodies $H_h  \cup \{ \boldsymbol{\beta} ^k H_{h}\}_{k\ge 1}$ from Theorem~\ref{thm:simple-knotted-handlebody} 
stay non-isotopic rel.\ $\partial$ even when their interiors are pushed into $B^5$. (Figure~\ref{fig:section53-recap}~(c))

The genus $g\ge 2$ handlebodies obtained by stabilizing these genus $2$ handlebodies are also pairwise non-isotopic rel.\ $\partial$, and remain non-isotopic rel.\ $\partial$ even when their interiors are pushed into $B^5$.
\end{cor}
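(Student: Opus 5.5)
The plan is to deduce Corollary~\ref{cor:simple-5d} from Theorem~\ref{thm:simple-5d} via Lemma~\ref{lem:stabilization}, in the same way Corollary~\ref{cor:bgconj} was deduced from Theorem~\ref{thm:morsesimple-s3}.

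For the first statement, apply Lemma~\ref{lem:stabilization} with $K=K_2$ and $H_3:=H_h$. For any $k\ne \ell$ in $\{0,1,2,\dots\}$, take $H_1:=\boldsymbol{\beta}^kH_h$ and $H_2:=\boldsymbol{\beta}^\ell H_h$, with $\boldsymbol{\beta}^0H_h=H_h$.

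We must check that $H_h\cup\overline{\boldsymbol{\beta}^kH_h}=Y_k$ and $H_h\cup\overline{\boldsymbol{\beta}^\ell H_h}=Y_\ell$ are non-isotopic in $S^5$. This follows from Theorem~\ref{thm:simple-5d}. An isotopy of $Y_k$ to $Y_\ell$ induces a homeomorphism of complements. It therefore gives an isomorphism of $\pi_1\cong\mathbb{Z}$ (possibly $t\mapsto t^{-1}$), together with a compatible, semilinear isomorphism of $\pi_2$ as $\mathbb{Z}[t,t^{-1}]$-modules.

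The modules $\mathbb{Z}[t^{\pm1}]/(k(t-1))^{\oplus2}$ are pairwise distinguished even allowing the involution $t\mapsto t^{-1}$. To see this, tensor with $\mathbb{Z}$ via $t\mapsto 1$; this yields $\mathbb{Z}^2$ in every case, so it does not separate them. Instead, look at the torsion subgroup of the underlying abelian group. The module $\mathbb{Z}[t^{\pm1}]/(k(t-1))$ contains $(t-1)\mathbb{Z}[t^{\pm1}]/(k(t-1))\cong \mathbb{Z}[t^{\pm1}]/(k)$, which is $k$-torsion. Hence the exponent of the torsion subgroup is exactly $k$. This exponent is invariant under the twisted isomorphisms, so different $k$ are distinguished.

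For $Y=Y_0$, $\pi_2$ is free, hence torsion-free. This separates $Y_0$ from the $Y_k$ with $k\ge2$. It also handles $Y_1$, since $\mathbb{Z}[t^{\pm1}]/(t-1)\cong\mathbb{Z}$ has rank one over $\mathbb{Z}$ after tensoring with $\mathbb{Q}[t^{\pm1}]$, whereas $\mathbb{Z}[t^{\pm1}]$ does not. This is in fact what the statement of Theorem~\ref{thm:simple-5d} already asserts, so we cite it directly.

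Lemma~\ref{lem:stabilization} then gives that $\boldsymbol{\beta}^kH_h\not\approx\boldsymbol{\beta}^\ell H_h$ rel.\ $\partial$ even in $B^5$. The same lemma gives the second statement: all stabilizations of these handlebodies also remain non-isotopic rel.\ $\partial$ in $B^5$, and therefore also in $S^4$.

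The hypothesis of Definition~\ref{def:stabilization} requires a small ball $B$ in which all the $H_i$ agree with $H_{loc}$. We arrange this by choosing $B$ near a point of $K_2$ away from the barbell $\beta$, so that $\boldsymbol{\beta}^k$ is the identity on $B$. No new idea is needed at any step; the only point requiring care is the module-theoretic distinction, and that is supplied by Theorem~\ref{thm:simple-5d}.
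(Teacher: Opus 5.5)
Your proposal is correct and follows the paper's argument: apply Lemma~\ref{lem:stabilization} with $H_3=H_h$, $H_1=\boldsymbol{\beta}^kH_h$, $H_2=\boldsymbol{\beta}^\ell H_h$, using the non-isotopy of the embeddings $Y_k$ from Theorem~\ref{thm:simple-5d}. Your extra check fills in a detail that Theorem~\ref{thm:simple-5d} asserts without elaboration: the modules $(\mathbb{Z}[t^{\pm1}]/(k(t-1)))^{\oplus 2}$ are pairwise distinguished by their underlying abelian groups, via the torsion exponent $k$ and, for $k=1$ versus $k=0$, finite versus infinite rank.
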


\begin{proof}
Apply Lemma~\ref{lem:stabilization} to the embeddings $H_h\cup \overline{\boldsymbol{\beta}^kH_h}$ of $S^1 \times S^2$ in $S^5$ from Theorem~\ref{thm:simple-5d}.
\end{proof}

\begin{cor}\label{cor:solid-torus-ball-brunnian} 
      The $n$-component genus $(0,\cdots,0,1)$ handlebody links $\{ B\sqcup H_h\} \cup \{\boldsymbol{\beta_{v,1} \beta_{h,k}}(B\sqcup H_h)\}_{k\ge 1}$ of $(n-1)$ 3-balls and a solid torus in $S^4$ from Theorem \ref{thm:linked-6crit} are pairwise non-isotopic rel.~$\partial$ and remain non-isotopic rel.\ $\partial$ even when their interiors are pushed into $B^5$. (Figure~\ref{fig:section53-recap}~(d) for the $n=2$ case)
    
    For $g_1,\cdots,g_{n-1}\ge 0$ and $g_n\ge 1$, the $n$-component genus $(g_1,\cdots,g_n)$ handlebody links of genus $g_i$ handlebodies obtained by stabilizing these links are also non-isotopic rel.\ $\partial$ and remain so even when their interiors are pushed into $B^5$. 
    
    Finally, for any $i=1,\cdots ,n$, if we remove the $i$th components from these handlebody links, then they become isotopic rel.\ $\partial$ (hence they are Brunnian).
\end{cor}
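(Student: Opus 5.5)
The plan is to treat the three assertions of Corollary~\ref{cor:solid-torus-ball-brunnian} separately, with Lemma~\ref{lem:stabilization} as the main tool.

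\textbf{Non-isotopy, including in $B^5$.} Set $H_3 := B\sqcup H_v$, $H_1 := \boldsymbol{\beta_{v,1}\beta_{h,k}}(B\sqcup H_h)$, and $H_2$ equal to either $B\sqcup H_h$ or $\boldsymbol{\beta_{v,1}\beta_{h,k'}}(B\sqcup H_h)$ with $k'\neq k$. Then $H_3\cup\overline{H_1}=Y_{k,1}$, and Theorem~\ref{thm:linked-6crit} gives $Y_{k,1}\not\approx Y_{k',1}$ whenever $\{k,1\}\neq\{k',1\}$, i.e.\ whenever $k\neq k'$.

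For the comparison with $B\sqcup H_h$, note that $H_3\cup\overline{B\sqcup H_h}$ is the $n$-component unlink (standard balls and the standard $H_v\cup\overline{H_h}$). Since $Y_{k,1}$ is not the unlink by Theorem~\ref{thm:linked-6crit}, these are non-isotopic as well. Lemma~\ref{lem:stabilization}, with zero stabilizations, then gives pairwise non-isotopy rel.\ $\partial$ even after pushing interiors into $B^5$. Non-isotopy in $S^4$ follows a fortiori, since an isotopy in $S^4$ rel.\ $\partial$ pushes in to one in $B^5$.

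\textbf{Stabilized links.} To reach genus $(g_1,\dots,g_n)$, stabilize the $i$th component $g_i$ times for $i<n$ and the $n$th component $g_n-1$ times. Each stabilization is a local move in a small ball near a point of $\partial$ of the relevant component, and Definition~\ref{def:stabilization} permits disconnected handlebodies. So Lemma~\ref{lem:stabilization} applies verbatim and yields the second assertion. The one thing to check is that the ball $B$ can be chosen near any component without meeting the others; this holds because the ball is arbitrarily small.

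\textbf{Brunnian property.} After removing the $i$th component, we need the links to be isotopic rel.\ $\partial$ in $S^4$.
\begin{itemize}
\item For $i=n$ (removing the solid torus), the first part of Lemma~\ref{lem:s2d2-barbell-trivial} says $B\approx\boldsymbol{\beta_{v,1}\beta_{h,k}}B$ rel.\ $N(K)$.
\item For $i<n$, the second part of Lemma~\ref{lem:s2d2-barbell-trivial} gives $(B\setminus B_i)\sqcup H_h\approx\boldsymbol{\beta_{v,1}\beta_{h,k}}((B\setminus B_i)\sqcup H_h)$ rel.\ $N((K\setminus K_i)\sqcup T^2)$.
\end{itemize}
For the stabilized links, I would perform the stabilizations inside a small ball disjoint from the barbells. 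These isotopies are then either rel.\ a neighborhood of the boundary or supported in a $4$-ball that can be chosen away from the stabilization ball, so they carry stabilized sublinks to stabilized sublinks.

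\textbf{Expected obstacle.} The main subtlety is this last compatibility of the Brunnian isotopies with stabilization. In the case $i=n$, the isotopy of Lemma~\ref{lem:s2d2-barbell-trivial} moves $T^2$, but $T^2$ is no longer part of the link once the $n$th component is removed, so only $N(K)$ needs to be fixed. Placing the stabilization balls near $K\setminus K_i$ and inside the fixed neighborhoods resolves this.
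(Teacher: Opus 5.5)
Your proposal is correct and follows the paper's proof. It applies Lemma~\ref{lem:stabilization} to the links $Y_{k,1}$ of Theorem~\ref{thm:linked-6crit} for the first two statements, and uses Lemma~\ref{lem:s2d2-barbell-trivial} with the stabilization balls placed inside the boundary neighborhoods fixed by those isotopies for the Brunnian statement; you also spell out two points the paper leaves implicit, namely that $\{k,1\}\neq\{k',1\}$ exactly when $k\neq k'$, and that $B\sqcup H_h$ corresponds to the unlink $(B\sqcup H_v)\cup\overline{B\sqcup H_h}$, which $Y_{k,1}$ is not.
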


\begin{proof}
    The first statement follows from applying Lemma \ref{lem:stabilization} to the $n$-component 3-links $(B\sqcup H_h)\cup \overline{\boldsymbol{\beta_{v,{\ell}}\beta_{h,k}}(B\sqcup H_h)}$ of Theorem \ref{thm:linked-6crit}. The second statement follows from Lemma \ref{lem:stabilization}.
    
    Finally, the last statement follows from Lemma \ref{lem:s2d2-barbell-trivial}, where we showed that $B \approx \boldsymbol{\beta_{v,1} \beta_{h,k}}B$ rel.\ $N(\partial B)$ and for each $i=1,\cdots,n-1$, $(B\setminus B_{i})\sqcup H_{h} \approx \boldsymbol{\beta_{v,\ell}\beta_{h,k}}((B\setminus B_{i})\sqcup H_{h})$ rel.\ $N(\partial ((B\setminus B_{i})\sqcup H_{h}))$.
    To see this, observe that starting from the genus $(0,\cdots,0,1)$ Brunnian links $\boldsymbol{\beta_{v,1} \beta_{h,k}}(B\sqcup H_h)$, we can stabilize any of the 3-balls $\boldsymbol{\beta_{v,1} \beta_{h,k}}B_i$ (resp.\ $\boldsymbol{\beta_{v,1} \beta_{h,k}}H_h$) inside 4-balls $D\subset N(\partial B_i)$ (resp.\ $D \subset N(\partial H_h)$).
    Then the above isotopies fix the 4-ball $D$ pointwise, and so the stabilized links of handlebodies are isotopic rel.\ neighborhoods of their boundaries as well.
\end{proof}

Finally, we remark that it is possible to give a $5$-dimensional proof of the existence of infinitely many knotted splitting spheres of the 2-component unlink of unknotted surfaces of genus $m,n$ for $m\ge 0, n\ge 1$; compare \cite{hughes2023nonisotopicsplittingspheressplit}.
Since we have given a simpler proof in Theorem~\ref{thm:simple-splitting-spheres}, we only give a proof for a weaker version; we leave the rest to the interested reader.

\begin{cor}\label{cor:5d-splitting} Let $\Sigma$ be a standard splitting 3-sphere for the unlink $S^2 \sqcup T^2 \subset S^4$ of a $2$-sphere and a $2$-torus. Then for any $k,\ell \ge 1$, $\Sigma$ and $\boldsymbol{\beta_{v,\ell } \beta_{h,k}}\Sigma $ are non-isotopic rel.\ $S^2 \sqcup T^2$. 
\end{cor}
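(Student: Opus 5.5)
The plan is to argue by contradiction and reduce to Theorem~\ref{thm:linked-6crit} with $n=2$, in the same way Lemma~\ref{lem:stabilization} reduces knotting of handlebodies in $B^5$ to knotting of closed $3$-manifolds in $S^5$. Write $S^2=K_1$, let $B_1$ be the $3$-ball it bounds, and let $H_h,H_v$ be the horizontal and vertical solid tori bounded by $T^2$, with $H_v$ isotoped off $K_1$ as in the setup before Theorem~\ref{thm:linked-6crit}. Take the barbells $\beta_{h,k},\beta_{v,\ell}$ as there, with bars following $w^k$ and $w^\ell$ where $w=\rho_1$. The standard splitting sphere $\Sigma$ can be chosen to separate $B_1$ from $H_h\cup H_v$. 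Consequently the diffeomorphism $\boldsymbol{\beta}:=\boldsymbol{\beta_{v,\ell}\beta_{h,k}}$ carries $\Sigma$ to a splitting sphere $\boldsymbol{\beta}\Sigma$ separating $\boldsymbol{\beta}B_1$ from $\boldsymbol{\beta}H_h$.

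Suppose $\Sigma\approx\boldsymbol{\beta}\Sigma$ rel.\ $S^2\sqcup T^2$. By isotopy extension we get a diffeomorphism $\psi$ of $S^4$, isotopic to the identity rel.\ $N(S^2\sqcup T^2)$, with $\psi(\boldsymbol{\beta}\Sigma)=\Sigma$. Then $\psi\boldsymbol{\beta}B_1$ and $\psi\boldsymbol{\beta}H_h$ lie in the two complementary balls $W_1,W_2$ of $\Sigma$.
\begin{enumerate}
\item In $W_1$, the ball $\psi\boldsymbol{\beta}B_1$ and $B_1$ share boundary $K_1$. By Hartman's theorem, quoted in the proof of Lemma~\ref{lem:unknots}, they become isotopic rel.\ $\partial$ once pushed into a collar of $B^5$ over $W_1$.
\item In $W_2\cong B^4$, the solid tori $\psi\boldsymbol{\beta}H_h$ and $\boldsymbol{\beta}H_h$ bound the same $T^2$. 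By Lemma~\ref{lem:s2d2-barbell-trivial} with $i=1$, $\boldsymbol{\beta}H_h\approx H_h$ rel.\ $N(T^2)$ in $S^4$. Since $W_2$ is a ball, one can try to arrange this isotopy inside $W_2$, or else simply carry the pair $(\psi\boldsymbol{\beta}H_h,W_2)$ along.
\end{enumerate}
Pushing the interiors into $B^5$ in the two disjoint half-regions over $W_1$ and $W_2$ then gives $\boldsymbol{\beta}(B_1\sqcup H_h)\approx B_1\sqcup H_h'$ rel.\ $\partial$ in $B^5$, with $H_h'$ a solid torus in $W_2$ bounded by $T^2$.

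It remains to identify $H_h'$ with $H_h$ in $B^5$ rel.\ $\partial$, using only $W_2$. Here I would argue that the $3$-knot $H_v\cup\overline{H_h'}$, which is split from the first component, is unknotted. It is isotopic to the $T^2$-component of $Y_{k,\ell}$, and the Brunnian property in Theorem~\ref{thm:linked-6crit} makes that component unknotted. The cleaner route is to avoid $H_h'$ altogether: the splitting $3$-sphere $\Sigma$, pushed into $S^5$, bounds $4$-balls. Hence $\Sigma$ gives a $4$-sphere in $S^5$ separating the two components of
\[
Y_{k,\ell}=(B_1\sqcup H_v)\cup\overline{\boldsymbol{\beta}(B_1\sqcup H_h)},
\]
obtained as $\Sigma\times$(push-in) on both sides. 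So $Y_{k,\ell}$ would be a split link, and each of its components is unknotted by Lemma~\ref{lem:s2d2-barbell-trivial}. A split link of two unknots is the unlink, which contradicts Theorem~\ref{thm:linked-6crit}.

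The main obstacle is building this separating $4$-sphere in $S^5$. On the $B^5$ side, push $\Sigma$ in as $\Sigma\times[0,\epsilon]$ and cap it with a $4$-ball disjoint from the pushed-in $B_1\sqcup H_v$, which is possible since $\Sigma$ splits them. On the $\overline{B^5}$ side, do the same with $\boldsymbol{\beta}\Sigma$, which splits $\boldsymbol{\beta}(B_1\sqcup H_h)$. These two caps agree on $S^4$ only up to the isotopy $\Sigma\approx\boldsymbol{\beta}\Sigma$ in $S^4\setminus(S^2\sqcup T^2)$. I would absorb that isotopy into a collar $S^4\times[-\epsilon,\epsilon]$ in $S^5$, where it traces an embedded $\Sigma\times I$ disjoint from $Y_{k,\ell}\cap(S^4\times I)=(S^2\sqcup T^2)\times I$. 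The result is an embedded $4$-sphere in $S^5\setminus Y_{k,\ell}$ separating the components. Splitness then forces $\pi_2$ of the complement to be that of a connected sum of unknot complements. This contradicts the nonzero module $M_{k,\ell}$ computed in the proof of Theorem~\ref{thm:linked-6crit}.
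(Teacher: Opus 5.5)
Your final (``cleaner'') route is the paper's proof: the paper likewise assumes $\Sigma\approx\boldsymbol{\beta}\Sigma$ rel.\ $S^2\sqcup T^2$ and makes the push-ins products $(S^2\sqcup T^2)\times I$ near the equatorial $S^4$. It then takes the $5$-ball $B^4\times[-1,-0.3]\,\cup\,(\text{track of }B^4\text{ in }S^4\times[-0.3,0.3])\,\cup\,\boldsymbol{\beta}B^4\times[0.3,1]$, whose boundary is exactly your separating $4$-sphere, so no Schoenflies argument is needed; this splits $Y_{k,\ell}$, whose components are unknotted, contradicting Theorem~\ref{thm:linked-6crit}. The earlier detour through Hartman's theorem and the auxiliary solid torus $H_h'$ is unnecessary, and its step (2) is not justified as stated, so you can simply drop it.
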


\begin{proof}
    Let $B, H_h , H_v $ be as in Theorem~\ref{thm:linked-6crit}, where $n=2$ and $S^2 := K_1$.
    We may assume that $B, H_h, H_v$ are disjoint from $\Sigma$ by isotoping $B, H_h, H_v$ rel.\ $\partial$ if necessary.
    Let us denote $\boldsymbol{\beta}:= \boldsymbol{\beta_{v,\ell } \beta_{h,k}} $ for simplicity.
    
    Let us assume that $\Sigma$ and $\boldsymbol{\beta}\Sigma $ are isotopic in $S^4$ rel.\ $S^2 \sqcup T^2 $.
    Recall from Theorem~\ref{thm:linked-6crit} that $(B\sqcup H_{v})\cup\overline{\boldsymbol{\beta}(B\sqcup H_{h})}$ is a nontrivial two-component link of $3$-spheres in $S^5$ such that both components are unknotted.
    We will construct a $5$-ball in $S^5$ such that one component of the link is contained in the interior of this $5$-ball and the other component is contained in the exterior, which contradicts that the link is nontrivial.

    Before we construct the $5$-ball, let us write down explict models of the push-ins of  $B \sqcup H_v$ and $\boldsymbol{\beta}(B \sqcup H_h) $ and hence the link of $3$-spheres.
    We view $S^5 = B^5 \cup S^4 \times [-1,1]\times \overline{B^5}$;
    the link $B \sqcup H_h$ and $\Sigma$ lies in the equatorial $S^4 \times 0\subset S^5$.
    Then, take the following as the push-ins:
    \[((B \sqcup H_v) \times -0.7 ) \cup ((S^2 \sqcup T^2 ) \times [-0.7,0])\ \mathrm{and} \ (\boldsymbol{\beta}(B \sqcup H_h) \times 0.7 ) \cup ((S^2 \sqcup T^2 ) \times [0,0.7]).\]
    Let $B^4 \subset S^4 $ be a $4$-ball that $\Sigma$ bounds. Then, the isotopy from $\Sigma$ to $\boldsymbol{\beta}\Sigma$ rel.\ $S^2 \sqcup T^2 $ must take $B^4$ to $\boldsymbol{\beta} B^4$.
    The 5-ball in $S^5$ is the union of the following:
    \begin{enumerate}
        \item $\boldsymbol{\beta}B^4 \times [0.3,1] \subset S^4 \times [0.3,1]$.
        \item The track of $B^4$ in the isotopy rel.\ $S^2 \sqcup T^2 $ from $B^4$ to $\boldsymbol{\beta}B^4$ in $S^4 \times [-0.3,0.3]$.
        \item $B^4 \times [-1,-0.3] \subset S^4 \times [-1,-0.3]$.\qedhere
    \end{enumerate}
\end{proof}

\appendix

\section{Handle decompositions for Section~\ref{sec:Codimension--submanifolds}}\label{sec:appendixa}

Recall that in Section~\ref{sec:Codimension--submanifolds}, we constructed 3-manifolds $Y\subset S^5$ by viewing $S^5 = B^5 \cup _{S^4} \overline{B^5}$, taking two ``standard'' handlebodies $H_1 , H_2 \subset S^4$ (e.g.\ $H_h$ or $H_v$) with common boundary $K$, constructing an interesting diffeomorphism $\boldsymbol{\beta}$ of $S^4$ rel.\ $N(K)$, pushing $H_1$ into $B^5$ and $\boldsymbol{\beta} H_2$ into $\overline{B^5}$ (which we also denote as $H_1$ and $\boldsymbol{\beta} H_2$), and letting $Y:=H_1 \cup \overline{\boldsymbol{\beta}H_2}$.
We computed $\pi_2 (S^5 \setminus \mathring{N} (Y))$ by studying the belt sphere of the $2$-handles and the attaching spheres of the $3$-handles in an explicit handle decomposition of $S^5 \setminus \mathring{N}(Y)$.

In this appendix, we explain how to obtain a handle decomposition of $S^5 \setminus \mathring{N} (Y)$.
It follows from Lemma~\ref{lem:glue} that $S^5 \setminus \mathring{N} (Y)$ is $B^5 \setminus \mathring{N} (H_1)$ and $\overline{B^5 \setminus \mathring{N} (H_2)}$ glued along $S^4 \setminus \mathring{N} (K)$ via $\boldsymbol{\beta}$. Hence we are left to study the handle decompositions of the exterior of standard handlebodies in $B^5$; this is the content of Proposition~\ref{prop:handle-decomp} (see Figure~\ref{fig:handlebody-appendix}).

\begin{figure}[h]
\begin{centering}
\includegraphics{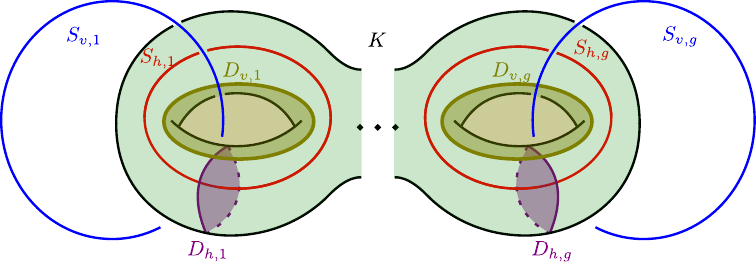}
\par\end{centering}
\caption{\label{fig:handlebody-appendix}The standard genus $g$ surface $K \subset S^3 \times 0 \subset S^4 $, $2$-spheres $S_{h,i},S_{v,i} \subset S^4 \setminus \mathring {N}(K)$ and $2$-disks $D_{h,i}$ (resp.\ $D_{v,i}$) which are compressing disks for the horizontal (resp.\ vertical) handlebody that $K$ bounds in $S^3 \times 0$. The horizontal handlebody is shaded in green.}
\end{figure}

\begin{prop}
\label{prop:handle-decomp}Let $K\subset S^{4}$ be the standard
genus $g$ surface, and let $H\subset B^{5}$ be obtained
by pushing the interior of the horizontal (resp.\ vertical) genus $g$ handlebody in
$S^{4}$ into $B^{5}$.
\begin{enumerate}
\item There exists a handle decomposition of $B^{5}\setminus \mathring{N}(H)$
that consists of one $0$-handle, one $1$-handle, and $g$ many $2$-handles,
such that the belt spheres of the $2$-handles intersected with $S^{4}\setminus \mathring{N}(K)\subset\partial(B^{5}\setminus \mathring{N}(H))$
are the horizontal disks $D_{h,1},\cdots,D_{h,g}$ (resp.\ vertical disks $D_{v,1},\cdots,D_{v,g}$) intersected
with $S^{4}\setminus \mathring{N}(K)$.
\item There exists a relative handle decomposition of $(B^{5}\setminus \mathring{N}(H),S^{4}\setminus \mathring{N}(K))$
that consists of $g$ many $3$-handles, such that the attaching spheres
of the $3$-handles are the vertical spheres $S_{v,1},\cdots,S_{v,g}$ (resp.\ horizontal spheres $S_{h,1},\cdots,S_{h,g}$).
\end{enumerate}
\end{prop}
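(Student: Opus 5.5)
We will model the push-in explicitly and read both handle decompositions off the product structure. Take $H\subset S^4=\partial B^5$ to be the horizontal handlebody (the vertical case is the same after swapping the roles of $h$ and $v$). Isotoping rel.\ $K$, the pushed-in handlebody is $(K\times[0,\epsilon])\cup(H\times\{\epsilon\})$ in a collar $S^4\times[0,1]\subset B^5$, with $S^4\times 0=\partial B^5$. The exterior $E:=B^5\setminus\mathring N(H)$ deformation retracts, rel.\ $S^4\setminus\mathring N(K)$, onto $B^5$ with a neighborhood of $H\times\{\epsilon\}$ carved out. Since $H$ is $3$-dimensional, $E$ is obtained from $S^4\setminus \mathring N(K)$ by attaching $\big(N_{S^4}(H)\setminus \mathring N(K)\big)\times[0,\epsilon]$ along $H$'s two ``sides'' and then filling by the remaining ball $B^5$. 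Concretely, $E\cong \big(S^4\setminus \mathring N(H)\big)\times[0,1]\cup(\text{collar})$, so $E\simeq S^4\setminus H$, up to corners.

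Step 1 is part (1). The complement $S^4\setminus\mathring N(H)$ is the complement of a genus $g$ handlebody in $S^4$, i.e.\ of the boundary sum of $g$ copies of $S^1\times D^2$-type pieces, which is diffeomorphic to $\natural^g (S^2\times D^2)\natural (S^1\times B^3)$. Equivalently, $S^4\setminus\mathring N(H)$ has a handle decomposition with one $0$-handle, one $1$-handle (the meridian of $H$) and $g$ $2$-handles, whose cocores are dual to the spheres $S_{h,i}$. Crossing with $[0,1]$ gives the stated $0$-, $1$- and $2$-handles of the $5$-manifold $E\cong (S^4\setminus\mathring N(H))\times I$. The belt sphere of each $5$-dimensional $2$-handle is the double of the $4$-dimensional cocore disk. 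Its intersection with the boundary face $S^4\setminus\mathring N(K)$ is the cocore of the $i$-th $2$-handle, which is the compressing disk $D_{h,i}$: the $2$-handles are dual to $S_{h,i}$, and $D_{h,i}$ meets $S_{h,i}$ once and the others not at all. I would verify this identification by drawing the Kirby diagram of $S^4\setminus\mathring N(K_g)$ (Figure~\ref{fig:tmp2}(c) for $g=1$) and noting that carving out $H_h$ cancels the $2$-handles for $S_{v,i}$ against nothing.

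Step 2 is part (2). Turn the decomposition upside down relative to the boundary. $\partial E=(S^4\setminus\mathring N(K))\cup_{\partial} (\text{two copies of } H \text{ glued along } K)$, and the second piece is the double of $H$, i.e.\ $\#^g S^1\times S^2$. Starting from a collar of $S^4\setminus\mathring N(K)$, pushing across $H\times[0,\epsilon]$ is attaching $H\times I$. Decompose $H$ into one $0$-handle and $g$ $1$-handles, then dualize the relative decomposition. The $5$-dimensional relative handles are $3$-handles: relative to the $4$-dimensional side, the co-index is $5-2=3$, and one must also check that the $0$-handle of $H$ cancels against the top piece. Their attaching spheres are the boundaries of the cocore disks of $H$'s $1$-handles in $H$, capped in $S^4$ off $H$. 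These are exactly the spheres meeting $K$ in the vertical compressing curves, namely the vertical spheres $S_{v,i}$. Confirming this via the $2$-handles of the dual diagram (the $S_{v,i}$ are the $2$-handles surviving in $Z$ that bound in $S^4\setminus H_h$) is the key geometric check.

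The main obstacle is bookkeeping the relative decomposition in Step 2: ensuring that exactly $g$ $3$-handles remain with no $4$-handle, and that the $0$-handle of $H$ cancels the top $5$-ball. I would handle this by an Euler characteristic and homology check, using $H_*(E,S^4\setminus\mathring N(K))\cong H_*(H\times(I,\partial I))$, which is concentrated in degree $3$ with rank $g$ after excision. I would then use the standard cancellation, since $E$ and $S^4\setminus\mathring N(K)$ are both built from the same simply-laced pieces.
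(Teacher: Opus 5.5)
Your proposal has a genuine gap at its foundation: it treats the pushed-in handlebody as if it still had codimension one. In $S^4$ the handlebody $H$ has codimension one, so $S^4\setminus\mathring N(H)\cong\natural^g(S^2\times D^2)$. This is simply connected and has no $1$-handle; there is no ``meridian of $H$'' in $S^4$. So your splitting $\natural^g(S^2\times D^2)\natural(S^1\times B^3)$ is false.

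After the push-in, $H\subset B^5$ has codimension two. Then $\pi_1(E)\cong\mathbb{Z}$ is generated by the meridian of $K$, and the rest of $\partial E$ is $H\times S^1$, not the double of $H$. Hence $E\not\cong(S^4\setminus\mathring N(H))\times I$, and $E\not\simeq S^4\setminus H$, since the latter is simply connected. What is true, writing the push-in as $K\times[0,\epsilon]\cup H\times\{\epsilon\}$, is that $E\cong(W\times I)\cup_P B'$. Here $W=S^4\setminus\mathring N(K)$, $P=S^4\setminus\mathring N(H)\subset W$, and $B'$ is a $5$-ball glued to $W\times\{1\}$ along $P$. Up to homotopy, $E$ is the mapping cone of $S^4\setminus H\to S^4\setminus K$.

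Step 1 reaches the count $(1,1,g)$ only through the false splitting. Its belt-sphere identification also fails on its own terms. The $S_{h,i}$ meet $H_h$ in core circles, so they do not lie in $S^4\setminus\mathring N(H_h)$. The $2$-handles of that complement have the vertical spheres $S_{v,i}$ as cores, so their cocores would be the vertical disks, the opposite of what part (1) asserts for $H_h$.

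Step 2 inherits these problems. What is glued onto the collar of $W$ is the ball $B'$ along the complement of $H$, not a copy of $H\times I$. The attaching spheres must lie in $W$, whereas you describe them as capped-off cocore boundaries ``meeting $K$''; note $S_{v,i}\cap K=\emptyset$. Your homology check is also off: $H_*(H\times(I,\partial I))$ is $\mathbb{Z}$ in degree $1$ and $\mathbb{Z}^g$ in degree $2$, even though $H_*(E,W)$ is indeed $\mathbb{Z}^g$ in degree $3$. In any case, no homology computation yields a geometric handle cancellation.

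The missing ingredient is the Gompf--Stipsicz local model, Theorem~\ref{thm:62}, which is what the paper uses. Make the radial function Morse on the pushed-in $H$. Each index-$i$ critical point of this codimension-two $H$ then contributes an $(i+1)$-handle to the exterior, with explicit attaching and belt spheres.
\begin{itemize}
\item Seen from the center, the minimum gives the $1$-handle. The $g$ index-$1$ points give $2$-handles whose belt spheres meet $S^4\setminus\mathring N(K)$ in the cocores of the $1$-handles of $H$, namely the $D_{h,i}$.
\item Seen from $S^4$, the $g$ index-$2$ points give $3$-handles. Each attaching sphere $S_{\mathbf{xz}}$ is the boundary of a normal thickening of $D_{h,i}$ enlarged slightly past $K$. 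Its equator is $\partial D_{h,i}$ pushed into $H_v$, so it is isotopic to $S_{v,i}$.
\item The deepest point gives a $4$-handle whose belt $0$-sphere has exactly one point on the inner level. So it cancels the inner $5$-ball.
\end{itemize}
Alternatively, your collar idea can be repaired for part (2). One checks that $B'$ is a collar on $P$ together with $g$ $3$-handles attached along the cores $S_{v,i}$ of $P\cong\natural^g(S^2\times D^2)$, which gives $E\cong W\times I\cup g$ $3$-handles directly.
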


In \cite[Section~6.2]{MR1707327}, Gompf and Stipsicz explain how to obtain a handle decomposition of such manifolds. 
More generally, let $M$ be an $(n+k-1)$-manifold, and let $\Sigma$ be a properly
embedded $n$-dimensional submanifold of $M\times[-1,1]$. Following \cite[Section~6.2]{MR1707327}, let us recall
how to obtain a relative handle decomposition of the exterior
$(M\times[-1,1])\setminus \mathring{N}(\Sigma)$ relative to $(M\times\{-1\})\setminus \mathring{N}(\Sigma)$, and give an explicit description of the attaching sphere and the belt sphere.
Proposition~\ref{prop:handle-decomp} follows from the below description (Theorem~\ref{thm:62}).

The idea is to work locally: assuming that the height function of
$M\times[-1,1]$ is Morse on $\Sigma$ and considering small neighborhoods
of each index $i$ critical point $p\in\Sigma$ separately, we reduce
to the case where $M$ is the $(n+k-1)$-ball $B$ and $\Sigma_{i}\subset B\times[-1,1]$
is as in Definition~\ref{def:standard-i}. Each index $i$ critical
point gives rise to an $(i+k-1)$-handle; see Theorem~\ref{thm:62}.
\begin{notation}
For $m\ge1$, let $B^{m}:=\{\mathbf{x}\in\mathbb{R}^{m}:\left|\mathbf{x}\right| \le 1\}$
be the closed unit $m$-ball, and let $(a,b)S^{m-1}:=\{\mathbf{x}\in\mathbb{R}^{m}:\left|{\bf x}\right|\in(a,b)\}$.
For $m=0$, let $\mathbb{R}^{0}$ and $B^{0}$ consist of one point
denoted as ${\bf 0}$, and let $(a,b)S^{-1}:=\emptyset$.
\end{notation}

\begin{defn}
\label{def:standard-i}Let $B:=\sqrt{2}B^{n+k-1}\subset\mathbb{R}^{n+k-1}$
and identify $\mathbb{R}^{n+k-1}=\mathbb{R}^{i}\times\mathbb{R}^{n-i}\times\mathbb{R}^{k-1}$.
Then, $\Sigma_{i}\subset B\times[-1,1]$, the \emph{standard properly
embedded $n$-ball with a unique critical point of index $i$}, is
as follows:
\begin{enumerate}
\item For $t\in(0,1]$, $\Sigma_{i}\cap(B\times\{t\})=B^{i}\times S^{n-i-1}\times\{{\bf 0}\}\times\{t\}$.
\item For $t=0$, $\Sigma_{i}\cap(B\times\{t\})=B^{i}\times B^{n-i}\times\{{\bf 0}\}\times\{t\}$.
\item For $t\in[-1,0)$, $\Sigma_{i}\cap(B\times\{t\})=S^{i-1}\times B^{n-i}\times\{{\bf 0}\}\times\{t\}$.
\end{enumerate}
\end{defn}

\begin{thm}[{\cite[Section~6.2]{MR1707327}}]
\label{thm:62}Let $\Sigma_{i}\subset B\times[-1,1]$ be as in Definition~\ref{def:standard-i},
 let $\mathring{N}(\Sigma_{i})$ be an open tubular neighborhood of $\Sigma_{i}$,
and let $\varepsilon>0$ be sufficiently small. Then, $B\times[-1,1]\setminus \mathring{N}(\Sigma_{i})$
can be viewed as 
$(B\times[-1,-1+\varepsilon]\setminus \mathring N(\Sigma_i))\cup(i+k-1)$-handle,
where the attaching sphere is $S_{{\bf xz}}\times\{-1+\varepsilon\}$
where 
\[
S_{{\bf xz}}:=\{(\mathbf{x},\boldsymbol{0},\mathbf{z})\in\mathbb{R}^{i}\times\mathbb{R}^{n-i}\times\mathbb{R}^{k-1}:\left|\mathbf{x}\right|^{2}+\left|\mathbf{z}\right|^{2}=1.1\},
\]
and the belt sphere intersected with $B\times\{1\}$ is $(\{0\}\times B^{n-i}\times\{0\}\times\{1\})\setminus \mathring{N}(\Sigma_{i})$.
\end{thm}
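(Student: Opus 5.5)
The plan is to build the handle from the local model of $\Sigma_i$ using standard Morse theory for the exterior of a codimension-$k$ submanifold. Let $F:B\times[-1,1]\to[-1,1]$ be the height (projection to $t$). Restricted to the exterior $E:=B\times[-1,1]\setminus\mathring N(\Sigma_i)$, $F$ has no critical points in the interior. Its level sets change only near the critical point of $\Sigma_i$ at the origin, where $\Sigma_i$ is locally $\{t=|\mathbf x|^2-|\mathbf y|^2,\ \mathbf z=0\}$.

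First I would replace $\Sigma_i$ by this graph model and consider the function $g(\mathbf x,\mathbf y,\mathbf z,t)=t-|\mathbf x|^2+|\mathbf y|^2$ restricted to the normal directions. Then I would perturb $F$ on $E$ into a Morse function with exactly one critical point. Concretely, I would take a function that measures "height minus distance to $\Sigma_i$ in the $\mathbf z$ and $\mathbf x$ directions", such as $h=t-|\mathbf x|^2-|\mathbf z|^2+|\mathbf y|^2$ (suitably cut off), and check that on $E$ its gradient-like flow is transverse to $\partial N(\Sigma_i)$, so that $\partial N(\Sigma_i)$ is part of the vertical boundary. The resulting critical point at $\mathbf 0$ has index $i+(k-1)$, since $\mathbf x$ and $\mathbf z$ are the descending directions. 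It lies inside $N(\Sigma_i)$ only if the tube is too thick, so I would shrink the tube radius below $\sqrt{1.1}$-scale appropriately, or equivalently regard the origin as sitting in the removed tube and argue via the equivalent handle attached to the collar. This gives $E\cong (B\times[-1,-1+\varepsilon]\setminus\mathring N)\cup h^{i+k-1}$.

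Next I would identify the spheres by flowing. The descending manifold of the critical point is the $(\mathbf x,\mathbf z)$-disk with $\mathbf y=0$, which misses $\Sigma_i$ except at $\mathbf z=0$, $|\mathbf x|$ on the sphere of $\Sigma_i$. Following it down to the slice $t=-1+\varepsilon$ gives the round sphere $|\mathbf x|^2+|\mathbf z|^2=\mathrm{const}$ with $\mathbf y=0$. This sphere links $\Sigma_i\cap\{t=-1+\varepsilon\}=S^{i-1}\times B^{n-i}$ correctly, and after isotopy within the level it becomes $S_{\mathbf{xz}}$ with radius$^2=1.1$, chosen just outside the unit sphere so that it avoids $N(\Sigma_i)$. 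The ascending manifold is the $\mathbf y$-disk with $\mathbf x=\mathbf z=0$. Flowing it up to $t=1$ gives $\{0\}\times B^{n-i}\times\{0\}\times\{1\}$ minus the tube, which meets $\Sigma_i$ along $\{0\}\times S^{n-i-1}$.

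The main obstacle is making the Morse-function argument honest near $\partial N(\Sigma_i)$: the critical level of $t$ is a singular slice of $\Sigma_i$, and the exterior's corners must be handled. I expect this to be dealt with by citing Gompf--Stipsicz \cite[Section~6.2]{MR1707327} directly for the handle-index statement, and then doing only the explicit flow computation for the attaching and belt spheres.
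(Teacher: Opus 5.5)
\paragraph{The step that fails: the Morse function.} The critical point you attach the handle to does not exist. Your $h=t-|\mathbf x|^2-|\mathbf z|^2+|\mathbf y|^2$ has $\partial h/\partial t\equiv 1$, so it has no critical points at all. In any case, no function on $E$ can have a critical point at the origin. The origin lies on $\Sigma_i$ (it belongs to the slice $B^i\times B^{n-i}\times\{\mathbf 0\}\times\{0\}$), so it is inside $N(\Sigma_i)$ for every tube radius, and shrinking the tube does not help. The height $t$ has no critical points in the interior of $E$. All the change in the sublevel sets $E\cap\{t\le c\}$ happens along the vertical boundary $\partial N(\Sigma_i)$.

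The missing ingredient is Morse theory with boundary. A critical point of $t|_{\partial N(\Sigma_i)}$ of index $\lambda$ at which $\nabla t$ points into $E$ attaches a $\lambda$-handle to $E$ when its level is crossed. One at which $\nabla t$ points into the tube only changes the sublevel set by a collar. Take the smoothed model $\Sigma_i=\{t=|\mathbf y|^2-|\mathbf x|^2,\ \mathbf z=\mathbf 0\}$ with tube $\{|\mathbf z|^2+(t-|\mathbf y|^2+|\mathbf x|^2)^2\le r^2\}$. Near the origin, the only critical points of $t|_{\partial N}$ are the top and bottom of the normal sphere over the origin. Near the top, in boundary coordinates $(\mathbf x,\mathbf y,\mathbf z)$, one has $t|_{\partial N}\approx r+|\mathbf y|^2-|\mathbf x|^2-|\mathbf z|^2/(2r)$; this is what your $h$ is really approximating. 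It has index $i+(k-1)$, and $\nabla t$ points out of the tube into $E$. At the bottom, $\nabla t$ points into the tube and nothing happens. That is where the single $(i+k-1)$-handle comes from.

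\paragraph{Sign of the local model, and the core and cocore.} Your local model has the wrong sign. Definition~\ref{def:standard-i} puts $S^{i-1}\times B^{n-i}$ below the critical level, so $\mathbf x$ are the descending directions of $\Sigma_i$ and the model is $t=|\mathbf y|^2-|\mathbf x|^2$. With your sign the $(\mathbf x,\mathbf z)$-disk does meet $\Sigma_i$, as you note, but a core has to lie entirely in $E$. With the correct sign it does, once the cap is placed just above the critical level.

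The paper does not derive the statement either: it cites Gompf--Stipsicz. In the remark that follows, for an explicit boxy tube, it records the core $S_{\mathbf{xz}}\times[-1+\varepsilon,2\delta)\cup D_{\mathbf{xz}}\times\{2\delta\}$. The cap misses the tube because at $t=2\delta$ the slice $B^i\times S^{n-i-1}$ avoids $\{\mathbf y=\mathbf 0\}$. The walls miss it because $|\mathbf x|^2+|\mathbf z|^2=1.1$ keeps them away from $|\mathbf x|\approx 1$, $\mathbf z\approx\mathbf 0$. The remark also records the cocore $\{\mathbf 0\}\times(1-\delta)B^{n-i}\times\{\mathbf 0\}\times[\delta,1]$, which has dimension $n-i+1$. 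Your ascending $\mathbf y$-disk has dimension $n-i$; it is missing the $t$-direction.

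\paragraph{What to do.} Your final attaching and belt spheres agree with the paper's, so falling back on the citation lands you where the paper is. But the flow argument you give for the spheres rests on a nonexistent critical point, and should be replaced. Either use the boundary-critical-point analysis above, or check directly that $E$ minus the bottom collar and a neighborhood of this explicit core is a product collar on the upper boundary of $E$.
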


\begin{rem}
We can also describe the core and the cocore of the $(i+k-1)$-handle.
To do this, let us first describe $\mathring{N}(\Sigma_{i})$ explicitly. Let
$\delta:=10^{-10}$ and let $\mathring{N}(\Sigma_{i})$ be such that 
\begin{enumerate}
\item For $t\in[\delta,1]$, $\mathring{N}(\Sigma_{i})\cap(B\times\{t\})=(((1+\delta)\mathring{B}^{i}\times(1-\delta,1+\delta)S^{n-i-1}\times\delta \mathring{B}^{k-1})\cap B)\times\{t\}$.
\item For $t\in(-\delta,\delta)$, $\mathring{N}(\Sigma_{i})\cap(B\times\{t\})=(((1+\delta)\mathring{B}^{i}\times (1+\delta)\mathring{B}^{n-i}\times\delta \mathring{B}^{k-1})\cap B)\times\{t\}$.
\item For $t\in[-1,-\delta]$, $\mathring{N}(\Sigma_{i})\cap(B\times\{t\})=(((1-\delta,1+\delta)S^{i-1}\times (1+\delta) \mathring{B}^{n-i}\times\delta \mathring{B}^{k-1})\cap B)\times\{t\}$.
\end{enumerate}
Then, the core of the $(i+k-1)$-handle is 
\[
S_{{\bf xz}}\times[-1+\varepsilon,2\delta)\cup D_{{\bf xz}}\times\{2\delta\}
\]
where 
\[
D_{{\bf xz}}:=\{(\mathbf{x},\boldsymbol{0},\mathbf{z})\in\mathbb{R}^{i}\times\mathbb{R}^{n-i}\times\mathbb{R}^{k-1}:\left|\mathbf{x}\right|^{2}+\left|\mathbf{z}\right|^{2}\le1.1\},
\]
and the cocore is $\{{\bf 0}\}\times(1-\delta)B^{n-i}\times\{{\bf 0}\}\times[\delta,1]$.
\end{rem}

\begin{figure} 
    \centering
    \includegraphics{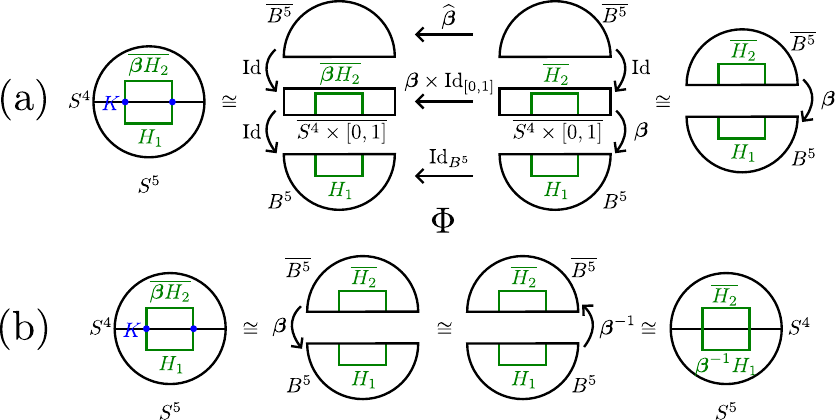}
    \caption{(a): A proof of Equation~(\ref{eq:glue}); here, $\widehat{\boldsymbol{\beta}}$ is a diffeomorphism of $B^5$ that restricts to $\boldsymbol{\beta}$ on the boundary $S^4$. (b): A proof of Equation~(\ref{eq:betainv}).}
    \label{fig:appendix-diffeo}
\end{figure}

Lemma~\ref{lem:glue} lets us study $S^5 \setminus \mathring{N} (Y)$ by studying $B^5 \setminus \mathring{N} (H_1)$, $\overline{B^5 \setminus \mathring{N} (H_2)}$, and $\boldsymbol{\beta}$ separately. 

\begin{lem}\label{lem:glue}
Let $K \subset S^4$ be a (possibly disconnected) surface in $S^4$, let $H_1 , H_2 \subset S^4$ be two (possibly disconnected) handlebodies that $K$ bounds, and let $\boldsymbol{\beta}:(S^4,K) \to (S^4,K)$ be a diffeomorphism of $S^4$ that fixes $K$ setwise. Push the interiors of $H_1$, ${\boldsymbol{\beta}}^{-1} H_1$, $H_2$, and ${\boldsymbol{\beta}} H_2$ into $B^5$, and denote the pushed-in handlebodies as $H_1$, ${\boldsymbol{\beta}}^{-1} H_1$, $H_2$ and ${\boldsymbol{\beta}} H_2$, respectively, by abuse of notation.

Consider the $3$-manifold $H_1 \cup  \overline{{\boldsymbol{\beta}} H_2} \subset S^5$. Then, the pair $(S^5 , H_1 \cup  \overline{{\boldsymbol{\beta}} H_2})$ is diffeomorphic to the pair obtained by gluing $\overline{(B^5 , H_2)}$ to $(B^5 , H_1)$ via ${\boldsymbol{\beta}}$, i.e.
\begin{equation}\label{eq:glue}
(S^5 , H_1 \cup  \overline{ {\boldsymbol{\beta}} H_2}) \cong (B^5 , H_1)\cup _{{\boldsymbol{\beta}} : \partial (B^5 , H_2 ) \to \partial (B^5 , H_1)} \overline{(B^5 , H_2)}.
\end{equation}

We also have
\begin{equation}\label{eq:betainv}
H_1 \cup \overline{{\boldsymbol{\beta}}H_2} \approx {\boldsymbol{\beta}} ^{-1} H_1 \cup  \overline{H_2}.
\end{equation}
\end{lem}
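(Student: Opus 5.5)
The plan is to extend $\boldsymbol{\beta}$ over $B^5$ and use the extension to identify both sides of (\ref{eq:glue}), and then deduce (\ref{eq:betainv}) by gluing the extension on the other hemisphere. First, I will show that $\boldsymbol{\beta}$ extends to a diffeomorphism $\widehat{\boldsymbol{\beta}}$ of $B^5$. Every orientation-preserving diffeomorphism of $S^4$ extends over $B^5$: take a collar $S^4\times[0,1]\subset B^5$ and an isotopy of $\boldsymbol{\beta}$ (for barbell diffeomorphisms, the arc-spinning description shows $\boldsymbol{\beta}$ is isotopic to the identity in $S^4$ if we ignore $K$). Alternatively, I would use the Alexander-type trick: $\boldsymbol{\beta}\in\mathrm{Diff}(S^4)$ is pseudo-isotopic to the identity, since $\pi_0\mathrm{Diff}^+(S^4)\to\Theta_5=0$. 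In either case $\boldsymbol{\beta}$ extends over $B^5$. For the barbells actually used, they are supported in the complement of $K$, and $\boldsymbol{\beta}$ is isotopic to the identity in $S^4$ (each cuff bounds a ball in $S^4$). So one can define $\widehat{\boldsymbol{\beta}}$ on the collar as the trace of that isotopy and as the identity inside.

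Second, I prove (\ref{eq:glue}). The pushed-in $\boldsymbol{\beta}H_2\subset\overline{B^5}$ should be $\widehat{\boldsymbol{\beta}}(H_2)$ up to isotopy rel.\ boundary. Both are push-ins of the same handlebody $\boldsymbol{\beta}H_2\subset S^4$ once we arrange $\widehat{\boldsymbol{\beta}}$ to preserve the radial push-in structure near the boundary: use the collar coordinate so that $\widehat{\boldsymbol{\beta}}(x,s)=(\boldsymbol{\beta}x,s)$ near $S^4$, with the push-in supported in that collar. Then $\widehat{\boldsymbol{\beta}}:\overline{(B^5,H_2)}\to\overline{(B^5,\boldsymbol{\beta}H_2)}$ is a diffeomorphism of pairs restricting to $\boldsymbol{\beta}$ on the boundary. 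Hence the pair obtained by gluing $\overline{(B^5,H_2)}$ to $(B^5,H_1)$ via $\boldsymbol{\beta}$ is diffeomorphic to the pair obtained by gluing $\overline{(B^5,\boldsymbol{\beta}H_2)}$ to $(B^5,H_1)$ via the identity, and the latter is $(S^5,H_1\cup\overline{\boldsymbol{\beta}H_2})$. This is the picture in Figure~\ref{fig:appendix-diffeo}(a).

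Third, I prove (\ref{eq:betainv}). Apply $\widehat{\boldsymbol{\beta}}^{-1}$ on the southern $B^5$ and the identity on the northern hemisphere. This is not continuous along the equator, so instead I will use the collar: compose with the diffeomorphism of $S^5$ that is $\widehat{\boldsymbol{\beta}}^{-1}$ on the southern $B^5$ and, on a collar $S^4\times[0,1]$ of the northern side, the family $\boldsymbol{\beta}_s^{-1}$ of the isotopy from $\boldsymbol{\beta}^{-1}$ to the identity in $S^4$ (which need not fix $K$). Instead of that, the cleaner route is to use (\ref{eq:glue}) twice. Both $H_1\cup\overline{\boldsymbol{\beta}H_2}$ and $\boldsymbol{\beta}^{-1}H_1\cup\overline{H_2}$ are obtained from $(B^5,H_1)$ and $\overline{(B^5,H_2)}$ glued by $\boldsymbol{\beta}$, using $\widehat{\boldsymbol{\beta}}^{-1}:(B^5,H_1)\cong(B^5,\boldsymbol{\beta}^{-1}H_1)$. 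So the pairs are diffeomorphic via a diffeomorphism of $S^5$. Since orientation-preserving diffeomorphisms of $S^5$ are isotopic to the identity ($\Theta_6=0$, and $\pi_0\mathrm{Diff}^+(S^5)\cong\Theta_6$), diffeomorphic embeddings of this kind are isotopic.

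The main obstacle is the extension step together with its compatibility with the push-ins. I expect the cleanest path to be arguing that $\boldsymbol{\beta}$ is isotopic to the identity in $\mathrm{Diff}(S^4)$ (not rel.\ $K$), via the barbell's cuffs bounding balls, and building $\widehat{\boldsymbol{\beta}}$ from that isotopy on a collar.
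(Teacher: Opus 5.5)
Your proposal is correct. For (\ref{eq:glue}) it is the paper's argument: extend $\boldsymbol{\beta}$ over $B^5$ using $\Gamma_5=\Theta_5=0$, take the extension to be $\boldsymbol{\beta}\times\mathrm{Id}$ on a collar containing the push-ins, and use $\mathrm{Id}\cup\widehat{\boldsymbol{\beta}}$ to identify the two gluings. For (\ref{eq:betainv}) the paper also reduces to the diffeomorphism of pairs you build with $\widehat{\boldsymbol{\beta}}^{-1}$ on the $H_1$ side.

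The genuine difference is the last step, where "related by an orientation-preserving diffeomorphism of $S^5$" is upgraded to "isotopic". You invoke $\pi_0\mathrm{Diff}^+(S^5)\cong\Gamma_6\cong\Theta_6=0$. That is true, but it rests on Cerf's pseudo-isotopy theorem together with Kervaire--Milnor.

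The paper instead uses an elementary conjugation trick that works in every dimension. Suppose $f=\varphi\circ g$ and the images miss a ball $B^n$. Replace $\varphi$ by an isotopic $\varphi'$ fixing $B^n$ pointwise, and let $\psi_t$ be an isotopy from the identity with $\psi_1(S^n\setminus B^n)\subset B^n$. Then
$g\approx\varphi^{-1}\psi_1\varphi g=\varphi^{-1}\psi_1 f\approx\varphi'^{-1}\psi_1 f=\psi_1 f\approx f$.
This needs only the disc theorem, so it survives in dimensions where $\pi_0\mathrm{Diff}^+(S^n)\cong\Theta_{n+1}\neq 0$ (e.g.\ $S^7$). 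Your input is special to $S^5$, which is enough here but less portable.

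One caveat on your first step. The route you call cleanest is to make $\boldsymbol{\beta}$ isotopic to the identity in $S^4$ because a cuff bounds a ball. That only covers barbell-type diffeomorphisms. The lemma is stated for an arbitrary $\boldsymbol{\beta}$, and it is applied in Theorem~\ref{thm:genus1-hd} to $\boldsymbol{\beta_{h,k}\beta_{v,\ell}}\varphi$, where $\varphi$ is an arbitrary diffeomorphism of $(S^4,T^2)$, e.g.\ one supplied by Montesinos's theorem. So the $\Theta_5=0$ extension is the one your proof must rely on. The isotopy-to-identity route is what the paper uses only in Appendix~\ref{sec:kuiperappendix}, where $\Gamma_{2n+1}$ need not vanish.
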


\begin{proof}Figure~\ref{fig:appendix-diffeo}~(a) is the proof of Equation~(\ref{eq:glue}); let us spell this out. By the definition of $(S^5 , H_1 \cup \overline{{\boldsymbol{\beta}} H_2 })$, Equation~(\ref{eq:glue}) is equivalent to 
\begin{equation}\label{eq:gluetmp}
(B^5 , H_1)\cup _{\mathrm{Id} : \partial (B^5 , {\boldsymbol{\beta}} H_2 ) \to \partial (B^5 , H_1)} \overline{(B^5 , {\boldsymbol{\beta}} H_2)} \cong (B^5 , H_1)\cup _{{\boldsymbol{\beta}} : \partial (B^5 , H_2 ) \to \partial (B^5 , H_1)} \overline{(B^5 , H_2)}.
\end{equation}

Let $N\cong S^4\times [0,1]$ be a closed collar neighborhood of the boundary $S^4 = \partial B^5$, which is identified with $S^4 \times 0 \subset N$. 
Without loss of generality, assume that the pushed-in handlebodies $H_1, H_2, {\boldsymbol{\beta}} H_2 \subset B^5$ are contained in $N$. Let $B := B^5 \setminus \mathring N$: then we have $$B^5 = N\cup B \cong (S^4 \times [0,1])\cup B.$$

Hence, we can rewrite Equation~(\ref{eq:gluetmp}) as 
\begin{align}
(B^5 , H_1)\cup _{\mathrm{Id} : (S^4 \times 0 , \partial ({\boldsymbol{\beta}} H_2 )) \to \partial (B^5 , H_1)} \overline{(S^4 \times [0,1], {\boldsymbol{\beta}} H_2)} \cup_{\mathrm{Id} :  (\partial B , \emptyset) \to (S^4 \times 1 , \emptyset ) } \overline{(B , \emptyset )} \label{eq:gluetmp10}\\
\cong (B^5 , H_1)\cup _{{\boldsymbol{\beta}} : (S^4 \times 0 , \partial  H_2 ) \to \partial (B^5 , H_1)} \overline{(S^4 \times [0,1],  H_2)} \cup_{\mathrm{Id} :  (\partial B , \emptyset) \to (S^4 \times 1 , \emptyset ) } \overline{(B , \emptyset )}.\label{eq:gluetmp11}
\end{align}

Let us define a diffeomorphism $\Phi$ from (\ref{eq:gluetmp11}) to (\ref{eq:gluetmp10}). Note that both (\ref{eq:gluetmp10}) and (\ref{eq:gluetmp11}) have three summands; $\Phi$ preserves this decomposition.
(1) On the first summand, let $$\Phi := \mathrm{Id}_{B^5} : \overline{(B^5 , H_1 )}\to \overline{(B^5 , H_1 )}.$$
(2) On the second summand, let $$\Phi:= {\boldsymbol{\beta}} \times \mathrm{Id}_{[0,1]} : \overline{(S^4 \times [0,1],  H_2)} \to \overline{(S^4 \times [0,1], {\boldsymbol{\beta}} H_2)}.$$
(3) Since $S^5$ has a unique smooth structure \cite{MR148075}, any diffeomorphism of $S^4$ is pseudoisotopic to the identity (see e.g.\ \cite[Lemma~12]{MR4996240}), and so there exists some diffeomorphism $\widehat{{\boldsymbol{\beta}}} :B \to B$ that restricts to ${\boldsymbol{\beta}}$ on its boundary. On the third summand, let $\Phi := \widehat{{\boldsymbol{\beta}}}$.

Figure~\ref{fig:appendix-diffeo}~(b) is the proof of Equation~(\ref{eq:betainv}); let us spell this out. 
Since\footnote{In fact, let $f,g:M\to S^n$ be two embeddings of a manifold $M$ such that their images are disjoint from some $n$-ball in $S^n$. Then, $f$ and $g$ are isotopic if and only if there exists a diffeomorphism $\varphi :S^n \to S^n$ such that $f=\varphi \circ g$.
The only if direction follows from the isotopy extension theorem.
To show the if direction, let $B^n \subset S^n $ be an $n$-ball disjoint from $f(M)$; we can find a diffeomorphism $\varphi '$ of $S^n$ that fixes $B^n$ pointwise and is isotopic to $\varphi $.
Let $(\psi _t)_{t\in [0,1]}$ be an isotopy of $S^n$ such that $\psi_0 = {\mathrm {Id}}$ and $\psi_1 (S^n \setminus B^n )\subset B^n$.
Then, $$g \approx \varphi ^{-1} \circ  \psi_1 \circ \varphi  \circ g  = \varphi ^{-1} \circ \psi _1 \circ f \approx {\varphi '}^{-1} \circ \psi _1 \circ  f = f,$$
where the last equality holds since $\psi_1 (f(M) )\subset \psi_1 (S^n \setminus B^n )\subset B^n$ and $\varphi '$ fixes $B^n$ pointwise.} 
two $3$-knots in $S^5$ are isotopic if and only if they are related by a diffeomorphism of $S^5$, Equation~(\ref{eq:betainv}) is equivalent~to
\begin{equation}\label{eq:betainv2}
(S^5 , H_1 \cup \overline{{\boldsymbol{\beta}}H_2}) \cong (S^5 , {\boldsymbol{\beta}} ^{-1} H_1 \cup  \overline{H_2}).
\end{equation}
Similarly to the above proof of Equation~(\ref{eq:glue}), we can show that 
\begin{equation}\label{eq:gluetmp2}
(B^5 ,{\boldsymbol{\beta}}^{-1} H_1)\cup _{\mathrm{Id} : \partial (B^5 , H_1 ) \to \partial (B^5 , H_2)} \overline{(B^5 ,  H_2)} \cong (B^5 , H_1)\cup _{{\boldsymbol{\beta}} ^{-1} : \partial (B^5 , H_1 ) \to \partial (B^5 , H_2)} \overline{(B^5 , H_2)},
\end{equation}
and so Equation~(\ref{eq:betainv2}) follows from Equations~(\ref{eq:glue})~and~(\ref{eq:gluetmp2}), together with the formal statement
$$(B^5 , H_1)\cup _{{\boldsymbol{\beta}} : \partial (B^5 , H_2 ) \to \partial (B^5 , H_1)} \overline{(B^5 , H_2)} \cong (B^5 , H_1)\cup _{{\boldsymbol{\beta}} ^{-1} : \partial (B^5 , H_1 ) \to \partial (B^5 , H_2)} \overline{(B^5 , H_2)}. \qedhere$$
\end{proof}

\section{$(2n-1)$-knots in the $(2n+1)$-sphere with 4 critical points}\label{sec:kuiperappendix}

In this appendix we generalize the construction of Theorem~\ref{thm:morsesimple-s3} to construct $(2n-1)$-knots in $S^{2n+1}$ with one critical point of each index $0, n-1, n$, and $2n-1$.
As mentioned in the introduction, this answers a question of Kuiper \cite[Section 10, page 390]{kuiper1984geometryintotal}, on whether there exist nontrivial $(4m-1)$-knots in $S^{4m+1}$ with 4 critical points with respect to the standard height function on $S^{4m+1}$ for $m \geq 1$.

\begin{thm}\label{thm:higher-dim-knots}
There exist infinitely many pairwise non-isotopic
embeddings of $S^{2n-1}$ in $S^{2n+1}$ all of which have four critical points with respect
to the standard height function on $S^{2n+1}$ (which is Morse on the $(2n-1)$-knot).
\end{thm}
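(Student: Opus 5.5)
The plan is to mimic the proof of Theorem~\ref{thm:morsesimple-s3} one dimension pattern higher, with the torus $T^2\subset S^4$ replaced by the standard $S^{n-1}\times S^{n-1}\subset S^{2n-1}\times 0\subset S^{2n}$. This surface bounds two handlebodies $H_h\cong S^{n-1}\times D^{n}$ and $H_v\cong D^{n}\times S^{n-1}$ in $S^{2n-1}$, and $H_v\cup\overline{H_h}$ is the unknotted $S^{2n-1}$. The complement $Z:=S^{2n}\setminus\mathring N(S^{n-1}\times S^{n-1})$ has $\pi_1(Z)\cong\mathbb{Z}$ generated by the meridian $\mu$. By Alexander duality, $H_n(Z)\cong\mathbb{Z}^2$, generated by $n$-spheres $S_h,S_v$ linking the two factors; in $\widetilde Z$ the lifts satisfy $|\widetilde{S_h}\cap\rho^i\widetilde{S_v}|=1$ for $i=0,1$, exactly as in the torus case. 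The compressing disk $D_v$ of $H_v$ is geometrically dual to $S_v$ and disjoint from $S_h$.

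Next, I replace barbells by their higher-dimensional analogue: a pair of cuffs that are parallel copies of an $n$-sphere with trivial normal bundle (a neighborhood $S^n\times D^n$), joined by a bar, with the model $S^n\times D^n\natural S^n\times D^n\cong B^{2n}\setminus(\mathring N(A_1)\sqcup\mathring N(A_2))$ and the map given by spinning the arc $A_1$ around $A_2$. The same argument as in Lemma~\ref{lem:barbelltubing} shows that such a map tubes the normal disk $D^n$ of one cuff into the other cuff, so Equation~(\ref{eq:barbell-action}) holds for $n$-dimensional homology with $\mathbb{F}_2$ coefficients, where signs can be ignored. The horizontal and vertical barbells $\beta_{h,k},\beta_{v,\ell}$ are defined as before, with cuffs parallel to $S_h$ (resp.\ $S_v$) and bars winding $k$ (resp.\ $\ell$) times around $\mu$. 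The knot is $Y_\beta:=H_v\cup\overline{\boldsymbol{\beta_{v,\ell}\beta_{h,k}}H_h}$. Pushing $H_v$ into the southern $B^{2n+1}$ gives critical points of index $0$ and $n-1$. Pushing $\overline{H_h}$ into the northern hemisphere gives critical points of index $n$ and $2n-1$, so $Y_\beta$ has four critical points.

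To distinguish the knots, I would apply Lemma~\ref{lem:glue} (whose proof works verbatim, using pseudoisotopy of $\boldsymbol{\beta}$ to the identity, which is automatic here since $\boldsymbol{\beta}$ extends over a ball by construction) and Theorem~\ref{thm:62}, with codimension $k=2$. These give a handle decomposition of $X=S^{2n+1}\setminus\mathring N(Y_\beta)$ with one handle each of index $0,1,n,n+1$ (plus higher handles). The belt sphere of the $n$-handle meets $Z$ in $D_v$, and the attaching sphere of the $(n+1)$-handle is $\boldsymbol{\beta_v\beta_h}S_v$. As in Claim~\ref{claim:pi2-comp}, this yields $H_n(\widetilde X;\mathbb{F}_2)\cong\mathbb{F}_2[t^{\pm1}]/(f)$ for $n\ge3$, where $f$ is the equivariant mod~2 intersection number; for $n\ge3$ this is $\pi_n(X)\otimes\mathbb{F}_2$ by Hurewicz, since $\widetilde X$ is $(n-1)$-connected. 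The lifted-barbell bookkeeping is identical to Equation~(\ref{eq:barbell-sphere-homology-class}), giving the same nine-term $f$ and hence dimension $2k+2\ell+2$, which is a knot-complement invariant.

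The main obstacle is the higher-dimensional barbell step: one must check that the spinning diffeomorphism is well defined from the embedded $S^n$ cuffs and bar (the normal framing of the bar and the triviality of the cuff normal bundles), and that its effect on $D_v$ and on $S_v$ is tubing, as in Lemma~\ref{lem:barbelltubing}. I would first try to verify this directly from the arc-spinning model. If the well-definedness is troublesome, it is enough to fix a specific embedding of the model; the homology computation only uses the tubing description. A lesser issue is confirming that $S_h,S_v$ have trivial normal bundle and the stated intersections in $\widetilde Z$. This follows from writing $S^{n-1}\times S^{n-1}$ as the boundary of a neighborhood of $S^{n-1}\vee S^{n-1}$ in $S^{2n-1}$ and using the standard handle diagram of its complement, analogous to Figure~\ref{fig:tmp2}(c).
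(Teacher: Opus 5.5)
Your outline follows the paper's proof. It uses the same $T=S^{n-1}\times S^{n-1}\subset S^{2n}$ with $S_h,S_v,D_v$, and horizontal and vertical barbells winding around the meridian. It obtains a handle decomposition of $S^{2n+1}\setminus\mathring N(Y)$ with one $0$-, $1$-, $n$- and $(n+1)$-handle from Theorem~\ref{thm:62} and Equation~(\ref{eq:glue}). It then gets the invariant $\dim_{\mathbb{F}_2}H_n(\widetilde X;\mathbb{F}_2)=2k+2\ell+2$ from $f=1+(t+t^{-1})(t^k+t^{-k})(t^\ell+t^{-\ell})$.

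\textbf{The barbell model is wrong for $n\ge 3$.} This is the step you flagged as the main obstacle. $B^{2n}$ minus two trivial arcs is $S^{2n-2}\times D^2\natural S^{2n-2}\times D^2$, not $S^n\times D^n\natural S^n\times D^n$. Its cuffs would be $(2n-2)$-spheres. Moreover, spinning an arc around an arc sweeps out a $2$-sphere in a complement homotopy equivalent to $S^{2n-2}$, which is null-homotopic. So arc-spinning cannot produce the tubing maps you need, and your fallback ("only the tubing description is used") still requires some diffeomorphism of $S^n\times D^n\natural S^n\times D^n$ rel.\ $\partial$ that has the tubing property.

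The correct model is the $2n$-dimensional barbell of Budney--Gabai \cite[Section~2]{bghyperbolic}, which the paper cites. It removes two trivial properly embedded $(n-1)$-balls instead of arcs. Writing $B^{2n}=B^{n-1}\times B^{n+1}$, removing a neighborhood of $B^{n-1}\times\mathbf{0}$ leaves $B^{n-1}\times(B^{n+1}\setminus\mathring N(\mathbf{0}))\cong S^n\times B^n$. The map spins one $(n-1)$-ball around the other, so that its $S^1\times B^{n-1}$ trace sweeps out the meridional $S^n$ of the other ball. The tubing formula, your analogue of Lemma~\ref{lem:barbelltubing}, is then \cite[Proposition~2.9]{bghyperbolic}. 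With this replacement the rest of your computation goes through unchanged.

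\textbf{Your justification for Equation~(\ref{eq:glue}) needs a different reason.} The barbell map extending over the model $B^{2n}$ does not make a diffeomorphism of $S^{2n}$ extend over $B^{2n+1}$. For $n\ge 3$ this extension can genuinely fail for diffeomorphisms of $S^{2n}$; for example, $\pi_0\mathrm{Diff}^+(S^6)\cong\Theta_7\neq 0$. The paper's argument instead uses that a cuff of each of $\beta_{h,k},\beta_{v,\ell}$ bounds an $(n+1)$-ball in $S^{2n}$ whose interior misses the rest of the barbell, once $T$ is forgotten. Hence $\boldsymbol{\beta_{h,k}}$ and $\boldsymbol{\beta_{v,\ell}}$ are each isotopic to the identity in $S^{2n}$. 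So their composite is pseudoisotopic to the identity, which is what the proof of Equation~(\ref{eq:glue}) requires.
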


\begin{proof}The proof is analogous to the proof of Theorem~\ref{thm:morsesimple-s3}, but we instead use $2n$-dimensional barbell diffeomorphisms.
In $2n$ dimensions, Budney and Gabai \cite[Section~2]{bghyperbolic} construct a barbell map $\beta :\mathcal{NB} \to \mathcal{NB}$ on the \emph{model thickened barbell} $\mathcal{NB}:=S^n \times B^n \natural S^n \times B^n $ analogously to the $4$-dimensional case. We will push forward $\beta$ along some embeddings of $\mathcal{NB}$. The analogous statement to Lemma~\ref{lem:barbelltubing} is \cite[Proposition~2.9]{bghyperbolic}.

View $S^{2n}=(\mathbb{R}^{n}\times\mathbb{R}^{n})\cup\{\infty\}$,
and let $T:=S^{n-1}\times S^{n-1}$, $H_{v}:=S^{n-1}\times B^{n}$,
and $H_{h}:=B^{n}\times S^{n-1}$ all in $\mathbb{R}^{n}\times\mathbb{R}^{n}\subset S^{2n}$.
Let $S_{h}:=(\{\mathbf{0}\}\times\mathbb{R}^{n})\cup\{\infty\}$,
$S_{v}:=(\mathbb{R}^{n}\times\{\mathbf{0}\})\cup\{\infty\}$, and
$D_{v}:=\{x\}\times B^{n}$ for some $x\in S^{n-1}$.
Let $k,\ell\ge1$ and let $\beta_{h,k}$ (resp.\ $\beta_{v,\ell}$)
be a barbell in $S^{2n}\setminus N(T)$ such that its two cuffs are
parallel copies of $S_{h}$ (resp.\ $S_{v}$) and its bar winds $k$
(resp.\ $\ell$) times around the meridian of $T$. Push the interiors of $\boldsymbol{\beta}_{v,\ell}\boldsymbol{\beta}_{h,k}H_{h}$ and $H_{v}$ into $B^{2n+1}$ and denote
them as $\boldsymbol{\beta}_{v,\ell}\boldsymbol{\beta}_{h,k}H_{h}$ and $H_{v}$ as well. Let $Y:=H_{v}\cup\overline{\boldsymbol{\beta}_{v,\ell}\boldsymbol{\beta}_{h,k}H_{h}}\subset S^{2n+1} = B^{2n+1} \cup \overline{B^{2n+1}}$.
We will show that 
$\dim_{\mathbb{F}_{2}} H_{n}(\widetilde{S^{2n+1}\setminus Y};\mathbb{F}_2 ) =2k+2\ell+2$,
where $\widetilde{S^{2n+1}\setminus Y}$ denotes the universal cover.

First, by Theorem~\ref{thm:62}, $B^{2n+1}\setminus\mathring{N}(H_{v})$
has a handle decomposition with one $0$-, $1$-, and $n$-handle
each, where the belt sphere of the $n$-handle restricted to $S^{2n}\setminus\mathring{N}(T)$
is $D_{v}\setminus\mathring{N}(T)$. Also by Theorem~\ref{thm:62}, $(B^{2n+1}\setminus\mathring{N}(H_{h}), S^{2n} \setminus \mathring{N}(T))$
has a relative handle decomposition with one $(n+1)$-handle, and the attaching
sphere of the $(n+1)$-handle is $S_{v}$.\footnote{For the readers' convenience, we record the following intermediate
description of the attaching sphere that Theorem~\ref{thm:62} provides: let
$\ell_{x}$ be the straight line segment between $0.9x$ and $1.1x$.
Then the attaching sphere is isotopic to $(1.1S^{n-1}\times\ell_{x})\cup(1.1B^{n}\times\partial\ell_{x})$,
which is isotopic to $S_{v}$.} Hence, by Equation~(\ref{eq:glue}) for $\boldsymbol{\beta}:=\boldsymbol{\beta}_{v,\ell}\boldsymbol{\beta}_{h,k}$,\footnote{In the proof of Lemma~\ref{lem:glue}, we used that any diffeomorphism of $S^{4}$
is pseudoisotopic to the identity. Note that this is not true for $S^{2n}$. However, Equation~(\ref{eq:glue}) still holds since our specific $\boldsymbol{\beta}$
is pseudoisotopic to the identity.
In fact, $\boldsymbol{\beta}_{h,k}$ and $\boldsymbol{\beta}_{v,\ell}$
are isotopic to the identity in $S^{2n}$, and so $\boldsymbol{\beta}$
is also isotopic to the identity. Indeed, $\boldsymbol{\beta}_{h,k}$
is isotopic to the identity since a cuff of $\beta_{h,k}$ bounds
a $B^{n+1}$ whose interior is disjoint from $\beta_{h,k}$. Similarly,
$\boldsymbol{\beta}_{v,\ell}$ is isotopic to the identity.} $S^{2n+1}\setminus\mathring{N}(Y)$ has a handle decomposition with
one $0$-, $1$-, $n$-, and $(n+1)$-handle each, where the belt sphere
of the $n$-handle restricted to $S^{2n}\setminus\mathring{N}(T)$
is $D_{v}\setminus\mathring{N}(T)$, and the attaching sphere of the
$(n+1)$-handle is $\boldsymbol{\beta}_{v,\ell}\boldsymbol{\beta}_{h,k}S_{v}$.

Now, $H_{n}(\widetilde{S^{2n+1}\setminus\mathring{N}(Y)};\mathbb{F}_{2})=\mathbb{F}_{2}[t,t^{-1}]/(f)$
where $f$ is the equivariant intersection number modulo
$2$ of $D_{v}$ and $\boldsymbol{\beta}_{v,\ell}\boldsymbol{\beta}_{h,k}S_{v}$.
The computation of $f$ is the same as in the
proof of Theorem~\ref{thm:morsesimple-s3}, and we have $f=1+(t+t^{-1})(t^{k}+t^{-k})(t^{\ell}+t^{-\ell})$:
first, the equivariant intersection number modulo~$2$ of $S_{h}$
and $S_{v}$ is $1+t\in\mathbb{F}_{2}[t,t^{-1}]$, and so the computation
of the homology class of a lift of $\boldsymbol{\beta}_{v,\ell}\boldsymbol{\beta}_{h,k}S_{v}$
to the universal cover of $S^{2n}\setminus\mathring{N}(T)$ is as in Equation~(\ref{eq:barbell-sphere-homology-class}). Second, since $D_{v}$ and $S_{v}$ meet transversely at
one point and $D_{v}$ and $S_{h}$ are disjoint, the computation of $f$ is also the same, and we have $f=1+(t+t^{-1})(t^{k}+t^{-k})(t^{\ell}+t^{-\ell})$.
\end{proof}

\bibliographystyle{amsalpha}
\bibliography{bib}

\end{document}